\crefname{equation}{}{}
\numberwithin{equation}{section}
\theoremstyle{plain}
\newtheorem{thm}[equation]{Theorem}
\newtheorem*{thm*}{Theorem}
\newtheorem{prop}[equation]{Proposition}
\newtheorem{cor}[equation]{Corollary}       
\newtheorem{lem}[equation]{Lemma}
\theoremstyle{definition} 
\newtheorem{defn}[equation]{Definition} 
\newtheorem{ex}[equation]{Example}
\newtheorem{rem}[equation]{Remark}
\newcommand{\Z}{\mathbb{Z}}
\newcommand{\Q}{\mathbb{Q}}
\newcommand{\qz}{\Q/\Z}
\newcommand{\Hom}{\mathrm{Hom}}
\newcommand{\msf}[1]{\mathsf{#1}}
\newcommand{\mc}[1]{\mathcal{#1}}
\newcommand{\mrm}[1]{\mathrm{#1}}
\newcommand{\mbb}[1]{\mathbb{#1}}
\newcommand{\p}{\mathfrak{p}}
\newcommand{\T}{\mathsf{T}}
\newcommand{\U}{\mathsf{U}}
\newcommand{\1}{\mathbbm{1}}
\renewcommand{\mod}[1]{\mathrm{Mod}_{#1}}
\newcommand{\A}{\mathsf{A}}
\newcommand{\B}{\mathsf{B}}
\newcommand{\C}{\mathscr{C}}
\newcommand{\D}{\mathsf{D}}
\newcommand{\G}{\mathscr{G}}
\newcommand{\K}{\mathscr{K}}
\newcommand{\bctwo}[2]{\mathbb{I}_{#1}({#2})}
\newcommand{\bc}[1]{\mathbb{I}{#1}}
\renewcommand{\mod}[1]{\mrm{mod}(#1)}
\newcommand{\Mod}[1]{\mrm{Mod}(#1)}
\newcommand{\ab}{\mathbf{Ab}}
\renewcommand{\t}{\text}
\newcommand{\ti}{\textit}
\renewcommand{\tilde}[1]{\widetilde{#1}}
\newcommand{\rlim}{\varinjlim}
\renewcommand{\t}{\text}
\renewcommand{\ti}{\textit}
\newcommand{\tb}{\textbf}
\newcommand{\mf}{\mathfrak}
\renewcommand{\c}{\mathrm{c}}
\newcommand{\X}{\msf{X}}
\newcommand{\Y}{\msf{Y}}
\begin{document}
	
\title{Duality pairs, phantom maps, and definability in triangulated categories}
\author{Isaac Bird}
\address[Bird]{Department of Algebra, Faculty of Mathematics and Physics, Charles University in Prague, Sokolovsk\'{a} 83, 186 75 Praha, Czech Republic}
\email{bird@karlin.mff.cuni.cz}
\author{Jordan Williamson}
\address[Williamson]{Department of Algebra, Faculty of Mathematics and Physics, Charles University in Prague, Sokolovsk\'{a} 83, 186 75 Praha, Czech Republic}
\email{williamson@karlin.mff.cuni.cz}
\subjclass[2020]{18G80, 18E45, 55U30, 13D09}
\begin{abstract}
	We define duality triples and duality pairs in compactly generated triangulated categories and investigate their properties. This enables us to give an elementary way to determine whether a class is closed under pure subobjects, pure quotients and pure extensions, as well as providing a way to show the existence of approximations. One key ingredient is a new characterisation of phantom maps. 
	We then introduce an axiomatic form of Auslander--Gruson--Jensen duality, from which we define dual definable categories, and show that these coincide with symmetric coproduct closed duality pairs. 
	This framework is ubiquitous, encompassing both algebraic triangulated categories and stable homotopy theories. Accordingly, we provide many applications in both settings, with a particular emphasis on silting theory and stratified tensor-triangulated categories.

\end{abstract}
\maketitle
\setcounter{tocdepth}{1}
\tableofcontents

\section{Introduction}
Since their introduction, triangulated categories have become a fundamental object of study in algebra, geometry and topology, where they encode all the homological and homotopic information of the objects under investigation. Their ubiquity is seen through examples such as the derived category of modules over a ring, the derived category of (quasi)coherent sheaves on a scheme, and the homotopy category of spectra, amongst a plethora of others. In fact, many examples of interest (including those recalled above) are instances of \emph{compactly generated} triangulated categories - ones in which there is a set of `small' objects, called \emph{compact} objects, which generate the entire category through the triangulated structure.

The property of being compactly generated endows the triangulated category with many pleasant properties. One such property is the existence of a universal functor to a Grothendieck abelian category which sends triangles to long exact sequences, and preserves coproducts. This Grothendieck abelian category is actually nothing other than the category of additive contravariant functors from the compact objects to abelian groups, and the associated functor is just the (restricted) Yoneda embedding. If one is only interested in the triangles which get sent to short exact sequences under this functor, rather than long exact sequences, one picks out the \ti{pure triangles}, and, using these, one can consider the pure structure of a compactly generated triangulated category. 

Purity has long been a staple for understanding and describing the structure of Grothendieck abelian, and particularly module, categories, as well as a vital device in answering many representation theoretic questions, such as classification problems for finite dimensional algebras and (co)tilting theory. A wide range of such applications, along with further references, may be found in~\cite{gt, jl, psl} for instance. The ability to transfer the utility of purity from the abelian functor category to the compactly generated triangulated setting means purity has emerged as a vital tool in many approaches to understanding the structure of compactly generated triangulated categories. For instance, several fundamental questions in homological algebra, representation theory, and homotopy theory can be considered through the lens of purity in such triangulated categories. We provide some examples as motivation for our study. 

The telescope conjecture asks whether the kernel of every smashing localisation is generated by a set of compact objects. Krause~\cite{krsmash} used purity as a central tool to reframe the conjecture in an algebraic form and then prove a modified version of it. This conjecture was recently disproved for the stable homotopy category of spectra~\cite{telescope}, but it remains interesting to study the conjecture in other compactly generated triangulated categories. Margolis's uniqueness conjecture for the stable homotopy category is another significant open problem. It asks whether the finite spectra determine the whole stable homotopy category, that is, whenever $\T$ is a compactly generated triangulated category with compact objects $\T^{\c}$ such that $\T^{\c}$ is equivalent to the category of finite spectra, then $\T$ is equivalent to the category of spectra. It was shown in~\cite{phantom} that this conjecture can be reduced, through the lens of purity, to a more elementary equivalent question. 

On the more algebraic side, since their introduction in ~\cite{bbd} where they were used to define perverse sheaves, $t$-structures have become a fundamental tool in understanding derived categories of sheaves and modules. The existence of $t$-structures is dependent on the existence of certain approximations or adjoints. The existence of approximations by a class of objects can in turn be determined through checking whether that class is closed under certain properties, often pure subobjects or pure quotients~\cite{covers, krauseapprox}. We will return to this as an application of the framework we develop later on, see \cref{dp}. In representation theory, $t$-structures and torsion pairs play a key role in the study of silting objects~\cite{AMV, MarksVitoria}, which are a generalisation of tilting objects as considered by Rickard~\cite{Rickard}. There are different motivations for studying silting objects: firstly, they are more flexible than tilting objects when it comes to mutation - one can obtain a new silting object from an old one \cite{alsv}. Secondly, silting objects can be seen as a generalisation of injective modules to triangulated categories. We will consider this perspective in the main body of the paper.

One fundamental property of purity in module categories, and more generally Grothendieck abelian categories, is that a pure exact sequence is, in some sense, a weak version of a split exact sequence: a sequence is pure exact if and only if its image under the character dual $\Hom_{\Z}(-,\qz)$ splits. However, unlike checking if a class is closed under direct summands, checking if a class has desirable purity closure properties is usually a difficult problem. For categories of modules over a ring, a useful concept, called a \emph{duality pair}, was introduced by Holm and J\o rgensen in~\cite{hj} to determine pure closure properties. Such a pair consists of two classes of modules $(\A,\B)$ such that $M \in \A$ if and only if $\Hom_\mbb{Z}(M,\qz) \in \B$, and $\B$ is closed under summands and finite direct sums. They used this splitting property of pure exact sequences to prove that if $(\A,\B)$ is a duality pair then $\A$ has good pure closure properties, and frequently yields approximations.

In this paper we provide an axiomatic framework for detecting pure triangles in compactly generated triangulated categories through an exact functor that generalises the character dual functor from the Grothendieck abelian setting. This enables us to define duality pairs, thus providing a scaffold from which one can verify pure closure properties. Moreover, we illustrate how duality pairs can, as suggested by the prior discussion, be used to determine the existence of approximations. 

These results are widely applicable: all algebraic triangulated categories, as well as all rigidly-compactly generated tensor triangulated categories, fall within the framework. Following this abstract setup, we provide many examples which arise naturally in algebra and topology, before turning our attention to applications, which predominantly lie in representation theory and stable homotopy theory. We also consider the relationship between the duality pairs we define, and those defined in~\cite{hj}, as well as the well established relationship between a compactly generated triangulated category, and its functor category. 

A further motivation for us to develop the framework enabling us to construct duality pairs in the triangulated setting is the goal of better understanding definable subcategories. These are the subcategories which are determined by the vanishing of coherent functors, or equivalently in the presence of an enhancement, by closure under pure subobjects, products, and directed homotopy colimits. We emphasise that definable subcategories need not be triangulated. 

Definable subcategories have long played an important role in representation theory, and understanding module categories~\cite{psl, dac}. However, it has been shown that they are of equal utility in the triangulated setting: for example, from an algebraic standpoint, every silting class is definable, as are Auslander and Bass classes of complexes, and many other classes of complexes of finite (co)homological dimension. Further motivation from the topological viewpoint is Freyd's generating hypothesis, which conjectures that a map $f$ between finite spectra is null homotopic if and only if the induced map on homotopy $\pi_*(f)$ is zero. This can be formulated in terms of definability~\cite{krcoh}, and as such the techniques of purity may provide new insight, just as in the seminal work of Krause on the telescope conjecture~\cite{krsmash}. 

We now discuss the contents and main results of this paper in more detail. In order to construct an axiomatic setting in which we can consider duality pairs, we first need a functor abstracting the character dual with the aim of connecting pure and split triangles as in the module case. By distilling the salient features of the character dual, we define a duality triple $(\T, \msf{U}, Q)$; see \cref{dualitytriple}. This consists of a pair of compactly generated triangulated categories $\T$ and $\msf{U}$ and exact functors $Q\colon \T^\t{op} \to \msf{U}$ and $Q\colon \msf{U}^\t{op} \to \T$ satisfying various natural properties which mirror those in the Grothendieck abelian setting. Our first main result shows that the structure of a duality triple completely determines the pure structure on a triangulated category. 

\begin{thm*}[\cref{detects}]
Let $(\T,\msf{U},Q)$ be a duality triple. Then a triangle $A\to B\to C$ in $\T$ is pure if and only if the induced triangle $Q(C)\to Q(B)\to Q(A)$ splits. Equivalently, $f\colon X\to Y$ is phantom if and only if $Q(f)=0$.
\end{thm*}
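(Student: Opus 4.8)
The plan is to first reduce the statement to its phantom-map reformulation — the two displayed equivalences are in fact equivalent to one another — and then to prove that reformulation using the two structural features a duality triple distils from the character dual: a symmetry isomorphism and the fact that every object in the image of $Q$ is pure-injective.

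For the reduction, recall the standard fact that a triangle $A \xrightarrow{a} B \xrightarrow{b} C \xrightarrow{c} \Sigma A$ in $\T$ is pure if and only if its connecting map $c$ is phantom: applying $\Hom_\T(t,-)$ for $t \in \T^c$ turns the triangle into a long exact sequence, and this yields a short exact sequence $0 \to \Hom_\T(t,A) \to \Hom_\T(t,B) \to \Hom_\T(t,C) \to 0$ for every $t$ exactly when $\Hom_\T(t,c) = 0$ for all $t \in \T^c$ (using that $\T^c$ is closed under shifts, which handles left-exactness as well as right-exactness), i.e.\ exactly when $c$ is phantom. Since $Q$ is exact, applying it to this triangle and rotating gives a triangle $Q(C) \to Q(B) \to Q(A) \to \Sigma Q(C)$ whose connecting morphism is, up to a shift and the canonical identification $Q(\Sigma A) \simeq \Sigma^{-1} Q(A)$, the morphism $Q(c)$; and a triangle splits precisely when its connecting morphism vanishes. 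Hence ``$A \to B \to C$ pure $\iff Q(C) \to Q(B) \to Q(A)$ splits'' is the same assertion as ``$c$ phantom $\iff Q(c) = 0$'', and it suffices to show: a morphism $f\colon X \to Y$ in $\T$ is phantom if and only if $Q(f) = 0$.

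Suppose $f$ is phantom; we show $Q(f) = 0$. By the symmetry isomorphism built into a duality triple — which abstracts the natural identification $\Hom(M, \Hom_\Z(N,\qz)) \cong \Hom(N, \Hom_\Z(M,\qz))$ enjoyed by the character dual — there is a natural isomorphism $\Hom_\U(W, Q(X)) \cong \Hom_\T(X, Q(W))$ under which $\Hom_\U(W, Q(f))$ corresponds to $\Hom_\T(f, Q(W))$; consequently $Q(f) = 0$ if and only if $h \circ f = 0$ for every $W \in \U$ and every $h\colon Y \to Q(W)$. For such $h$ the composite $h \circ f$ is phantom, since the phantom maps form an ideal, and its target $Q(W)$ is pure-injective, this being another defining feature of a duality triple (the character dual of any module is pure-injective); and a phantom map into a pure-injective object is zero. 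Hence $h \circ f = 0$ for all such $h$, so $Q(f) = 0$. I expect this to be the crux: the naive route — pass to a pure-projective presentation $C_1 \to C_0 \xrightarrow{p} X \to \Sigma C_1$, note that $f$ phantom forces $f \circ p = 0$ and hence $Q(p) \circ Q(f) = 0$, then conclude from ``$Q(p)$ is a split monomorphism'' — is circular, because ``$Q(p)$ split mono'' is itself a special case of the statement being proved; it is the detour through the symmetry isomorphism together with pure-injectivity of the image of $Q$ that avoids the circle.

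Conversely, suppose $Q(f) = 0$. Then $QQ(f) = Q(Q(f)) = Q(0) = 0$, so by naturality of the biduality transformation $\eta\colon \mathrm{id}_\T \Rightarrow QQ$ we obtain $\eta_Y \circ f = QQ(f) \circ \eta_X = 0$. The morphism $\eta_Y\colon Y \to QQ(Y)$ is a pure monomorphism (abstracting that $M \to M^{++}$ is pure), so $\Hom_\T(c, \eta_Y)$ is injective for every $c \in \T^c$; therefore, for every $g\colon c \to X$ with $c$ compact, $\Hom_\T(c, \eta_Y)(f \circ g) = \eta_Y \circ f \circ g = 0$ forces $f \circ g = 0$, and $f$ is phantom. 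This direction and the reduction are essentially formal; the substance is the forward implication, and behind it the two properties extracted from the character dual — the symmetry isomorphism and pure-injectivity of every $Q(W)$ — together with the principle that phantom maps into pure-injective objects vanish.
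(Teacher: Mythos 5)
Your proof is correct, and its overall architecture parallels the paper's (reduce to the phantom-map statement, then use the unit $i\colon\mathrm{id}\to Q^2$ together with pure-injectivity of the image of $Q$), but it differs in two noteworthy ways.

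First, you invoke a ``symmetry isomorphism'' $\Hom_\U(W, Q(X)) \cong \Hom_\T(X, Q(W))$ as though it were part of the definition of a duality triple. It is not listed among the axioms. It \emph{is} a consequence of them: the natural maps $i\colon \mathrm{id}\to Q^2$ on $\T$ and on $\U$ play the roles of unit and counit of a contravariant adjunction on the right, and the two instances of condition (3) of \cref{dualitytriple} (one for $X\in\T$, one for $X\in\U$) are exactly the two triangle identities required to promote this data to a genuine hom-set isomorphism. You should spell this out rather than treat it as given; as written the proof uses a fact that is true but not explicitly available. The paper sidesteps the adjunction isomorphism entirely: it first proves a lemma (\cref{phantomcomp}) characterising phantoms via vanishing of $i_Y\circ f$, then manipulates the identity $Q(f) = Q(i_Y\circ f)\circ i_{Q(Y)}$ coming straight from naturality of $i$ and condition (3). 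That is closer to the axioms but less conceptual; yours is cleaner once the adjunction is justified.

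Second, your converse direction ($Q(f)=0 \Rightarrow f$ phantom) is a genuine simplification over the paper's. You deduce $i_Y \circ f = 0$ from $Q^2(f)=0$ and naturality, and then conclude $f$ is phantom \emph{directly} from $i_Y$ being a pure monomorphism, i.e.\ $\Hom_\T(c, i_Y)$ injective for all compact $c$. The paper instead routes this implication through the converse half of its \cref{phantomcomp}, which in turn cites a nontrivial external characterisation of phantom maps (\cite[4.12]{phantom}) and a factorisation argument through pure-injectives. Your argument needs neither. Beyond these points the two proofs agree; in particular, your observation that the ``naive'' route through a pure-projective presentation is circular is a good one, and your reduction from the triangle form to the phantom form (connecting map phantom $\Leftrightarrow$ triangle pure; triangle splits $\Leftrightarrow$ connecting map zero; $Q$ exact sends the connecting map to the connecting map up to shift) matches the paper.
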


We show that almost all compactly generated triangulated categories that arise in algebraic, geometric, and topological settings admit the structure of a duality triple. Thus the previous theorem is widely applicable: we prove in \cref{ttdualitytriple} that any rigidly-compactly generated tensor-triangulated category $\T$ has the natural structure of a duality triple $(\T,\T,\mathbb{I})$ where $\mathbb{I}$ is the Brown--Comenetz dual. We also show that any compactly generated algebraic triangulated category has the natural structure of a duality triple via a functor induced by the character dual, see \cref{algebraicdualitytriple}. Therefore the above theorem has applications to a broad range of categories, such as the homotopy category of spectra, the derived category of a ring, quasicoherent sheaves on a scheme, stable categories of Frobenius categories, and certain categories of motives.

If one is given a duality triple $(\T, \msf{U}, Q)$ one may define a duality pair $(\A,\B)$, where $\A\subset\T$ and $\B\subset\msf{U}$ in the same way as in ~\cite{hj}: the pair $(\A,\B)$ is a duality pair provided $\B$ is closed under summands and finite direct sums, and $A\in\A$ if and only if $Q(A)\in\B$. Using this definition, we are able to apply \cref{detects} to show that, as in the case for modules over a ring, duality pairs in the triangulated setting also admit powerful pure closure properties.

\begin{thm*}[\cref{dp}]
If $(\A,\B)$ is a duality pair, then the class $\A$ is closed under pure subobjects, pure quotients and pure extensions.
\end{thm*}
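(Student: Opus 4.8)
The plan is to reduce everything to the module-theoretic argument of Holm and J\o rgensen by passing through the functor $Q$ and invoking \cref{detects}. So let $A \to B \to C$ be a pure triangle in $\T$. Since $Q$ is a (contravariant) exact functor, it sends this to a triangle $Q(C) \to Q(B) \to Q(A)$ in $\msf{U}$, and by \cref{detects} the latter splits. A split triangle has vanishing connecting morphism, so we obtain a direct sum decomposition $Q(B) \cong Q(A) \oplus Q(C)$ in $\msf{U}$; this single observation drives all three closure statements.

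For closure under pure subobjects and pure quotients, take $B \in \A$, so $Q(B) \in \B$ by the definition of a duality pair. As $\B$ is closed under direct summands, the decomposition above gives $Q(A), Q(C) \in \B$, hence $A, C \in \A$. Since a pure subobject (respectively pure quotient) of $B$ is by definition the outer term $A$ (respectively $C$) of such a pure triangle, this is exactly closure of $\A$ under pure subobjects and pure quotients. For closure under pure extensions, take instead $A, C \in \A$, so that $Q(A), Q(C) \in \B$; since $\B$ is closed under finite coproducts, $Q(B) \cong Q(A) \oplus Q(C)$ lies in $\B$, and therefore $B \in \A$.

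The real content is entirely in \cref{detects}: once pure triangles are identified with those triangles that split after applying $Q$, the statement is a formal consequence of the defining closure properties of the class $\B$. The only points requiring a little care are bookkeeping ones — that the contravariant functor $Q$ reverses the triangle, putting $Q(B)$ in the middle flanked by $Q(A)$ and $Q(C)$, which is precisely the configuration needed, and that a split triangle does genuinely decompose as the direct sum of its outer terms. I do not anticipate any serious obstacle beyond these.
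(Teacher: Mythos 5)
Your proof is correct and takes exactly the same approach as the paper: apply $Q$ to the pure triangle, invoke \cref{detects} to split it, and then use closure of $\B$ under summands and finite coproducts. You simply spell out the bookkeeping that the paper leaves implicit.
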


We combine the preceding theorem with results from~\cite{lv}, to obtain criteria for when $\A$ provides approximations, or appears as a torsion class. Consequently as claimed, duality pairs can be a useful tool in understanding inclusions of subcategories and the existence of adjoints.

As mentioned above, one of our motivations for studying duality pairs is to provide a better understanding of definable subcategories, and we now turn our attention to this problem. Definable subcategories are determined by finitely presented functors, and therefore any interplay between duality pairs and definable categories must entail some way to put a duality on finitely presented functors. We provide such a duality by introducing the refined concept of an \emph{Auslander--Gruson--Jensen duality triple}; despite this refinement, the main examples of duality triples we consider in this paper satisfy this additional requirement. We show in \cref{agj} that this refined structure is precisely what is needed to establish an exact antiequivalence $\delta \colon (\T^{\c},\ab)^{\t{fp}}\to(\U^{\c},\ab)^{\t{fp}}$ between the categories of finitely presented additive functors on $\T^{\c}$ and $\U^{\c}$. Using $\delta$, and the bijective correspondence between definable subcategories of $\T$ and Serre subcategories of $(\T^{\c},\ab)$, we define the \ti{dual definable subcategory}, denoted $\mc{D}^{d}\subset\msf{U}$, of a definable subcategory $\mc{D}$ of $\T$. One would at this point hope that the dual definable subcategory should be related to duality pairs; the following theorem shows that this is indeed the case, and moreover that definable classes are completely characterised via duality pairs. 

\begin{thm*}[\cref{dualdefinable},~\cref{definablett}]
Let $\mc{D}$ be a definable subcategory. Then $(\mc{D}, \mc{D}^d)$ is a symmetric duality pair. Moreover, if $(\A,\B)$ is a symmetric duality pair, then the following are equivalent:
\begin{enumerate}
\item either class is closed under coproducts;
\item either class is closed under products;
\item $\A$ is definable and $\B = \A^d$;
\item $\B$ is definable and $\A = \B^d$.
\end{enumerate}
\end{thm*}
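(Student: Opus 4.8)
We work in an Auslander-Gruson-Jensen duality triple $(\T,\U,Q)$, so that the antiequivalence $\delta\colon(\T^\c,\ab)^{\t{fp}}\to(\U^\c,\ab)^{\t{fp}}$ of \cref{agj} and its quasi-inverse, the AGJ duality for $\U$, are available. For the first assertion, fix a definable $\mc D\subseteq\T$, let $\mc S\subseteq(\T^\c,\ab)^{\t{fp}}$ be the associated Serre subcategory, and write $\overline F$ for the extension to all of $\T$ (respectively $\U$) of a coherent functor $F$, so that $\mc D=\{X:\overline F(X)=0\text{ for all }F\in\mc S\}$ and, by definition of the dual definable category, $\mc D^d=\{Y:\overline{\delta F}(Y)=0\text{ for all }F\in\mc S\}$. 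The point I would extract from the construction of $\delta$ is its compatibility with $Q$: for each $F$ and each $A\in\T$ there is a natural identification under which $\overline F(A)=0$ if and only if $\overline{\delta F}(Q(A))=0$. Granting this, $A\in\mc D$ iff $\overline F(A)=0$ for all $F\in\mc S$ iff $\overline{\delta F}(Q(A))=0$ for all $F\in\mc S$ iff $Q(A)\in\mc D^d$; running the identical argument with $\delta$ replaced by its quasi-inverse yields $B\in\mc D^d$ iff $Q(B)\in\mc D$, and also shows that $\mc D^d$ is definable with $(\mc D^d)^d=\mc D$. As definable subcategories are closed under summands and finite coproducts, $(\mc D,\mc D^d)$ and $(\mc D^d,\mc D)$ are then both duality pairs, that is, $(\mc D,\mc D^d)$ is a symmetric duality pair.

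For the equivalence, let $(\A,\B)$ be a symmetric duality pair; I would first record two reductions. Applying \cref{dp} to $(\A,\B)$ and, by symmetry, to $(\B,\A)$ shows that both $\A$ and $\B$ are closed under pure subobjects and pure quotients; recalling that a subcategory is definable precisely when it is closed under products, pure subobjects and pure quotients, this gives: $\A$ is definable if and only if it is closed under products, and likewise for $\B$. Secondly, if $\A$ is definable then, by the first assertion, $(\A,\A^d)$ is a symmetric duality pair; since any symmetric duality pair $(\A',\B')$ satisfies $\B'=\{B\in\U:Q(B)\in\A'\}$, comparison with $(\A,\B)$ forces $\B=\A^d$ (and $\A^d$ is definable). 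Symmetrically, if $\B$ is definable then $\A=\B^d$. Consequently each of (3) and (4) implies (1) and (2), since definable subcategories are closed under both products and coproducts; and (3) $\Leftrightarrow$ (4), because if $\A$ is definable with $\B=\A^d$ then $\B=\A^d$ is definable and $\A=(\A^d)^d=\B^d$, and symmetrically.

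The substantive point is the biduality lemma: \emph{if $\A$ is closed under coproducts then $\B$ is closed under products}, and symmetrically with $\A$ and $\B$ interchanged. To prove it, take $B_i\in\B$; then $Q(B_i)\in\A$, so $\bigoplus_iQ(B_i)\in\A$ by hypothesis, whence $\prod_iQ(Q(B_i))\cong Q\bigl(\bigoplus_iQ(B_i)\bigr)\in\B$, using that $Q$ carries coproducts to products. The biduality unit $B_i\to Q(Q(B_i))$ is a pure monomorphism (a consequence of \cref{detects}, exactly as the character-dual embedding $M\to M^{++}$ is pure), and products of pure monomorphisms are pure monomorphisms, so $\prod_iB_i$ is a pure subobject of $\prod_iQ(Q(B_i))\in\B$ and hence lies in $\B$, again by \cref{dp}. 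The converse implication is formal, since $Q\bigl(\bigoplus_iA_i\bigr)\cong\prod_iQ(A_i)$. This lemma, where the symmetry of the duality pair and \cref{dp} are genuinely used, is the step I expect to demand the most care.

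It remains to assemble the implications. If (1) holds and $\A$ is closed under coproducts, the lemma makes $\B$ closed under products, hence definable, hence $\A=\B^d$, which is (4); if instead $\B$ is closed under coproducts, the symmetric argument gives (3). If (2) holds and $\A$ is closed under products, then $\A$ is definable and $\B=\A^d$, which is (3), and the case where $\B$ is closed under products gives (4). Together with (3) $\Leftrightarrow$ (4) and with (3),(4) $\Rightarrow$ (1),(2) from the second paragraph, this shows all four conditions are equivalent. Finally, the corresponding statement in the rigidly-compactly generated tensor-triangulated setting is obtained by specialising the whole argument to the duality triple $(\T,\T,\I)$ of \cref{ttdualitytriple}.
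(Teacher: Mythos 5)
Your argument is correct and follows essentially the route the paper takes: the first assertion is proved via the correspondence between definable subcategories and Serre subcategories together with the antiequivalence $\delta$ (this is \cref{dualdefinable}), and your ``biduality lemma'' is precisely part (4) of \cref{symmclosure}, proved by the same device (apply $Q$ to a coproduct, use that $X\to Q^2X$ is a pure monomorphism and that products of pure monomorphisms are pure, then invoke closure under pure subobjects from \cref{dp}). The ``first reduction'' matches the paper's use of \cref{definableequivalent} together with \cref{dp} in the proof of \cref{definablett}. One point where you do take a slightly different and arguably cleaner route: to get $\B=\A^d$ once $\A$ is known to be definable, you observe that symmetry of a duality pair forces $\B=\{B:Q(B)\in\A\}$, so $\B$ is \emph{uniquely determined} by $\A$, and then compare with $(\A,\A^d)$. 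The paper instead shows that $\B$ is also definable and applies \cref{definablesymmetry}; your uniqueness observation bypasses the need to verify definability of $\B$ separately, so it buys a small economy.

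Two things you glossed over are worth naming. First, the ``compatibility of $\delta$ with $Q$'', namely that $\overline F(A)=0$ iff $\overline{\delta F}(QA)=0$, is the real content of \cref{dualdefinable}: unwinding, it says $\mrm{Coker}\,\Hom_\T(f,A)=0$ iff $\mrm{Ker}\,\Hom_\U(Df,QA)=0$, and verifying this is exactly where the AGJ axiom $\Hom_\T(C,X)^+\cong\Hom_\U(DC,QX)$ together with exactness and faithfulness of $(-)^+$ is used (via \cref{dualiskernels}). Stating it as ``the point I would extract from the construction'' is fine as an outline, but it is the step that needs the axiom, not a formal consequence. Second, the purity of the unit $B\to Q^2B$ is axiom (2) of \cref{dualitytriple}, not a consequence of \cref{detects}; a small misattribution. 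Finally, the characterisation ``definable iff closed under products, pure subobjects, pure quotients'' that powers your first reduction requires $\T$ and $\U$ to be the underlying categories of strong and stable derivators (\cref{definableequivalent}); the four-way equivalence is only asserted under that hypothesis in \cref{definablett}, and you should carry it as a standing assumption in the second half of the argument.
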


The above theorem is also shown to hold in the setting of Grothendieck abelian categories, where it is also new. Both results completely characterise definable subcategories in terms of symmetric duality pairs and closure under a single property - either coproducts or products. Although the above theorem makes no mention of triangulated subcategories, we obtain the following corollaries: firstly, a definable category $\mc{D}$ is closed under triangles if and only if its dual definable category is (\cref{triiffdual}), and secondly, if $\T$ has a compatible tensor product, then $\mc{D}$ is a $\otimes$-ideal if and only if its dual definable category is (\cref{idealdefinable}). We note that these corollaries were previously observed in~\cite{wagstaffe} using completely independent machinery, but our techniques provide substantially more elementary and brief proofs. 

Having described the main abstract results and tools, we turn to explaining the applications we obtain. The first of these is in representation theory, where we consider the relationship between duality pairs and silting theory, see \cref{sec:silting}. The main result of this section is the following:
\begin{thm*}[\cref{dualiscosilting},~\cref{siltingthm}]
Let $R$ be a ring, and let $X$ be a bounded silting object in $\msf{D}(R)$. Then $X^+ := \msf{R}\Hom_\Z(X,\qz)$ is a bounded cosilting object in $\msf{D}(R^\circ)$. Moreover, $(\msf{Silt}(X), \msf{Cosilt}(X^+))$ is a symmetric duality pair, $\msf{Silt}(X)^d = \msf{Cosilt}(X^+)$ and $\msf{Cosilt}(X^+)^d = \msf{Silt}(X)$.
\end{thm*}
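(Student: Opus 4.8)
The plan is to deduce the cosilting statement from the general machinery of symmetric duality pairs together with the character-dual duality triple $(\msf{D}(R), \msf{D}(R^\circ), \mathbb{I})$, where $\mathbb{I} = \msf{R}\Hom_\Z(-,\qz)$; note that $\msf{D}(R)$ is algebraic, hence a duality triple by the cited result, and in fact an Auslander-Gruson-Jensen duality triple by \cref{AGJdualitytripleexamples}. First I would recall that for a bounded silting object $X$ with associated silting $t$-structure, the coaisle $\msf{Silt}(X)$ (the class generated by the heart, i.e.\ the silting class) is a \emph{definable} subcategory of $\msf{D}(R)$; this is known in the bounded case by work of Marks--Vit\'oria and Laking, and it is exactly the hypothesis that makes \cref{dualdefinable} available. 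Applying \cref{dualdefinable} to $\mc{D} = \msf{Silt}(X)$ immediately yields that $(\msf{Silt}(X), \msf{Silt}(X)^d)$ is a symmetric duality pair with $\msf{Silt}(X)^d$ definable, and by symmetry (part (4) of the displayed theorem) that $\msf{Silt}(X)^{dd} = \msf{Silt}(X)$. So the only remaining content is to identify $\msf{Silt}(X)^d$ with $\msf{Cosilt}(X^+)$ and to check that $X^+$ is a bounded cosilting object.

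The key step is therefore the identification $\msf{Silt}(X)^d = \msf{Cosilt}(X^+)$. By definition of the dual class with respect to the duality triple, $M \in \msf{Silt}(X)^d$ iff $\mathbb{I}(M) \in \msf{Silt}(X)$, i.e.\ iff $\msf{R}\Hom_\Z(M,\qz)$ lies in the silting coaisle of $X$. I would translate the membership condition for the silting coaisle into a vanishing condition: $Y \in \msf{Silt}(X)$ iff $\Hom_{\msf{D}(R)}(X, Y[i]) = 0$ for $i$ in the appropriate range (this is the standard description of a silting coaisle, valid since $X$ is bounded). Then I would use the adjunction/duality isomorphism
\[
\Hom_{\msf{D}(R)}(X, \msf{R}\Hom_\Z(M,\qz)[i]) \;\cong\; \Hom_{\Z}\!\big(\Hom_{\msf{D}(R^\circ)}(M, X^+[-i])^\vee \text{-type expression}, \qz\big),
\]
more precisely the tensor-hom adjunction relating $\msf{R}\Hom_\Z(M \otim^{\mbb L}_R X, \qz)$-style expressions, to convert the condition "$\mathbb{I}(M)$ is right-orthogonal to $X$" into the condition "$M$ is in the cosilting aisle of $X^+$". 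Concretely, $\Hom(X,\mathbb{I}(M)[i]) \cong \mathbb{I}(X \otimes^{\mbb L}_R M)[\,i\,]$-cohomology, and dualising back shows this vanishes for all $M$ in the relevant range exactly when $X^+$ cogenerates the matching co-$t$-structure; this simultaneously shows $X^+$ is cosilting (its associated class is a definable cosilting class) and that $\msf{Cosilt}(X^+)$ is precisely $\msf{Silt}(X)^d$. Boundedness of $X^+$ follows because applying the exact, faithful functor $\mathbb{I}$ to the finite filtration/generation data witnessing boundedness of $X$ produces the corresponding finite data for $X^+$.

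Having established $\msf{Silt}(X)^d = \msf{Cosilt}(X^+)$ and that $\msf{Cosilt}(X^+)$ is definable, the remaining equality $\msf{Cosilt}(X^+)^d = \msf{Silt}(X)$ is then just the symmetry clause of \cref{dualdefinable} (equivalently, $\mc{D}^{dd} = \mc{D}$ for definable $\mc{D}$), and the "symmetric duality pair" assertion is part of the same theorem. I expect the main obstacle to be the orthogonality-translation step: one must be careful about which functor ($\otimes^{\mbb L}_R$ versus $\msf{R}\Hom$) and which side ($R$ versus $R^\circ$) is used, and about the exact cohomological range defining bounded (co)silting, so that the character-dual isomorphism sends the silting coaisle condition precisely onto the cosilting aisle condition with no off-by-one shift. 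A secondary point requiring care is citing the correct form of "bounded silting class is definable" and "bounded cosilting class is definable" in $\msf{D}(R)$; once those are in place, everything else is a formal consequence of the duality-pair framework developed in the paper.
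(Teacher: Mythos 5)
Your plan shares the paper's core mechanism: translate orthogonality through $\msf{R}\Hom_{R^\circ}(Y, X^+) \simeq \msf{R}\Hom_{R}(X, Y^+)$ (the paper's \cref{characteradjunctionswap}), and use definability of $\msf{Silt}(X)$ plus the symmetric duality pair $(\mc{D},\mc{D}^d)$ from \cref{dualdefinable}. Once $X^+$ is known to be bounded cosilting, your identification $\msf{Silt}(X)^d = {}^{\perp_{>0}}X^+ = \msf{Cosilt}(X^+)$ is essentially the paper's proof of \cref{siltingthm}, and $\msf{Cosilt}(X^+)^d = \msf{Silt}(X)$ then follows from symmetry of dual definable classes, just as you say.

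However, there is a genuine gap in the step that $X^+$ \emph{is} a bounded cosilting object. You assert that the orthogonality translation ``simultaneously shows $X^+$ is cosilting,'' but it does not: that calculation identifies $\msf{Silt}(X)^d$ with ${}^{\perp_{>0}}X^+$, but this says nothing about $X^+$ satisfying the two defining conditions of a bounded cosilting object. Those must be verified separately. For the vanishing condition one needs $(X^+)^J \in {}^{\perp_{>0}}X^+$, i.e.\ $((X^+)^J)^+ \simeq (X^{(J)})^{++} \in \msf{Silt}(X)$, which follows from $X^{(J)} \in \msf{Silt}(X)$ together with closure of the definable class under double character dual---this is an argument you do not make explicit. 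More seriously, your treatment of the thickness condition $\mrm{thick}(\mrm{Prod}(X^+)) = \msf{K}^b(\mrm{Inj}(R^\circ))$ (``applying the exact, faithful functor $\mathbb{I}$ to the finite generation data'') is too vague to count as a proof. The inclusion $\mrm{thick}(\mrm{Prod}(X^+)) \subseteq \msf{K}^b(\mrm{Inj}(R^\circ))$ uses $\prod_I X^+ \simeq (\oplus_I X)^+$ together with $\oplus_I X \in \msf{K}^b(\mrm{Proj}(R))$; the reverse inclusion needs the nontrivial identification $\msf{K}^b(\mrm{Inj}(R^\circ)) = \mrm{thick}(\mrm{Prod}(R^+))$ (via brutal truncations), after which $R \in \mrm{thick}(\mrm{Add}(X))$ gives $R^+ \in \mrm{thick}(\mrm{Prod}(X^+))$. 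Faithfulness of the dual plays no role here, and ``finite filtration data'' does not straightforwardly transport under $(-)^+$ without addressing the $\mrm{Add}/\mrm{Prod}$ asymmetry. The paper is careful to establish \cref{dualiscosilting} as a separate proposition precisely because this is real content, not a byproduct of the duality-pair formalism.
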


This result is a generalisation of a well established result about the duality between tilting and cotilting modules, and their induced classes, in module categories. It is well known that silting and cosilting class are definable~\cite{MarksVitoria}, and as such the contribution of the previous result is twofold. Firstly, it shows that the character dual on the derived category passes to a duality on silting and cosilting classes recovering a result of~\cite{HrbekHugel}, and secondly, that moreover, this duality corresponds to the duality on definable subcategories.

Our second major application is in the study of tensor-triangulated categories. Many triangulated categories in nature have the additional structure of a compatible monoidal structure, and this monoidal structure often opens the door to new approaches of study. For example, a scheme cannot be reconstructed from the triangulated structure on its derived category alone, but can be reconstructed from the \emph{tensor}-triangular structure~\cite{balmerspectrum}. As such, the study of tensor-triangulated categories is a generalisation of algebraic geometry, and algebro-geometric techniques and constructions have now become a vital part of the toolkit. For example, just as is standard in algebraic geometry, the global structure of a tensor-triangulated category can often be determined via performing localisations and then gluing together the results. 

Therefore we often seek to understand the global structure of a category via the behaviour of certain types of subcategories. For example, there is a set of thick tensor ideals of compacts so we can understand the structure of the compact objects via this, but to understand big objects we have to go further. The natural extension of this to the big setting, is that of localising tensor ideals, however, it is not known whether there is a set of these. On the other hand, there is only a set of definable subcategories, so they provide a way to extend the paradigm from small objects to big objects. Moreover, definable classes also encompass many subcategories of substantial interest: for example, the set of smashing subcategories is in bijection with the set of triangulated definable subcategories. 

Just as the Zariski spectrum of a commutative ring $R$ provides a local-to-global approach for understanding the structure and properties of $R$-modules, there is a categorification of this to tensor-triangulated categories. In fact, associated to a tensor-triangulated category, there are many `spectra' of interest, each isolating a particular feature. For example, the Balmer spectrum determines the collection of thick tensor ideals~\cite{balmerspectrum}, and the Ziegler spectrum determines the definable subcategories~\cite{birdwilliamsonhomological, psl}.  
In this vein, our main application to tensor-triangulated categories determines the interaction between localisations and definability, and hence implicitly between localisation and the Ziegler spectrum.
\begin{thm*}[\cref{ttselfdual},~\cref{stratequiv}]
Let $\T$ be a rigidly-compactly generated tensor-triangulated category. 
\begin{enumerate}
\item If $L$ is a smashing localisation and $\mc{D}$ is a definable subcategory of $\T$, then $L\mc{D}$ is a definable subcategory of $L\T$ and $(L\mc{D})^d = L(\mc{D}^d)$. 
\item Moreover, if $\T$ is stratified in the sense of~\cite{bik2}, then a localising $\otimes$-ideal is product closed if and only if it is self-dual definable. In particular, if $L$ is a smashing localisation then $L\T$ is self-dual definable, and any definable triangulated $\otimes$-ideal in $\T$ is self-dual.
\end{enumerate}
\end{thm*}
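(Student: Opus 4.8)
My plan for both parts is to reduce everything to the duality triple $(\T,\T,\I)$ of \cref{ttdualitytriple}, the description of dual definable categories in \cref{dualdefinable} and \cref{definablett}, and the Benson--Iyengar--Krause formalism of local cohomology, support and cosupport. For (1), I would first observe that the localisation $L\T$ of $L$-local objects is again a rigidly-compactly generated tensor-triangulated category (the unit $L\1$ is compact, the compacts of $L\T$ are the retracts of $\{Lc : c\in\T^{\c}\}$, and rigidity descends), so by \cref{ttdualitytriple} it carries its own duality triple $(L\T,L\T,\I_{L\T})$. Interpreting $L\mc D$ as $\mc D\cap L\T$, one shows it is definable in $L\T$ by checking the three closure properties: products and coproducts in $L\T$ coincide with those in $\T$ (a coproduct of local objects is local since $L$ is smashing, a product of local objects is always local), and a triangle with local terms is pure in $L\T$ if and only if it is pure in $\T$, since $\Hom_{L\T}(Lc,Y)\cong\Hom_\T(c,Y)$ for $Y$ local means both purity notions are tested by the same functors $\Hom_\T(c,-)$, $c\in\T^{\c}$; hence $\mc D\cap L\T$ inherits closure under products, coproducts and pure subobjects from the definable subcategory $\mc D$.

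For $(L\mc D)^d = L(\mc D^d)$, the key is the compatibility $\I_{L\T}\simeq\I_\T|_{L\T}$. One checks $\I_\T$ preserves local objects --- if $Y$ is local it is a module over the idempotent algebra $L\1$, hence so is $\I_\T Y = \ihom(Y,\I\1)$ --- and that for $Y$ local and $c\in\T^{\c}$, $\Hom_{L\T}(Lc,\I_{L\T}Y)_\ast = \Hom_\Z(\Hom_{L\T}(Lc,Y)_{-\ast},\qz) = \Hom_\Z(\Hom_\T(c,Y)_{-\ast},\qz) = \Hom_\T(c,\I_\T Y)_\ast$, so the two Brown--Comenetz duals corepresent the same functor on $(L\T)^{\c}$ and therefore agree. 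Given this, \cref{dualdefinable} (applied in $L\T$ and then in $\T$) gives $Y\in(L\mc D)^d\iff\I_{L\T}Y\in L\mc D\iff\I_\T Y\in\mc D\iff Y\in\mc D^d$, and since $Y$ is local this is precisely $Y\in\mc D^d\cap L\T = L(\mc D^d)$.

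For (2), the heart is that any definable triangulated $\otimes$-ideal $\mc E$ is self-dual. Since $\mc E$ is definable it is closed under products and coproducts, hence a localising $\otimes$-ideal and (product-closed plus triangulated) also colocalising; stratification gives $\mc E = \{X : \operatorname{supp}X\subseteq\V\}$ with $\V = \operatorname{supp}\mc E$, and costratification gives $\mc E = \{X : \operatorname{cosupp}X\subseteq\mc W\}$ with $\mc W = \operatorname{cosupp}\mc E$. Testing both descriptions on the objects $\Gamma_\p\1$, for which $\operatorname{supp}\Gamma_\p\1 = \operatorname{cosupp}\Gamma_\p\1 = \{\p\}$, forces $\mc W = \V$, so $\mc E$ is cut out by the \emph{same} subset under support and cosupport; by \cref{definablett} the same holds for the definable $\otimes$-ideal $\mc E^d$, with subset $\V^d$. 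The decisive input is the behaviour of the Brown--Comenetz dual: since $\I X = \ihom(X,\I\1)$, since $\I\1$ has full cosupport ($\I$ being conservative by \cref{detects}), and since $\operatorname{cosupp}\ihom(A,B) = \operatorname{supp}A\cap\operatorname{cosupp}B$ by Benson--Iyengar--Krause, we get $\operatorname{cosupp}(\I X) = \operatorname{supp}(X)$. Feeding this into the symmetric duality pair $(\mc E,\mc E^d)$ of \cref{definablett}: $X\in\mc E\iff\I X\in\mc E^d\iff\operatorname{cosupp}(\I X)\subseteq\V^d\iff\operatorname{supp}(X)\subseteq\V^d$, while also $X\in\mc E\iff\operatorname{supp}(X)\subseteq\V$; hence $\V = \V^d$ and $\mc E^d = \mc E$.

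The rest of (2) follows. The subcategory $L\T = (\ker L)^\perp$ of $L$-local objects is definable, and since a smashing localisation of a rigidly-compactly generated tensor-triangulated category is a $\otimes$-localisation it is a $\otimes$-ideal ($\Gamma\1\otimes Y = 0$ implies $\Gamma\1\otimes(X\otimes Y) = 0$), so it is self-dual definable by the above. For the equivalence ``localising $\otimes$-ideal product-closed $\iff$ self-dual definable'': the reverse direction holds because definable subcategories are product-closed; for the forward direction, a product-closed localising $\otimes$-ideal $\mc L = \mc L_\V$ is closed under pure quotients --- if $B\to C$ is a pure epimorphism with $B\in\mc L$ then $\I C$ is a retract of $\I B$ by \cref{detects}, so $\operatorname{supp}(C) = \operatorname{cosupp}(\I C)\subseteq\operatorname{cosupp}(\I B) = \operatorname{supp}(B)\subseteq\V$, whence $C\in\mc L$ --- and hence, being triangulated, also under pure subobjects, so $\mc L$ is definable, and then self-dual by the first part. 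I expect the genuine obstacle to be the interplay of Brown--Comenetz duality with the Benson--Iyengar--Krause (co)support formalism --- concretely the identity $\operatorname{cosupp}(\I X) = \operatorname{supp}(X)$ together with the fact that a definable $\otimes$-ideal is cut out by one and the same subset via support and via cosupport; once these are secured, alongside (co)stratification and the already-established theory of dual definable categories, the remainder is routine. In part (1) the corresponding technical point is that $\I_\T$ preserves the subcategory of $L$-local objects.
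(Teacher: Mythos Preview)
Your treatment of part (1) is essentially correct and close in spirit to the paper's, though organised differently. The paper packages the argument as: $(\mc D,\mc D^d)$ is a symmetric duality pair, localisation sends this to a symmetric coproduct-closed duality pair $(L\mc D, L(\mc D^d))$ on $L\T$, and then \cref{definablett} converts this directly into definability plus the identification $(L\mc D)^d = L(\mc D^d)$. You instead verify definability of $L\mc D$ by hand via closure properties and then compute the dual using the compatibility $\I_{L\T}\simeq \I_\T|_{L\T}$ (which is exactly the paper's \cref{lem:localBC}). Two small points: you checked closure under products, coproducts and pure subobjects, but the characterisation in \cref{definableequivalent} requires pure \emph{quotients} rather than coproducts --- your purity-comparison argument covers this too, it just needs to be said; and both routes implicitly need $L\T$ to underlie a strong stable derivator for \cref{definableequivalent}/\cref{definablett} to apply.

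Part (2) has a genuine gap: your argument relies on \emph{costratification}, not stratification. You invoke the formula $\operatorname{cosupp}F(A,B)=\operatorname{supp}A\cap\operatorname{cosupp}B$ and the classification of colocalising subcategories by cosupport; both of these are theorems about costratified categories (e.g.\ Benson--Iyengar--Krause, \emph{Colocalizing subcategories and cosupport}, Theorem~9.5), and the implication ``stratified $\Rightarrow$ costratified'' is not available as a hypothesis here. The paper avoids this entirely. It quotes \cite[11.8]{bik} for the equivalence, under stratification alone, of ``$\mathsf{L}$ is product closed'' with ``$\mathsf{L}=\mathsf{X}^\perp$ for a set of compacts $\mathsf{X}$'', and then proves directly (\cref{selfdualperp}, via \cref{lem:closedunderI}) that any such $\mathsf{X}^\perp$ closed under tensoring with compacts is self-dual definable: one shows $\mathsf{X}^\perp$ is closed under $\I$ because $X\otimes Y\simeq 0$ for $X\in\mathsf{X}$, $Y\in\mathsf{X}^\perp$, and then plays this off against the symmetric duality pair $(\mathsf{X}^\perp,(\mathsf{X}^\perp)^d)$. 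No cosupport calculus is needed. Your identity $\operatorname{cosupp}(\I X)=\operatorname{supp}(X)$ is attractive, and your argument would go through in a costratified setting, but as stated the theorem only assumes stratification, so you should either replace the cosupport route with the $\mathsf{X}^\perp$ argument or add costratification as an explicit (stronger) hypothesis.
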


As previously discussed, purity in a compactly generated triangulated category $\T$ is determined by purity in the category $\Mod{\T^{\c}}$ of additive functors $(\T^{\mrm{c}})^{\mrm{op}}\to\ab$. Since duality pairs concern purity, one may naturally ask whether duality pairs in $\T$ can be related to duality pairs in $\Mod{\T^{\c}}$. This is the main study of \cref{dpinducedbyfunctors}, where we show that by using a nerve construction one can construct an appropriate structure to define duality pairs on the corresponding functor categories. By considering an assignment $(-)_0$ which produces a class in $\T$ from a class in $\Mod{\T^{\c}}$ by restricting to representable functors, we are able to show that all duality classes in an Auslander--Gruson--Jensen duality triple come from a duality class in a functor category. Moreover, this assignment refines to duality pairs that are symmetric, and even definable. The main results are summarised in the following theorem.

\begin{thm*}[\cref{symmlift},~\cref{defsymlift},~\cref{dplift}]
Let $(\T,\U,Q)$ be an Auslander--Gruson--Jensen duality triple, and let $(\X,\Y)$ be a duality pair on $(\T,\U,Q)$.
\begin{enumerate}
\item There exists a duality pair $(\A,\B)$ where $\A \subseteq \Mod{\T^\c}$ and $\B \subseteq \Mod{\U^\c}$, such that $\A_0 = \X$ and $\B_0 \subseteq \Y$. 
\item If $(\X,\Y)$ is a symmetric duality pair, then there exists a symmetric duality pair $(\A,\B)$ where $\A \subseteq \Mod{\T^\c}$ and $\B \subseteq \Mod{\U^\c}$, such that $\A_0 = \X$ and $\B_0 = \Y$. Moreover, if $\X$ and $\Y$ are definable then so are $\A$ and $\B$.
\end{enumerate}
\end{thm*}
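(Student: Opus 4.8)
The plan is to transport the duality pair $(\X,\Y)$ along the functor-category duality triple produced by the nerve construction. Write $y_{\T}\colon\T\to\Mod{\T^{\c}}$ and $y_{\U}\colon\U\to\Mod{\U^{\c}}$ for the restricted Yoneda embeddings $X\mapsto\T(-,X)|_{\T^{\c}}$, so that $\mc{E}_{0}=\{X\in\T:y_{\T}(X)\in\mc{E}\}$ for a class $\mc{E}\subseteq\Mod{\T^{\c}}$, and let $(\Mod{\T^{\c}},\Mod{\U^{\c}},\widehat{Q})$ denote the duality triple obtained by applying the nerve construction to $Q$. The hinge of everything is the compatibility $\widehat{Q}\circ y_{\T}\simeq y_{\U}\circ Q$ and $\widehat{Q}\circ y_{\U}\simeq y_{\T}\circ Q$, valid because the nerve of $Q$ is set up to agree with $Q$ on representables and is exact; I would isolate this as a lemma, alongside the standard facts that $y_{\T}$ and $y_{\U}$ are faithful, preserve products and coproducts, reflect isomorphisms, and have essential image exactly the flat objects. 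Two consequences of this last point will be used throughout: (i) maps between restricted-Yoneda images, and monomorphisms landing in the image of $y_{\T}$ or $y_{\U}$, are --- modulo the phantom ideal --- induced by maps, resp.\ pure monomorphisms, of $\T$ or $\U$ (via projectivity of representables and the $\varprojlim^{1}$-sequence for homotopy colimits); (ii) consequently a summand, and more generally a pure subobject, of a restricted-Yoneda image is again a restricted-Yoneda image of a summand, resp.\ pure subobject --- here one also uses that a pure subobject of a flat object is flat.

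For part (1), let $\B$ be the closure under summands and finite coproducts of $\{\widehat{Q}(y_{\T}X):X\in\X\}$, which by the compatibility lemma equals the closure of $\{y_{\U}(QX):X\in\X\}$, and set $\A:=\{F\in\Mod{\T^{\c}}:\widehat{Q}(F)\in\B\}$. As $\B$ is closed under summands and finite coproducts, $(\A,\B)$ is automatically a duality pair. For $\B_{0}\subseteq\Y$: if $y_{\U}(Y')\in\B$ then, using that $\X$ is closed under finite coproducts (\cref{dp}) and that $Q$ turns coproducts into coproducts, $y_{\U}(Y')$ is a summand of $y_{\U}(QX)$ for some $X\in\X$; by (ii), $Y'$ is then a summand of $QX$, and since $QX\in\Y$ and $\Y$ is closed under summands, $Y'\in\Y$ (this inclusion may be proper, as $\B$ is built from $\X$ alone). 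Then $\A_{0}=\X$: the inclusion $\X\subseteq\A_{0}$ is immediate from $y_{\U}(QX)\in\B$ for $X\in\X$ together with the compatibility lemma, and conversely $X\in\A_{0}$ gives $y_{\U}(QX)=\widehat{Q}(y_{\T}X)\in\B$, so $QX\in\B_{0}\subseteq\Y$ and hence $X\in\X$.

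For part (2), assume $(\X,\Y)$ is symmetric, so that $(\Y,\X)$ is also a duality pair; then \cref{dp} applied to both pairs shows $\X$ and $\Y$ are closed under pure subobjects (hence also summands and finite coproducts), and, being left classes of the respective pairs, both are closed under the double dual $Q^{2}$. Enlarge the previous construction: let $\B$ be the closure under summands, finite coproducts and pure subobjects of $\{y_{\U}(QX):X\in\X\}$ --- equivalently, the class of pure subobjects of $y_{\U}(QX)$ with $X\in\X$, using $\X$ closed under finite coproducts --- and put $\A:=\{F:\widehat{Q}(F)\in\B\}$. One checks $\B_{0}=\Y$: the inclusion $\subseteq$ is as in part (1), now invoking $\Y$ closed under pure subobjects and (ii); for $\supseteq$, any $Y'\in\Y$ has $QY'\in\X$, and the canonical pure monomorphism $Y'\hookrightarrow Q^{2}Y'$ exhibits $y_{\U}(Y')$ as a pure subobject of $y_{\U}(Q(QY'))\in\B$. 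Hence $\A_{0}=\X$ as in part (1), and $\A$ is automatically closed under summands and finite coproducts since $\widehat{Q}$ is additive. Symmetry, $\B=\{G:\widehat{Q}(G)\in\A\}$, reduces to $\B=\widehat{Q}^{-2}(\B)$: the inclusion $\B\subseteq\widehat{Q}^{-2}(\B)$ uses that $\widehat{Q}^{2}$ sends pure monomorphisms to pure monomorphisms and sends $y_{\U}(QX)$ to $y_{\U}(QX')$ with $X'=Q^{2}X\in\X$, while the reverse inclusion uses the double-dual pure monomorphism $G\hookrightarrow\widehat{Q}^{2}(G)$ of the functor-category triple and that pure monomorphisms compose. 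Finally, if $\X$ and $\Y$ are definable, take $\A$ and $\B$ to be the definable subcategories of $\Mod{\T^{\c}}$ and $\Mod{\U^{\c}}$ attached, under the bijection between definable subcategories and Serre subcategories of $(\T^{\c},\ab)^{\t{fp}}$, resp.\ $(\U^{\c},\ab)^{\t{fp}}$, to the Serre subcategories cut out by $\X$ and $\Y$; it remains to check, using that the nerve construction intertwines the Auslander--Gruson--Jensen antiequivalence $\delta$ of \cref{agj} with these correspondences on both sides, that this pair agrees with the one built above, whence $\A$ and $\B$ are definable.

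The step I expect to be the main obstacle is establishing that the restriction $(-)_{0}$ of the lifted classes recovers exactly $\X$ and $\Y$ --- equivalently, once the bookkeeping is stripped away, establishing consequence (ii): a summand or pure subobject of a restricted-Yoneda image must lift to $\T$ or $\U$. Making this precise is delicate, since it depends on identifying the essential image of the restricted Yoneda embeddings with the flat objects --- subtle because triangulated categories lack general filtered homotopy colimits --- and on controlling morphisms along those embeddings modulo the phantom ideal. The definable subcase carries the separate obligation of checking that the nerve construction is compatible with $\delta$ and with the Serre-subcategory correspondences; granting these inputs, the remainder is formal.
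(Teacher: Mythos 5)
Your overall plan coincides with the paper's: pass to the functor-category duality triple $(\Mod{\T^{\c}},\Mod{\U^{\c}},\partial)$ built from the nerve construction, exploit the compatibility $\partial\circ\bm{y}_{\T}\simeq\bm{y}_{\U}\circ Q$ (the paper's \cref{partialyoneda}), and show that classes defined in the functor categories restrict under $(-)_{0}$ to the given ones. For part (1) your classes literally agree with those of \cref{dplift}: $\B=\msf{add}(\bm{y}Q\X)=\msf{add}(\partial\bm{y}\X)$ and $\A=\partial^{-1}(\B)=\overline{\bm{y}\X}$. For part (2) you take $\B$ to be the pure subobjects of $\bm{y}Q\X$ and $\A=\partial^{-1}(\B)$, whereas \cref{symmlift} takes $\A_{\Y}=\{F:\partial F\in\bm{y}\Y\}$ and $\A_{\X}=\{G:\partial G\in\bm{y}\X\}$; one can check these descriptions coincide, but the paper's version is slightly cleaner because its symmetry in $\X$ and $\Y$ is visible by inspection, and because it keeps all the relevant objects pure injective by design --- they all arise as images of $\partial$. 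That observation is what avoids the pitfall you correctly identify as ``the main obstacle'': your consequence (ii) as stated (a summand or pure subobject of $\bm{y}X$ lifts to a summand or pure subobject of $X$) is \emph{not} true for arbitrary $X$, precisely because $\bm{y}$ is only faithful modulo phantoms and so idempotents need not lift. The paper sidesteps this by arguing only with flat pure-injective (hence injective) functors, for which $\bm{y}$ is an equivalence onto injectives by~\cite[1.7, 1.9]{krsmash}; the remaining fact needed is that flat $+$ pure injective $\Rightarrow$ injective, which they deduce from~\cite[2.7]{krsmash} and~\cite[5.6]{dac}, combined with the observation that flatness of $\Mod{\T^{\c}}$-objects is itself a definable property so passes to pure subobjects.

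Two further points where your route diverges from the paper or has a gap. First, for the definable case you propose to pass through Serre subcategories of $(\T^{\c},\ab)^{\t{fp}}$ and the functor $\delta$; but those Serre subcategories parametrize definable subcategories of $\T$, not of $\Mod{\T^{\c}}$ --- the latter would require Serre subcategories of $(\msf{fp}(\Mod{\T^{\c}}),\ab)^{\t{fp}}$, a different category. Untangling this is possible but substantially more involved than what the paper does in \cref{defsymlift}: once $\A_{\X}$ is known to be a duality class, one only needs to show it is coproduct-closed (a direct computation using $\partial(\oplus F_{i})\simeq\prod\partial F_{i}$ and definability of $\X$) and then invoke the module-category version of the characterisation (\cref{sym}). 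Second, your symmetry argument relies on $\widehat{Q}^{2}=\partial^{2}$ sending pure monomorphisms to pure monomorphisms; this is true because $\partial^{2}F\simeq F(-)^{++}$ and a pure mono dualises to a split epi and hence double-dualises to a split mono, but it is worth making explicit rather than assuming. Granting these repairs, your argument is correct and is a mild reparametrisation of the paper's: the bookkeeping is organised around closures rather than preimages, but the technical ingredients and the endpoint are the same.
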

 
In particular, as an application, we show in \cref{ex:moduledualitypair} that every duality pair in a category of modules can be recovered from a duality pair on the derived category. Consequently, the framework developed in this paper extends and completely subsumes the classical concept introduced in \cite{hj}.

\subsection*{Acknowledgements}
We are grateful to Rose Wagstaffe for sharing a copy of her PhD thesis with us, and for helpful discussions. We thank Scott Balchin, Sebastian Opper, Luca Pol, and Alexandra Zvonareva for their comments on a preliminary version of this paper. We also thank Rosanna Laking, Jan \v{S}\v{t}ov\'{\i}\v{c}ek, and Jorge Vit\'oria for helpful discussions and comments regarding silting. We thank the anonymous referee for their helpful comments. Both authors were supported by the grant GA~\v{C}R 20-02760Y from the Czech Science Foundation. 

\section{Purity and duality pairs in module categories}
In this section we provide some background material on purity, as well as recall duality pairs, in the category of modules over a ring. The aim of this is to provide motivation for our study of analogous concepts in the triangulated setting, as well as to highlight the similarities between purity in compactly generated triangulated categories and module categories. 

Let $R$ denote a unital associative ring. We denote the category of left $R$-modules by $\Mod{R}$, and the category of finitely presented left $R$-modules by $\mod{R}$. We write $\Mod{R^{\circ}}$ for the category of right $R$-modules, viewed as left modules over the opposite ring $R^{\circ}$. We will let $\tb{Ab}$ denote $\Mod{\Z}$, the category of abelian groups.

\begin{thm}[{\cite[6.4]{jl}}]\label{puredef}
The following are equivalent for a short exact sequence $S\colon 0\to L\to M\to N\to 0$ in $\Mod{R}$:

\begin{enumerate}
\item for each finitely presented $A\in\Mod{R}$, the induced sequence
\[
0\to\Hom_{R}(A,L)\to\Hom_{R}(A,M)\to\Hom_{R}(A,N)\to 0
\]
is exact in $\ab$;
\item for every (finitely presented) $B\in\Mod{R^{\circ}}$, the induced sequence
\[
0\to B\otimes_{R}L\to B\otimes_{R}M\to B\otimes_{R}N\to 0
\]
is exact in $\ab$;
\item there is a directed system $\mc{S} = (0\to L_{i}\to M_{i}\to N_{i}\to 0)_{i\leq \kappa}$ of split exact sequences of finitely presented modules such that $\rlim_{i\leq \kappa}\mc{S}=S$;
\item
the induced sequence 
\[
0\to \Hom_{\Z}(N,\qz)\to\Hom_{\Z}(M,\qz)\to\Hom_{\Z}(L,\qz)\to 0
\]
is split in $\Mod{R^{\circ}}$.
\end{enumerate}
\end{thm}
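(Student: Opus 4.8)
The plan is to prove the chain of equivalences $(1) \Leftrightarrow (2) \Leftrightarrow (3) \Leftrightarrow (4)$, treating $(3)$ as the central hub since the others translate most naturally through it. I would actually organize the argument as $(3) \Rightarrow (1)$, $(1) \Rightarrow (2)$, $(2) \Rightarrow (4)$, and $(4) \Rightarrow (3)$, or some such cycle; the precise routing matters less than that each single arrow is cheap once the right tool is in hand.

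\textbf{The filtered-colimit steps.} For $(3) \Rightarrow (1)$, recall that $\Hom_R(A, -)$ commutes with filtered colimits when $A$ is finitely presented, and that filtered colimits are exact in $\Mod R$; so applying $\Hom_R(A, -)$ to the directed system $\mc{S}$ of (split, hence) exact sequences of finitely presented modules and passing to the colimit yields the desired exact sequence. The converse direction $(1) \Rightarrow (3)$ is the substantive one: one writes $N$ as a filtered colimit of finitely presented modules $N_i$ along a presentation, pulls back $S$ along each $N_i \to N$ to get extensions $0 \to L \to M_i \to N_i \to 0$, uses condition $(1)$ to show each such pullback, when further pulled back along a presentation $F \twoheadrightarrow N_i$ with $F$ finitely presented free, becomes split, and then reorganizes these splittings into a directed system of exact sequences of finitely presented modules with colimit $S$. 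This is the classical argument and is where the real bookkeeping lives; since the statement is cited from \cite{jl}, I would point to that source for the colimit-resolution details rather than reproducing them.

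\textbf{Tensor and character-module steps.} The equivalence $(1) \Leftrightarrow (2)$ is the standard tensor/Hom reformulation of purity: $B \otimes_R -$ commutes with filtered colimits and is right exact, so $(3) \Rightarrow (2)$ is immediate, while $(2) \Rightarrow (1)$ again goes through $(3)$, or alternatively one invokes the adjunction isomorphism $\Hom_\Z(B \otimes_R M, \qz) \cong \Hom_{R^\circ}(B, \Hom_\Z(M, \qz))$ together with faithful injectivity of $\qz$ as a $\Z$-module. That last point is exactly the bridge to $(4)$: applying $\Hom_\Z(-, \qz)$ to $S$ gives a sequence of right $R$-modules which is automatically exact (since $\qz$ is injective over $\Z$), and the content of $(4)$ is that it is \emph{split}. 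To get $(2) \Rightarrow (4)$, note $B \otimes_R S$ exact for all $B$ is equivalent, via the adjunction and faithful injectivity, to $\Hom_{R^\circ}(B, \Hom_\Z(S, \qz))$ being exact for all $B \in \Mod{R^\circ}$, i.e. to $\Hom_\Z(N,\qz) \to \Hom_\Z(M,\qz)$ being a pure epimorphism onto... actually the clean route is: $\Hom_\Z(S,\qz)$ is a short exact sequence of right modules whose $\Hom_{R^\circ}(B,-)$ stays exact for every $B$; taking $B = \Hom_\Z(N,\qz)$ and lifting the identity gives the splitting. For $(4) \Rightarrow (2)$, a split sequence stays split (hence exact) after applying any additive functor, in particular $B \otimes_R \Hom_\Z(-,\qz)$-type manipulations, and one descends back to $B \otimes_R S$ using the natural evaluation map $B \otimes_R M \to \Hom_\Z(\Hom_\Z(B \otimes_R M, \qz), \qz)$ being a pure monomorphism — or more simply, $B \otimes_R S$ is exact iff $\Hom_\Z(B \otimes_R S, \qz)$ is exact (faithful injectivity), and the latter is $\Hom_{R^\circ}(B, \Hom_\Z(S,\qz))$, which is exact because $\Hom_\Z(S,\qz)$ is split.

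\textbf{Expected main obstacle.} The genuinely delicate step is $(1) \Rightarrow (3)$: manufacturing the directed system of finitely presented short exact sequences from the abstract lifting property in $(1)$. Everything else is a formal consequence of three soft facts — finitely presented objects commute with filtered colimits, filtered colimits are exact, and $\qz$ is a faithfully injective $\Z$-module — whereas $(1) \Rightarrow (3)$ requires genuinely constructing modules and maps. Since \cref{puredef} is quoted verbatim from \cite[6.4]{jl}, in the paper itself I would simply cite that reference for the proof; the purpose of recalling it here is orientation, and the character-module characterization $(4)$ — the one that motivates the whole triangulated story via splitting — deserves the most explicit treatment even if the others are black-boxed.
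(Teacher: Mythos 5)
The paper itself offers no proof of this theorem; the bracket citing Jensen--Lenzing indicates it is simply being quoted, so there is nothing in the paper to compare your sketch against. Your sketch is the classical argument, and its overall shape is right, but there are two points to tighten.

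In the direction $(3) \Rightarrow (1)$ (or $(3) \Rightarrow (2)$), applying $\Hom_R(A,-)$ or $B \otimes_R -$ to the system and passing to the filtered colimit only works if the sequences $S_i$ are \emph{split}: a non-split short exact sequence of finitely presented modules need not stay exact under $\Hom_R(A,-)$ or $B \otimes_R -$, and taking colimits cannot rescue that. Your parenthetical ``(split, hence) exact'' is in fact load-bearing, not an aside. Notice, moreover, that the paper's statement of (3) does not say ``split''; as literally written, (3) is strictly weaker than the other conditions --- the non-pure sequence $0 \to \Z \xrightarrow{\,2\,} \Z \to \Z/2 \to 0$ is trivially a directed colimit (over a one-point index set) of short exact sequences of finitely presented modules. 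The correct Lenzing-type formulation, and presumably what Jensen--Lenzing 6.4 actually records, asks for a filtered colimit of \emph{split} short exact sequences of finitely presented modules; you should verify the reference and keep the qualifier. Separately, in your $(2) \Rightarrow (4)$ step the choice $B = \Hom_\Z(N, \qz)$ is generally not finitely presented, so you must first upgrade (2) from finitely presented $B$ to all $B$ by the usual filtered-colimit argument (every module is a filtered colimit of finitely presented ones, $\otimes$ commutes with these, filtered colimits are exact) before you may lift the identity. With those two repairs the outline is sound, and your judgment that $(1) \Rightarrow (3)$ carries the genuine construction is correct.
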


\begin{defn}
A short exact sequence is called $\ti{pure}$ if it satisfies any of the equivalent conditions given in \cref{puredef}. We call a monomorphism $L\to M$ in $\Mod{R}$ \ti{pure} if it appears within a pure exact sequence, and we say that $L$ is a \ti{pure submodule} of $M$. The notions of \ti{pure epimorphisms} and \ti{pure quotients} are similarly defined. We will say that a class $\mc{C}$ is closed under \ti{pure extensions} if, given a pure exact sequence $0\to L\to M\to N\to 0$ with $L,N\in\mc{C}$, we also have $M\in\mc{C}$.
\end{defn}

\begin{rem}
The equivalences of the statements in \cref{puredef} are specific to the category $\Mod{R}$. Indeed, depending on the properties of the category at hand, one can define different purity theories. Most general (of the ones we consider) is $\lambda$-purity in accessible categories, as detailed in \cite{ar1} which is sometimes referred to as categorical purity for $\lambda=\aleph_{0}$. For locally finitely presented abelian categories $\mc{A}$, categorical purity is equivalent to the analogues in $\mc{A}$ of \cref{puredef}(1) and \cref{puredef}(3). However, if $\mc{A}$ is equipped with a monioidal structure, these equivalences in general fail to be equivalent to the analogue of \cref{puredef}(2). Indeed, if $X$ is a quasiseparated and quasicompact scheme, then the tensor product definition of purity in $\t{QCoh}(X)$ coincides with categorical purity if and only if $X$ is an affine scheme, as detailed in \cite{sheaves}.
\end{rem}

Having introduced purity, we now recall the definition of a duality pair of modules, as originally presented by Holm and J\o rgensen in \cite{hj}.

\begin{defn}[{\cite[2.1]{hj}}]
Let $R$ be a ring. A pair of classes $(\msf{A},\msf{B})$ of modules with $\msf{A}\subseteq\Mod{R}$ and $\msf{B}\subseteq\Mod{R^{\circ}}$ is a \ti{duality pair} if the following two conditions hold:
\begin{enumerate}
	\item $M\in\msf{A}$ if and only if $\Hom_{\Z}(M,\qz)\in\msf{B}$;
	\item $\msf{B}$ is closed under finite direct sums and direct summands.
\end{enumerate}
A duality pair $(\msf{A},\msf{B})$ is called:
\begin{itemize}
	\item \ti{(co)product closed} if $\msf{A}$ is closed under (co)products in $\Mod{R}$;
	\item \ti{perfect} if $\msf{A}$ is coproduct closed, extension closed and contains $R$.
\end{itemize}
\end{defn}

There is an abundance of duality pairs, but possibly the canonical example is the pair $(\msf{F}_{R},\msf{I}_{R^{\circ}})$ comprising of flat $R$-modules and injective $R^{\circ}$-modules. This is a coproduct closed duality pair over any ring, and is product closed if and only if $R$ is right coherent.

Before stating Holm and J\o rgensen's main theorem concerning duality pairs, we recall some definitions from approximation theory. If $\mc{F}$ is a class of modules and $M$ is an $R$-module, a morphism $f\colon F\to M$ with $F \in \mc{F}$, is an \ti{$\mc{F}$-precover} if for any $g\colon F'\to M$ with $F'\in\mc{F}$ there is a map $\alpha\colon F'\to F$ such that $g=f\alpha$. If $f\colon F\to M$ is an $\mc{F}$-precover, we say it is an \ti{$\mc{F}$-cover} if for any $\alpha\colon F\to F$ such that $f=f\alpha$, we have $\alpha\in\t{Aut}(F)$. The categorically dual notions are called (pre)envelopes. 

Given a class $\mc{A}$ of $R$-modules, we let
\[\mc{A}^{\perp}=\{M\in\Mod{R}:\t{Ext}_{R}^{1}(A,M)=0\t{ for all }A\in\mc{A}\},\]
and similarly define $^{\perp}\mc{A}$. A pair of classes $(\msf{X},\msf{Y})$ of $\Mod{R}$ is called a \ti{cotorsion pair} if $\msf{X}^{\perp}=\msf{Y}$ and $^{\perp}\msf{Y}=\msf{X}$. A cotorsion pair $(\msf{X},\msf{Y})$ is said to be \ti{perfect} if $\msf{X}$ is covering and $\msf{Y}$ is enveloping.

The following is the main theorem from \cite{hj}.

\begin{thm}[{\cite[3.1]{hj}}]\label{hjtheorem}
	Let $(\msf{A},\msf{B})$ be a duality pair. Then $\msf{A}$ is closed under pure submodules, pure quotients and pure extensions. Moreover:
	
	\begin{enumerate}
		\item if $(\msf{A},\msf{B})$ is coproduct closed, then $\msf{A}$ is covering;
		\item if $(\msf{A},\msf{B})$ is product closed, then $\msf{A}$ is preenveloping;
		\item if $(\msf{A},\msf{B})$ is perfect, then $(\msf{A},\msf{A}^{\perp})$ is a perfect cotorsion pair.
		
	\end{enumerate}
\end{thm}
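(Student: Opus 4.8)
The plan is to derive everything from the single equivalence in \cref{puredef}(4): a short exact sequence $0\to L\to M\to N\to 0$ is pure if and only if the character-dualised sequence $0\to N^{+}\to M^{+}\to L^{+}\to 0$ splits in $\Mod{R^{\circ}}$, where $(-)^{+}=\Hom_{\Z}(-,\qz)$. Granting this, the three closure statements are pure bookkeeping. If the sequence is pure with $M\in\msf{A}$, then $M^{+}\cong N^{+}\oplus L^{+}$ lies in $\msf{B}$, so $L^{+}$ and $N^{+}$, being summands of $M^{+}$, lie in $\msf{B}$; hence $L,N\in\msf{A}$, which is closure under pure submodules and pure quotients. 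If instead $L,N\in\msf{A}$, then $L^{+},N^{+}\in\msf{B}$, so $M^{+}\cong N^{+}\oplus L^{+}\in\msf{B}$ because $\msf{B}$ is closed under finite direct sums, whence $M\in\msf{A}$: closure under pure extensions. In particular $\msf{A}$ is closed under direct summands, since a direct summand is in particular a pure submodule.

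For part (1) I would first upgrade coproduct-closedness to closure under direct limits: every direct limit fits into a canonical pure exact sequence $0\to K\to\bigoplus_{i}M_{i}\to\rlim_{i}M_{i}\to 0$, so a class closed under coproducts and pure quotients is closed under direct limits. Then, using the pure L\"owenheim--Skolem principle — for a regular cardinal $\kappa>|R|+\aleph_{0}$, every module is the directed union of its $<\kappa$-presented pure submodules — together with closure of $\msf{A}$ under pure submodules, every object of $\msf{A}$ is a direct limit of members of the \emph{set} of $<\kappa$-presented modules lying in $\msf{A}$. By El Bashir's theorem such a class is precovering, and since it is closed under direct limits, Enochs' theorem upgrades precovers to covers. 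Hence $\msf{A}$ is covering.

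For part (2) I would verify the solution-set condition by hand. Given a module $M$ and any $f\colon M\to A$ with $A\in\msf{A}$, the image $f(M)$ has cardinality at most $|M|$, so for regular $\kappa>|M|+|R|+\aleph_{0}$ it is contained in a $<\kappa$-presented pure submodule $A'\leq A$; then $A'\in\msf{A}$ and $f$ factors through $A'$. As the $<\kappa$-presented members of $\msf{A}$ form a set $\msf{S}$, the canonical map $M\to\prod_{S\in\msf{S},\,g\in\Hom_{R}(M,S)}S$ lies in $\msf{A}$ by product-closedness and is, by construction, an $\msf{A}$-preenvelope of $M$.

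For part (3) I would note that under the perfect hypothesis $\msf{A}$ is closed under direct limits, extensions and summands, and contains $R$, hence all projectives; in particular every $\msf{A}$-(pre)cover is surjective. By part (1) $\msf{A}$ is covering, so Wakamatsu's lemma forces the kernel of an $\msf{A}$-cover of an arbitrary $M$ into $\msf{A}^{\perp}$, making $\msf{A}$ special precovering; a splitting argument using summand-closedness then gives $\msf{A}={}^{\perp}(\msf{A}^{\perp})$, so $(\msf{A},\msf{A}^{\perp})$ is a cotorsion pair, complete by Salce's lemma and perfect because $\msf{A}$ is closed under direct limits. The routine part is the first paragraph; the real obstacle is the approximation-theoretic input behind part (1), namely that a direct-limit-closed class which is generated under direct limits by a set of small members must be precovering — once this is in hand, Enochs', Wakamatsu's and Salce's lemmas make (2) and (3) essentially formal. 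One can sidestep El Bashir's theorem by constructing precovers through a Kaplansky-style transfinite recursion along the purity filtration, but the set-theoretic bookkeeping then becomes the crux.
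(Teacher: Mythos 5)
For the only part the paper actually proves --- the non-enumerated closure under pure submodules, pure quotients and pure extensions --- your first paragraph is essentially identical to the argument given in the text: apply \cref{puredef}(4) to split the dualised sequence, use that $\msf{B}$ is closed under summands (for submodules/quotients) and finite direct sums (for extensions), then transfer back through the duality-pair equivalence. The paper then simply cites Holm--J{\o}rgensen~\cite[3.1]{hj} for parts (1)--(3) without reproving them, so there is no in-paper argument to compare against there; your supplied proofs are correct and follow the standard approximation-theoretic route that underlies the original reference. Two minor remarks: in (1) El Bashir's theorem already yields a cover outright for a direct-limit-closed class with a generating set, so the separate Enochs step is redundant (harmless, but worth noting); and in (2) you should make explicit that the ``pure Löwenheim--Skolem'' step uses closure of $\msf{A}$ under pure submodules, which you established in the non-enumerated part, so that $A'$ genuinely lands in $\msf{A}$. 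Everything else is sound.
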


We will include the proof of the non-enumerated statement for illustrative reasons, as it will become relevant and provide motivation for the subsequent discussion in the triangulated setting.

\begin{proof}
Let $0\to L\to M \to N\to 0$ be a pure exact sequence, so $0\to \Hom_{\Z}(N,\qz)\to\Hom_{\Z}(M,\qz)\to\Hom_{\Z}(L,\qz)\to 0$ is split by \cref{puredef}. If $M\in\msf{A}$, then $\Hom_{\Z}(M,\qz)\in\msf{B}$, and thus, as $\msf{B}$ is closed under summands, both $\Hom_{\Z}(L,\qz)$ and $\Hom_{\Z}(N,\qz)$ are in $\msf{B}$. Yet this means $L$ and $N$ are in $\msf{A}$, hence $\msf{A}$ is closed under pure submodules and quotients. The statement about pure extensions follows from similar reasoning.
\end{proof}

Through the above theorem, duality pairs have become a useful tool in determining both closure and approximation properties for classes of modules, such as for example, covers and envelopes by Gorenstein injective modules in \cite{ei}. More recently, interplays between duality pairs and model structures on the category of chain complexes of $R$-modules have been developed, such as in \cite{g1} or \cite{gi}.

\begin{rem}
In the definition of a duality pair, it is possible to replace the functor $\Hom_{\Z}(-,\qz)$ with an appropriate substitute. If $R$ is an $S$-algebra for some commutative ring $S$, and $E$ is an injective cogenerator for $S$-modules, then, instead of using the character duality $\Hom_{\Z}(-,\qz)$, the functor $\Hom_{S}(-,E)$ can be used, and all results written above still hold~\cite[2.19]{gt}. One can also wonder if a pair of classes $(\msf{A},\msf{B})$ can be a duality pair with respect to one duality, and not with respect to another. This can be the case, but only up to products, as stated in \cite[3.2]{mehdi}. Thus, if the class $\msf{B}$ is closed under products, $(\msf{A},\msf{B})$ will be a duality pair for all dualities if it is a duality pair with respect to $\Hom_{\Z}(-,\qz)$. 
\end{rem}

\section{Duality triples and phantom maps}
Throughout $\msf{T}$ will denote a triangulated category which has arbitrary set-indexed coproducts, with shift functor $\Sigma$. Recall that an object $C\in\msf{T}$ is called \ti{compact} if the natural map
\[
\bigoplus_{I}\Hom_{\msf{T}}(C,X_{i}) \to \Hom_{\T}(C,\bigoplus_{I}X_{i})
\]
is an isomorphism for any set $\{X_{i}\}_{I}$ of objects of $\msf{T}$. For a compact object $C$ and any object $X \in \T$ we write $H_i^C(X) = \Hom_\T(\Sigma^i C, X)$. The category $\msf{T}$ is said to be \ti{compactly generated} if there is a set $\G$ of compact objects of $\msf{T}$ such that $X \simeq 0$ in $\T$ if and only if $H_i^G(X) = 0$ for all $G \in \G$ and $i \in \mbb{Z}$. Recall that, if $\msf{X}\subseteq\T$ is a class of objects in $\T$, then $\t{Add}(\msf{X})$ denotes the class of objects obtained as retracts of arbitrary coproducts of objects in $\msf{X}$. Similarly, $\t{Prod}(\msf{X})$ is the class of objects obtained as retracts of arbitrary products of $\msf{X}$. We denote by $\t{thick}(\msf{X})$ the smallest triangulated subcategory of $\T$ closed under retracts and containing $\msf{X}$, while $\t{Loc}(\msf{X})$ denotes the smallest thick subcategory of $\T$ containing $\msf{X}$ which is closed under arbitrary coproducts.

From now on, we will always assume that $\msf{T}$ is compactly generated, and let $\msf{T}^{\c}$ denote the full subcategory of $\msf{T}$ consisting of the compact objects. Recall that $\Mod{\T^\c}$ denotes the category of additive functors $(\T^\c)^\t{op} \to \ab$. The restricted Yoneda embedding $\tb{y}\colon \msf{T}\to \Mod{\T^\c}$ is defined by
\[
\tb{y}X=\Hom_{\msf{T}}(-,X)\vert_{\T^{\c}}
\]
on objects and with the obvious action on morphisms. This functor enables a definition of purity, analogous to the one given in \cref{puredef}.

\begin{defn}[{\cite[1.1]{krsmash}}]\label{tpuredef}
A triangle 
\[
\begin{tikzcd}
X\arrow{r}{\alpha} & Y \arrow{r}{\beta} & Z \arrow{r}{f} & \Sigma X
\end{tikzcd}
\]
is said to be \ti{pure} if the induced sequence $0\to \tb{y}X\to\tb{y}Y\to\tb{y}Z\to 0$ is exact in $\Mod{\T^\c}$. In this case, we say that $\alpha$ is a \ti{pure monomorphism} and $\beta$ is a \ti{pure epimorphism}, and call $X$ a pure subobject, and $Z$ a pure quotient, of $Y$.
\end{defn}

We say that a class $\msf{X}\subseteq\T$ is closed under pure subobjects if for any pure monomorphism $A\to X$ in $\T$ with $X\in\msf{X}$ we have $A\in\msf{X}$. We dually define what it means to be closed under pure quotients. We say $\msf{X}$ is closed under pure extensions if given a pure triangle $X\to Y\to Z\to \Sigma X$ with $X,Z\in\msf{X}$, we have $Y\in\msf{X}$.

Note that this formulation of purity is nothing other than \cref{puredef}(1) reformulated for $\T$. In fact, there are equivalent formulations of purity in $\T$ that further highlight the similarities between \cref{tpuredef} and \cref{puredef}. A direct analogy of \cref{puredef}(3) can be found at \cite[2.8]{krcoh}. The aim of this section is to prove an equivalent definition of purity in $\msf{T}$ that is the analogue of \cref{puredef}(4). 

In the previous definition, the induced morphism $\tb{y}f$ being the zero map is equivalent to having $\Hom_{\T}(C,f)=0$ for all $C\in\msf{T}^\c$. Such maps have already been well studied in the literature.
\begin{defn}
A morphism $f\in\T$ is \ti{phantom} if $\Hom_{\T}(C,f)=0$ for all $C\in\T^{\c}$.
\end{defn}

Another crucial concept that will regularly be used throughout is that of pure injective objects. They are, as the following shows, closely related to phantom maps.

\begin{defn}[{\cite[1.8]{krsmash}}]
An object $X\in\T$ is \ti{pure injective} if it satisfies any of the following equivalent conditions:
\begin{enumerate}
\item $\tb{y}{X}$ is injective in $\Mod{\T^\c}$;
\item any pure monomorphism $X \to Y$ splits;
\item for every set $I$, the summation $\oplus_I X \to X$ factors through the canonical map $\oplus_I X \to \prod_I X$;
\item any phantom map $Y \to X$ is zero.
\end{enumerate}
\end{defn}

In order to give an analogue of \cref{puredef}(4), we introduce the notion of a duality triple.
\begin{defn}\label{dualitytriple}
A \ti{duality triple} $(\T,\msf{U},Q)$ consists of two compactly generated triangulated categories $\T$ and $\msf{U}$, together with exact functors $Q\colon\T^{\t{op}}\to\msf{U}$ and $Q\colon\msf{U}^{\t{op}}\to\T$ such that:
\begin{enumerate}
\item $Q(X)$ is pure injective for any $X\in\T,\msf{U}$;
\item for any $X\in\T,\msf{U}$, there is a natural map $i_{X}\colon X\to Q^{2}(X)$ which is a pure monomorphism;
\item the composite $Q(i_X) \circ i_{Q(X)}$ is the identity for any $X \in \T, \msf{U}$;
\item for any set of objects $\{X_{i}\}_{I}$ in $\T$ or $\msf{U}$, we have $Q(\oplus_{I}X_{i})\simeq \prod_{I}Q(X_{i})$.
\end{enumerate}
\end{defn}

\begin{rem}
Notice that the order of the triangulated categories in the triple is irrelevant. We also abuse notation in the definition of duality triple by using $Q$ to denote both functors, even though they need not be the same. This is justified by the symmetry in the definition, and by the examples given in the later sections (see \cref{ttdualitytriple} and \cref{algebraicdualitytriple}). One can immediately see the relationship between $Q$ and $\Hom_{\Z}(-,\qz)$ in the module case.
\end{rem}

Our first main result provides a triangulated analogue of \cref{puredef}(4), and will be of significant use in the subsequent sections.

\begin{thm}\label{detects}
Let $(\T,\msf{U},Q)$ be a duality triple. Then a morphism $f\colon X\to Y$ in $\T$ is phantom if and only if $Q(f)$ is zero. In particular, $A\to B\to C$ is a pure triangle in $\T$ if and only if the induced triangle $Q(C)\to Q(B)\to Q(A)$ splits. 
\end{thm}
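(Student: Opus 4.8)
The plan is to prove the statement about morphisms first, and then deduce the statement about triangles formally. For the morphism claim, one direction is essentially by unwinding definitions combined with property (1) of a duality triple: if $f\colon X \to Y$ is phantom, I want to show $Q(f) = 0$. The natural move is to use the naturality square for $i_{(-)}\colon \mathrm{id} \to Q^2$, so that $Q^2(f) \circ i_X = i_Y \circ f$. Since $Q(Y)$ is pure injective by (1), any phantom map into $Q(Y)$ vanishes; the strategy is to exhibit $Q(f)$ (or a map built from it) as a phantom map into a pure injective. Concretely, I would argue that $i_Y \circ f$ is phantom because $f$ is phantom (phantom maps form an ideal, and in particular precomposition or postcomposition with a phantom map is phantom — here postcomposing the phantom $f$ with $i_Y$ keeps it phantom), and a phantom map with pure injective target is zero, so $i_Y \circ f = 0$, hence $Q^2(f) \circ i_X = 0$. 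Then applying $Q$ and using $Q(i_X)\circ i_{Q(X)} = \mathrm{id}_{Q(X)}$ from property (3): $Q(f) = Q(f) \circ \mathrm{id}_{Q(X)} = Q(f) \circ Q(i_X) \circ i_{Q(X)} = Q(Q^2(f) \circ i_X) \circ \text{(something)}$ — I would need to be slightly careful about whether $Q$ is contravariant here, so I would track the variance and use $Q(Q^2(f)) = Q^2(Q(f))$ together with the factorisation to conclude $Q(f)$ factors through $Q(0) = 0$. The cleanest phrasing: $Q$ applied to $Q^2(f)\circ i_X = 0$ gives $Q(i_X)\circ Q(Q^2(f)) = 0$, i.e. $Q(i_X) \circ Q^2(Q(f)) = 0$; precomposing the identity factorisation $\mathrm{id}_{Q(X)} = Q(i_X)\circ i_{Q(X)}$ appropriately and using naturality of $i$ at $Q(f)$ lets me cancel and obtain $Q(f)=0$.

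For the converse, suppose $Q(f) = 0$; I want $f$ phantom, equivalently $\mathbf{y}f = 0$, equivalently $\Hom_\T(C,f) = 0$ for all compact $C$. The idea is to use property (2): $i_X\colon X \to Q^2(X)$ is a pure monomorphism, so $\mathbf{y}i_X$ is a (split) monomorphism in $\Mod{\T^\c}$, hence $\Hom_\T(C, i_X)$ is injective for every compact $C$. From $Q(f) = 0$ we get $Q^2(f) = Q(Q(f)) = Q(0) = 0$, and then naturality gives $i_Y \circ f = Q^2(f) \circ i_X = 0$. Applying $\Hom_\T(C,-)$ and using injectivity of $\Hom_\T(C,i_Y)$ forces $\Hom_\T(C,f) = 0$ for all compact $C$, which is exactly that $f$ is phantom.

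Finally, for the triangle statement: given a triangle $A \xrightarrow{\alpha} B \xrightarrow{\beta} C \xrightarrow{\gamma} \Sigma A$ in $\T$, applying the exact functor $Q$ yields a triangle $Q(C) \to Q(B) \to Q(A) \to \Sigma^{-1} Q(C)$ (up to the usual shift conventions) in $\U$. A triangle in a triangulated category splits if and only if its connecting map is zero, and by the standard characterisation (e.g.\ \cref{tpuredef} unwound via the long exact sequence of $\mathbf{y}$), the original triangle is pure precisely when $\gamma$ is phantom. By the morphism equivalence just proved, $\gamma$ is phantom iff $Q(\gamma) = 0$, and $Q(\gamma)$ is exactly the connecting map of the triangle $Q(C) \to Q(B) \to Q(A)$, so that triangle splits iff $Q(\gamma) = 0$ iff $\gamma$ is phantom iff the original triangle is pure. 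I expect the main obstacle to be the bookkeeping in the first direction of the morphism claim — keeping the variance of the two $Q$'s straight and correctly combining properties (2) and (3) to cancel $i$ and $Q(i)$ — rather than anything conceptually deep; the rest is formal manipulation with pure injectivity and the definition of phantom maps.
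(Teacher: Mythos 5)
Your proposal is correct and follows the same overall architecture as the paper's proof: both reduce the morphism claim to the intermediate characterisation ``$f$ is phantom iff $i_Y \circ f = 0$'' (the paper isolates this as \cref{phantomcomp}, you fold it in), both prove the ``phantom $\Rightarrow i_Y\circ f = 0$'' direction identically via pure injectivity of $Q^2(Y)$, both extract $Q(f)=0$ from $i_Y\circ f = 0$ via the two naturality squares together with condition (3) of \cref{dualitytriple}, and both handle the triangle statement by translating purity into the connecting map being phantom. The one genuine difference is your argument for ``$i_Y\circ f=0 \Rightarrow f$ phantom.'' The paper routes this through the Christensen--Strickland criterion \cite[4.12]{phantom} (phantom iff the composite vanishes against every map $Y \to Q(C)$ for $C\in\U^{\c}$), combined with the lifting property of the pure monomorphism $i_Y$ against the pure injective $Q(C)$ via \cite[1.4]{krsmash}. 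You instead observe directly that $i_Y$ being a pure monomorphism means $\Hom_\T(C,i_Y)$ is injective for every compact $C$, so applying $\Hom_\T(C,-)$ to $i_Y\circ f=0$ gives $\Hom_\T(C,f)=0$ immediately. Your route is more elementary and self-contained — it avoids both external citations — and it is a nice tightening of that step. One small correction: when you parenthetically call $\bm{y}i_Y$ a ``(split)'' monomorphism, that parenthetical is unnecessary and slightly misleading; all you need (and all that \cref{dualitytriple}(2) directly hands you) is that it is a monomorphism, which already gives injectivity of $\Hom_\T(C, i_Y)$. (It does in fact split in $\Mod{\T^\c}$ since $\bm{y}Q^2(Y)$ is injective, but that is extra information you do not use.)
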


The proof of the theorem relies on the following lemma, which gives an alternative characterisation of phantom maps.

\begin{lem}\label{phantomcomp}
Let $(\T,\msf{U},Q)$ be a duality triple and $f\colon X\to Y$ in $\T$. Then $f$ is phantom if and only if the composite $i_{Y}\circ f\colon X\to Y\to Q^{2}Y$ is zero.
\end{lem}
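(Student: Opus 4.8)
The statement to prove is \cref{phantomcomp}: $f$ is phantom iff $i_Y \circ f = 0$. One direction is immediate and the other will use the factorization properties (2) and (3) of a duality triple together with the fact that $Q^2 Y$ is pure injective.

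\medskip

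\textbf{The easy direction.} Suppose $f$ is phantom. I want to show $i_Y \circ f = 0$. The map $i_Y \colon Y \to Q^2 Y$ is a pure monomorphism by axiom (2), and $Q^2 Y = Q(Q(Y))$ is pure injective by axiom (1) (it is $Q$ applied to the object $Q(Y) \in \msf{U}$). Now $i_Y \circ f$ is a composite of $f$ (phantom) with another map; since phantom maps form an ideal, $i_Y \circ f$ is again phantom. But any phantom map into a pure injective object is zero — this is precisely condition (4) in the characterisation of pure injectives. Hence $i_Y \circ f = 0$.

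\medskip

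\textbf{The harder direction.} Now suppose $i_Y \circ f = 0$; I must deduce that $f$ is phantom, i.e. $\tb{y}f = 0$, equivalently $\Hom_\T(C, f) = 0$ for every compact $C$. The key idea is that $i_Y$ is, despite being a pure monomorphism rather than a split one, "injective enough" to detect compact test maps — because $\tb{y}(i_Y)$ is a monomorphism into the injective object $\tb{y}(Q^2 Y)$ in $\Mod{\T^\c}$. Concretely: $\tb{y}$ is an exact functor in the sense that it sends pure triangles to short exact sequences, so applying $\tb{y}$ to $i_Y$ gives a monomorphism $\tb{y}Y \hookrightarrow \tb{y}(Q^2 Y)$; hypothesis $i_Y \circ f = 0$ gives $\tb{y}(i_Y) \circ \tb{y}(f) = 0$, and since $\tb{y}(i_Y)$ is monic this forces $\tb{y}(f) = 0$, which is exactly the statement that $f$ is phantom. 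I expect this to be the cleanest route; the only thing to verify carefully is that $\tb{y}$ preserves monomorphisms, which follows because a pure monomorphism $i_Y$ sits in a pure triangle and $\tb{y}$ turns that into a short exact sequence by \cref{tpuredef}. (Axiom (3), the identity $Q(i_X) \circ i_{Q(X)} = \mathrm{id}$, guarantees in particular that $i_{Q(Y)}$ is split monic, hence these $i$'s behave well; but for this lemma the pure-monomorphism property of $i_Y$ from axiom (2) is what is actually used.)

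\medskip

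\textbf{Main obstacle.} The only subtlety is making sure that "$i_Y \circ f = 0 \Rightarrow \tb{y}(f) = 0$" really only needs $\tb{y}(i_Y)$ to be a monomorphism and not a split monomorphism — that is, we should not accidentally need $i_Y$ itself to split, which it does not in general. Since $\tb{y}$ is additive and kills $i_Y \circ f$, and monomorphisms in the abelian category $\Mod{\T^\c}$ are left-cancellable, this goes through. So I would structure the proof as two short paragraphs: ($\Rightarrow$) phantom maps form an ideal and die into pure injectives; ($\Leftarrow$) apply $\tb{y}$, use that it sends the pure monomorphism $i_Y$ to a monomorphism, and cancel.
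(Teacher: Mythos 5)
Your proof is correct, and the non-trivial direction takes a genuinely different and more elementary route than the paper. The easy direction ($f$ phantom $\Rightarrow$ $i_Y\circ f=0$) is identical to the paper's: phantom maps form an ideal and die into the pure injective object $Q^2Y$. For the converse, the paper reaches back to the Christensen--Strickland characterisation of phantoms (essentially \cite[4.12]{phantom}): $f$ is phantom if and only if $g\circ f=0$ for every $g\colon Y\to Q(C)$ with $C\in\U^{\c}$, and then uses that such $g$ factors through the pure monomorphism $i_Y$ because $Q(C)$ is pure injective. This requires importing the Christensen--Strickland result and checking it transfers to the duality-triple setting. Your argument bypasses all of that: since $i_Y$ is a pure monomorphism, $\tb{y}(i_Y)$ is a monomorphism in $\Mod{\T^{\c}}$ directly from \cref{tpuredef}, so $\tb{y}(i_Y)\circ\tb{y}(f)=\tb{y}(i_Y\circ f)=0$ forces $\tb{y}(f)=0$ by left-cancellation, and $\tb{y}(f)=0$ is literally the definition of $f$ being phantom. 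This uses only axioms (1) and (2) of a duality triple (not axiom (3), which you correctly flag as inessential here) and is shorter and more self-contained than the paper's proof. Both proofs are valid; yours buys simplicity and avoids a citation, while the paper's route has the side benefit of making explicit the ``test against $Q(C)$'' characterisation, which is perhaps conceptually closer to the module-theoretic motivation.
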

\begin{proof}
If $f$ is phantom then the composite $i_Y \circ f$ is also phantom, since if $C$ is compact then $C\to X\to Y$ is zero by definition. Then since $i_{Y}\circ f$ is a phantom map into a pure injective object, it must be zero by \cite[1.8]{krsmash}. Before showing the reverse implication, note that \cite[4.12]{phantom} holds in our setting - this is because $\msf{U}$ is compactly generated and the category of additive functors $\T^{\c}\to\tb{Ab}$ has enough injectives. Thus, as in \cite[4.12]{phantom}, $f$ is phantom if and only if the composite $X\to Y\to Q(C)$ is zero for every $C\in\msf{U}^{\c}$ and each map $Y\to Q(C)$. Now, by the definition of a duality triple, the map $i_{Y}\colon Y \to Q^2(Y)$ is a pure monomorphism and $Q(C)$ is pure injective. Therefore the map $Y\to Q(C)$ factors as $Y\to Q^{2}(Y)\to Q(C)$ by \cite[1.4]{krsmash}. It therefore follows that $X\to Y\to Q(C)$ is zero, hence $f$ is phantom.
\end{proof}  

\begin{proof}[Proof of \cref{detects}]
By condition (3) of \cref{dualitytriple}, we have $Q(f) = Q(f) \circ (Q(i_{Y}) \circ i_{Q(Y)}) = Q(i_{Y}\circ f)\circ i_{Q(Y)}$. In particular, if $f$ is phantom then we see that $Q(f)=0$ since \cref{phantomcomp} tells us that $i_Y \circ f = 0$. For the converse, by the naturality of $i$ we have a commutative diagram
\[
\begin{tikzcd}[column sep = 0.75in, row sep = 0.6in]
	X \arrow{r}{f} \arrow[d, "i_{X}"'] & Y \arrow{d}{i_{Y}} \\
	Q^{2}(X) \arrow[r, "Q^{2}(f)"'] & Q^{2}(Y)
\end{tikzcd}\]
in $\T$. If $Q(f)=0$, then $Q^{2}(f)=0$, and consequently by the above commutative diagram we see the composition $i_{Y}\circ f$ factors through the zero map, and hence by \cref{phantomcomp}, $f$ is phantom. 

Let us now prove the statement about pure triangles. If $A\to B\to C\xrightarrow{f}\Sigma A$ is a pure triangle, then the map $f$ is phantom, and therefore vanishes under $Q$. In particular, the zero map appears in the triangle
\[
\Sigma Q(A)\xrightarrow{0} Q(C)\to Q(B)\to Q(A)
\]
and therefore the triangle is split. Conversely, if the induced triangle splits then $Q(f) = 0$ (see for example~\cite[1.4]{Happel}). Therefore $f$ is phantom and hence the triangle $A \to B \to C \to \Sigma A$ is pure.
\end{proof}

\begin{rem}
	Following the completion of this manuscript, we became aware that in the case when $\T=\D(R)$ for $R$ a ring and $Q$ the derived character dual, the above result was independently obtained in~\cite[2.6]{HrbekHugel}. As we show in ~\cref{algebraicdualitytriple} (and~\cref{ttdualitytriple}), the abstract framework of duality triples incorporates this result as a particular case of a much more general phenomenon.
\end{rem}

We now give two examples of classes of compactly generated triangulated categories which naturally admit duality triples, for which, therefore, the above theorem applies.

\subsection{Duality for big tensor-triangulated categories}
Recall that a \ti{tensor-triangulated category} is a triangulated category with a closed symmetric monoidal structure that is compatible with the shift $\Sigma$ on $\T$; see \cite[\S A.2]{axiomatic} for a detailed definition. We will let $\otimes$ denote the monoidal product on $\T$, with unit $\mathbbm{1}$, and $F(-,-)$ will denote the internal hom (the right adjoint to $\otimes$). An object $X \in \T$ is \emph{rigid} if the natural map
\[
DX\otimes Y \to F(X,Y)
\] 
is an isomorphism for every $Y\in \T$, where $DX=F(X,\mathbbm{1})$ denotes the functional dual.

\begin{defn}
A \ti{big tensor-triangulated} category is a compactly generated tensor-triangulated category $\T$ for which the rigid and compact objects coincide. 
\end{defn}

Such categories are sometimes called rigidly-compactly generated triangulated categories, but for brevity we use `big'. We list some common examples of such categories; more information can be found in \cite{axiomatic}. Observe that the tensor unit $\1$ is compact in a big tensor-triangulated category since it is rigid.

\begin{ex}\leavevmode
	\begin{enumerate}
		\item If $R$ is a commutative ring, then $\D(R)$, the derived category of $R$-modules is a big tensor-triangulated category. The ring itself is a compact generator.
		
		\item If $X$ is a quasicompact and quasiseparated scheme, then $\D_{\t{qc}}(X)$, the unbounded derived category of complexes of sheaves of $\mc{O}_X$-modules whose cohomology modules are quasi-coherent, is a big tensor-triangulated category.
		
		\item For a finite group $G$ and a field $k$ whose characteristic divides the order of $G$, the stable module category $\t{StMod}(kG)$ is a big tensor-triangulated category, with a compact generating set given by the set of simple $kG$-modules.
		
		\item The stable homotopy category of spectra, $\t{Sp}$, is a big tensor-triangulated category, generated by the sphere $S^{0}$. More generally, the derived category of a commutative ring spectrum is a big tensor-triangulated category.
		
		\item Given a compact Lie group $G$, the equivariant stable homotopy category, $\t{Sp}_{G}$, is a big tensor-triangulated category. The set $\{G/H_{+}\}$, as $H$ ranges over closed subgroups of $G$, gives a set of compact generators. 
	\end{enumerate}
\end{ex}

Given a big tensor-triangulated category $\T$, we have a naturally occurring functor $\T^{\t{op}}\to \T$ that mimics $\Hom_{\Z}(-,\qz)$ as in \cref{puredef} which can be constructed using Brown representability. Recall that Brown representability states that any cohomological functor $F\colon\T^{\t{op}}\to \tb{Ab}$ that sends coproducts in $\T$ to products is of the form $\Hom_{\T}(-,T)$ for some $T\in \T$ (see \cite{Neemantri}).

\begin{defn}\label{browncomenetz}
Let $\T$ be a big tensor-triangulated category, and $C\in\T^{\c}$ and $X\in\T$. The \ti{Brown--Comenetz dual} of $X$ with respect to $C$ is the unique object, denoted $\mbb{I}_{C}(X)$, which represents the cohomological functor
\[
\Hom_{\Z}(H_{0}^{C}(X\otimes -),\qz),
\]
where $H_{0}^{C}(-)=\Hom_{\T}(C,-)$.
\end{defn}
As we see in the next lemma, which collates some useful consequences of the definition and properties of Brown--Comenetz duals, it is often enough to use the Brown--Comenetz dual of $\mathbbm{1}$ with respect to itself. As is standard, we write $\bc{X} := \bctwo{\1}{X}$ for the Brown--Comenetz dual of $X$ with respect to the unit, and $\mbb{I} := \bc{\1}$ for the Brown--Comenetz dual of the tensor unit.

\begin{lem}\label{lem:BCproperties} Let $\T$ be a big tensor-triangulated category and let $X, Y \in \T$ and $C \in \T^{\c}$.
	\begin{itemize}
		\item[(i)] We have a natural isomorphism $\bctwo{C}{X} = \bc{(F(C, X))}$.
		\item[(ii)] There are natural isomorphisms $\bc{X} = F(X, \mbb{I})$ and $\bctwo{C}{X} = F(F(C,X), \mbb{I}).$
		\item[(iii)] We have $\Hom_\T(\Sigma^nY, \bctwo{C}{X}) = \Hom_\Z(H_{-n}^C(X \otimes Y), \Q/\Z).$
		\item[(iv)] The Brown--Comenetz dual functor $\mbb{I}$ is conservative, i.e., for any object $Z \in \T$ we have $Z \simeq 0$ if and only if $F(Z, \mbb{I}) \simeq 0$.
	\end{itemize}
\end{lem}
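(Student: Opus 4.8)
The plan is to deduce everything from the defining universal property in \cref{browncomenetz} together with standard adjunction manipulations in a closed symmetric monoidal triangulated category, using rigidity of compacts only where indicated. For part (ii), I would first establish $\bc{X} = F(X,\mbb{I})$: by Yoneda it suffices to produce a natural isomorphism $\Hom_\T(Z, F(X,\mbb{I})) \cong \Hom_\T(Z,\bc{X})$ for all $Z \in \T$. The left side is $\Hom_\T(Z \otimes X, \mbb{I}) = \Hom_\T(X \otimes Z, \mbb{I})$ by the $\otimes$-$F$ adjunction and symmetry, and the right side is, by definition of $\bc{X} = \bctwo{\1}{X}$ applied to the object $Z$ (after unwinding the cohomological functor it represents, using that one only needs to test on a generating set / all objects), $\Hom_\Z(H_0^\1(X \otimes Z), \qz) = \Hom_\Z(\Hom_\T(\1, X \otimes Z), \qz)$; but this is exactly $\Hom_\T(X \otimes Z, \mbb{I})$ by the defining property of $\mbb{I} = \bc{\1}$ evaluated at $X \otimes Z$. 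This requires a small care point: the universal property as stated defines $\bc{X}$ via representability of a cohomological functor, so matching $\Hom_\T(Z,-)$ against it is just Yoneda, but I should note naturality in $X$ is automatic from uniqueness of representing objects. The second isomorphism in (ii), $\bctwo{C}{X} = F(F(C,X),\mbb{I})$, follows the same way using $\bctwo{C}{X}$ represents $\Hom_\Z(\Hom_\T(C, X \otimes -), \qz)$ and $\Hom_\T(C, X \otimes Z) = \Hom_\T(\1, F(C,X) \otimes Z)$, where the last step uses that $C$ is rigid so $F(C, X \otimes Z) \simeq F(C,X) \otimes Z$ and $\Hom_\T(C, -) = \Hom_\T(\1, F(C,-))$.

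Part (i), $\bctwo{C}{X} = \bc{(F(C,X))}$, is then immediate by combining the two formulas in (ii): $\bc{(F(C,X))} = F(F(C,X), \mbb{I}) = \bctwo{C}{X}$; alternatively one can prove (i) directly from the universal properties (both represent $\Hom_\Z(\Hom_\T(C, X \otimes -),\qz)$, using rigidity of $C$ to identify $\Hom_\T(C, X\otimes -)$ with $\Hom_\T(\1, F(C,X) \otimes -)$) and then derive (ii) as a special case with $C = \1$. Either ordering works; I would present (ii) first since the Yoneda computation there is the cleanest, then read off (i). For part (iii), I would just chase: $\Hom_\T(\Sigma^n Y, \bctwo{C}{X})$ — using $\bctwo{C}{X}$ represents its defining functor and that $\Sigma$ commutes with everything — equals $\Hom_\Z(H_0^C(X \otimes \Sigma^n Y), \qz) = \Hom_\Z(\Hom_\T(C, X \otimes \Sigma^n Y), \qz) = \Hom_\Z(\Hom_\T(\Sigma^{-n} C, X \otimes Y), \qz) = \Hom_\Z(H_{-n}^C(X \otimes Y), \qz)$, which is the claim.

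For part (iv), faithfulness of $\mbb{I}$: I want $Z \simeq 0 \iff F(Z,\mbb{I}) \simeq 0$. The forward direction is clear since $F(-,\mbb{I})$ is an exact functor (preserves zero objects). For the converse, suppose $F(Z,\mbb{I}) \simeq 0$. Since $\T$ is compactly generated by $\G$, it suffices to show $\Hom_\T(\Sigma^n G, Z) = 0$ for all $G \in \G$, $n \in \Z$. By rigidity of $G$ and (iii) (or directly), $\Hom_\Z(\Hom_\T(\Sigma^{-n} G, Z), \qz) \cong \Hom_\T(\Sigma^n (DG), F(Z, \mbb{I}))$ — more carefully: $\Hom_\T(\Sigma^n(DG), F(Z,\mbb{I})) = \Hom_\T(\Sigma^n DG \otimes Z, \mbb{I}) = \Hom_\Z(\Hom_\T(\1, \Sigma^n DG \otimes Z), \qz)$, and $\Hom_\T(\1, \Sigma^n DG \otimes Z) = \Hom_\T(\Sigma^{-n} G, Z)$ by rigidity ($DG \otimes Z \simeq F(G,Z)$ and $\Hom_\T(\1, F(G,Z)) = \Hom_\T(G,Z)$). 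So the vanishing of $F(Z,\mbb{I})$ forces $\Hom_\Z(\Hom_\T(\Sigma^{-n}G, Z), \qz) = 0$ for all $G, n$, and since $\qz$ is an injective cogenerator of $\ab$ this forces $\Hom_\T(\Sigma^{-n}G, Z) = 0$, hence $Z \simeq 0$.

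The main obstacle I anticipate is bookkeeping rather than conceptual: keeping the two copies of the functor $Q$-like duality straight, correctly invoking rigidity of compact objects at exactly the points where $F(C, X \otimes Z) \simeq F(C,X) \otimes Z$ and $\Hom_\T(C,-) \cong \Hom_\T(\1, F(C,-))$ are needed (and not over-claiming rigidity for general $X, Z$), and being careful that "represents the cohomological functor $\Hom_\Z(H_0^C(X \otimes -),\qz)$" is used correctly — i.e. that evaluating the representing object against an arbitrary $Z$ recovers that functor, which is exactly the content of Brown representability as recalled before \cref{browncomenetz}. None of the steps should require more than one line of adjunction shuffling once set up correctly.
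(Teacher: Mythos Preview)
Your proposal is correct and follows essentially the same approach as the paper: both arguments reduce (i)--(iii) to Yoneda plus the defining universal property of the Brown--Comenetz dual together with rigidity of compacts, and prove (iv) by using rigidity to rewrite $\Hom_\Z(H^G_*Z,\qz)$ as a hom into (a variant of) $F(Z,\mbb{I})$ and then invoking that $\qz$ cogenerates $\ab$. The only cosmetic differences are that the paper proves (i) first and reads (ii) off from it (you do the reverse, noting either order works), and in (iv) the paper packages your computation through the object $\mbb{I}_G \simeq G \otimes \mbb{I}$ rather than mapping out of $\Sigma^n DG$ directly.
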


\begin{proof}
For the first item, note that there are isomorphisms
\begin{align*}
 \Hom_\T(Y, \bctwo{C}{X}) &= \Hom_\Z(H^C_0(X \otimes Y), \Q/\Z) \\
 &= \Hom_\Z(H^\1_0(F(C,X) \otimes Y), \Q/\Z) \mbox{ as $C$ is rigid}\\
 &= \Hom_\T(Y, \bc{(F(C,X))}) \mbox{ by adjunction}
\end{align*}
which proves part (i). The first part of (ii) is also clear by adjunction, while the second part follows by applying (i). Part (iii) follows from the defining property of the Brown--Comenetz dual. For part (iv), assume that $F(Z,\mbb{I}) \simeq 0$. Therefore $F(Z, \mbb{I}_C) \simeq F(Z, \mbb{I}) \otimes C \simeq 0$ for all $C \in \T^\c$ using part (ii). Applying $\Hom_\T(\Sigma^i\1,-)$ we obtain that $\Hom_\T(\Sigma^iZ, \mbb{I}_C) = \Hom_\Z(H^C_{-i}Z, \Q/\Z) = 0$ for each $i \in \mathbb{Z}$ and $C \in \T^\c$, and since $\Q/\Z$ is a cogenerator for $\Mod{\Z}$ it follows that $H^C_*Z = 0$ for each $C \in \T^\c$, and hence $Z \simeq 0$.
\end{proof}

The following proposition shows that every big tensor-triangulated category naturally admits the structure of a duality triple.
\begin{prop}\label{ttdualitytriple}
	For any big tensor-triangulated category $\T$, the triple $(\T,\T,\mbb{I})$ is a duality triple. 
\end{prop}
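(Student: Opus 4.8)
The plan is to use, in both slots of the triple (which is legitimate since $\U=\T$), the single exact functor $Q:=F(-,\mbb{I})\colon\T^{\t{op}}\to\T$; by \cref{lem:BCproperties}(ii) this is precisely the Brown--Comenetz dual functor $X\mapsto\bc{X}$ appearing in the statement. I would then verify the four axioms of \cref{dualitytriple} in the order (4), (3), (2), (1). The first two are formal. For (4), the $\otimes$--$F$ adjunction gives $\Hom_\T\big(W,F(\bigoplus_I X_i,\mbb{I})\big)\cong\Hom_\T\big(W,\prod_I F(X_i,\mbb{I})\big)$ naturally in $W$ (products exist in $\T$ by Brown representability), so $Q(\bigoplus_I X_i)\simeq\prod_I Q(X_i)$ by the Yoneda lemma. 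For (3), the biduality maps $i_X\colon X\to F(F(X,\mbb{I}),\mbb{I})=Q^2(X)$ are natural and constitute the unit of the adjunction exhibiting $F(-,\mbb{I})$ as right adjoint to its own opposite; the identity $Q(i_X)\circ i_{Q(X)}=\mathrm{id}_{Q(X)}$ is then one of the triangle identities of this adjunction, valid in any closed symmetric monoidal category.

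For (2) it remains, given that $i_X$ is natural, to show it is a pure monomorphism. I would complete it to a triangle $X\xrightarrow{i_X}Q^2(X)\xrightarrow{p}Z\xrightarrow{\partial}\Sigma X$; applying the homological functor $\tb{y}$ and using that the phantom condition is invariant under $\Sigma$, it suffices to show $\partial$ is phantom. Now apply the exact contravariant functor $Q$ and rotate: one obtains a triangle $Q(Z)\xrightarrow{Q(p)}Q^3(X)\xrightarrow{Q(i_X)}Q(X)\xrightarrow{\ \pm\Sigma Q(\partial)\ }\Sigma Q(Z)$. By (3) the map $i_{Q(X)}$ is a section of $Q(i_X)$, so $Q(i_X)$ is a split epimorphism; in a triangle the middle map being a split epimorphism forces the connecting morphism to vanish, hence $Q(\partial)=0$. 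Finally I would check the elementary implication ``$Q(g)=0\Rightarrow g$ phantom'' for any morphism $g$ of $\T$: by \cref{lem:BCproperties}(iii) and the $\otimes$--$F$ adjunction, $\Hom_\T(V,Q(g))=0$ for all $V$ is equivalent — since $\Q/\Z$ cogenerates $\ab$ — to $\Hom_\T(\1,V\otimes g)=0$ for all $V$, and specialising to $V=DC$ with $C\in\T^\c$ and using rigidity to identify $\Hom_\T(\1,DC\otimes-)\simeq\Hom_\T(C,-)$ yields $\Hom_\T(C,g)=0$ for every compact $C$. Applying this to $g=\partial$ shows $\partial$ is phantom, so $i_X$ is a pure monomorphism.

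For (1) I would show that $\tb{y}(Q(X))$ is injective in $\Mod{\T^\c}$, which by \cite[1.8]{krsmash} is equivalent to $Q(X)$ being pure injective. By \cref{lem:BCproperties}(iii) and rigidity there are natural isomorphisms, for $C\in\T^\c$,
\[
\tb{y}(Q(X))(C)=\Hom_\T(C,\bc{X})\cong\Hom_\Z\big(\Hom_\T(\1,X\otimes C),\Q/\Z\big)\cong\Hom_\Z\big(\Hom_\T(DC,X),\Q/\Z\big).
\]
Thus, after reindexing along the anti-autoequivalence $D$ of $\T^\c$, the functor $\tb{y}(Q(X))$ is the pointwise $\Q/\Z$-dual of the restricted Yoneda functor $\tb{y}X$. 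Since $\tb{y}X$ is a flat object of $\Mod{\T^\c}$ (a standard fact; see e.g.\ \cite{krsmash}) and the $\Q/\Z$-dual of a flat functor is injective — by the tensor--hom adjunction and the exactness of $\Hom_\Z(-,\Q/\Z)$ — and $D$ is an equivalence, it follows that $\tb{y}(Q(X))$ is injective and hence $Q(X)$ is pure injective.

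I expect the main obstacle to be condition (1): it is the only step that is not purely formal, relying on the (external) fact that every restricted Yoneda image $\tb{y}X$ is flat. A secondary point that needs care is keeping the implication ``$Q(g)=0\Rightarrow g$ phantom'' used in (2) self-contained — it must be deduced from \cref{lem:BCproperties}(iii) and rigidity alone, rather than from \cref{detects} or \cref{phantomcomp}, on pain of circularity, since those presuppose that $(\T,\T,\mbb{I})$ is already a duality triple.
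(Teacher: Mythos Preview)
Your proof is correct and carefully written. The paper's own proof is simply a one-line citation: ``The conditions which need to be verified follow from work of Christensen--Strickland \cite[3.12, 4.13]{phantom}.'' Those results in \cite{phantom} are precisely the pure-injectivity of Brown--Comenetz duals and the fact that the biduality map $X\to\bc{^2X}$ is a pure monomorphism; the remaining two axioms are formal, as you observe.

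So your argument is not a different route so much as an unpacking of what Christensen--Strickland prove, recast in the language of the present paper. Two points are worth noting. First, your care in deriving ``$Q(g)=0\Rightarrow g$ phantom'' directly from \cref{lem:BCproperties}(iii), rigidity, and the faithfulness of $\Hom_\Z(-,\Q/\Z)$ --- rather than invoking \cref{detects} --- is exactly right and avoids the circularity you flag; this is essentially the argument for one direction of \cref{detects} extracted and made independent of the duality-triple hypotheses. Second, your proof of axiom~(1) via flatness of $\tb{y}X$ and the standard ``character dual of flat is injective'' for functor categories is correct; this is also how \cite{phantom} proceeds (their 3.12), and the reindexing along $D$ causes no trouble since $D$ is an equivalence on compacts. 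What your approach buys is a self-contained account inside the paper's own framework; what the citation buys is brevity.
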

\begin{proof}
Conditions (1) and (2) of \cref{dualitytriple} follow from work of Christensen-Strickland~\cite[3.12, 4.13]{phantom}, and conditions (3) and (4) are clear by construction.
\end{proof}

\subsection{Duality for compactly generated algebraic triangulated categories}
A triangulated category $\T$ is \ti{algebraic} if it is equivalent as a triangulated category to the stable category of a Frobenius exact category. As shown in \cite{keller}, where the notion was first introduced, any compactly generated algebraic triangulated category is triangulated equivalent to the derived category of modules over a small differential graded category. We will use this to establish a duality triple for compactly generated algebraic triangulated categories.

Recall that a \ti{differential graded category} (henceforth, dg-category) is a category $\mc{A}$ that is enriched over $\msf{Ch}(R)$, the category of chain complexes over a commutative ring $R$. More explicitly, $\mc{A}$ is a category and for any pair of objects $X,Y\in\mc{A}$, $\Hom_{\mc{A}}(X,Y)$ is a chain complex over $R$, and the composition map is linear.

A \ti{dg-functor} is a functor $F\colon\mc{A}\to\mc{B}$ between dg-categories (implicitly over $R$) such that the natural map
\[
\Hom_{\mc{A}}(X,Y)\to \Hom_{\mc{B}}(FX,FY)
\]
is a chain map in $\msf{Ch}(R)$. For a dg-category $\mc{A}$, a left \ti{dg-$\mc{A}$-module} is a dg-functor $\mc{A}\to\msf{Ch}(R)$, where we view $\msf{Ch}(R)$ as a dg-category by letting $\Hom_{\msf{Ch}(R)}(X,Y)$ denote the Hom-complex. A right dg-$\mc{A}$-module is a dg-functor $\mc{A}^{\t{op}}\to\msf{Ch}(R)$. We will let $\Mod{\mc{A}}$ and $\Mod{\mc{A^{\circ}}}$ denote the categories of left and right $\mc{A}$-modules respectively. The morphisms in these categories are natural transformations of dg-functors, which themselves take the structure of a chain complex over $R$, hence $\Mod{\mc{A}}$ and $\Mod{\mc{A}^{\circ}}$ are differential graded categories themselves. More details can be found in \cite{keller2}.

For a differential graded category $\mc{A}$, define $\msf{C}(\mc{A})$ to be the category whose objects are the same as those of $\Mod{\mc{A}}$, but whose morphisms are given by
\[
\msf{C}(\mc{A})(X,Y)=Z^{0}\Hom_{\Mod{\mc{A}}}(X,Y).
\]
Essentially, if $X$ and $Y$ are $\mc{A}$-modules, then one can view the morphisms of $\msf{C}(\mc{A})$ as the chain maps between $X(A)$ and $Y(A)$ for any object $A\in\mc{A}$. It is shown in \cite[2.12]{sp}, that if $\mc{A}$ is a small differential graded category then $\msf{C}(\mc{A})$ is a locally finitely presented Grothendieck category, as it is equivalent to $\Mod{\mc{R}}$ for some small preadditive category $\mc{R}$. The same works for right $R$-modules, and $\msf{C}(\mc{A}^{\circ})\simeq \Mod{\mc{R}^{\circ}}$.
The character dual $(-)^+\colon \Mod{\mc{R}}\to \Mod{\mc{R}^{\circ}}$ defined by $M^{+} = \Hom_{\Z}(M(-),\qz)$, has the property that the image of any module under it is pure injective, by \cite{stenstrom}. As such we obtain a character dual $(-)^+\colon \msf{C}(\mc{A}) \to \msf{C}(\mc{A}^\circ)$.
 
The \ti{derived category} of $\mc{A}$, denoted $\D(\mc{A})$, is the localisation of $\msf{C}(\mc{A})$ along all quasi-isomorphisms, and we write $\msf{q}\colon\msf{C}(\mc{A})\to\D(\mc{A})$ for the associated localisation functor. The functor $(-)^+\colon \msf{C}(\mc{A}) \to \msf{C}(\mc{A}^\circ)$ has an induced derived functor $\D(\mc{A}) \to \D(\mc{A}^\circ)$ which by abuse of notation we also denote by $(-)^+$.

\begin{prop}\label{algebraicdualitytriple}
Given any compactly generated algebraic triangulated category $\T\simeq \D(\mc{A})$, where $\mc{A}$ is a small dg-category, we have that $(\D(\mc{A}), \D(\mc{A}^\circ), (-)^+)$ is a duality triple.
\end{prop}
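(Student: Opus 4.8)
The plan is to verify the four axioms of \cref{dualitytriple} for the triple $(\D(\mc{A}), \D(\mc{A}^\circ), (-)^+)$, where the two functors are the derived character dual in each direction. The key observation organising the argument is that $\D(\mc{A})$ is a localisation of the locally finitely presented Grothendieck category $\msf{C}(\mc{A})$, and purity in $\D(\mc{A})$ can be understood through this presentation. Recall first that $(-)^+$ on $\msf{C}(\mc{A})$ is exact (it is a composite of $\Hom_\Z(-,\qz)$ with the evaluation functors, and $\qz$ is injective), so it descends to an exact functor on derived categories; moreover $(-)^+$ sends quasi-isomorphisms to quasi-isomorphisms since $\Hom_\Z(-,\qz)$ is exact and faithful on $\ab$, so the derived functor agrees with the pointwise character dual. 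This exactness on the underlying abelian category is what will make it a triangulated (exact) functor $\D(\mc{A})^\t{op}\to\D(\mc{A}^\circ)$ and, symmetrically, $\D(\mc{A}^\circ)^\t{op}\to\D(\mc{A})$.

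For axiom (1), I would argue that $M^+$ is pure injective in $\D(\mc{A})$ for every $M$. This should follow because $M^+$ is pure injective in $\msf{C}(\mc{A}^\circ)$ (by Stenström, as recalled in the excerpt) and the localisation functor $\msf{q}$ carries pure injectives of the functor category to pure injectives of the derived category; concretely, $\tb{y}(M^+)$ is injective in $\Mod{(\D(\mc{A}^\circ))^\c}$, which one checks by relating the restricted Yoneda functor on $\D(\mc{A}^\circ)$ to the one on $\msf{C}(\mc{A}^\circ)$ — this is the kind of compatibility established in Krause's work on purity in compactly generated triangulated categories arising from Grothendieck categories. For axioms (2) and (3), the natural evaluation map $i_M\colon M\to M^{++}$ already exists at the level of $\msf{C}(\mc{A})$ (the usual biduality unit for $\Hom_\Z(-,\qz)$), it is a (pure) monomorphism there, and the triangle identity $(-)^+(i_M)\circ i_{M^+} = \mathrm{id}$ is a standard fact about the adjunction/biduality for the character dual over $\Z$. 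Descending to $\D(\mc{A})$: $i_M$ remains a natural map, and it is a pure monomorphism because applying $\tb{y}$ gives the character-dual biduality unit in the module category, which is a monomorphism, and its cokernel-level behaviour is controlled so that $0\to \tb{y}M\to\tb{y}M^{++}\to\tb{y}C\to 0$ is exact; equivalently, the connecting map of the completing triangle is phantom by \cref{detects}-type reasoning, but since we are still proving we have a duality triple, I would instead argue directly from the known pure injectivity of $M^+$ and the splitting criterion. Axiom (4), $(\oplus_I M_i)^+\simeq\prod_I M_i^+$, is immediate from $\Hom_\Z(\oplus_I -,\qz)=\prod_I\Hom_\Z(-,\qz)$ pointwise, and this isomorphism is visibly natural and compatible with the derived structure.

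The main obstacle I expect is axiom (2): showing that $i_M\colon M\to M^{++}$ is a \emph{pure} monomorphism in the triangulated sense, i.e.\ that $\tb{y}(i_M)$ is a monomorphism with exact cokernel sequence in $\Mod{\D(\mc{A})^\c}$, rather than merely that $i_M$ is a monomorphism in $\msf{C}(\mc{A})$. The subtlety is that the compact objects of $\D(\mc{A})$ are not the representables of $\mc{A}$ naively, and the restricted Yoneda embedding of $\D(\mc{A})$ must be compared carefully with the pure-exact structure on $\msf{C}(\mc{A})$; the cleanest route is probably to invoke that for a locally finitely presented Grothendieck category $\mc{G}$, its derived category is compactly generated and the purity on $\D(\mc{G})$ restricts compatibly to the purity on $\mc{G}$ under $\msf{q}$, together with the fact that $M\to M^{++}$ is a pure monomorphism in $\mc{G}$ (a classical fact for the character dual). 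Once that compatibility is in hand, axioms (1)--(3) follow by transporting the well-known module-theoretic statements about $\Hom_\Z(-,\qz)$ through $\msf{q}$, and (4) is formal. I would therefore structure the proof as: (a) recall exactness and quasi-isomorphism-preservation of $(-)^+$, hence it is an exact functor of triangulated categories; (b) recall the classical facts for $(-)^+$ on $\msf{C}(\mc{A})$ — pure injectivity of the image, the pure-mono biduality unit, the triangle identity, the coproduct-to-product rule; (c) state and use the compatibility between purity in $\D(\mc{A})$ and purity in $\msf{C}(\mc{A})$ to transport each of these to $\D(\mc{A})$; and (d) conclude that all four axioms hold.
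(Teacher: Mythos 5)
Your high-level plan is the right one, and you correctly isolate axiom (2) as the crux. But the resolution you propose for it is not a proof: you write that ``the cleanest route is probably to invoke that \dots\ the purity on $\D(\mc{G})$ restricts compatibly to the purity on $\mc{G}$ under $\msf{q}$.'' This compatibility is precisely the statement that needs proving, and it is not a formal consequence of anything you cite. In general, $\msf{q}$ is a Verdier quotient followed by restriction, the compact objects of $\D(\mc{A})$ are not representables of $\mc{A}$, and there is no a priori reason a pure monomorphism of $\msf{C}(\mc{A})$ should remain pure after passing to $\D(\mc{A})$ and applying $\tb{y}$. Also, the alternative you float --- ``argue directly from the known pure injectivity of $M^+$ and the splitting criterion'' --- does not work: pure injectivity of the target tells you that pure monos \emph{into} it split, not that a given map \emph{is} a pure mono.

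The paper's proof supplies exactly the missing ingredient. It writes the unit $j_X\colon X\to X^{++}$ in $\msf{C}(\mc{A})$ as a directed colimit $\rlim_I g_i$ of split monomorphisms (possible because $\msf{C}(\mc{A})$ is a module category), then uses two facts: the composite $\mathbf{y}\circ\msf{q}\colon\msf{C}(\mc{A})\to\Mod{\D(\mc{A})^\c}$ preserves directed colimits (this is the substantive lemma, cited from \cite[3.5]{lv}), and the homotopy colimit of a directed system of split monomorphisms in a compactly generated triangulated category is a pure monomorphism (\cite[2.8]{krcoh}). Chaining these shows $\mathbf{y}(i_X)$ is a monomorphism, i.e.\ $i_X$ is pure. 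Without a concrete argument of this sort, your proof of (2) has a genuine gap. For axiom (1), your route via relating restricted Yoneda embeddings would face the same compatibility issue; the paper sidesteps it entirely by using the intrinsic characterisation of pure injectives as objects $X$ for which the summation $\oplus_I X\to X$ factors through $\oplus_I X\to\prod_I X$, which is visibly preserved by $\msf{q}$ since it commutes with products and coproducts. Your treatment of axioms (3) and (4) is fine and matches the paper.
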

\begin{proof}
In any finitely accessible category with products and coproducts, an object $X$ is pure injective if and only if the canonical summation $\oplus_I X\to X$ factors through the map $\oplus_I X\to \prod_I X$ for any set $I$~\cite[5.4]{dac}. The same is true in any compactly generated triangulated category by~\cite[1.8]{krsmash}. Therefore as $\msf{q}$ commutes with both coproducts and products, it preserves pure injectives, and hence, for any object $X\in\D(\mc{A})$, the object $X^{+}$ is pure injective in $\D(\mc{A}^{\circ})$. 

Let $X \in \D(\mc{A})$, and note that we may view $X$ as an object in $\msf{C}(\mc{A})$. There is a morphism $j_X\colon X\to X^{++}$ in $\msf{C}(\mc{A})$ so there is a canonical morphism $i_X := \msf{q}(j_X)\colon X\to X^{++}$ in $\D(\mc{A})$. To show this is a pure monomorphism, we adapt the proof of \cite[3.8]{lv}. The map $j_X$ is a pure embedding as $\msf{C}(\mc{A})$ is a module category. In particular, there is a directed system $\{g_{i}\}_{I}$ of split monomorphisms such that $\rlim_{I}g_i=j_X$. Yet \cite[3.5]{lv} tells us that the composition 
\[
\msf{C}(\mc{A})\xrightarrow{\msf{q}} \D(\mc{A}) \xrightarrow{\mathbf{y}} \Mod{\D(\mc{A})^\c}
\]
preserves directed colimits, where $\mathbf{y}$ is the restricted Yoneda embedding. In particular, $(\mathbf{y}\circ \msf{q})(j_X) = \rlim_{I}(\mathbf{y}\circ \msf{q})(g_{i})$. 
Now, each $\msf{q}(g_{i})$ is a split monomorphism, and therefore the homology colimit $\mrm{hocolim}_{I} \msf{q}(g_i)$ of this directed system of split monomorphisms is a pure monomorphism by~\cite[2.8]{krcoh}. By definition, \[\mathbf{y}\mrm{hocolim}_{I} \msf{q}(g_i) = \rlim_{I}(\mathbf{y}\circ \msf{q})(g_{i})= (\mathbf{y}\circ \msf{q})(j_X) = \mathbf{y}i_X.\] Therefore $\mathbf{y}i_X$ is a monomorphism, and hence $i_X$ is a pure monomorphism as required. 

The third condition required is, again, one that follows from purity in module categories: the object $X^{+}$ is pure injective, and therefore the canonical embedding $j_{X^{+}}\colon X^{+}\to X^{+++}$ splits, with splitting map given by $(j_{X})^{+}$. As $\msf{q}$ preserves splittings, and $i_X = \msf{q}(j_X)$, the third condition follows. The fourth condition is clear by definition.
\end{proof}

We end this section by giving some examples of compactly generated algebraic triangulated categories to which one may apply the previous result.

\begin{ex}\leavevmode
\begin{enumerate}
	\item If $\mc{A}$ is a small dg-category, then $\D(\mc{A})$ is a compactly generated algebraic triangulated category; indeed, every example takes this form up to triangulated equivalence. In particular, so is $\D(R)$ for any dga $R$, by considering $R$ as a differential graded category with one object. When $R$ is a ring, one may easily check that the duality functor on $\msf{D}(R)$ constructed above in \cref{algebraicdualitytriple} is indeed the derived character dual $\msf{R}\Hom_\Z(-, \qz)$.
	
	\item For a finite group $G$ and a field $k$ whose characteristic divides the order of $G$, the stable module category $\t{StMod}(kG)$ is a compactly generated algebraic triangulated category. 
	
	\item If $\mc{B}$ is an additive category, then the homotopy category $\msf{K}(\mc{B})$ of $\mc{B}$ is an algebraic triangulated category. However, in general it is not compactly generated; for example, $\msf{K}(R)$ (for a ring $R$) is typically not compactly generated. Nonetheless many examples are known for which it is. Generalizing a result of J\o rgensen~\cite{Jproj}, Neeman~\cite{Nproj} showed that the homotopy category $\msf{K}(\t{Proj}(R))$ of all projectives, is compactly generated whenever $R$ is right coherent. The methods used were then utilised in \cite{Sproj}, to show that $\msf{K}(\t{Proj}(\mc{B}))$ is compactly generated whenever $\mc{B}$ is a skeletally small additive category with weak cokernels. For injective objects, it was shown in \cite{Krausest} that $\msf{K}(\t{Inj}(\mc{A}))$ is compactly generated whenever $\mc{A}$ is a locally noetherian Grothendieck category. If one only wishes to study classes of modules, a sufficient condition for compact generation of $\msf{K}(\mc{X})$, where $\mc{X}\subseteq\Mod{R}$ can be found in \cite{hjcg}.
\end{enumerate}
	
\end{ex}

\section{Duality pairs and their closure properties}
In this section, we introduce the triangulated formulation of duality pairs, directly following Holm and J{\o}rgensen, before highlighting their role in purity and approximation theory. 

\begin{defn}\label{tdp}
Let $(\T, \msf{U}, Q)$ be a duality triple. A pair of classes $(\A,\B)$, where $\A\subseteq\T$ and $\B\subseteq\msf{U}$, is a \ti{duality pair} if:
\begin{enumerate}
	\item $X\in\A$ if and only if $Q(X)\in\B$;
	\item $\B$ is closed under finite coproducts and retracts.
\end{enumerate}
\end{defn}

Note that it is immediate that the class $\A$ is also closed under finite coproducts and retracts, as $Q$ is an additive functor. In the subsequent section we provide several examples of duality pairs, and see that they arise when considering natural questions relating to triangulated categories. 

\subsection{Closure properties} Let us now show some closure properties of duality pairs that can be deduced directly from the definition.

\begin{prop}\label{closure}
Let $(\A,\B)$ be a duality pair on a duality triple $(\T, \msf{U}, Q)$.
\begin{enumerate}
	\item If $\B$ is closed under arbitrary products, then $\A$ is closed under arbitrary coproducts.
	\item If $\B$ is triangulated, then so is $\A$.
\end{enumerate}
Moreover, if $(\A,\B)$ is a duality pair on the duality triple $(\T,\T,\mbb{I})$ for a big tensor-triangulated category $\T$ then we additionally have:
\begin{enumerate}[resume]
	\item If $\B$ is $F$-closed (that is, for any $B\in\B$ and $X\in \T$ we have $F(X,B)\in\B$), then $\A$ is a $\otimes$-ideal (that is, for any $A \in \A$ and $X \in \T$ we have $X \otimes A \in \A$).
\end{enumerate}
\end{prop}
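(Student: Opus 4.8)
The plan is to prove all three parts by translating closure conditions on $\B$ through the defining equivalence $X\in\A\iff Q(X)\in\B$, using the compatibility of $Q$ with the relevant (co)limits and internal-hom structures. For part (1): given a family $\{A_i\}_I$ in $\A$, we have $Q(A_i)\in\B$ for each $i$, and by axiom (4) of \cref{dualitytriple} there is an isomorphism $Q(\bigoplus_I A_i)\simeq\prod_I Q(A_i)$. If $\B$ is closed under arbitrary products, then $\prod_I Q(A_i)\in\B$, hence $Q(\bigoplus_I A_i)\in\B$, and therefore $\bigoplus_I A_i\in\A$. (Strictly this shows $\bigoplus_I A_i\in\A$ directly from the defining biconditional; no retract argument is even needed.)

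For part (2): suppose $\B$ is triangulated, i.e.\ closed under shifts and cones. Closure of $\A$ under shifts is immediate since $Q$ is exact, so $Q(\Sigma A)\simeq\Sigma^{-1}Q(A)$ (up to sign conventions for the exactness of a contravariant functor), and $\B$ being triangulated is closed under both $\Sigma$ and $\Sigma^{-1}$. For cones: given a triangle $A\to A'\to A''\to\Sigma A$ with $A,A'\in\A$, applying the exact functor $Q$ yields a triangle $Q(A'')\to Q(A')\to Q(A)\to\Sigma Q(A'')$ in $\msf{U}$; since $Q(A'),Q(A)\in\B$ and $\B$ is closed under cones (here the cone of $Q(A')\to Q(A)$, which fits as $\Sigma Q(A'')$, lies in $\B$, so $Q(A'')\in\B$ after desuspending), we conclude $A''\in\A$. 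One must be mildly careful about which vertex of the rotated triangle is the relevant "cone", but this is routine rotation of triangles together with the shift-closure already established.

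For part (3): here $\T=\U$ and $Q=\mbb{I}$ is the Brown--Comenetz dual, so by \cref{lem:BCproperties}(ii) we have a natural isomorphism $\mbb{I}(X)\simeq F(X,\mbb{I})$ for all $X$. Given $A\in\A$ and $X\in\T$, we want $X\otimes A\in\A$, i.e.\ $\mbb{I}(X\otimes A)\in\B$. Using the tensor--hom adjunction (more precisely the standard natural isomorphism $F(X\otimes A,\mbb{I})\simeq F(A,F(X,\mbb{I}))=F(A,\mbb{I}(X)\otimes\text{(unit)})$, or directly $F(X\otimes A,\mbb{I})\simeq F(X,F(A,\mbb{I}))=F(X,\mbb{I}(A))$), we get $\mbb{I}(X\otimes A)\simeq F(X,\mbb{I}(A))$. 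Since $A\in\A$ we have $\mbb{I}(A)\in\B$, and since $\B$ is $F$-closed, $F(X,\mbb{I}(A))\in\B$; hence $\mbb{I}(X\otimes A)\in\B$ and $X\otimes A\in\A$. The main thing to get right is the bookkeeping of which adjunction isomorphism to invoke—$F(X\otimes A,\mbb{I})\simeq F(X,F(A,\mbb{I}))$ is the cleanest, as it immediately produces $F(X,\mbb{I}(A))$ via \cref{lem:BCproperties}(ii)—after which the argument is a one-line chase.

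The steps are essentially independent and each is short; I expect no serious obstacle, the only care needed being sign/rotation conventions in part (2) and choosing the right adjunction isomorphism in part (3). The substantive content—that $Q$ intertwines coproducts with products, is exact, and is an internal-hom—has already been packaged into \cref{dualitytriple} and \cref{lem:BCproperties}, so the proof is purely a matter of unwinding definitions.
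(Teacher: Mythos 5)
Your proposal is correct and follows essentially the same route as the paper's proof: part (1) reads the coproduct off axiom (4) of \cref{dualitytriple}, part (2) applies the exactness of $Q$ and the two-out-of-three property of $\B$, and part (3) uses precisely the adjunction $F(X\otimes A,\mbb{I})\simeq F(X,F(A,\mbb{I}))$ combined with $F$-closedness of $\B$. The brief detour via $F(A,F(X,\mbb{I}))$ in part (3) would not lead anywhere (you would need $\mbb{I}(X)\in\B$, which is unavailable), but you already identify and discard that option in favor of the correct one, so there is no gap.
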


\begin{proof}
For the first item, suppose $\B$ is closed under products and $\{X_{i}\}_{I}$ is a collection of objects in $\A$. Then $Q(\oplus_{I}X_{i})\simeq \prod_{I}Q(X_{i})$. As $(\A,\B)$ is a duality pair, each $Q(X_{i})$ is in $\B$, and hence, by assumption, so is $\prod_{I}Q(X_{i})$. But then, by definition, we have $\oplus_{I}X_{i}\in\A$. For the second, suppose that $X\to Y\to Z$ is a triangle with $X$ and $Z$ in $\A$. Then $Q(Z)\to Q(Y)\to Q(X)$ is a triangle with $Q(X)$ and $Q(Z)$ in $\B$. In particular, if $\B$ is closed under triangles, we immediately see that $Q(Y)\in\B$, hence $Y\in\A$ by definition. As $\B$ is closed under shifts, we can similarly argue that $\A$ is. Lastly, suppose that $\T$ is tensor-triangulated and $A\in\A$. If $X\in\T$ is arbitrary, then $A\otimes X\in\A$ if and only if $F(A\otimes X,\mbb{I})\in\B$. By adjunction, we have $F(A\otimes X,\mbb{I})\simeq F(X,F(A,\mbb{I}))$; yet $F(A,\mbb{I})\in\B$ and $\B$ is $F$-closed, which proves the claim.
\end{proof}

We now highlight the connection between duality pairs, pure closure properties, and approximations, providing a direct analogy of \cref{hjtheorem}.

\begin{thm}\label{dp}
Let $(\A,\B)$ be a duality pair on a duality triple $(\T, \msf{U}, Q)$. Then $\A$ is closed under pure subobjects, pure quotients, and pure extensions. Moreover, if $\T$ is algebraic, then:
\begin{enumerate}
	\item if $\A$ is closed under coproducts it is precovering;
	\item if $\A$ is closed under products it is preenveloping;
	\item if $\A$ is closed under coproducts and triangles, it is a torsion class.
\end{enumerate}
\end{thm}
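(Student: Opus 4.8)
The core of the theorem is the claim that $\A$ is closed under pure subobjects, pure quotients, and pure extensions; the three enumerated items should then follow formally from results already in the literature. So I would split the argument into two parts.

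For the pure closure properties, the plan is to run the Holm--J{\o}rgensen argument (the non-enumerated statement of \cref{hjtheorem}, whose proof is reproduced in \cref{sec:silting}'s precursor section) verbatim, but with \cref{puredef}(4) replaced by \cref{detects}. Concretely: suppose $A \to B \to C \to \Sigma A$ is a pure triangle in $\T$ with $B \in \A$. By \cref{detects}, the induced triangle $Q(C) \to Q(B) \to Q(A) \to \Sigma Q(C)$ splits, so $Q(B) \simeq Q(A) \oplus Q(C)$ (up to the relevant shift). Since $B \in \A$ we have $Q(B) \in \B$, and $\B$ is closed under retracts, so both $Q(A)$ and $Q(C)$ lie in $\B$; hence $A, C \in \A$ by the duality pair condition. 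This handles pure subobjects (the $A$) and pure quotients (the $C$) simultaneously. For pure extensions, suppose instead $A, C \in \A$ with the triangle pure; then $Q(A), Q(C) \in \B$, and since the triangle splits, $Q(B)$ is a finite coproduct $Q(A) \oplus Q(C)$ (again up to shift), which lies in $\B$ because $\B$ is closed under finite coproducts; thus $B \in \A$. One small point to be careful about: I should check that the shift appearing in the split triangle $\Sigma Q(A) \xrightarrow{0} Q(C) \to Q(B) \to Q(A)$ does not cause trouble --- since $\B$ is a class of objects in a triangulated category it is presumably understood to be closed under shifts, or else one notes that a split triangle exhibits $Q(B)$ as $Q(A) \oplus Q(C)$ without needing $\Sigma$ at all in the relevant isomorphism. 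I do not expect this to be a genuine obstacle.

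For the three enumerated consequences, I would invoke the machinery of \cite{lv} cited just after \cref{dp} in the introduction. The point is that a class in a compactly generated algebraic triangulated category which is closed under pure subobjects, pure quotients, and pure extensions --- equivalently, is "definable-like" in the relevant sense --- inherits precovering/preenveloping/torsion-class properties under the appropriate closure hypotheses, exactly as in the module-category statements (1)--(3) of \cref{hjtheorem}. Specifically, item (1): $\A$ closed under coproducts and under pure subobjects and pure quotients (hence in particular under directed colimits of pure systems) is, by the relevant result of \cite{lv}, precovering. Item (2): $\A$ closed under products and pure closure conditions is preenveloping by the dual statement. Item (3): adding closure under triangles to item (1)'s hypotheses upgrades $\A$ to a torsion class, again by \cite{lv}. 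The algebraic hypothesis is exactly what lets us import these results, which are stated there for algebraic triangulated categories.

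The main obstacle, such as it is, is not in the closure-properties argument --- that is an essentially mechanical transcription of Holm--J{\o}rgensen using \cref{detects} as the replacement for the splitting characterization of purity. The real work has already been done in establishing \cref{detects}. The only points requiring care are: (a) confirming that the split triangle in $\U$ really does give a retract decomposition of $Q(B)$ compatible with the closure hypotheses on $\B$ (handled above), and (b) citing the \cite{lv} results with the precise hypotheses they require, in particular verifying that "closed under pure subobjects, pure quotients, and pure extensions" together with the stated coproduct/product closure is exactly the input those theorems take in the algebraic setting. Neither of these is deep, so I expect the proof to be short.
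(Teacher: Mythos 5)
Your proposal matches the paper's proof essentially verbatim: apply $Q$ to a pure triangle, invoke \cref{detects} to split it, deduce $Q(Y) \simeq Q(X) \oplus Q(Z)$, and use the closure of $\B$ under retracts and finite coproducts; the enumerated items are then read off from \cite{lv} exactly as you describe. Your worry about the shift is correctly dispatched, and there is no gap.
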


\begin{proof}
The proof is similar to that of \cref{hjtheorem}. Suppose that $X\to Y\to Z$ is a pure triangle in $\T$. Then, applying $Q$, we obtain a triangle 
\[
Q(Z)\to Q(Y)\to Q(X)
\]
in $\msf{U}$ which is split by \cref{detects}, so $Q(Y)\simeq Q(X)\oplus Q(Z)$. In particular, we immediately deduce the claims about pure subobjects, quotients and extensions. For the enumerated claims, we apply the closure properties to \cite[4.2, 5.2]{lv}.
\end{proof}

\begin{rem}
We note that in the case that $\A$ is triangulated in $\T$, it is precovering if and only if it is closed under coproducts~\cite[1.4]{adj}.
\end{rem}

The following, rather immediate, lemma will be of use in several later applications.
\begin{lem}\label{coprodclosed}
	Let $\A$ be a class of objects closed under pure subobjects. If $\A$ is closed under products then it is closed under coproducts. In particular, if $(\A,\B)$ is a duality pair and $\A$ is closed under products, then it is also closed under coproducts.
\end{lem}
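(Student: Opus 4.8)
The plan is to reduce the coproduct to a product via a pure monomorphism, so that closure under products and under pure subobjects together force closure under coproducts. First I would recall the standard fact, recorded in the definition of pure injective object (specifically condition (3)) and already exploited in the proof of \cref{algebraicdualitytriple}, that in any compactly generated triangulated category the canonical summation map $\bigoplus_I X_i \to \prod_I X_i$ is a pure monomorphism whenever each $X_i$ is pure injective; more to the point, for an \emph{arbitrary} family $\{X_i\}_I$ the map $\bigoplus_I X_i \to \prod_I X_i$ need not be pure, so one cannot argue quite so directly. Instead I would take the family $\{X_i\}_I$ of objects of $\A$ and consider the canonical map $\iota\colon \bigoplus_I X_i \to \prod_I X_i$. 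The key observation is that $\iota$ is a pure monomorphism: this is \cite[2.8]{krcoh} (or follows from the same circle of ideas used in \cref{algebraicdualitytriple}), since applying the restricted Yoneda functor $\mathbf{y}$ sends coproducts to coproducts and products to products in $\Mod{\T^\c}$, and in the abelian category $\Mod{\T^\c}$ the canonical map $\bigoplus_I \mathbf{y}X_i \to \prod_I \mathbf{y}X_i$ is always a monomorphism.

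With that in hand the argument is immediate. Since $\A$ is closed under products, $\prod_I X_i \in \A$. Since $\iota\colon \bigoplus_I X_i \to \prod_I X_i$ is a pure monomorphism with target in $\A$, and $\A$ is closed under pure subobjects, we conclude $\bigoplus_I X_i \in \A$. This proves the first sentence. For the second sentence, if $(\A,\B)$ is a duality pair then by \cref{dp} the class $\A$ is closed under pure subobjects, so the hypothesis of the first part is satisfied and the conclusion follows.

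The only real point requiring care — and the step I expect to be the main obstacle — is justifying that the canonical map $\bigoplus_I X_i \to \prod_I X_i$ is genuinely a pure monomorphism in $\T$, rather than merely a monomorphism after applying $\mathbf{y}$; here one uses that $\mathbf{y}$ detects purity (a triangle is pure precisely when its image under $\mathbf{y}$ is a short exact sequence, by \cref{tpuredef}) together with the fact that $\mathbf{y}$ is an exact-preserving functor which commutes with coproducts and products, so that $\mathbf{y}\iota$ fits into a short exact sequence $0 \to \bigoplus_I \mathbf{y}X_i \to \prod_I \mathbf{y}X_i \to \operatorname{coker} \to 0$ in $\Mod{\T^\c}$; completing $\iota$ to a triangle in $\T$ and comparing shows this triangle is pure. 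Everything else is a one-line diagram chase.
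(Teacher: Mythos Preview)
Your approach is the same as the paper's: both proofs hinge on the single fact that the canonical map $\iota\colon\bigoplus_I X_i \to \prod_I X_i$ is a pure monomorphism in any compactly generated triangulated category, from which the conclusion is immediate. The paper simply asserts this fact; you supply a justification via the restricted Yoneda embedding, and that justification is correct: $\mbf{y}$ is cohomological and preserves products and coproducts, so $\mbf{y}\iota$ is the canonical monomorphism $\bigoplus_I \mbf{y}X_i \hookrightarrow \prod_I \mbf{y}X_i$ in $\Mod{\T^\c}$, and hence the triangle completing $\iota$ is pure.

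However, your first paragraph contains a genuine misstep that you then silently contradict. You write that for an \emph{arbitrary} family the map $\bigoplus_I X_i \to \prod_I X_i$ ``need not be pure, so one cannot argue quite so directly,'' and suggest the pure injective case is special. This is false: the map is \emph{always} a pure monomorphism, for any family, by exactly the argument you give two sentences later. (Condition (3) in the definition of pure injectivity is about something else entirely---the factorisation of the summation map $\bigoplus_I X \to X$---and has no bearing on whether $\iota$ is pure.) So delete the detour about pure injectives and the claim that the general case fails; what remains is a clean, correct proof that matches the paper's but with the key purity step spelled out.
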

\begin{proof}
	In any compactly generated triangulated category, the canonical map $\oplus_{I}X_{i}\to\prod_{I}X_{i}$ is a pure monomorphism which gives the first claim. For the second claim, note that $\A$ is closed under pure subobjects by \cref{dp}.
\end{proof}

We have already seen that duality pairs have good closure properties. However, if we impose another condition on the duality pairs we consider, one can deduce stronger closure properties as we now show. Moreover, examples of such duality pairs still abound.

\begin{defn}
A duality pair $(\A,\B)$ is \ti{symmetric} if both $(\A,\B)$ and $(\B,\A)$ are duality pairs.
\end{defn}

\begin{prop}\label{symmclosure}
If $(\A,\B)$ is a symmetric duality pair, then the following hold:
\begin{enumerate}
	\item Both $\A$ and $\B$ are closed under pure injective envelopes.
	\item $\A$ is triangulated if and only if $\B$ is triangulated.
	\item $X\in \A$ if and only if $Q^{2}X\in\A$, and likewise for $\B$.
	\item The following statements are equivalent:
	\begin{enumerate}
		\item $\A$ is closed under coproducts;
		\item $\A$ is closed under products;
		\item $\B$ is closed under coproducts;
		\item $\B$ is closed under products.
	\end{enumerate}
\end{enumerate}
\end{prop}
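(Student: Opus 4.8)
The plan is to prove part (3) first, since parts (1) and (4) build on it, and to observe that part (2) follows immediately from \cref{closure}. For (3): as $(\A,\B)$ is a duality pair, $X\in\A$ if and only if $Q(X)\in\B$, and as $(\B,\A)$ is also a duality pair (using that the order of the categories in a duality triple is irrelevant), $Q(X)\in\B$ if and only if $Q^{2}(X)\in\A$; composing these equivalences gives the claim for $\A$, and swapping the roles of $\A$ and $\B$ gives it for $\B$. For (2): if $\B$ is triangulated then \cref{closure}(2) applied to $(\A,\B)$ shows $\A$ is triangulated, and conversely \cref{closure}(2) applied to the duality pair $(\B,\A)$ shows that if $\A$ is triangulated then $\B$ is.

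For part (1), I would use that every object of a compactly generated triangulated category has a pure injective envelope (see \cite{krsmash}). Let $X\in\A$ with pure injective envelope $u\colon X\to\mathrm{PE}(X)$. The duality triple axioms supply a pure monomorphism $i_{X}\colon X\to Q^{2}(X)$ whose target is pure injective, so the standard envelope argument applies: $i_{X}$ factors through $u$ (as $Q^{2}(X)$ is pure injective), $u$ factors through $i_{X}$ (as $\mathrm{PE}(X)$ is pure injective), and composing these factorizations yields an endomorphism of $\mathrm{PE}(X)$ compatible with $u$, hence an automorphism by minimality of the envelope. Therefore $\mathrm{PE}(X)$ is a retract of $Q^{2}(X)$; since $Q^{2}(X)\in\A$ by part (3) and $\A$ is closed under retracts, $\mathrm{PE}(X)\in\A$. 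The symmetric argument handles $\B$.

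For part (4), \cref{coprodclosed} applied to $(\A,\B)$ gives (b) $\Rightarrow$ (a) and applied to $(\B,\A)$ gives (d) $\Rightarrow$ (c), so it suffices to prove the two symmetric implications (a) $\Rightarrow$ (d) and (c) $\Rightarrow$ (b); I describe the first. Assume $\A$ is closed under coproducts and let $\{Y_{i}\}_{I}$ be a family in $\B$. Then each $Q(Y_{i})\in\A$, so $\bigoplus_{I}Q(Y_{i})\in\A$, and hence $Q(\bigoplus_{I}Q(Y_{i}))\in\B$; by the coproduct axiom for a duality triple this object is $\prod_{I}Q^{2}(Y_{i})$. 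The pure monomorphisms $i_{Y_{i}}\colon Y_{i}\to Q^{2}(Y_{i})$ induce a map $\prod_{I}Y_{i}\to\prod_{I}Q^{2}(Y_{i})$ which is again a pure monomorphism, because the restricted Yoneda functor preserves products and products are exact in $\Mod{\T^{\c}}$. Thus $\prod_{I}Y_{i}$ is a pure subobject of $\prod_{I}Q^{2}(Y_{i})\in\B$, and $\B$ is closed under pure subobjects by \cref{dp} (applied to $(\B,\A)$), so $\prod_{I}Y_{i}\in\B$, which is (d).

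I expect part (4) to be the main obstacle: it is the only place where one genuinely combines the coproduct-to-product axiom of a duality triple, the pure-subobject closure from \cref{dp}, and the fact that a product of pure monomorphisms is a pure monomorphism, and it is also where the symmetry hypothesis does real work, since both $(\A,\B)$ and $(\B,\A)$ must be duality pairs for the cycle (a) $\Rightarrow$ (d) $\Rightarrow$ (c) $\Rightarrow$ (b) $\Rightarrow$ (a) to close. Parts (1)--(3) are comparatively routine once the symmetry is exploited; the only mild care needed is in the envelope argument for (1), where minimality of the pure injective envelope is what forces the relevant endomorphism to be invertible.
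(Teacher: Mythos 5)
Your proof is correct and follows essentially the same strategy as the paper: part (2) and the easy implications in part (4) are read off from \cref{closure} and \cref{coprodclosed}, while the substantive direction of (4) uses the coproduct-to-product axiom, a product of pure monomorphisms, and pure-subobject closure from \cref{dp}, exactly as in the paper. The minor variations — you prove (3) by chaining the two defining biconditionals rather than invoking \cref{dp}, and you run the envelope argument for (1) directly in $\T$ rather than passing through the injective-hull correspondence in $\Mod{\T^\c}$ as the paper does — are both correct and, if anything, slightly more self-contained.
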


\begin{proof}
For the first item, suppose that $A\in\A$. Then as $(\A,\B)$ is symmetric, it is clear that $Q^{2}A\in\A$. As described in \cite{krsmash}, pure injective objects and envelopes correspond with injective objects and envelopes in the functor category $\Mod{\T^\c}$. By considering the restricted Yoneda embedding of all the maps, it follows that the pure injective hull of $A$ is a summand of $Q^{2}(A)$. Yet we know $\A$ is closed under summands, proving the claim.
The second item follows from \cref{closure}. The third claim follows from \cref{dp} since $X\to Q^{2}X$ is a pure monomorphism. 

Let us now prove the fourth claim. The implications $(b)\implies (a)$ and $(d)\implies (c)$ follow from \cref{coprodclosed}, while $(b)\implies (c)$ and $(d)\implies (a)$ hold by \cref{closure}. We show that $(c)\implies (b)$; then $(a)\implies (d)$ follows via symmetry. Suppose that $\{X_{i}\}_{I}$ is a collection of objects in $\A$, so $QX_{i}\in\B$ for each $i\in I$. Since $\B$ is closed under coproducts by assumption we have $\oplus_{I}QX_{i}\in\B$. As $(\B,\A)$ is also a duality pair, we have $Q(\oplus_{I}QX_{i})\simeq \prod_{I}Q^{2}X_{i}\in\A$. Now, for each $i$, the morphism $X_i\to Q^{2}X_i$ is a pure monomorphism, and products of pure monomorphisms are also pure, hence
\[
\prod_{I}X_{i}\to\prod_{I}Q^{2}X_{i}
\]
is a pure monomorphism. Consequently $\prod_{I}X_{i}\in \A$ by \cref{dp}.
\end{proof}

\subsection{The minimal duality pair generated by a class}\label{minimaldualitypair}
We now show that not only do duality pairs on a given duality triple always exist, but every class of objects generates a minimal one. We will use this construction later in \cref{dplift} to relate duality pairs on triangulated categories to those in functor categories. Let us fix a duality triple $(\T, \msf{U}, Q)$. For any class of objects $\mc{S}\subseteq\T$, define $Q\mc{S}=\{QX:X\in\mc{S}\}$. Using standard notation, we let $\t{add}(Q\mc{S})$ denote the smallest subcategory of $\msf{U}$ containing $Q\mc{S}$ which is closed under retracts and finite coproducts.
We then define
\[
\overline{\mc{S}}=\{X\in\T:QX\in\t{add}(Q\mc{S})\} \subseteq \T.
\]

\begin{lem}\label{gendp}
	Let $(\T, \msf{U}, Q)$ be a duality triple and let $\mc{S} \subseteq \T$. The pair $(\overline{\mc{S}},\mrm{add}(Q\mc{S}))$ is a duality pair.
\end{lem}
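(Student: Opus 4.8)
The statement to prove is \cref{gendp}: for a duality triple $(\T,\msf{U},Q)$ and a class $\mc{S}\subseteq\T$, the pair $(\overline{\mc{S}},\mrm{add}(Q\mc{S}))$ is a duality pair. Going by \cref{tdp}, I need to check two things: first, that $X\in\overline{\mc{S}}$ if and only if $Q(X)\in\mrm{add}(Q\mc{S})$; and second, that $\mrm{add}(Q\mc{S})$ is closed under finite coproducts and retracts.

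\textbf{The easy half.} The second condition is immediate from the definition of $\mrm{add}(-)$: by construction $\mrm{add}(Q\mc{S})$ is the smallest subcategory of $\msf{U}$ containing $Q\mc{S}$ that is closed under retracts and finite coproducts, so in particular it \emph{is} closed under retracts and finite coproducts. Nothing further is needed here.

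\textbf{The "if and only if" condition.} This is where the real (though still light) content lies. One direction is a tautology: by the very definition $\overline{\mc{S}}=\{X\in\T : Q(X)\in\mrm{add}(Q\mc{S})\}$, so $X\in\overline{\mc{S}}$ forces $Q(X)\in\mrm{add}(Q\mc{S})$, and conversely $Q(X)\in\mrm{add}(Q\mc{S})$ is exactly the membership condition for $X\in\overline{\mc{S}}$. So in fact the biconditional holds definitionally, with no use of the duality-triple axioms at all beyond the fact that $Q\colon\T^{\t{op}}\to\msf{U}$ is a functor landing in $\msf{U}$ (so that the class $\overline{\mc{S}}$ is well-defined as a subclass of $\T$). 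Consequently both defining conditions of a duality pair are verified, and the pair $(\overline{\mc{S}},\mrm{add}(Q\mc{S}))$ is a duality pair.

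\textbf{Main obstacle.} Honestly, there is no substantial obstacle: the lemma is essentially a bookkeeping statement that the "preimage under $Q$ of an $\mrm{add}$-closed class" automatically forms the left half of a duality pair, and the proof is a one-line unwinding of definitions. If anything, the only point worth a sentence is recording that $\mrm{add}(Q\mc{S})$ is genuinely closed under finite coproducts and retracts (which is immediate), and perhaps remarking — as the paper's surrounding text already does for general duality pairs — that $\overline{\mc{S}}$ is then automatically closed under finite coproducts and retracts as well, since $Q$ is additive. I would present the proof in two short sentences and move on.
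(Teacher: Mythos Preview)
Your proof is correct and matches the paper's own argument essentially verbatim: both conditions in \cref{tdp} hold by the very definitions of $\overline{\mc{S}}$ and $\mrm{add}(Q\mc{S})$, with no further input needed.
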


\begin{proof}
	It is clear that $X\in\overline{\mc{S}}$ if and only if $QX\in\t{add}(Q\mc{S})$ via the way the classes are defined. By definition, $\t{add}(Q\mc{S})$ is closed under finite coproducts and retracts.
\end{proof}

\begin{defn}
	Let $(\T, \msf{U}, Q)$ be a duality triple. Given a class $\mc{S}\subseteq\mathsf{T}$, we say that $(\overline{\mc{S}},\t{add}(Q\mc{S}))$ is the \ti{duality pair generated by} $\mc{S}$, and $\overline{\mc{S}}$ is the \ti{duality class} generated by $\mc{S}$.
\end{defn}

We now show that using the terminology `minimal' and `generated' is justified.

\begin{lem}\label{barclosure}
	Let $(\T, \msf{U}, Q)$ be a duality triple and let $\mc{S} \subseteq \T$. Suppose $(\mathsf{A},\mathsf{B})$ is a duality pair with $\mc{S}\subseteq \mathsf{A}$, then $\overline{\mc{S}}\subseteq \mathsf{A}$.
\end{lem}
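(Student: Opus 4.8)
The plan is to unwind the definitions. We are given a duality pair $(\mathsf{A},\mathsf{B})$ with $\mathcal{S}\subseteq\mathsf{A}$, and we want $\overline{\mathcal{S}}\subseteq\mathsf{A}$. Recall $\overline{\mathcal{S}}=\{X\in\mathsf{T}:QX\in\mathrm{add}(Q\mathcal{S})\}$, so take any $X\in\overline{\mathcal{S}}$; then $QX\in\mathrm{add}(Q\mathcal{S})$, meaning $QX$ is a retract of a finite coproduct $\bigoplus_{j=1}^{n}QS_j$ with each $S_j\in\mathcal{S}$.

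First I would observe that since $\mathcal{S}\subseteq\mathsf{A}$ and $(\mathsf{A},\mathsf{B})$ is a duality pair, condition (1) of \cref{tdp} gives $QS_j\in\mathsf{B}$ for every $j$. Since $\mathsf{B}$ is closed under finite coproducts (condition (2) of \cref{tdp}), we get $\bigoplus_{j=1}^{n}QS_j\in\mathsf{B}$, and since $\mathsf{B}$ is closed under retracts, $QX\in\mathsf{B}$. Applying condition (1) of \cref{tdp} in the reverse direction then yields $X\in\mathsf{A}$. As $X\in\overline{\mathcal{S}}$ was arbitrary, $\overline{\mathcal{S}}\subseteq\mathsf{A}$.

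There is essentially no obstacle here; the statement is a formal consequence of the two closure axioms for $\mathsf{B}$ together with the iff characterisation of membership in $\mathsf{A}$. The only point requiring a moment's care is checking that $\mathrm{add}(Q\mathcal{S})\subseteq\mathsf{B}$: this is exactly the assertion that $\mathsf{B}$, being closed under finite coproducts and retracts and containing $Q\mathcal{S}$, must contain the smallest such subcategory, namely $\mathrm{add}(Q\mathcal{S})$. I would phrase the proof precisely in those terms, so that the conclusion $\overline{\mathcal{S}}\subseteq\mathsf{A}$ follows immediately by transporting membership back through $Q$.
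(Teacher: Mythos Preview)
Your proof is correct and essentially identical to the paper's own argument: both take $X\in\overline{\mc{S}}$, observe that $QX$ is a retract of a finite coproduct of objects $QS_j$ with $S_j\in\mc{S}\subseteq\A$, use the closure of $\B$ under finite coproducts and retracts to conclude $QX\in\B$, and then apply the defining biconditional of a duality pair to obtain $X\in\A$.
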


\begin{proof}
	Let $X\in\overline{\mc{S}}$, so $QX$ is a summand of $Q(S_{1})\oplus\cdots\oplus Q(S_{n})$ for some $S_{i}\in\mc{S}$, $1\leq i\leq n$. But as $\mc{S}\subseteq\mathsf{A}$, we have $Q(S_{i})\in\mathsf{B}$, and as $\B$ is closed under finite coproducts, we have that $QX$ is a summand of an object of $\mathsf{B}$. As $\B$ is closed under summands, $QX$ is itself an element of $\mathsf{B}$. But by the definition of duality pairs, it follows that $X\in\mathsf{A}$.
\end{proof}

\begin{cor}
	Let $(\mathsf{A},\mathsf{B})$ be a duality pair on a duality triple $(\T, \msf{U}, Q)$. Then $\overline{\mathsf{A}}=\mathsf{A}$.
\end{cor}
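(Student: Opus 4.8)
The plan is to deduce this immediately from \cref{barclosure} together with an elementary direct argument, so no new machinery is needed. The statement is a two-sided inclusion $\overline{\mathsf{A}} \subseteq \mathsf{A}$ and $\mathsf{A} \subseteq \overline{\mathsf{A}}$, and each direction is short.

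For the inclusion $\overline{\mathsf{A}} \subseteq \mathsf{A}$, I would simply apply \cref{barclosure} with $\mc{S} = \mathsf{A}$: since $(\mathsf{A},\mathsf{B})$ is itself a duality pair containing $\mathsf{A}$, that lemma gives $\overline{\mathsf{A}} \subseteq \mathsf{A}$ at once.

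For the reverse inclusion $\mathsf{A} \subseteq \overline{\mathsf{A}}$, let $X \in \mathsf{A}$. Then $QX \in Q\mathsf{A} \subseteq \t{add}(Q\mathsf{A})$, so by the very definition of $\overline{\mathsf{A}} = \{X \in \T : QX \in \t{add}(Q\mathsf{A})\}$ we conclude $X \in \overline{\mathsf{A}}$. Combining the two inclusions yields $\overline{\mathsf{A}} = \mathsf{A}$.

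There is essentially no obstacle here: the only point to be careful about is that \cref{barclosure} is being invoked in the degenerate case $\mc{S} = \mathsf{A}$, which is permitted since its hypothesis only requires $\mc{S}$ to be contained in the left class of some duality pair. One could also note as a sanity check that this corollary says the operator $\mc{S} \mapsto \overline{\mc{S}}$ is idempotent on duality classes and that a class is a duality class precisely when it is the left class of a duality pair.
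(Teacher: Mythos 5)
Your proof is correct and takes the same route as the paper: the forward inclusion $\overline{\mathsf{A}}\subseteq\mathsf{A}$ is Lemma \ref{barclosure} applied with $\mc{S}=\mathsf{A}$, and the reverse inclusion is the immediate unwinding of the definition of $\overline{\mathsf{A}}$ that the paper dismisses as "clear." Nothing to change.
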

\begin{proof}
	Clearly $\mathsf{A}\subseteq \overline{\mathsf{A}}$. On the other hand, by \cref{barclosure} we have $\overline{\mathsf{A}}\subseteq\mathsf{A}$.
\end{proof}

Any duality class is closed under finite coproducts, pure extensions, pure subobjects and pure quotients by \cref{dp}. However, it is not clear that these properties uniquely determine duality classes. The following illustrates a sufficient (but very far from necessary) condition to determine that a class is a duality class.

\begin{lem}
	Let $(\T, \msf{U}, Q)$ be a duality triple and let $\mc{S} \subseteq \T$. Suppose $\mc{S}$ is closed under pure submodules and finite coproducts and that $Q^{2}\mc{S}\subseteq\mc{S}$. Then $\mc{S}$ is a duality class.
\end{lem}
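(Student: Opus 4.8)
The plan is to show that $\mc{S}$ equals the duality class $\overline{\mc{S}}$ that it generates. Since $(\overline{\mc{S}},\t{add}(Q\mc{S}))$ is a duality pair by \cref{gendp}, the class $\overline{\mc{S}}$ is by definition a duality class, so once we know $\mc{S}=\overline{\mc{S}}$ we are done. The inclusion $\mc{S}\subseteq\overline{\mc{S}}$ is immediate from the definition of $\overline{\mc{S}}$, so the whole content lies in proving $\overline{\mc{S}}\subseteq\mc{S}$.

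The first step I would take is the elementary observation that any split monomorphism sits inside a split triangle, which is in particular pure; hence a class closed under pure subobjects is automatically closed under retracts, and so $\mc{S}$ is retract-closed. Next, given $X\in\overline{\mc{S}}$, by definition $QX$ is a retract of $Q(S_1)\oplus\cdots\oplus Q(S_n)$ for some $S_i\in\mc{S}$. Using that $\mc{S}$ is closed under finite coproducts and that $Q$ is additive (so $Q(S_1\oplus\cdots\oplus S_n)\simeq Q(S_1)\oplus\cdots\oplus Q(S_n)$), I set $S:=S_1\oplus\cdots\oplus S_n\in\mc{S}$ and conclude that $QX$ is a retract of $QS$. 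Applying $Q$ once more, $Q^2X$ is a retract of $Q^2S$; since $Q^2S\in Q^2\mc{S}\subseteq\mc{S}$ and $\mc{S}$ is retract-closed, we get $Q^2X\in\mc{S}$. Finally, axiom (2) of \cref{dualitytriple} provides a pure monomorphism $i_X\colon X\to Q^2X$; as $Q^2X\in\mc{S}$ and $\mc{S}$ is closed under pure subobjects, it follows that $X\in\mc{S}$. This establishes $\overline{\mc{S}}\subseteq\mc{S}$, hence $\mc{S}=\overline{\mc{S}}$ is a duality class.

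There is no substantial obstacle in this argument; the only mild subtleties are noticing that closure under pure subobjects forces closure under retracts, and that passing to $Q^2$ is exactly what lets one feed the hypothesis $Q^2\mc{S}\subseteq\mc{S}$ together with the canonical pure monomorphism $X\to Q^2X$. Everything else is formal manipulation of the definitions of $\overline{\mc{S}}$ and of a duality triple.
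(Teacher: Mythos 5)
Your proof is correct and essentially matches the paper's. The paper composes the pure embeddings $X \to Q^2X \to Q^2S$ (the second being a split mono since $Q^2X$ is a retract of $Q^2S$) and invokes closure under pure subobjects once, while you first deduce $Q^2X \in \mc{S}$ via retract-closure (itself a consequence of pure-subobject closure, as you note) and then apply pure-subobject closure to $i_X \colon X \to Q^2X$; the underlying ideas are identical.
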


\begin{proof}
	Since $\mc{S} \subseteq \overline{\mc{S}}$, it suffices to show that $\overline{\mc{S}}\subseteq\mc{S}$. Let $X\in\overline{\mc{S}}$ so $QX$ is a summand of $QS$ for some $S\in\mc{S}$. Then there is a composition of pure embeddings
	\[
	X\to Q^{2}X\to Q^{2}S
	\]
	with the latter object being in $\mc{S}$ by assumption. But since the composition of pure embeddings is pure, it follows, from the assumption that $\mc{S}$ is closed under pure submodules, that $X\in\mc{S}$.
\end{proof}

\section{Examples of duality pairs}
In this section, we provide some examples of duality pairs arising in algebra and topology. 

\begin{ex}
Let $R$ be a ring and consider the derived category $\D(R)$ of left $R$-modules. Recall that the \ti{flat dimension} of $X\in\D(R)$ is given by 
\[
\t{fd}\,X=\sup\{n \in \mathbb{Z} : H_n(R/\mf{a}\otimes_{R}^{\mathsf{L}}X) \neq 0 \t{ where $\mf{a}$ is a finitely generated right ideal of $R$}\},
\]
while the \ti{injective dimension} of $Y\in\D(R^{\circ})$ is given by
\[
\t{id}\,Y= \sup\{n \in \mathbb{Z} : H_{-n}\msf{R}\Hom_{R}(R/\mf{a},Y) \neq 0 \t{ where $\mf{a}$ is a finitely generated right ideal of $R$}\}.
\]
For any $n\geq 0$, let $\mc{F}_{n}\subset\D(R)$ denote the class of complexes of flat dimension at most $n$, and similarly define $\mc{I}_{n}\subset\D(R^{\circ})$ to be the class of complexes of injective dimension at most $n$. Then $(\mc{F}_{n},\mc{I}_{n})$ is a duality pair on the duality triple $(\D(R), \D(R^\circ), \msf{R}\Hom_\Z(-, \qz))$. It is clear that $\mc{I}_{n}$ is closed under retracts and finite coproducts, while the equalities
\[
H_{-n}\msf{R}\Hom_{R}(R/\mf{a},\msf{R}\Hom_{\Z}(X,\qz))= H_{-n}\msf{R}\Hom_{\Z}(R/\mf{a}\otimes_{R}^{\mathsf{L}}X,\qz) = H_n(R/\mf{a}\otimes_{R}^{\mathsf{L}}X),
\]
which hold for any $X\in\D(R)$ by \cite[2.5.7]{dcmca}, show that $Y\in\mc{F}_{n}$ if and only if $\msf{R}\Hom_{\Z}(Y,\qz)\in\mc{I}_{n}$. Note that this example is classic and can be seen immediately from \cite[4.1]{af}. The same reasoning also shows that if $R$ is right coherent, the complexes $\mc{GF}_{n}$ (resp., $\mc{GI}_{n}$) of Gorenstein flat (resp., Gorenstein injective) dimension at most $n$ also appear in the analogous duality pair $(\mc{GF}_{n},\mc{GI}_{n})$, using \cite[3.11]{holmgor}. 
\end{ex}

\begin{ex}\label{localduality}
Let $\T$ be a big tensor-triangulated category and let $\mathscr{K}\subset \T^{\c}$ be a set of compact objects. Let $\T_{\Gamma}$ denote the $\mathscr{K}$-torsion objects of $\T$, that is 
\[
\T_{\Gamma}=\t{Loc}_{\T}^{\otimes}(\mathscr{K})
\]
is the smallest localising $\otimes$-ideal of $\T$ containing $\mathscr{K}$. Associated to $\T_{\Gamma}$ are two further subcategories, the $\mathscr{K}$-local objects $\T_{\K-\t{loc}}=(\T_{\Gamma})^{\perp_0}$ and the $\K$-complete objects $\T_{\Lambda}=(\T_{\K-\t{loc}})^{\perp_0}$, where for any subcategory $\X$ of $\T$, we define \[\X^{\perp_0} := \{Y \in \T : \Hom_\T(X, Y) = 0 \t{ for all $X \in \X$}\}.\] By \cite[2.21]{localduality} (see also~\cite[3.3.5]{axiomatic} and~\cite{cosupport}), the inclusion $\T_{\Gamma}\to \T$ admits a right adjoint, denoted $\Gamma$, while the inclusions $\T_{\K\t{-loc}}\to \T$ and $\T_{\Lambda}\to \T$ admit left adjoints, denoted $L$ and $\Lambda$ respectively. The functor $\Gamma$ is a colocalisation, while $L$ and $\Lambda$ are localisations.

We claim that the pair $(\T_{\Gamma},\T_{\Lambda})$ is a duality pair on $(\T,\T,\mbb{I})$. To see this, first note that any object $X\in \T$ appears in a triangle
\[
\Gamma X\to X\to LX,
\]
which induces a further triangle
\[
F(LX,\mbb{I})\to F(X,\mbb{I})\to F(\Gamma X,\mbb{I}).
\]

The functor $\Gamma\colon \T \to \T$ is left adjoint to $\Lambda$~\cite[2.21(4)]{localduality}, and this adjunction yields an isomorphism $F(\Gamma X,\mbb{I})\simeq \Lambda F(X,\mbb{I})$ which, combined with the above triangle shows that $F(X,\mbb{I})$ is complete (that is, in $\T_{\Lambda})$ if and only if $F(LX,\mbb{I})\simeq 0$.  By \cref{lem:BCproperties}, this is the case if and only if $LX\simeq 0$, which is equivalent to $X\in \T_{\Gamma}$. This shows the first condition. The second is trivial as $\T_{\Lambda}$ is closed under finite coproducts and retracts. As an immediate corollary of \cref{dp} we obtain that $\T_{\Gamma}$ is closed under pure subobjects, pure quotients and pure extensions, which to our knowledge is a new result.

We spell out some concrete instances of this example now. For more details on the functors $\Gamma$ and $\Lambda$ which arise in each of the following examples, together with some further examples of $\T$ and $\K$ one might choose, we refer the reader to~\cite{localduality}.
\begin{enumerate}
\item Let $R$ be a commutative ring and $I$ be a finitely generated ideal. Taking $\T = \msf{D}(R)$ and $\K = \{K(I)\}$ where $K(I)$ is the (unstable) Koszul complex on $I$, we obtain a duality pair $(\msf{D}(R)_\Gamma, \msf{D}(R)_\Lambda)$, where $\msf{D}(R)_\Gamma$ and $\msf{D}(R)_\Lambda$ consist of the derived $I$-torsion and derived $I$-complete complexes respectively. One can also generalize this to the setting where $R$ is a commutative ring spectrum and $I$ is a finitely generated ideal in $\pi_*R$.
\item Letting $\T = \mrm{Sp}$ be the stable homotopy category and $\K = \{F(n)\}$ where $F(n)$ is a finite type $n$ spectrum, we obtain a duality pair $(\C_{n-1}^f, L_{F(n)}\mrm{Sp})$ where $\C_{n-1}^f$ denotes the image of the acyclification with respect to $F(n)$, and $L_{F(n)}\mrm{Sp}$ denotes the Bousfield localisation at $F(n)$. Note that the resulting duality pair is independent of the choice of finite type $n$ spectrum $F(n)$. One may also perform a local version of this by setting $\T$ to be the category of $E(n)$-local spectra, where $E(n)$ denotes the Johnson-Wilson theory at some height $n$.
\item Take $\T = \mrm{Sp}_G$ to be the equivariant stable homotopy category with respect to a compact Lie group $G$, and let $\mc{F}$ be a family of subgroups of $G$. By taking $\K = \{G/H_+ \mid H \in \mc{F}\}$ one obtains a duality pair $(\mrm{Sp}_G^{\text{$\mc{F}$-free}}, \mrm{Sp}_G^{\text{$\mc{F}$-cofree}})$ where $\mrm{Sp}_G^{\text{$\mc{F}$-free}}$ denotes the subcategory of $\mc{F}$-free $G$-spectra (i.e., those which have geometric isotropy in $\mc{F}$, equivalently, those for which the natural map $E\mc{F}_+ \wedge X \to X$ is an equivalence) and $\mrm{Sp}_G^{\text{$\mc{F}$-cofree}}$ denotes the subcategory of $\mc{F}$-cofree $G$-spectra (i.e., those for which the natural map $X \to F(E\mc{F}_+, X)$ is an equivalence). 
\end{enumerate}
\end{ex}

\begin{ex}
Let $\T$ be a big tensor-triangulated category and fix a compact object $K\in\T$. Consider the Auslander and Bass classes associated to $K$:
\[
\A_K = \{X \in \T \mid X \xrightarrow{\sim} F(K, K \otimes X)\} \quad \mrm{and} \quad \B_K = \{Y \in \T \mid K \otimes F(K,Y) \xrightarrow{\sim} Y\}.
\]
We refer the reader to~\cite{foxbyequivalence} for an example of Auslander and Bass classes in relation to~\cref{localduality}, and for further references on the history of these classes. We can realise $\A_{K}$ (respectively $\B_{K}$) as the class of objects in $\T$ such that the unit (respectively counit) of the adjunction $K \otimes - \dashv F(K,-)$ on $\T$ is an equivalence. The pair $(\A_{K},\B_{K})$ is, in fact, a symmetric duality pair. To see this, suppose that $X\in\T$, then there are isomorphisms
\begin{align*}
K\otimes F(K, F(X,\mbb{I})) & \simeq K\otimes F(K\otimes X,\mbb{I}) \t{ by adjunction},\\
&\simeq F(F(K,K\otimes X),\mbb{I}) \t{ as $K$ is compact}.
\end{align*}
In particular, we see that if $X\in \A_{K}$, then $F(X,\mbb{I})\in\B_{K}$, while if $F(Y,\mbb{I})\in\B_{K}$, we must have $Y\in\A_{K}$. Clearly $\B_{K}$ is closed under retracts and finite coproducts. To see $(\B_{K},\A_{K})$ is a duality pair, note the following isomorphisms hold for any $Y\in\T$
\begin{align*}
	F(K,K\otimes F(Y,\mbb{I})) & \simeq F(K,F(F(K,Y),\mbb{I})) \t{ by the compactness of $K$,} \\
	& \simeq F(K\otimes F(K,Y),\mbb{I}) \t{ by adjunction}.
\end{align*} 
Again, we then see that $Y\in\B_{K}$ if and only if $F(Y,\mbb{I})\simeq F(K,K\otimes F(Y,\mbb{I}))$, that is $F(Y,\mbb{I})\in\A_{K}$. Again, $\A$ is closed under finite coproducts and retracts.
\end{ex}

\begin{ex}
Let $R$ be a commutative ring and $\mf{a}$ an ideal of $R$ generated by a sequence $x_{1},\cdots,x_{n}$ of elements of $R$. Recall that the $\mf{a}$-depth of a complex $M\in\D(R)$ is 
\[
\mf{a}\t{-depth}(M)= \inf\{n \in \mathbb{Z} : H_{-n}\msf{R}\Hom_R(R/\mf{a},M) \neq 0\}
\]
while the $\mf{a}$-width of $M$ is given by
\[
\mf{a}\t{-width}(M)=\inf\{n \in \mathbb{Z} : H_n(R/\mf{a} \otimes_R^\msf{L} M) \neq 0\}.
\]
For $i\geq 0$, let $\msf{D}_{i}$ denote the class of complexes of $\mf{a}$-depth at least $i$, and likewise define $\msf{W}_{i}$ to be the class of complexes of $\mf{a}$-width at least $i$. Then $(\msf{D}_{i},\msf{W}_{i})$ is a symmetric duality pair. This follows from \cite[2.5.7]{dcmca} or alternatively from \cite[14.4.14]{dcmca}. Note that, in the case of modules rather than complexes, it was shown in \cite[2.7]{hj} that there are analogous duality pairs.
\end{ex}

\begin{ex}
In~\cref{sec:silting} we apply duality pairs to study silting and cosilting classes, and we refer the reader there for more details. If $X\in\D(R)$ is bounded silting, then $X^{+}:=\msf{R}\Hom_{\Z}(X,\qz)$ is bounded cosilting in $\D(R^\circ)$ by~\cite[3.3]{HrbekHugel}. We show that if $\msf{Silt}(X)$ is the associated silting class of $X$ and $\msf{Cosilt}(X^+)$ is the associated cosilting class of $X^{+}$, then $(\msf{Silt}(X),\msf{Cosilt}(X^+))$ is a symmetric, (co)product closed duality pair. We moreover use this to identify the classes as dual definable categories, see \cref{siltingthm} for more details. 
\end{ex}

\section{Definability and duality pairs}
In this section, we explore the relationship between definable classes and duality pairs by considering Auslander--Gruson--Jensen duality for compactly generated triangulated categories. More precisely, we introduce an antiequivalence between left $\T^\c$-modules and left $\U^\c$-modules with respect to a suitable duality triple $(\T,\U,Q)$, which we call Auslander--Gruson--Jenson duality. We view this as a triangulated analogue of the classic Auslander--Gruson--Jensen duality for modules, which was independently introduced by Auslander in~\cite{Auslander} and Gruson--Jensen in~\cite{GrusonJensen}. In the classical setting of finitely accessible categories, this duality enables the introduction of dual definable categories~\cite{dac}, and we show that our antiequivalence allows the same in the triangulated setting.

\subsection{The case for modules}
To motivate this discussion, we provide some background on definability in module categories and then relate this to duality pairs. Even in this classical setting, we obtain a new result, see \cref{sym}. 

To this end, let $R$ be a ring and consider $(\mod{R},\tb{Ab})$, the category of additive functors from finitely presented $R$-modules to abelian groups. A functor $Q\in (\mod{R},\ab)$ is \ti{finitely presented} if there is a morphism $f\colon A\to B$ in $\mod{R}$ such that the induced sequence of functors
\[
\Hom_{R}(B,-)\to\Hom_{R}(A,-)\to Q\to 0
\] 
is exact in $(\mod{R},\ab)$. We will let $(\mod{R},\ab)^{\t{fp}}$ denote the category of finitely presented functors. As we can view $f\colon A\to B$ as a morphism in the category $\Mod{R}$, we can also consider the functor that is obtained as the cokernel of the induced map $\Hom_{R}(B,-)\to\Hom_{R}(A,-)$ in $(\Mod{R},\ab)$, which we shall denote by $\tilde{Q}$. By definition, the restriction of $\tilde{Q}$ to $\mod{R}$ is nothing other than $Q$, and as $A$ and $B$ are finitely presented modules, $\tilde{Q}$ commutes with direct limits and direct products. In fact, by \cite[\S 10.2.8]{psl}, $\tilde{Q}$ is the unique functor that extends $Q$ with these properties.

\begin{defn}
	A class $\mc{D}\subseteq\Mod{R}$ is said to be \ti{definable} if there is a set $\mc{S}\subset (\mod{R},\ab)^{\t{fp}}$ such that
	\[
	\mc{D}=\t{Ker}(\mc{S}):=\{M\in\Mod{R}:\tilde{Q}M=0 \t{ for all }Q\in\mc{S}\}.
	\]
\end{defn}

Equivalently, a class of modules is definable precisely when it is closed under direct limits, direct products and pure subobjects by~\cite[3.4.7]{psl}.

Note that given a definable class $\mathcal{D}$, the class of functors 
\[
\mathcal{S}_{\mathcal{D}}:=\{Q\in (\mod{R},\ab)^{\t{fp}}: Q(M)=0\t{ for all }M\in\mathcal{D}\}
\]
is a Serre subcategory. Conversely, given a Serre subcategory, one can consider the definable class given by its kernel. In this way one establishes a bijection between definable subcategories of $\Mod{R}$ and Serre subcategories of $(\mod{R},\ab)^{\t{fp}}$.  

To see how definable classes relate to duality pairs, we next recall the Auslander--Gruson--Jensen functor.
\begin{defn} 
The Auslander--Gruson--Jensen (AGJ) functor is a contravariant functor $\delta\colon(\mod{R},\ab)^{\t{fp}}\to (\mod{R^{\circ}},\ab)^{\t{fp}}$ defined by
\[
(\delta Q)(M) = \Hom_{(\mod{R},\ab)}(Q, M\otimes_{R}-)
\]
for every $Q\in (\mod{R},\ab)^{\t{fp}}$ and $M\in \mod{R^{\circ}}$.
\end{defn}
It can be seen in \cite[\S 10.3]{psl} that $\delta$ is an exact functor. Abusing notation, we can also define $\delta\colon (\mod{R^{\circ}},\ab)^{\t{fp}}\to (\mod{R},\ab)^{\t{fp}}$ via
\[
(\delta F)(N)=\Hom_{(\mod{R^{\circ}},\ab)}(F,-\otimes_{R}N)
\]
It follows that $\delta^{2}$ is naturally isomorphic to the identity functor. Consequently one obtains a bijection between Serre subcategories in $(\mod{R},\ab)^{\t{fp}}$ and $(\mod{R^{\circ}},\ab)^{\t{fp}}$, and thus there is a natural correspondence between definable subcategories of $\Mod{R}$ and $\Mod{R^{\circ}}$. 
\begin{defn}
For a definable category $\mathcal{D}$ of $\Mod{R}$ we define its \ti{dual definable category} $\mathcal{D}^d$ by $\mathcal{D}^{d}=\t{Ker}(\delta(\mathcal{S}_{\mathcal{D}}))$. 
\end{defn}
From the preceding discussion, it is immediate that $\mathcal{D}^{dd}=\mathcal{D}$. It is at this stage we return to duality pairs to highlight how, in practice, one can move between $\mathcal{D}$ and $\mathcal{D}^{d}$. 

\begin{thm}\label{sym}
Let $R$ be a ring and $\mathcal{D}$ be a definable category in $\Mod{R}$. Then $(\mathcal{D},\mathcal{D}^{d})$ is a symmetric, product-closed duality pair. 
Moreover, if $(\mathsf{A},\mathsf{B})$ is a symmetric duality pair, then the following are equivalent:
\begin{enumerate}
	\item either class is closed under coproducts;
	\item either class is closed under products;
	\item $\mathsf{A}$ is definable and $\mathsf{B}=\mathsf{A}^{d}$;
	\item $\mathsf{B}$ is definable and $\mathsf{A}=\mathsf{B}^{d}$.
\end{enumerate}
In particular, if either class is closed under (co)products, both classes are and they are thus definable.
\end{thm}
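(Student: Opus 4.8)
The plan is to split the theorem into its two independent assertions. For the first assertion, that $(\mathcal{D},\mathcal{D}^d)$ is a symmetric product-closed duality pair, I would first recall that $\mathcal{D}$ is definable in $\Mod R$, hence closed under direct limits, products and pure submodules by \cite[3.4.7]{psl}, and dually $\mathcal{D}^d$ is definable in $\Mod{R^\circ}$, so in particular both are closed under products and finite coproducts and retracts. The key point is to establish the biconditional $M \in \mathcal{D} \iff M^+ \in \mathcal{D}^d$, where $M^+ = \Hom_\Z(M,\qz)$. For this I would use the compatibility between the AGJ functor $\delta$ and the character dual: for a finitely presented functor $Q \in (\mod R,\ab)^{\t{fp}}$ and a module $M$, one has a natural isomorphism $\widetilde{(\delta Q)}(M^+) \cong \Hom_\Z(\widetilde Q(M),\qz)$, which is exactly the identity $(\delta Q)(M) = \Hom_{(\mod R,\ab)}(Q, M\otimes_R -)$ unwound together with $M^+ \otimes_{R} - \cong \Hom_\Z(\Hom_R(-,M),\qz)$ on finitely presented modules (this is essentially \cite[\S10.3]{psl}). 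Since $\qz$ is an injective cogenerator, $\Hom_\Z(\widetilde Q(M),\qz) = 0 \iff \widetilde Q(M) = 0$, so running over $Q \in \mathcal{S}_{\mathcal{D}}$ gives $M \in \mathcal{D} \iff M^+ \in \t{Ker}(\delta(\mathcal{S}_{\mathcal{D}})) = \mathcal{D}^d$. Applying $\delta^2 = \mathrm{id}$ and $\mathcal{D}^{dd} = \mathcal{D}$, the same argument with roles reversed shows $(\mathcal{D}^d,\mathcal{D})$ is also a duality pair, so the pair is symmetric; product-closedness is immediate since both classes are definable.

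For the second assertion, let $(\A,\B)$ be a symmetric duality pair. The implications among (1), (2), (3), (4) I would organize as follows. The equivalence $(1)\iff(2)$ for both classes simultaneously is precisely \cref{symmclosure}(4) (its proof does not use the triangulated structure — the relevant input is that the canonical map $\oplus_I X_i \to \prod_I X_i$ is a pure monomorphism, which holds in $\Mod R$, together with the fact that products of pure monomorphisms are pure). Next, $(3) \implies (2)$: if $\A$ is definable then it is product-closed, and by symmetry so is $\B = \A^d$ (it is also definable being a dual definable class); similarly $(4)\implies(2)$. The substantive direction is $(2) \implies (3)$: assuming $\A$ is closed under products, I must show $\A$ is definable, i.e. closed under direct limits, products and pure submodules. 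Product-closure is the hypothesis; closure under pure submodules is \cref{dp} (the module-category analogue, \cref{hjtheorem}, or directly: pure exactness gives a split sequence after applying $(-)^+$, and $\B$ is summand-closed). By \cref{coprodclosed}/\cref{symmclosure}, product-closure also forces coproduct-closure, hence closure under all direct sums. It remains to get closure under direct limits; here I would use that a direct limit is a pure quotient of the direct sum of the system (the standard presentation $0 \to K \to \oplus_i M_i \to \varinjlim M_i \to 0$ with $K$ a pure submodule of $\oplus_i M_i$), so $\A$ closed under coproducts and pure quotients (again \cref{dp}) yields closure under direct limits, whence $\A$ is definable. Once $\A$ is definable, $\mathcal{S}_{\A}$ is a Serre subcategory and by the first part applied to the definable class $\A$ we get that $(\A,\A^d)$ is a duality pair; comparing with $(\A,\B)$ and using that $\B$ is determined by $\A$ via condition (1) of a duality pair ($Y \in \B \iff$ some/every $M$ with $M^+ \cong Y$, or more carefully, $\B$ and $\A^d$ have the same preimage under $Q$ hence agree as far as the duality pair axioms see — one checks $\B = \{Y : Y \text{ is a retract of } M^+ \text{ for some } M \in \A\} = \A^d$ using that $\A^d$ is closed under retracts and $M^+$ is pure injective), I would conclude $\B = \A^d$, giving (3). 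Then $(3) \iff (4)$ follows by the symmetry $\mathcal{D}^{dd} = \mathcal{D}$. The final sentence of the theorem is just the equivalence $(1)\iff(2)$ combined with $(2)\implies(3)$ read for both classes.

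The step I expect to be the main obstacle is pinning down precisely the naturality and exactness of the identification $\widetilde{(\delta Q)} \circ (-)^+ \cong (-)^{\!+} \circ \widetilde Q$ on all of $\Mod R$ (not just on finitely presented modules), since $\delta$ is defined on finitely presented functors but is applied via the extension $\widetilde{(-)}$ which involves commuting Hom out of a colimit and a product; this is where I would lean hardest on \cite[\S10.2--10.3]{psl}. A secondary subtlety is the precise argument that $\B = \A^d$ rather than merely $\A^d \subseteq \B$ or vice versa: one must use both the retract-closure of $\B$ and the pure-injectivity of character duals to see that membership of $Y$ in $\B$ is detected by $Y$ being a retract of $M^+$ for $M \in \A$, exactly as in the uniqueness statement implicit in \cref{gendp} and \cref{barclosure}. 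Everything else is bookkeeping with the closure properties already established in \cref{closure}, \cref{dp}, \cref{coprodclosed} and \cref{symmclosure}.
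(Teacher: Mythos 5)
Your overall route is the same as the paper's: cite \cite[3.4.17]{psl} (or unwind it via AGJ compatibility) for the biconditional $M\in\mathcal{D}\iff M^+\in\mathcal{D}^d$, use closure under pure subobjects/quotients plus product closure to get closure under direct limits and hence definability for $(2)\Rightarrow(3)$, and use \cref{symmclosure} for $(1)\iff(2)$. The one place where your argument goes wrong is the identification $\mathsf{B}=\mathsf{A}^d$ in $(2)\Rightarrow(3)$.

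You propose to characterize $\mathsf{B}$ as $\{Y : Y \text{ is a retract of } M^+ \text{ for some } M\in\mathsf{A}\}$ and then match this with $\mathsf{A}^d$. This is false: the right-hand side consists only of pure-injective modules (every character dual is pure injective, and retracts of pure injectives are pure injective), whereas $\mathsf{B}$ and $\mathsf{A}^d$ can, and typically do, contain non-pure-injective modules. A concrete counterexample is $(\mathsf{A},\mathsf{B})=(\Mod{R},\Mod{R^\circ})$, which is a symmetric product-closed duality pair with $\mathsf{A}^d=\Mod{R^\circ}$, yet the class of retracts of duals of $R$-modules is the class of pure-injective $R^\circ$-modules, a proper subclass. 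Your accompanying remark that ``$\mathsf{B}$ and $\mathsf{A}^d$ have the same preimage under $(-)^+$'' is true but inconclusive: agreeing on the image of $(-)^+$ says nothing about the many modules not of the form $M^+$. The correct argument, which the paper uses, avoids characterizing $\mathsf{B}$ altogether and exploits the pure monomorphism $Y\to Y^{++}$: if $B\in\mathsf{B}$ then $B^+\in\mathsf{A}$ (symmetry of $(\mathsf{A},\mathsf{B})$), so $B^{++}\in\mathsf{A}^d$ (since $(\mathsf{A},\mathsf{A}^d)$ is a duality pair), and since $\mathsf{A}^d$ is definable and hence closed under pure submodules, $B\in\mathsf{A}^d$. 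Conversely, if $X\in\mathsf{A}^d$ then $X^+\in\mathsf{A}$, so $X^{++}\in\mathsf{B}$, and since $\mathsf{B}$ is closed under pure submodules (it is the left class of the duality pair $(\mathsf{B},\mathsf{A})$, so \cref{hjtheorem} applies), $X\in\mathsf{B}$. Once you replace your retract description with this double-dual argument, the rest of your proof is sound.
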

\begin{proof}
Write $(-)^+ = \Hom_{\mbb{Z}}(-,\qz)$ for the character dual. By~\cite[3.4.17]{psl} we have $M \in \mathcal{D}$ if and only if $M^{+} \in \mathcal{D}^{d}$ which shows that $(\mathcal{D}, \mathcal{D}^d)$ is a duality pair. It is product-closed by definition, and is symmetric since $\mathcal{D}^{dd} = \mathcal{D}$. We now show the equivalences. That $(1)$ and $(2)$ are equivalent follows from \cref{symmclosure}. For $(2)\implies (3)$, assume without loss of generality that $\mathsf{A}$ is closed under products. The class $\mathsf{A}$ is also closed under pure subobjects and pure quotients by~\cite[3.1]{hj} (see also \cref{hjtheorem}). Since direct limits are pure quotients of coproducts which are in turn pure subobjects of products, $\mathsf{A}$ is therefore closed under direct limits. Consequently $\mathsf{A}$ is definable and there is a symmetric duality pair $(\mathsf{A},\mathsf{A}^{d})$. If $B \in \mathsf{B}$, then $B^{+}\in \mathsf{A}$ and thus, by the first statement, $B^{++}\in\mathsf{A}^{d}$. Since $B \to B^{++}$ is a pure monomorphism, we have $B\in\mathsf{A}^{d}$. On the other hand, for $X\in\mathsf{A}^{d}$, we have $X^{++}\in\mathsf{B}$, hence $X\in\mathsf{B}$, yielding $\mathsf{A}^{d}=\mathsf{B}$. The implication $(3)\implies (4)$ is immediate as $\A^{dd} = \A$, and $(4)\implies (1)$ is trivial from the definition of definability.
\end{proof}

\begin{rem}\label{symmetry}
Note that the symmetric assumption on the duality pair in the previous theorem cannot be disposed of, even if $\mathsf{A}$ is definable. For example, let $R$ be a ring that is right noetherian. The class $\msf{F}$ of flat left $R$-modules is then definable as it is product closed (all this requires is right coherence), and the class $\msf{I}$ of injective right $R$-modules is also definable, as it is closed under directed colimits (see \cite[\S 3.4.3]{psl}). With these assumptions $(\msf{F},\msf{I})$ is a symmetric duality pair, but there is also a duality pair $(\msf{I},\msf{F}_{\t{PI}})$, where $\msf{F}_{\t{PI}}$ denotes the pure injective flat modules. This is because the dual of any module is pure injective, and the class of pure injective modules is closed under finite coproducts and direct summands. However $(\msf{F}_{\t{PI}},\msf{I})$ is, in general, not a duality pair. If it were, then by \cref{sym} $\msf{F}_{\t{PI}}$ would be definable, and since $\Hom_{\Z}(R,\qz)$ is an injective right $R$-module, it would follow that $R$ is a pure injective module over itself, and therefore every flat module would be pure injective. Therefore by \cite[3.2]{rothmaler}, $R$ would have to be a left perfect ring.
\end{rem}
However, with an additional assumption, the above theorem has the following corollary which enables us to bypass the concerns of the remark.

\begin{cor}\label{symmcor}
Suppose $(\A,\B)$ is a duality pair such that both $\A$ and $\B$ are definable. Then $\A=\B^{d}$, $\B = \A^d$, and the duality pair is symmetric.
\end{cor}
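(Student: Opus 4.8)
The heart of the matter is to show that the reversed pair $(\B,\A)$ is again a duality pair, i.e.\ that $(\A,\B)$ is symmetric; the remaining assertions then follow from \cref{sym}. Write $(-)^{+}=\Hom_{\Z}(-,\qz)$. Closure of $\A$ under finite direct sums and direct summands --- the second axiom required of $(\B,\A)$ --- is automatic, since it holds for the left class of any duality pair. For the first axiom I must check that $Y\in\B\iff Y^{+}\in\A$ for $Y\in\Mod{R^{\circ}}$. Two observations combine to give this. First, since $(\A,\B)$ is a duality pair, $Y^{+}\in\A$ if and only if $Y^{++}=(Y^{+})^{+}\in\B$. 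Second, since $\B$ is definable, $Y\in\B$ if and only if $Y^{++}\in\B$; this double-character-dual invariance follows from \cref{sym} applied to $\B$ and then to $\B^{d}$, giving $Y\in\B\iff Y^{+}\in\B^{d}\iff Y^{++}\in\B^{dd}=\B$, and may also be found in \cite[3.4.17]{psl}. Chaining the two observations yields $Y\in\B\iff Y^{++}\in\B\iff Y^{+}\in\A$, so $(\B,\A)$ is a duality pair and $(\A,\B)$ is symmetric.

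With symmetry established, I would finish using \cref{sym}. Since $\A$ is definable it is in particular closed under products, so the symmetric duality pair $(\A,\B)$ satisfies condition $(2)$ of \cref{sym}; hence conditions $(3)$ and $(4)$ hold as well, yielding $\B=\A^{d}$ and $\A=\B^{d}$. (One could equally feed definability of $\B$ into \cref{sym}.)

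I expect the only real pitfall to be the temptation to invoke the equivalence $(1)$--$(4)$ of \cref{sym} prematurely: that equivalence presupposes the duality pair is symmetric, which is precisely part of what must be proved here. So the genuine work is the verification that $(\B,\A)$ is a duality pair, and the one nontrivial ingredient is that a definable class is detected on double character duals, i.e.\ $Y\in\B\iff Y^{++}\in\B$; this is what lets the defining equivalence of $(\A,\B)$ be transported across the dual $(-)^{+}$.
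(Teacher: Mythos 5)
Your proof is correct, and the underlying ingredients are the same as in the paper — the symmetric duality pairs $(\A,\A^d)$ and $(\B,\B^d)$ coming from the first part of \cref{sym}, the defining biconditional of $(\A,\B)$, and the stability of definable classes under $(-)^{++}$ (equivalently, under pure submodules). The organization differs slightly: you first verify that $(\B,\A)$ is a duality pair by the chain $Y\in\B\iff Y^{++}\in\B\iff Y^{+}\in\A$, which establishes symmetry of $(\A,\B)$ up front, and then you feed that into the equivalences $(1)$--$(4)$ of \cref{sym} to harvest $\B=\A^d$ and $\A=\B^d$. The paper instead proves $\B=\A^d$ directly by double inclusion (chasing $B\mapsto B^{++}$ through the three duality pairs and closing up with definability), and obtains symmetry only as a consequence of identifying $\B$ with $\A^d$. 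Your route is perhaps marginally more modular — symmetry becomes an explicit intermediate lemma rather than a byproduct — but the two arguments are substantively the same, and your caution about not invoking the equivalences in \cref{sym} before symmetry is in hand is exactly the right point to watch.
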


\begin{proof}
Let $B\in\B$, then, as $(\B, \B^d)$ is a symmetric duality pair by \cref{sym}, we have $B^{++}\in\B$. Yet as $(\A,\B)$ is a duality pair, it follows that $B^{+}\in\A$ and therefore $B^{++}\in\A^{d}$. Yet since $\A^{d}$ is definable, we have that $B\in\A^{d}$, hence $\B\subseteq \A^{d}$. For the other inclusion, if $Y\in\A^{d}$, then $Y^{+}\in \A$ and therefore $Y^{++}\in\B$, but, again, $Y\in\B$ as $\B$ is definable. Thus $\B=\A^{d}$ and the conclusion follows as $\A^{dd} = \A$.
\end{proof}

\subsection{The triangulated case}\label{ttcase}
Let us now turn to the analogous question for compactly generated triangulated categories. 
Firstly, we note that definability is well established in the setting of compactly generated triangulated categories (see, for example \cite{krcoh} for a treatment) and we recall the key points.
\begin{defn}
A class $\mc{D}$ of objects in a compactly generated triangulated category $\T$ is \ti{definable} if there is a set of maps $\{f_{i}\colon A_{i}\to B_{i}\}_{I}$, with $A_{i},B_{i}\in\T^{\c}$, such that
\[
\mc{D}=\{X\in\T:\Hom_{\T}(f_{i},X) \t{ is surjective for all }i\in I\}.
\]
\end{defn}
Note that this definition is essentially the same as the one for modules, with maps between compacts replacing maps between finitely presented modules. A functor $\T\to \ab$ that arises as 
\[
\t{coker}(\Hom_{\T}(f,-)\colon \Hom_{\T}(B,-)\to \Hom_{\T}(A,-))
\]
for a morphism $f\colon A\to B$ of compact objects is called a coherent functor in \cite{krcoh}, while its restriction to $\T^{\c}$ is called a \ti{finitely presented functor}. We let $(\T^{\c},\ab)^{\t{fp}}$ denote the category of finitely presented functors $\T^{\c}\to\ab$. 

As with definable classes of modules, definable classes in compactly generated triangulated categories (under a mild assumption) can be determined by closure properties. 
\begin{prop}\label{definableequivalent}
Let $\T$ be a compactly generated triangulated category which is the underlying category of a strong and stable derivator. Then the following conditions are equivalent for a class of objects $\mc{D}$ in $\T$:
\begin{enumerate}
\item $\mc{D}$ is definable;
\item $\mc{D}$ is closed under products, pure subobjects and directed homotopy colimits;
\item $\mc{D}$ is closed under products, pure subobjects and pure quotients.
\end{enumerate}
\end{prop}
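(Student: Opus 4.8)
\textbf{Proof proposal for \cref{definableequivalent}.}

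The plan is to prove the cycle $(1)\Rightarrow(2)\Rightarrow(3)\Rightarrow(1)$, mirroring the module-category argument \cite[3.4.7]{psl} but substituting homotopy colimits for direct limits and invoking the derivator hypothesis precisely where filtered homotopy colimits need to behave well. For $(1)\Rightarrow(2)$: if $\mc{D}=\{X : \Hom_\T(f_i,X)\text{ surjective for all }i\}$, then each defining condition is the vanishing of the coherent functor $F_i=\operatorname{coker}(\Hom_\T(f_i,-))$. Closure under products is immediate since $\Hom_\T(A_i,-)$ sends products to products (the $A_i,B_i$ being objects, not just compacts, this is automatic) and a product of surjections of abelian groups is a surjection. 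Closure under pure subobjects follows from \cref{tpuredef}: a pure monomorphism $X\to Y$ with $Y\in\mc D$ induces, after applying $\tb y$ and then $\Hom_{\Mod{\T^\c}}(-, \text{injectives})$-type arguments, that the coherent functors $F_i$ — which factor through the restricted Yoneda embedding — vanish on $X$ as well; concretely, $F_i(X)$ is a subquotient computed from $\tb y X \hookrightarrow \tb y Y$ and exactness of the pure sequence forces $F_i(X)=0$. Closure under directed homotopy colimits is where the strong stable derivator is used: by \cite[\S2]{krcoh} (or the standard fact that in such a derivator the restricted Yoneda embedding sends directed homotopy colimits to directed colimits in $\Mod{\T^\c}$, as already invoked via \cite[3.5]{lv} in the proof of \cref{algebraicdualitytriple}), each coherent functor $F_i$ commutes with directed homotopy colimits, and a directed colimit of zero objects is zero.

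For $(2)\Rightarrow(3)$: this is the purely formal observation that a pure quotient is a directed homotopy colimit of a diagram involving only pure subobjects and finite (co)products of the terms — precisely, any pure epimorphism $Y\to Z$ exhibits $Z$ as a directed homotopy colimit built from $Y$ via \cite[2.8]{krcoh} (the characterisation of pure triangles as directed homotopy colimits of split triangles), so closure under products, pure subobjects and directed homotopy colimits yields closure under pure quotients. Here one should cite the triangulated analogue of \cref{puredef}(3) recorded in the excerpt at \cite[2.8]{krcoh}.

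The implication $(3)\Rightarrow(1)$ is the substantive one and I expect it to be the main obstacle. The strategy is: given $\mc D$ closed under products, pure subobjects and pure quotients, consider the class $\mc S_{\mc D}=\{F\in(\T^\c,\ab)^{\t{fp}} : F(\,\cdot\,)\text{-extension}\ \tilde F \text{ vanishes on }\mc D\}$ of finitely presented functors whose coherent extension kills every object of $\mc D$, and show $\mc D=\operatorname{Ker}(\mc S_{\mc D})$. The inclusion $\mc D\subseteq\operatorname{Ker}(\mc S_{\mc D})$ is by definition. For the reverse, one must show that the three closure properties already force $\mc D$ to be cut out by finitely presented functors; the key input is that $\mc S_{\mc D}$ is a Serre subcategory of $(\T^\c,\ab)^{\t{fp}}$ and that, by the theory of coherent functors and the correspondence between definable subcategories of $\T$ and Serre subcategories of $(\T^\c,\ab)^{\t{fp}}$ established in \cite{krcoh}, every class closed under products and pure subobjects that additionally absorbs the relevant colimits is definable. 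Concretely: closure under pure subobjects and pure quotients already gives that $\mc D$ is a definable-type class in the functor-category picture (its closure under $\tb y$ is a localising-and-Serre-stable subcategory of $\Mod{\T^\c}$), and closure under products upgrades this to an honest definable class. The delicate point — and the place the derivator hypothesis must be re-used, to run $(3)\Rightarrow(2)$-type reasoning internally — is verifying that closure under pure quotients plus products actually implies closure under directed homotopy colimits, so that the chain closes; here I would cite \cite[\S2]{krcoh} for the fact that a directed homotopy colimit can be realised as a pure quotient of a coproduct, which is a pure subobject of the corresponding product, reducing directed homotopy colimit closure to the two hypotheses already in hand. Once all three closure properties are available, definability follows from the functor-category dictionary, completing the cycle.
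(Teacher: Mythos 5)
Your final observation is the same as the paper's: a directed homotopy colimit is a pure quotient of the coproduct (via the natural map $\oplus_I X_i \to \mathrm{hocolim}_I X_i$ being a pure epimorphism), and the coproduct is a pure subobject of the product, which reduces closure under directed homotopy colimits to closure under products, pure subobjects and pure quotients. That is exactly the content of the paper's $(3)\Rightarrow(2)$, which is the only new step there; the paper then simply cites Laking~\cite[3.12]{laking} for the equivalence $(1)\Leftrightarrow(2)$, and notes that $(1)\Rightarrow(3)$ is immediate from the definition (a pure epimorphism induces surjections $\Hom(C,Y)\to\Hom(C,Z)$ for each compact $C$, and a two-out-of-three argument with the squares $\Hom(B_i,-)\to\Hom(A_i,-)$ then gives the claim). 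Your attempt to prove $(1)\Rightarrow(2)$ from scratch, while pointing in the right direction, is much more work than simply invoking Laking.

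However, your $(2)\Rightarrow(3)$ step contains a genuine gap. You write that a pure quotient $Z$ of $Y$ is ``a directed homotopy colimit built from $Y$'' via \cite[2.8]{krcoh}. What that result gives is that the pure triangle $X\to Y\to Z$ is a directed homotopy colimit of \emph{split} triangles $X_i\to Y_i\to Z_i$; but the $Y_i$ converge to $Y$ without generally lying in, or being pure subobjects of, $Y$ itself, so there is no reason for the $Z_i$ (retracts of the $Y_i$) to lie in $\mc{D}$, and closure under directed homotopy colimits does not apply. This step fails in exactly the same way the analogous direct argument fails for modules; the standard route is to pass through definability, i.e.\ show $(2)\Rightarrow(1)\Rightarrow(3)$. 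Your argument as written therefore does not close the cycle $(1)\Rightarrow(2)\Rightarrow(3)\Rightarrow(1)$: the $(2)\Rightarrow(3)$ link is broken. If you instead replace it with the trivial $(1)\Rightarrow(3)$ (which, together with your correct $(3)\Rightarrow(2)$ observation and the Laking citation for $(1)\Leftrightarrow(2)$), you recover the paper's proof.

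Finally, in $(3)\Rightarrow(1)$ the detour through Serre subcategories and the functor-category dictionary is unnecessary: once you know $(3)\Rightarrow(2)$ (which is the only thing your argument there really establishes), $(2)\Rightarrow(1)$ is just Laking's theorem again. Stating it as a separate, ``substantive'' implication and then re-deriving $(2)$ inside it obscures what is actually being proved.
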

\begin{proof}
(1) and (2) are equivalent by \cite[3.12]{laking}, and (1) implies (3) follows from the definition. So it remains to verify that (3) implies (2). For any directed category $I$, the natural map $\alpha\colon \oplus_I X_i \to \mrm{hocolim}_I X_i$ is a pure epimorphism, since for any compact object $C$, the map $\Hom_\T(C, \alpha)$ may be identified with \[\oplus_I \Hom_\T(C, X_i) \to \mrm{colim}_I\Hom_\T(C, X_i)\] by \cite[1.5]{Neemanconnection}. Therefore any directed homotopy colimit is a pure quotient of a coproduct, and since coproducts are pure subobjects of products, this completes the proof. 
\end{proof}

To link this to duality pairs, we now introduce a refined notion of duality triple. We note that this refined notion still encompasses the main examples of interest.
\begin{defn}\label{AGJdualitytriple}
An \emph{Auslander--Gruson--Jensen duality triple} is a duality triple $(\T,\U,Q)$ together with a pair of functors $D\colon (\T^\c)^\mathrm{op} \to \U^\c$ and $D\colon (\U^\c)^\mrm{op} \to \T^\c$ satisfying the following properties:
\begin{enumerate}
\item the functors \[\begin{tikzcd}\T^\c \arrow[r, yshift=1mm, "D"] & (\U^\c)^\mathrm{op} \arrow[l, yshift=-1mm, "D"] \end{tikzcd}\] form an adjoint equivalence of categories;
\item for all $C \in \T^\c$ and $X \in \T$, and all $C' \in \U^\c$ and $Y \in \U$ there are isomorphisms \[\Hom_\T(C,X)^+ = \Hom_\U(DC, QX) \quad \mathrm{and} \quad \Hom_\U(C',Y)^+ = \Hom_\T(DC', QY)\] which are natural in both $C$ and $X$, and $C'$ and $Y$, where $(-)^+ = \Hom_\Z(-, \qz)$ denotes the character dual.
\end{enumerate}
We write $(\T,\U,Q,D)$ for the data of an Auslander--Gruson--Jensen (AGJ) duality triple.
\end{defn}

\begin{rem}
We note that condition (1) amounts to saying that $D^2 = \mathrm{id}$, and that for all $X \in \T^\c$ and $Y \in \U^\c$, there is an isomorphism \[\Hom_\T(DY,X) = \Hom_\U(DX,Y)\] where everything is appropriately natural. As with the definition of duality triple, we note that the definition of AGJ duality triple is completely symmetric. 
\end{rem}

We now show that our two main duality pairs of interest form AGJ duality triples.
\begin{prop}\label{AGJdualitytripleexamples}\leavevmode
\begin{enumerate}
\item Let $\T$ be a big tensor-triangulated category. Then with $D = F(-, \1)$, the collection $(\T,\T,\mathbb{I},D)$ forms an AGJ duality triple. 
\item Let $R$ be a ring. Then with $D\colon \msf{D}(R) \to \msf{D}(R^\circ)$ (resp., $D\colon \msf{D}(R^\circ) \to \msf{D}(R))$ defined by $\msf{R}\Hom_R(-,R)$ (resp., $\msf{R}\Hom_{R^\circ}(-,R)$), the collection $(\msf{D}(R), \msf{D}(R^\circ), \msf{R}\Hom_\Z(-, \qz), D)$ forms an AGJ duality triple.
\end{enumerate}
\end{prop}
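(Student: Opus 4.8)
The plan is to observe first that the two underlying triples have already been identified as duality triples: $(\T,\T,\mbb{I})$ by \cref{ttdualitytriple}, and $(\msf{D}(R),\msf{D}(R^\circ),\msf{R}\Hom_\Z(-,\qz))$ by \cref{algebraicdualitytriple} applied to $R$ viewed as a dg-category with a single object (the functional dual constructed there being the derived character dual). Hence the only thing left to check in each case is conditions (1) and (2) of \cref{AGJdualitytriple}: that $D$ restricts to an adjoint self-equivalence of the categories of compact objects with $D^2\simeq\mathrm{id}$, and that the character-dual identity $\Hom(C,X)^+\cong\Hom(DC,QX)$ holds, naturally in both arguments.

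For part (1) I would use that in a big tensor-triangulated category the compacts coincide with the rigids, so $DC=F(C,\1)$ is rigid (hence compact) whenever $C$ is, and the biduality map $C\to F(F(C,\1),\1)$ is an isomorphism, giving $D^2\simeq\mathrm{id}$. To upgrade this to an adjoint equivalence it suffices to supply a natural isomorphism $\Hom_\T(DY,X)\cong\Hom_\T(DX,Y)$ for compact $X,Y$, which I would get from the chain $\Hom_\T(F(Y,\1),X)\cong\Hom_\T(\1,Y\otimes X)\cong\Hom_\T(\1,X\otimes Y)\cong\Hom_\T(F(X,\1),Y)$ using rigidity and the symmetry of $\otimes$. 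For condition (2), with $Q=\bc{(-)}$, I would expand $\Hom_\T(DC,\bc{X})$ via the defining property of the Brown-Comenetz dual (cf.\ \cref{lem:BCproperties}) as $\Hom_\Z(\Hom_\T(\1,X\otimes DC),\qz)$, then use $X\otimes F(C,\1)\simeq F(C,X)$ (rigidity of $C$) and the $\otimes\dashv F$ adjunction $\Hom_\T(\1,F(C,X))\cong\Hom_\T(C,X)$ to rewrite this as $\Hom_\T(C,X)^+$; naturality in $C$ and $X$ is inherited from each step, and the reversed assertion is identical by the symmetry built into the definition.

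For part (2), the compact objects of $\msf{D}(R)$ are the perfect complexes, and I would invoke the classical fact that $D=\msf{R}\Hom_R(-,R)$ carries perfect complexes of left $R$-modules to perfect complexes of right $R$-modules with $D^2\simeq\mathrm{id}$ on them, which gives condition (1) exactly as in part (1). For condition (2) I would begin from the derived adjunction $\msf{R}\Hom_{R^\circ}(M,\msf{R}\Hom_\Z(X,\qz))\simeq\msf{R}\Hom_\Z(M\otimes_R^{\msf{L}}X,\qz)$ for $M\in\msf{D}(R^\circ)$, $X\in\msf{D}(R)$, pass to $H_0$, and use that $\qz$ is an injective $\Z$-module (so $\Hom_\Z(-,\qz)$ is exact and commutes with $H_0$) to obtain $\Hom_{\msf{D}(R^\circ)}(M,X^+)\cong\Hom_\Z(H_0(M\otimes_R^{\msf{L}}X),\qz)$. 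Taking $M=DC$ for $C$ perfect and using $\msf{R}\Hom_R(C,R)\otimes_R^{\msf{L}}X\simeq\msf{R}\Hom_R(C,X)$ identifies $H_0$ of the left side with $\Hom_{\msf{D}(R)}(C,X)$, so $\Hom_{\msf{D}(R^\circ)}(DC,X^+)\cong\Hom_{\msf{D}(R)}(C,X)^+$, naturally in $C$ and $X$, and the reversed statement is symmetric.

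The genuinely routine content is this string of adjunction and rigidity/perfectness identities; the step that will need the most care is the bookkeeping of naturality and of variances under $(-)^{\t{op}}$ when assembling the adjoint-equivalence statement in condition (1), together with citing cleanly the standard facts that the functional dual preserves rigid (resp.\ perfect) objects and squares to the identity on them.
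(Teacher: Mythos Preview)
Your proposal is correct and follows essentially the same approach as the paper: both verify the AGJ conditions via rigidity/perfectness (for $D^2\simeq\mathrm{id}$ and the adjoint equivalence) and the defining adjunctions of the Brown-Comenetz dual or derived tensor-hom (for condition (2)). The only cosmetic difference is that the paper spells out the adjunction chain for condition (1) in part (2) explicitly rather than citing it as classical, and conversely is terser than you on part (1); one caution is that your phrase ``exactly as in part (1)'' should not be read as invoking the symmetric monoidal chain $\Hom(\mathbbm{1},Y\otimes X)\cong\Hom(\mathbbm{1},X\otimes Y)$, since $\otimes_R^{\mathsf{L}}$ is not symmetric for noncommutative $R$---but $D^2\simeq\mathrm{id}$ alone already forces the adjoint equivalence, so no harm is done.
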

\begin{proof}
Firstly we consider the case of big tensor-triangulated categories. Here the adjoint equivalence is a well-known property of the compact objects, and the second condition follows from the definition of the Brown--Comenetz dual (see \cref{lem:BCproperties}). For the case of $\msf{D}(R)$, we note that there is a natural isomorphism $C\simeq\msf{R}\Hom_{R^{\circ}}(\msf{R}\Hom_{R}(C,R),R)$ for any $C \in \msf{D}(R)^\c$. Using this together with~\cite[7.5.27]{dcmca}, there are natural isomorphisms
\begin{align*}
\Hom_{\D(R)}(\msf{R}\Hom_{R^{\circ}}(Y,R),X) &\simeq \Hom_{\D(R)}(\msf{R}\Hom_{R^{\circ}}(Y,R),\msf{R}\Hom_{R^{\circ}}(\msf{R}\Hom_{R}(X,R),R)) \\
&\simeq \Hom_{\D(R^{\circ})}(\msf{R}\Hom_{R}(X,R),\msf{R}\Hom_{R}(\msf{R}\Hom_{R^{\circ}}(Y,R),R)) \\
&\simeq \Hom_{\D(R)}(\msf{R}\Hom_{R}(X,R),Y),
\end{align*}
which proves the first condition. For the second condition, we note that $H_0(X)^+ = H_0\msf{R}\Hom_\Z(X,\qz)$ for all $X \in \msf{D}(R)$ by definition. Now take $C \in \msf{D}(R)^\c$ and $X \in \msf{D}(R)$. By similar adjunction arguments to above we have
\begin{align*}
\Hom_{\msf{D}(R)}(C,X)^+ &= H_0\msf{R}\Hom_\Z(\msf{R}\Hom_R(C,X),\qz) \\
&= H_0\msf{R}\Hom_\Z(\msf{R}\Hom_R(C,R) \otimes^\msf{L}_R X, \qz) \\
&= H_0\msf{R}\Hom_{R^\circ}(\msf{R}\Hom_R(C,R), \msf{R}\Hom_\Z(X, \qz)) \\
&= \Hom_{\msf{D}(R^\circ)}(\msf{R}\Hom_R(C,R), \msf{R}\Hom_\Z(X,\qz))
\end{align*} as required.
\end{proof}

At this stage, we will fix an AGJ duality triple $(\T,\U,Q,D)$. We now define the Auslander--Gruson--Jensen (AGJ) duality in this context.
\begin{defn}
Define $\delta\colon (\T^{\c},\ab)^\t{op}\to(\U^{\c},\ab)$ via
\[
(\delta Q)(Y)=\Hom_{(\T^\c,\ab)}(Q,\Hom_{\T}(DY,-)).
\]
In an analogous way one also defines $\delta\colon (\U^\c, \ab)^\t{op} \to (\T^\c, \ab)$. 
\end{defn}
This functor will act as the Auslander--Gruson--Jensen transpose on our category, and the following theorem shows that it is a justifiable choice. We note that since the definition of an AGJ duality triple is symmetric in $\T$ and $\U$, all the statements below admit an alternative version with $\U$ taken to be the `base'.
\begin{thm}\label{agj}
The following statements hold:
\begin{enumerate}
	\item There is a natural isomorphism $\delta\Hom_{\T}(X,-) = \Hom_{\U}(DX,-)$ for any $X\in\T^{\c}$. In particular, $\delta^{2}\Hom_{\T}(X,-)=\Hom_{\T}(X,-)$.
	\item The functor $\delta$ restricts to an exact contravariant equivalence $(\T^{\c},\ab)^{\mrm{fp}}\to (\U^{\c},\ab)^{\mrm{fp}}$.
\end{enumerate}
\end{thm}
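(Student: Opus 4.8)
The plan is to obtain statement (1) directly from the Yoneda lemma and then bootstrap (2) from it together with some homological algebra of the functor categories. For (1), fix $X\in\T^\c$; for any $Y\in\U^\c$ the object $DY$ lies in $\T^\c$, so $\Hom_\T(DY,-)$ is an object of $(\T^\c,\ab)$, and the contravariant Yoneda lemma gives a natural isomorphism
\[
(\delta\Hom_\T(X,-))(Y)=\Hom_{(\T^\c,\ab)}\big(\Hom_\T(X,-),\Hom_\T(DY,-)\big)\cong\Hom_\T(DY,X).
\]
By the form of condition~(1) of an AGJ duality triple recorded after \cref{AGJdualitytriple}, the right-hand side is naturally isomorphic to $\Hom_\U(DX,Y)$; assembling these over $Y$ gives $\delta\Hom_\T(X,-)\cong\Hom_\U(DX,-)$, naturally in $X$, and applying the analogous statement for the other copy of $\delta$ together with $D^2=\mrm{id}$ yields $\delta^2\Hom_\T(X,-)\cong\Hom_\T(X,-)$.

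For (2), I would first recall that $\T^\c$ and $\U^\c$ have weak kernels (take $\Sigma^{-1}$ of a cone), so $(\T^\c,\ab)^{\mrm{fp}}$ and $(\U^\c,\ab)^{\mrm{fp}}$ are abelian categories in which the representable functors form a set of projective generators (standard; cf.\ \cite{krcoh}). The extra input coming from the triangulated structure is that the representables are also \emph{injective} here: writing a finitely presented $F$ as $\mrm{coker}(\Hom_\T(f,-))$ for $f\colon A\to B$ in $\T^\c$, completing $f$ to a triangle $A\to B\to E\to\Sigma A$ and applying the long exact sequence that $\Hom_\T(C,-)$ produces from this triangle reduces $\mrm{Ext}^1_{(\T^\c,\ab)^{\mrm{fp}}}(F,\Hom_\T(C,-))$ to a cokernel which vanishes, so $\Hom_\T(C,-)$ is injective. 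Next, applying the left exact contravariant functor $\Hom_{(\T^\c,\ab)}(-,\Hom_\T(DY,-))$ to a presentation $\Hom_\T(B,-)\to\Hom_\T(A,-)\to F\to0$ and using part~(1) exhibits $\delta F$ as the kernel of a morphism $\Hom_\U(DA,-)\to\Hom_\U(DB,-)$ of representables (exactness in a functor category being computed pointwise), hence as an object of $(\U^\c,\ab)^{\mrm{fp}}$; so $\delta$ restricts to an additive contravariant functor between the two finitely presented functor categories. This functor is exact: for each $Y\in\U^\c$ the functor $(\delta-)(Y)=\Hom_{(\T^\c,\ab)}(-,\Hom_\T(DY,-))$ is exact on $(\T^\c,\ab)^{\mrm{fp}}$ because $\Hom_\T(DY,-)$ is injective there, and a sequence in $(\U^\c,\ab)^{\mrm{fp}}$ is exact precisely when it is so after evaluating at every $Y$; the same applies to the other copy of $\delta$.

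It remains to upgrade this to an equivalence, which I would do by proving $\delta^2\cong\mrm{id}$. Since $\delta$ is exact and contravariant on both sides, $\delta^2$ is exact. The canonical natural transformation $\psi\colon\mrm{id}\to\delta^2$ (sending $x\in F(X)$, viewed via Yoneda as a map $\Hom_\T(X,-)\to F$, to its image under $\delta^2$ composed with the isomorphisms of part~(1)) is an isomorphism on representables by part~(1); applying the $\psi$-ladder to a presentation $\Hom_\T(B,-)\to\Hom_\T(A,-)\to F\to0$, using that $\delta^2$ is right exact and the five lemma, shows $\psi_F$ is an isomorphism for every finitely presented $F$. Hence $\delta^2\cong\mrm{id}$ for both copies of $\delta$, so the two are mutually quasi-inverse exact contravariant functors, i.e. $\delta$ is the asserted exact contravariant equivalence $(\T^\c,\ab)^{\mrm{fp}}\to(\U^\c,\ab)^{\mrm{fp}}$.

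I expect the main obstacle to be isolating and establishing the right ``local'' facts about the functor categories — chiefly that the representable functors are injective objects of $(\T^\c,\ab)^{\mrm{fp}}$ (this is exactly where the triangulated structure of $\T^\c$ enters, via the long exact sequence of a triangle), and checking that the isomorphism $\delta^2\cong\mrm{id}$ is natural rather than merely objectwise — after which exactness of $\delta$ and the equivalence follow formally. Part~(1) itself, and the reduction of ``$\delta$ lands in $(\cdot)^{\mrm{fp}}$'' to the pointwise left exactness of $\Hom$, are then routine.
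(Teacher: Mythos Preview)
Your proof is correct and follows essentially the same route as the paper's. The only cosmetic differences are that the paper invokes fp-injectivity of representables in the big functor category $(\T^\c,\ab)$ by citing \cite[2.7]{krsmash} (cohomological $\Rightarrow$ fp-injective), whereas you argue injectivity in $(\T^\c,\ab)^{\mrm{fp}}$ directly from the triangle on $f$; and the paper dispatches the equivalence in one line (``follows from part (1), exactness, and $D^2=\mrm{id}$'') while you spell out the natural transformation $\psi$ and the five-lemma step. Both are the same argument at different levels of detail.
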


\begin{proof}
For the first item, suppose that $X$ is a compact object of $\T$. Then, for any compact $Y\in \U^{\c}$, there is a chain of isomorphisms:
\begin{align*}
\delta\Hom_{\T}(X,-)(Y) &= \Hom_{(\T^\c,\ab)}(\Hom_{\T}(X,-),\Hom_{\T}(DY,-)) \text{ by definition, } \\
 &\simeq \Hom_{\T}(DY,X) \text{ by Yoneda, } \\
 &\simeq \Hom_{\U}(DX,Y) \t{ by definition of AGJ duality triple,}
\end{align*}
which shows the first claim holds. 

For the second claim, we begin by showing that $\delta$ preserves finitely presented functors. Since $\delta$ is left exact (it is a hom functor), if $F\in(\T^{\c},\ab)^{\t{fp}}$ has a presentation $\Hom_{\T}(A,-)\to\Hom_{\T}(B,-)\to F\to 0$, where $A$ and $B$ are compact objects in $\T$, then applying $\delta$ gives an exact sequence
\[
0\to\delta F\to\delta\Hom_{\T}(B,-)\to\delta\Hom_{\T}(A,-).
\]
By the first claim, we know that both $\delta\Hom_{\T}(B,-)\simeq\Hom_{\U}(DB,-)$ and $\delta\Hom_{\T}(A,-)=\Hom_{\U}(DA,-)$ are finitely presented functors. But $(\U^{\c},\ab)^{\t{fp}}$ is an abelian category by \cite{krcoh}, hence $\delta F$ is also finitely presented.

We now show that the restriction of $\delta$ to finitely presented objects is exact. Since $\delta$ is left exactness, by its definition it suffices to show that $\t{Ext}_{(\U^\c,\ab)}^{1}(Q,\Hom_{\U}(DX,-))=0$ for any object $X\in\T^{\c}$ and $Q$ finitely presented. Since, $\Hom_{\U}(DX,-)$ is a cohomological functor, it is fp-injective by \cite[2.7]{krsmash}, hence this Ext group vanishes by the assumption on $Q$. That $\delta$ is an equivalence follows immediately from part (1), exactness, and the fact that $D^2 = \mrm{id}$.
\end{proof}

The following corollary provides a useful equivalent formulation of definability.
\begin{cor}\label{kerdef}
A full subcategory $\mc{D}\subseteq\T$ is definable if and only if there is a set of maps $\{g_{i}\}_{I}$ between compact objects such that
\[
\mc{D}=\{X\in\T:\mrm{Ker}(\Hom_{\T}(g_{i},X))=0 \t{ for all }i\in I\}.
\]
\end{cor}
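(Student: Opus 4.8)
The plan is to treat the corollary as a purely formal reformulation of the definition of definability, obtained by rotating triangles; in particular the argument uses neither the Auslander--Gruson--Jensen duality $\delta$ nor any functor-category input, only the long exact sequence attached to a triangle together with the fact that $\T^{\c}$ is a thick subcategory of $\T$ (hence closed under shifts, fibres and cofibres).

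The one computation I would isolate is the following claim. Let $C\xrightarrow{\psi}D\xrightarrow{\rho}E\xrightarrow{\phi}\Sigma C$ be a triangle in $\T$ and let $X\in\T$; write $(-)^{*}=\Hom_{\T}(-,X)$. Applying the cohomological functor $(-)^{*}$ to this triangle and to its rotation gives the exactness relations $\mrm{Ker}(\psi^{*})=\mrm{Im}(\rho^{*})$ and $\mrm{Im}(\phi^{*})=\mrm{Ker}(\rho^{*})$. Chaining these, $\mrm{Ker}(\Hom_{\T}(\psi,X))=0$ if and only if $\rho^{*}=0$, if and only if $\mrm{Ker}(\rho^{*})=\Hom_{\T}(E,X)$, if and only if $\Hom_{\T}(\phi,X)$ is surjective. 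Thus, for such a triangle, $\mrm{Ker}(\Hom_{\T}(\psi,X))=0$ holds precisely when $\Hom_{\T}(\phi,X)$ is surjective. I would also record that in any triangle of this shape, if two of $C,D,E$ are compact then all of $C,D,E$ and $\Sigma C$ are compact.

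With this equivalence in hand both implications become bookkeeping. If $\mc{D}=\{X:\Hom_{\T}(f_{i},X)\text{ is surjective for all }i\in I\}$ with $f_{i}\colon A_{i}\to B_{i}$ between compacts, I would rotate the triangle completing $f_{i}$ so that $f_{i}$ appears as the connecting map in a triangle $\Sigma^{-1}B_{i}\xrightarrow{g_{i}}W_{i}\to A_{i}\xrightarrow{f_{i}}B_{i}$; here $W_{i}$ (the fibre of $f_{i}$) and $\Sigma^{-1}B_{i}$ are compact, and the equivalence above rewrites the defining condition on $\mc{D}$ as $\mrm{Ker}(\Hom_{\T}(g_{i},X))=0$, so $\{g_{i}\}_{I}$ witnesses the second description. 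Conversely, given $\mc{D}=\{X:\mrm{Ker}(\Hom_{\T}(g_{i},X))=0\text{ for all }i\in I\}$ with $g_{i}\colon C_{i}\to D_{i}$ between compacts, I would complete $g_{i}$ to a triangle $C_{i}\xrightarrow{g_{i}}D_{i}\to E_{i}\xrightarrow{f_{i}}\Sigma C_{i}$, note that $E_{i}$ and $\Sigma C_{i}$ are compact, and use the equivalence to rewrite the defining condition as surjectivity of $\Hom_{\T}(f_{i},X)$, so that $\mc{D}$ is definable. There is no genuine obstacle here: the only points needing care are matching $f_{i}$ and $g_{i}$ with the roles of $\phi$ and $\psi$ correctly under rotation (rather than swapping them), and recalling that fibres, cofibres and shifts of compact objects remain compact.
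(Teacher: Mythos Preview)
Your proof is correct and takes a genuinely different route from the paper. The paper deduces the corollary from the Auslander--Gruson--Jensen duality just established in \cref{agj}: given a finitely presented functor $F$ (a cokernel of a map of representables on $\T^{\c}$), one has $\delta F=\mrm{Coker}\,\Hom_{\U}(g,-)$ for some $g$ between compacts in $\U$, and applying $\delta$ again yields $F=\delta^{2}F=\mrm{Ker}\,\Hom_{\T}(Dg,-)$; thus every cokernel-type condition may be rewritten as a kernel-type condition (and conversely, since $\delta$ is an equivalence). Your argument instead bypasses the functor-category machinery entirely: you rotate the triangle containing $f_{i}$ so that $f_{i}$ plays the role of the connecting map, and read off directly from the long exact sequence that $\Hom_{\T}(f_{i},X)$ is surjective precisely when $\mrm{Ker}(\Hom_{\T}(g_{i},X))=0$ for the rotated edge $g_{i}$, using only that $\T^{\c}$ is closed under shifts and (co)fibres. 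Your approach is more elementary and in fact works for any compactly generated triangulated category, with no need for the AGJ duality triple hypothesis; the paper's approach, by contrast, illustrates the power of the $\delta$-equivalence and ties the corollary into the functor-categorical framework that the surrounding section is developing.
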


\begin{proof}
If $F$ is a finitely presented functor, then so is $\delta F$ by ~\cref{agj}. Hence there is a map $g\in\U^{\c}$ such that $\delta F=\t{Coker}\,\Hom_{\U}(g,-)$. By dualising this, it is clear from \cref{agj} that $F=\delta^{2}F=\t{Ker}\,\Hom_{\T}(Dg,-)$.
\end{proof}

As in the module case, there is a bijection between Serre subcategories of $(\T^{\c},\ab)^{\t{fp}}$ and definable subcategories of $\T$, which is described in \cite{krcoh}. We are similarly able to define the dual definable category of $\mc{D}$; we let $\mc{S}_{\mc{D}}$ denote the unique Serre subcategory corresponding to $\mc{D}$, and define
\begin{equation}\label{defndualdef}
\mc{D}^{d}=\mrm{Ker}(\delta \mc{S}_{\mc{D}})
\end{equation}
which is a definable subcategory of $\U$.
Using \cref{agj}, we can identify the functors which define the dual definable category.

\begin{lem}\label{dualiskernels}
	Let $\Phi=\{f_{i}\colon A_{i}\to B_{i}\}_{i \in I}$ be a set of morphisms between compact objects in $\T$, and let $\mc{D}=\{X\in\T:\mrm{Coker}(f_{i},X)=0 \t{ for all }i \in I\}$. Then
	\[
	\mc{D}^{d}=\{Y\in\U : \mrm{Ker}(Df_{i},Y)=0 \t{ for all }i \in I\}.
	\]
\end{lem}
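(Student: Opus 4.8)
The plan is to unwind the definition $\mc{D}^d=\mrm{Ker}(\delta\mc{S}_\mc{D})$ from \eqref{defndualdef} by computing $\delta$ on an explicit generating set of $\mc{S}_\mc{D}$. For each $i\in I$ let $F_i\in(\T^\c,\ab)^{\mrm{fp}}$ be the finitely presented functor with presentation
\[
\Hom_\T(B_i,-)\xrightarrow{\Hom_\T(f_i,-)}\Hom_\T(A_i,-)\to F_i\to 0 ,
\]
so that by hypothesis $\mc{D}=\{X\in\T:\tilde{F_i}(X)=0\text{ for all }i\in I\}$, where $\tilde{(-)}$ denotes the extension of a finitely presented functor on $\T^\c$ to a coherent functor on $\T$. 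Since this extension is exact, $\mrm{Ker}(-)$ of a Serre subcategory coincides with the common zero set of any generating set; combined with the bijection between Serre subcategories of $(\T^\c,\ab)^{\mrm{fp}}$ and definable subcategories of $\T$ recalled from \cite{krcoh}, this shows that $\mc{S}_\mc{D}$ is precisely the Serre subcategory generated by $\{F_i:i\in I\}$.

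Next I would compute $\delta F_i$. Applying the contravariant left exact functor $\delta$ to the presentation above produces an exact sequence $0\to\delta F_i\to\delta\Hom_\T(A_i,-)\to\delta\Hom_\T(B_i,-)$, and \cref{agj}(1), together with the naturality of that isomorphism in the compact argument, identifies this with $0\to\delta F_i\to\Hom_\U(DA_i,-)\xrightarrow{\Hom_\U(Df_i,-)}\Hom_\U(DB_i,-)$; thus $\delta F_i=\mrm{Ker}(\Hom_\U(Df_i,-))$ in $(\U^\c,\ab)^{\mrm{fp}}$. Since $\delta$ is an exact equivalence by \cref{agj}(2), it carries the Serre subcategory generated by $\{F_i\}$ to the one generated by $\{\delta F_i\}$, so $\mc{D}^d=\mrm{Ker}(\delta\mc{S}_\mc{D})=\{Y\in\U:\tilde{(\delta F_i)}(Y)=0\text{ for all }i\in I\}$. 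Finally, the assignment $Y\mapsto\mrm{Ker}(\Hom_\U(Df_i,Y))$ commutes with products and with directed homotopy colimits — the latter because $DA_i$ and $DB_i$ are compact and kernels commute with filtered colimits of abelian groups — and hence is the unique coherent extension $\tilde{(\delta F_i)}$. Substituting yields $\mc{D}^d=\{Y\in\U:\mrm{Ker}(\Hom_\U(Df_i,Y))=0\text{ for all }i\in I\}$, as claimed.

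Each step is short, so the only real care is bookkeeping with the extension/restriction dictionary: one needs the isomorphism of \cref{agj}(1) to be natural in the compact object (so that $\delta$ sends $\Hom_\T(f_i,-)$ to $\Hom_\U(Df_i,-)$), and one must check that the ``kernel-type'' functor $\delta F_i$ extends from $\U^\c$ to $\U$ by the expected formula rather than through some cokernel presentation of it. I expect this identification of coherent extensions to be the main point, though it is not deep.
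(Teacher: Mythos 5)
Your proof is correct and follows essentially the same route as the paper: compute $\delta$ on the cokernel presentation of the generating functors $F_i$, use exactness of $\delta$ to identify $\delta F_i = \mrm{Ker}(\Hom_\U(Df_i,-))$, and then read off $\mc{D}^d$ from the definition. The extra detail you supply — that $\mc{S}_\mc{D}$ is the Serre subcategory generated by the $F_i$ and that the coherent extension of $\delta F_i$ is the expected kernel functor on $\U$ — makes explicit two points the paper leaves implicit, which is a reasonable thing to flag, though the justification for the uniqueness of the coherent extension is a bit informal (it is perhaps cleaner to observe directly that the exact embedding of $(\U^\c,\ab)^{\mrm{fp}}$ into coherent functors on $\U$ sends the kernel presentation of $\delta F_i$ to the corresponding kernel of representables, avoiding the appeal to a characterization by preservation of products and directed homotopy colimits).
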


\begin{proof}
	In $(\T^{\c},\ab)^{\t{fp}}$ there is, for each $i\in I$, an exact sequence
	\[
	\Hom_{\T}(B_{i},-)\xrightarrow{\Hom_{\T}(f_{i},-)} \Hom_{\T}(A_{i},-)\to \mrm{Coker}\Hom_{\T}(f_{i},-)\to 0.
	\]
	Applying $\delta$, which, by ~\cref{agj}, is exact, one obtains the exact sequence
	\[
	0\to \delta\mrm{Coker}\Hom_{\T}(f_{i},-)\to\Hom_{\U}(DA_{i},-)\xrightarrow{\Hom_{\U}(Df_{i},-)}\Hom_{\U}(DB_{i},-);
	\]
	showing that $\delta \mrm{Coker}\Hom_{\T}(f_{i},-)=\mrm{Ker}(\Hom_{\U}(Df_{i},-))$. The claim now follows from the definition of the dual definable category.
\end{proof}

We are now in a position to see how duality pairs relate to definable categories. In a direct analogy to \cref{sym}, we obtain the following results.

\begin{thm}\label{dualdefinable}
	Let $(\T,\U,Q,D)$ be an AGJ duality triple. If $\mc{D}$ is a definable subcategory of $\T$, then $(\mc{D},\mc{D}^{d})$ is a symmetric product closed duality pair on the duality triple $(\T, \U, Q)$.
\end{thm}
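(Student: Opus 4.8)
The plan is to transcribe the first half of the proof of \cref{sym} into the triangulated setting, using \cref{dualiskernels} to realise both $\mc{D}$ and $\mc{D}^{d}$ as kernels of representable functors, and then using condition (2) of \cref{AGJdualitytriple} to pass between the two. First I would fix a presentation: since $\mc{D}$ is definable there is a set $\Phi = \{f_{i}\colon A_{i}\to B_{i}\}_{i\in I}$ of morphisms between compact objects of $\T$ with $\mc{D} = \{X\in\T : \mrm{Coker}(f_{i},X)=0 \text{ for all } i\}$, that is, with $\Hom_{\T}(f_{i},X)$ surjective for all $i$. By \cref{dualiskernels} the dual definable category is then $\mc{D}^{d} = \{Y\in\U : \mrm{Ker}(Df_{i},Y)=0 \text{ for all } i\}$, i.e.\ the class of $Y$ for which each $\Hom_{\U}(Df_{i},Y)$ is a monomorphism.

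The core of the argument is the chain of equivalences, for $X\in\T$: $X\in\mc{D}$ if and only if $\Hom_{\T}(f_{i},X)$ is surjective for every $i$, if and only if $\Hom_{\T}(f_{i},X)^{+}$ is injective for every $i$, if and only if $\Hom_{\U}(Df_{i},QX)$ is injective for every $i$, if and only if $QX\in\mc{D}^{d}$. The middle equivalence holds because $(-)^{+}=\Hom_{\Z}(-,\qz)$ is exact and faithful ($\qz$ being an injective cogenerator of $\ab$), so it turns epimorphisms into monomorphisms and reflects this property. The third equivalence is exactly condition (2) of \cref{AGJdualitytriple}: the natural isomorphism $\Hom_{\T}(C,X)^{+}\cong\Hom_{\U}(DC,QX)$, applied at $C=A_{i}$ and $C=B_{i}$ and combined with its naturality in $C$, identifies the map $\Hom_{\U}(Df_{i},QX)$ with $\Hom_{\T}(f_{i},X)^{+}$. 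This establishes condition (1) of a duality pair. Condition (2) is immediate: $\mc{D}^{d}$ is a definable subcategory of $\U$ by construction, hence closed under products and under pure subobjects, so in particular under finite coproducts and retracts. Thus $(\mc{D},\mc{D}^{d})$ is a duality pair on $(\T,\U,Q)$, and it is product closed because $\mc{D}$, being definable, is closed under products.

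For symmetry I would use that $\mc{D}^{dd}=\mc{D}$, which follows from $\delta^{2}\cong\mrm{id}$ (\cref{agj}) together with the bijection between Serre subcategories of $(\T^{\c},\ab)^{\mrm{fp}}$ and $(\U^{\c},\ab)^{\mrm{fp}}$ and its compatibility with passing to kernels. Since the AGJ data, and \cref{dualiskernels}, are symmetric in $\T$ and $\U$, running the previous paragraph with $\U$ as the base category and the definable subcategory $\mc{D}^{d}$ shows that $(\mc{D}^{d},(\mc{D}^{d})^{d})=(\mc{D}^{d},\mc{D})$ is also a duality pair; hence $(\mc{D},\mc{D}^{d})$ is symmetric. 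I expect the only genuine subtlety to lie in the bookkeeping for the third equivalence: keeping track of the variance of $D$ (so that $Df_{i}\colon DB_{i}\to DA_{i}$) and using that the isomorphism $\Hom_{\T}(-,X)^{+}\cong\Hom_{\U}(D(-),QX)$ is natural in the first variable, so that it carries the map induced by $f_{i}$ to the one induced by $Df_{i}$. Everything else is routine given the results already available.
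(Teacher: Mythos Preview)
Your proposal is correct and follows essentially the same approach as the paper: both arguments fix a presentation of $\mc{D}$ by maps $\{f_i\}$ between compacts, invoke \cref{dualiskernels} to identify $\mc{D}^d$, and then use the exactness/faithfulness of $(-)^+$ together with condition~(2) of \cref{AGJdualitytriple} to obtain the chain of equivalences $X\in\mc{D}\iff QX\in\mc{D}^d$, with symmetry coming from $\mc{D}^{dd}=\mc{D}$ and product closure from definability. Your version is a bit more explicit about the naturality bookkeeping, but there is no substantive difference.
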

\begin{proof}
	Firstly, we must show that $X\in \mathcal{D}$ if and only if $QX\in \mathcal{D}^{d}$. So, let $\mc{D}$ be definable with representative set of maps $\{f_i\colon A_i \to B_i\}$. Then
	\begin{align*}
		X\in \mc{D} & \iff \Hom_{\T}(B_i,X)\to \Hom_{\T}(A_i,X)\to 0 \t{ is exact for all $i$}\\
		& \iff 0 \to \Hom_{\T}(A_i,X)^+ \to \Hom_{\T}(B_i,X)^+ \t{ is exact for all $i$}.
	\end{align*}
	By definition of an AGJ duality triple, we have $\Hom_{\T}(C,X)^{+}=\Hom_{\U}(DC,QX)$ for any $C \in \T^\c$. Therefore we see that $X \in \mc{D}$ if and only if
 \[0\to \Hom_{\U}(DA_i, QX)\to \Hom_{\U}(DB_i, QX)\]
is exact for all $i$,
	by compactness of $A_i$ and $B_i$. This last equivalence is the same as saying that $QX\in\mc{D}^d$ by ~\cref{dualiskernels}. Since definable classes are always closed under finite sums and retracts, it follows that $(\mc{D},\mc{D}^{d})$ is a duality pair. This is moreover a symmetric duality pair as $\mc{D}^{dd}=\mc{D}$. Last but not least, the classes being product closed follows trivially from the definition of definability.
\end{proof}

We firstly state the following corollary which gives a triangulated version of ~\cref{symmcor}. The proof is identical.
\begin{cor}\label{definablesymmetry}
	Let $(\T,\U,Q,D)$ be an AGJ duality triple, and suppose $(\A,\B)$ is a duality pair on the duality triple $(\T, \U, Q)$. If both $\A$ and $\B$ are definable, then $\A=\B^{d}$, $\B=\A^{d}$ and the duality pair is symmetric.
\end{cor}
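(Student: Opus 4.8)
The plan is to transcribe the proof of \cref{symmcor} verbatim into the present setting, making three substitutions: the character dual $(-)^+$ is replaced by the functor $Q$ of the AGJ duality triple $(\T,\U,Q,D)$; the canonical pure monomorphism $M\to M^{++}$ of module categories is replaced by the pure monomorphism $i_X\colon X\to Q^2 X$ supplied by \cref{dualitytriple}(2); and the fact that definable subcategories of a module category are closed under pure subobjects is replaced by the corresponding fact for compactly generated triangulated categories. The first step is to assemble the inputs. Since $\A$ and $\B$ are definable, \cref{dualdefinable} shows that $(\A,\A^d)$ and $(\B,\B^d)$ are symmetric duality pairs on the duality triple $(\T,\U,Q)$; in particular $(\A^d,\A)$, $(\B,\B^d)$ and $(\B^d,\B)$ are also duality pairs, the classes $\A^d$ and $\B^d$ are definable, and $\A^{dd}=\A$, $\B^{dd}=\B$ (as in the module case, or by \cref{agj}).

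Next I would prove the inclusion $\B\subseteq\A^d$. Let $B\in\B$. Applying the duality pair $(\B,\B^d)$ gives $Q(B)\in\B^d$, and then the duality pair $(\B^d,\B)$ gives $Q^2(B)\in\B$. Now feed the object $Q(B)\in\T$ into the duality pair $(\A,\B)$, whose defining biconditional reads $X\in\A\iff Q(X)\in\B$: the relation $Q^2(B)=Q(Q(B))\in\B$ forces $Q(B)\in\A$, and then the duality pair $(\A,\A^d)$ gives $Q^2(B)\in\A^d$. Since $i_B\colon B\to Q^2(B)$ is a pure monomorphism by \cref{dualitytriple}(2) and $\A^d$ is definable, hence closed under pure subobjects, we conclude $B\in\A^d$.

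For the reverse inclusion $\A^d\subseteq\B$, let $Y\in\A^d$. The duality pair $(\A^d,\A)$ yields $Q(Y)\in\A$, and then the duality pair $(\A,\B)$ yields $Q^2(Y)\in\B$; since $i_Y\colon Y\to Q^2(Y)$ is a pure monomorphism and $\B$ is definable, $Y\in\B$. Hence $\B=\A^d$, whence $\B^d=\A^{dd}=\A$, and the duality pair $(\A,\B)=(\A,\A^d)$ is symmetric by \cref{dualdefinable}. I do not anticipate a genuine obstacle here; the only care required is bookkeeping, namely keeping straight which of the several duality-pair biconditionals is being invoked at each step and in which of $\T$ or $\U$ the relevant object sits.
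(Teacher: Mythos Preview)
Your proposal is correct and follows essentially the same route as the paper, which simply declares the proof identical to that of \cref{symmcor}. The only cosmetic difference is that you spell out the two-step passage $B\mapsto Q(B)\mapsto Q^2(B)$ via the symmetric pair $(\B,\B^d)$ explicitly, whereas the paper compresses this into a single invocation of \cref{symmclosure}(3); the logical content is the same.
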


We can now give the analogue of the enumerated part of \cref{sym}.
\begin{cor}\label{definablett}
	Let $(\T,\U,Q,D)$ be an AGJ duality triple, and suppose $(\A,\B)$ is a symmetric duality pair on the duality triple $(\T, \U, Q)$. Consider the following conditions:
	\begin{enumerate}
		\item either class is closed under coproducts;
		\item either class is closed under products;
		\item $\mathsf{A}$ is definable and $\mathsf{B}=\mathsf{A}^{d}$;
		\item $\mathsf{B}$ is definable and $\mathsf{A}=\mathsf{B}^{d}$.
	\end{enumerate}
The implications \[\begin{tikzcd}
	(1) \arrow[r, Leftrightarrow] & (2) \\
	(3) \arrow[r, Leftrightarrow] \arrow[u, Rightarrow] & (4) \arrow[u, Rightarrow]
\end{tikzcd} \] always hold. If $\T$ and $\U$ are the underlying categories of strong and stable derivators,
then all four statements are equivalent.
\end{cor}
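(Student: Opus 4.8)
The plan is to separate the four universally valid implications, each of which follows formally from results already established, from the single implication that genuinely needs the derivator hypothesis. For the universal part: the equivalence $(1)\Leftrightarrow(2)$ is precisely \cref{symmclosure}(4) applied to the symmetric duality pair $(\A,\B)$. For $(3)\Leftrightarrow(4)$ I would invoke that passage to the dual definable category is an involution (see \cref{defndualdef}): if $\A$ is definable and $\B=\A^{d}$, then $\B$, being the dual definable category of a definable category, is itself definable, and $\A=\A^{dd}=\B^{d}$; the converse is identical after interchanging $\A$ and $\B$. For $(3)\Rightarrow(1)$ and $(4)\Rightarrow(2)$: a definable subcategory is closed under products straight from the definition, so $(3)$ gives $(2)$ and $(4)$ gives $(2)$, and then $(2)\Rightarrow(1)$ by the equivalence just noted.

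It then remains, under the assumption that $\T$ and $\U$ underlie strong and stable derivators, to close the loop, for which it suffices to prove $(2)\Rightarrow(3)$. The chain I would run is as follows. Assume $\A$ is closed under products; by $(1)\Leftrightarrow(2)$ together with \cref{symmclosure}(4), $\B$ is closed under products as well. Since $(\A,\B)$ is a duality pair on $(\T,\U,Q)$, \cref{dp} shows $\A$ is closed under pure subobjects and pure quotients; since the symmetry of the duality-triple axioms (noted after \cref{dualitytriple}) makes $(\U,\T,Q)$ a duality triple with $(\B,\A)$ a duality pair on it, \cref{dp} likewise shows $\B$ is closed under pure subobjects and pure quotients. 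Hence both $\A$ and $\B$ are closed under products, pure subobjects and pure quotients, so both are definable by \cref{definableequivalent} --- this is the only place where the strong-and-stable-derivator hypothesis is used. Finally, applying \cref{definablesymmetry} to the duality pair $(\A,\B)$ with both classes definable yields $\B=\A^{d}$ and $\A=\B^{d}$, which is exactly $(3)$ (hence also $(4)$). This gives $(2)\Rightarrow(3)$ and, combined with the universal implications, the equivalence of all four statements.

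I do not expect a genuine obstacle here: every step is a direct citation of an earlier result, mirroring the module-theoretic argument in \cref{sym}. The only points requiring care are bookkeeping ones --- correctly invoking \cref{dp} for both $(\A,\B)$ and $(\B,\A)$ (licensed by the built-in symmetry of \cref{dualitytriple} and the definition of a symmetric duality pair), and being explicit that the derivator hypothesis enters solely through \cref{definableequivalent}, to upgrade closure under products, pure subobjects and pure quotients to definability.
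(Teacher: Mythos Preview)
Your proposal is correct and follows essentially the same route as the paper's proof: the universal implications are handled via \cref{symmclosure}(4) for $(1)\Leftrightarrow(2)$ and the involution $\mc{D}^{dd}=\mc{D}$ for $(3)\Leftrightarrow(4)$, and the key step $(2)\Rightarrow(3)$ under the derivator hypothesis combines \cref{dp}, \cref{definableequivalent}, and \cref{definablesymmetry} exactly as you describe. You are slightly more explicit than the paper in spelling out why $\B$ is closed under pure subobjects and quotients (via the symmetry making $(\B,\A)$ a duality pair), but the argument is the same.
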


\begin{proof}
The equivalence of $(1)$ and $(2)$ follows from \cref{symmclosure}. The equivalence of $(3)$ and $(4)$, and the implication $(4)\implies (2)$ are trivial. 

We now suppose that $\T$ and $\U$ are the underlying categories of strong and stable derivators, and show $(2)\implies (3)$. By \cref{definableequivalent}, being definable is equivalent to being closed under products, pure subobjects and pure quotients. By ~\cref{dp} we know $\A$ is closed under pure subobjects and pure quotients. Since $\A$ is closed under products by assumption, it is definable. It also follows that $\B$ is definable, and therefore $\B=\A^{d}$ by \cref{definablesymmetry}.
\end{proof}

\begin{cor}\label{triiffdual}
Let $(\T,\U,Q,D)$ be an AGJ duality triple. If $\mc{D}\subseteq\T$ is definable, then $\mc{D}$ is closed under extensions if and only if $\mc{D}^{d}$ is, and $\mc{D}$ is triangulated if and only if $\mc{D}^{d}$ is. 
\end{cor}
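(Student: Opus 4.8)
The plan is to leverage that, by \cref{dualdefinable}, $(\mc{D},\mc{D}^d)$ is a \emph{symmetric} duality pair on $(\T,\U,Q)$; in particular both $(\mc{D},\mc{D}^d)$ and $(\mc{D}^d,\mc{D})$ are duality pairs, so $X\in\mc{D}$ if and only if $Q(X)\in\mc{D}^d$, and $Y\in\mc{D}^d$ if and only if $Q(Y)\in\mc{D}$. Granting this, the statement about triangulated subcategories requires no further work: it is precisely \cref{symmclosure}(2) applied to the symmetric duality pair $(\mc{D},\mc{D}^d)$.

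For the statement about extensions, suppose first that $\mc{D}$ is closed under extensions, and let $Y_1\to Y_2\to Y_3\to\Sigma Y_1$ be a triangle in $\U$ with $Y_1,Y_3\in\mc{D}^d$. Applying the exact functor $Q\colon\U^{\t{op}}\to\T$ yields a triangle $Q(Y_3)\to Q(Y_2)\to Q(Y_1)\to\Sigma Q(Y_3)$ in $\T$, by the same bookkeeping used in the proof of \cref{detects}. Since $(\mc{D}^d,\mc{D})$ is a duality pair we have $Q(Y_1),Q(Y_3)\in\mc{D}$, and since $\mc{D}$ is closed under extensions the middle term $Q(Y_2)$ also lies in $\mc{D}$; hence $Y_2\in\mc{D}^d$. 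Thus $\mc{D}^d$ is closed under extensions. The converse follows by the identical argument with the roles of $\T$ and $\U$ interchanged, using $\mc{D}^{dd}=\mc{D}$.

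The proof is essentially just bookkeeping, so there is no genuine obstacle; the two points to keep straight are the contravariance of $Q$ when transporting triangles (already dealt with in \cref{detects}) and that it is the symmetry of the duality pair that supplies the equivalence $Y\in\mc{D}^d\Leftrightarrow Q(Y)\in\mc{D}$, as opposed to only the a priori weaker $X\in\mc{D}\Leftrightarrow Q(X)\in\mc{D}^d$.
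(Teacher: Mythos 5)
Your proof is correct and follows the same route as the paper: the paper simply cites \cref{dualdefinable} (to get the symmetric duality pair $(\mc{D},\mc{D}^d)$) together with \cref{closure}(2), and your argument for the extension-closure statement is exactly the content of the proof of \cref{closure}(2) written out explicitly, with the "iff" supplied by symmetry just as you observe. The only minor difference is that you invoke \cref{symmclosure}(2) for the triangulated case rather than \cref{closure}(2) directly, but \cref{symmclosure}(2) is itself deduced from \cref{closure}, so this is the same argument.
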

\begin{proof}
Combining \cref{dualdefinable} with \cref{closure} yields the result.
\end{proof}

\begin{rem}
In the preparation of this document, Rose Wagstaffe was kind enough to share her doctoral thesis with us. We noticed that she also has a proof of the above statement at \cite[8.1.15]{wagstaffe}, although her techniques are completely independent of ours.
\end{rem}

We end this section by using the results of \cref{minimaldualitypair} to give explicit constructions of definable closures in certain cases. Given any class $\mc{S}\subseteq\T$, one can form the definable closure of $\mc{S}$, denoted $\t{Def}(\mc{S})$, which is the smallest definable subclass of $\T$ containing $\mc{S}$. When $\T$ is the underlying category of a strong and stable derivator, \cref{definableequivalent} shows that this is the closure of $\mc{S}$ under products, pure subobjects and pure quotients. Since taking the duality class generated by $\mc{S}$ is closed under pure subobjects and pure quotients by \cref{dp}, it is enough to worry about products. Recall that for any class of objects $\mc{C}$ in $\T$ we define $\overline{\mc{C}}=\{X\in\T:QX\in\t{add}(Q\mc{C})\}$. 

\begin{prop}\label{defclosure}
	Let $(\T, \msf{U}, Q, D)$ be an AGJ duality triple and suppose that $\T$ is the underlying category of a strong and stable derivator. Let $\mc{S} \subseteq \T$. Then the definable closure $\mrm{Def}(\mc{S})$ of $\mc{S}$, is equal to $\overline{\mrm{Prod}(\mc{S})}$.
\end{prop}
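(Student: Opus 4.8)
The plan is to prove the two inclusions $\overline{\mrm{Prod}(\mc{S})}\subseteq\mrm{Def}(\mc{S})$ and $\mrm{Def}(\mc{S})\subseteq\overline{\mrm{Prod}(\mc{S})}$ separately; the first is formal, the second requires an explicit description of the definable closure. Throughout, I would use that $\mc{S}\subseteq\mrm{Prod}(\mc{S})\subseteq\overline{\mrm{Prod}(\mc{S})}$ (the second inclusion because $QP\in Q\,\mrm{Prod}(\mc{S})\subseteq\mrm{add}(Q\,\mrm{Prod}(\mc{S}))$ for $P\in\mrm{Prod}(\mc{S})$), and that by \cref{gendp} and \cref{dp} the class $\overline{\mrm{Prod}(\mc{S})}$ is closed under pure subobjects and pure quotients.

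For the first inclusion: since $\mrm{Def}(\mc{S})$ is definable it is in particular closed under products, so $\mrm{Prod}(\mc{S})\subseteq\mrm{Def}(\mc{S})$. By \cref{dualdefinable} the pair $(\mrm{Def}(\mc{S}),\mrm{Def}(\mc{S})^d)$ is a duality pair on $(\T,\U,Q)$, so applying \cref{barclosure} to the class $\mrm{Prod}(\mc{S})$ yields $\overline{\mrm{Prod}(\mc{S})}\subseteq\mrm{Def}(\mc{S})$.

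For the second inclusion, the key point is to show that every object of $\mrm{Def}(\mc{S})$ is a pure subobject of a pure quotient of an object of $\mrm{Prod}(\mc{S})$; granting this, such an object lies in $\overline{\mrm{Prod}(\mc{S})}$, since that class contains $\mrm{Prod}(\mc{S})$ and is closed under pure quotients and then under pure subobjects. To prove this description, let $\msf{X}$ be the class of all pure subobjects of pure quotients of objects of $\mrm{Prod}(\mc{S})$, and check that $\msf{X}$ is closed under products, pure subobjects and pure quotients, so that it is definable by \cref{definableequivalent}; since $\mc{S}\subseteq\msf{X}$ and, conversely, $\msf{X}\subseteq\mrm{Def}(\mc{S})$ (as $\mrm{Def}(\mc{S})$ is closed under all three operations), minimality of the definable closure forces $\msf{X}=\mrm{Def}(\mc{S})$. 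The verification rests on three facts, all obtained by applying the restricted Yoneda functor $\mbf{y}$, using that products of triangles are triangles, that $\mbf{y}$ commutes with products, and that products are exact in $\Mod{\T^\c}$: (a) a product of pure monomorphisms is a pure monomorphism and a product of pure epimorphisms is a pure epimorphism; (b) pure monomorphisms compose and pure epimorphisms compose; (c) a pure quotient of a pure subobject is a pure subobject of a pure quotient — for this, given pure monomorphisms $K\to Y\to Z$, form the cofibres $W=\mrm{cofib}(K\to Y)$, $Z/K=\mrm{cofib}(K\to Z)$, $Z/Y=\mrm{cofib}(Y\to Z)$ and use the octahedral axiom to produce a triangle $W\to Z/K\to Z/Y$ which, after applying $\mbf{y}$, exhibits $W$ as a pure subobject of $Z/K$, the latter being a pure quotient of $Z$. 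Facts (a) and (b) give that $\msf{X}$ is closed under products and pure subobjects, and fact (c) together with (b) gives closure under pure quotients. I expect fact (c) to be the main obstacle, since it is where one has to interface the triangulated structure with the pure-exact structure carefully; the remaining steps are formal or are routine diagram chases in $\Mod{\T^\c}$. (One should also note at the outset that $\T$, being compactly generated, has arbitrary products, so all of the above constructions are available.)
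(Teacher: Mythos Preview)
Your proof is correct and follows essentially the same two-inclusion strategy as the paper. Your first inclusion (via \cref{barclosure} and \cref{dualdefinable}) is exactly the paper's argument, just packaged through the lemma rather than unwound by hand.

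The difference lies in the second inclusion. The paper simply asserts that $\mrm{Def}(\mc{S})=\mrm{Prod}(\mc{S})^{\mrm{pq,ps}}$ ``for the same reason as the module case'' (citing Prest), and then checks that $\overline{\mrm{Prod}(\mc{S})}$ contains $\mrm{Prod}(\mc{S})$ and is closed under pure subobjects and pure quotients, which is immediate from \cref{detects}. You instead \emph{prove} the description of $\mrm{Def}(\mc{S})$ as pure subobjects of pure quotients of $\mrm{Prod}(\mc{S})$ directly in the triangulated setting, by verifying that this class is already closed under products, pure subobjects and pure quotients and invoking \cref{definableequivalent}. Your fact~(c) is the non-trivial step, and your octahedral argument for it is sound: the connecting map $Z/Y\to\Sigma W$ in the octahedral triangle factors through the phantom $Z/Y\to\Sigma Y$, so $\mbf{y}(W\to Z/K)$ is a monomorphism. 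What your approach buys is self-containment --- you do not need to transport an argument from the module setting --- at the cost of the extra octahedral verification; the paper's route is shorter but relies on the reader accepting that the module-case reasoning carries over.
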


\begin{proof}
	Let us first observe that $\t{Def}(\mc{S})$ is equal to $\t{Prod}(\mc{S})^{\t{pq,ps}}$ for the same reason as the module case (see \cite[p.109]{psl}). We therefore show that $\t{Prod}(\mc{S})^{\t{pq,ps}}=\overline{\t{Prod}(\mc{S})}$.

	To show that $\t{Prod}(\mc{S})^{\t{pq,ps}} \subseteq \overline{\mrm{Prod}(\mc{S})}$ it suffices to show that $\overline{\mrm{Prod}(\mc{S})}$ contains $\mrm{Prod}(\mc{S})$ and is closed under pure quotients and pure subobjects. It is clear that it contains $\mrm{Prod}(\mc{S})$ so suppose that $Y$ is either a pure subobject or a pure quotient of $X \in \overline{\mrm{Prod}(\mc{S})}$. By~\cref{detects}, it follows that $QY$ is a retract of $QX$, and hence $Y \in \overline{\mrm{Prod}(\mc{S})}$ as required.
	
	For the reverse inclusion if $X\in\overline{\t{Prod}(\mc{S})}$, then by definition, $QX$ is a retract of an object of the form $Q(\prod_{I}S_{i})$ where each $S_i \in \mc{S}$, as $Q$ commutes with finite coproducts. As $\t{Def}(\mc{S})$ is definable, there is a symmetric duality pair $(\t{Def}(\mc{S}),\t{Def}(\mc{S})^{d})$ by \cref{dualdefinable}, where $\t{Def}(\mc{S})^{d}$ is the dual definable category of $\t{Def}(\mc{S})$. In particular, we have $\prod_{I}S_{i}\in\t{Def}(\mc{S})$, and therefore $Q(\prod_{I}S_{i})\in\t{Def}(\mc{S})^{d}$. But then $QX$ is also in $\t{Def}(\mc{S})^{d}$, as definable categories are closed under retracts. But since $(\t{Def}(\mc{S}),\t{Def}(\mc{S})^{d})$ is a duality pair, we see that $X$ must also be an object in $\t{Def}(\mc{S})$ as required.
\end{proof}

\begin{cor}
	Let $(\T, \msf{U}, Q, D)$ be an AGJ duality triple and suppose that $\T$ is the underlying category of a strong and stable derivator. Let $\mc{S} \subseteq \T$ be a class of objects closed under products and retracts. Then $\mrm{Def}(\mc{S})=\overline{\mc{S}}$.
\end{cor}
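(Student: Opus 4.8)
The plan is to deduce this immediately from \cref{defclosure}. That proposition identifies the definable closure of an \emph{arbitrary} class $\mc{S} \subseteq \T$ (for $\T$ the underlying category of a strong and stable derivator) with $\overline{\mrm{Prod}(\mc{S})}$. So the only point to verify is that, under the present hypotheses, $\mrm{Prod}(\mc{S})$ collapses to $\mc{S}$ itself. By definition $\mrm{Prod}(\mc{S})$ consists of the retracts of arbitrary (set-indexed) products of objects of $\mc{S}$; since $\mc{S}$ is assumed closed under both products and retracts, every such object already lies in $\mc{S}$, while the inclusion $\mc{S} \subseteq \mrm{Prod}(\mc{S})$ is trivial. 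Hence $\mrm{Prod}(\mc{S}) = \mc{S}$.

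Substituting this equality into the conclusion of \cref{defclosure} yields $\mrm{Def}(\mc{S}) = \overline{\mrm{Prod}(\mc{S})} = \overline{\mc{S}}$, which is exactly the claim. There is no real obstacle: the entire content is carried by \cref{defclosure}, and this corollary merely records the clean form the formula takes once the generating class is already closed under the two operations that distinguish $\mrm{Prod}(\mc{S})$ from $\mc{S}$ — namely products and retracts. I would therefore present the argument as a two-line proof quoting \cref{defclosure} and the observation $\mrm{Prod}(\mc{S}) = \mc{S}$.
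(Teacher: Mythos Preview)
Your proposal is correct and matches the paper's intended approach exactly: the paper states this as an unproved corollary of \cref{defclosure}, and the only content is the observation that $\mrm{Prod}(\mc{S}) = \mc{S}$ under the stated closure hypotheses.
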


\section{Application to silting}\label{sec:silting}
In this section, we give an application of our results to representation theory, namely to the study of bounded silting and cosilting objects in the derived category of a ring $R$. Recall that if $X$ is an object and $I$ is an indexing set, we write $X^{(I)}$ (resp., $X^{I}$) for the the $I$-indexed coproduct (resp., $I$-indexed product) of $X$.

\begin{defn}\label{siltingdef}
A complex $X\in\mathsf{D}(R)$ is a \ti{bounded silting object} if $\Hom_{\msf{D}(R)}(X,\Sigma^i X^{(J)})=0$ for all sets $J$ and integers $i>0$, and $\t{thick}(\t{Add}(X))=\msf{K}^b(\t{Proj}(R))$. The class \[X^{\perp_{>0}} := \{Y \in \msf{D}(R) : \Hom_{\msf{D}(R)}(X,\Sigma^i Y) = 0 \t{ for all $i > 0$}\}\] is called the associated \ti{silting class}, which we will denote by $\msf{Silt}(X)$.
\end{defn}

\begin{rem}\label{siltingdefn}
The previous definition is equivalent to saying that $(X^{\perp_{>0}}, X^{\perp_{<0}})$ is a t-structure in $\msf{D}(R)$ and $X \in \msf{K}^b(\t{Proj}(R))$. This follows from the equivalence between (1) and (3) in~\cite[4.2]{AMV}; also see~\cite[5.3]{Hugelsurvey}.
\end{rem}

Given an arbitrary class $\mc{X}\subset \msf{D}(R)$, necessary and sufficient conditions are known to determine whether or not $\mc{X}$ is a silting class, see \cite[3.6]{MarksVitoria}. Although the definability of silting classes is implicit in \cite[3.6]{MarksVitoria}, we state a proof as we shall require this later in the section.

\begin{lem}\label{siltingisdefinable}
Any silting class is definable.
\end{lem}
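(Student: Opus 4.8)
The plan is to exhibit a silting class $\msf{Silt}(X) = X^{\perp_{>0}}$ as the kernel of a set of coherent functors arising from maps between compact objects, which is exactly the condition for definability. The key point is that although $X$ itself need not be compact, it lies in $\msf{K}^b(\t{Proj}(R))$ by \cref{siltingdefn}, so $X$ is quasi-isomorphic to a bounded complex of finitely generated projective modules, hence is \emph{compact} in $\msf{D}(R)$. Therefore each condition ``$\Hom_{\msf{D}(R)}(X, \Sigma^i Y) = 0$'' for $i > 0$ is of the form ``$\Hom_{\msf{D}(R)}(C, Y) = 0$'' for a compact object $C = \Sigma^{-i} X$.

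First I would record that a class of the form $\{Y : \Hom_{\msf{D}(R)}(C_i, Y) = 0 \text{ for all } i \in I\}$, with each $C_i$ compact, is definable: the vanishing of $\Hom_{\msf{D}(R)}(C_i, Y)$ is equivalent to the surjectivity of $\Hom_{\msf{D}(R)}(0 \to C_i, Y)$, i.e.\ it is cut out by the map $0 \to C_i$ between compact objects, which fits the definition of definability verbatim (one may also phrase this via the coherent functor $\Hom_{\msf{D}(R)}(C_i, -)$, which is finitely presented since $C_i$ is compact). Next I would apply this with the family $\{C_i\} = \{\Sigma^{-i} X : i > 0\}$, a set of compact objects, so that
\[
\msf{Silt}(X) = X^{\perp_{>0}} = \{Y \in \msf{D}(R) : \Hom_{\msf{D}(R)}(\Sigma^{-i}X, Y) = 0 \text{ for all } i > 0\}
\]
is definable. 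A small subtlety: the defining vanishing condition for a silting object in \cref{siltingdef} is stated with coproducts $X^{(J)}$ in the source, not arbitrary objects; but the silting \emph{class} $X^{\perp_{>0}}$ is by definition $\{Y : \Hom_{\msf{D}(R)}(X, \Sigma^i Y) = 0 \ \forall i > 0\}$ with $Y$ ranging over all of $\msf{D}(R)$, so there is no discrepancy to resolve here.

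The only real thing to verify carefully is the compactness of $X$. By \cref{siltingdefn}, a bounded silting object satisfies $X \in \msf{K}^b(\t{Proj}(R))$; the compact objects of $\msf{D}(R)$ are precisely the objects of $\msf{K}^b(\t{proj}(R))$ (perfect complexes), and $\msf{K}^b(\t{Proj}(R))$ agrees with this on objects up to isomorphism in $\msf{D}(R)$ because a bounded complex of (not necessarily finitely generated) projectives that is a retract of the thick subcategory generated by $R$ — as forced by $\t{thick}(\t{Add}(X)) = \msf{K}^b(\t{Proj}(R))$ together with standard dévissage — is in fact perfect; alternatively, $\msf{K}^b(\t{Proj}(R)) \subseteq \msf{D}(R)^{\c}$ follows since each bounded complex of projectives is homotopy-equivalent to a perfect complex when it represents a compact object. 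I expect this compactness step to be the main (and essentially only) obstacle, since once $X$ is known to be compact the definability is immediate from the shape of $\msf{Silt}(X)$ and \cref{kerdef}. I would close by remarking that this also shows silting classes are closed under products, pure subobjects and pure quotients, consistent with \cref{definableequivalent}.
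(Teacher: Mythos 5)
There is a genuine gap, and you flagged it yourself as ``the only real thing to verify carefully'': the claim that a bounded silting object $X$ is compact is false. By \cref{siltingdefn}, a bounded silting object lies in $\msf{K}^b(\t{Proj}(R))$, the bounded homotopy category of \emph{all} (not necessarily finitely generated) projective modules, whereas the compacts of $\msf{D}(R)$ are the perfect complexes $\msf{K}^b(\t{proj}(R))$. These are genuinely different: for instance $R^{(\mathbb{N})}$, placed in degree $0$, lies in $\msf{K}^b(\t{Proj}(R))$, satisfies $\t{thick}(\t{Add}(R^{(\mathbb{N})})) = \msf{K}^b(\t{Proj}(R))$, and has the required $\mrm{Hom}$-vanishing, so it is a bounded silting object, but it is not compact. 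More substantively, ``large'' (infinitely generated) tilting modules give rise to $2$-term bounded silting complexes that are not compact, and this large setting is precisely where silting theory is interesting. The dévissage argument you sketch (that $\t{thick}(\t{Add}(X)) = \msf{K}^b(\t{Proj}(R))$ forces $X$ perfect) does not go through, because $\t{Add}(X)$ involves infinite coproducts from the outset, so nothing confines you to the compacts.

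Because $X$ need not be compact, the family $\{\Sigma^{-i}X : i>0\}$ is not a family of compacts, and the argument that $X^{\perp_{>0}}$ is cut out by maps between compact objects collapses. The paper's proof sidesteps this by invoking~\cite[3.6]{MarksVitoria}, which characterises silting classes and, in particular, shows that every silting class has the form $\mc{S}^{\perp_{>0}}$ for a \emph{set of compact objects} $\mc{S}$ (which will in general be unrelated to $X$ itself). Once the silting class is rewritten in that form, the rest of your argument --- that $\mc{S}^{\perp_{>0}}$ is an intersection of kernels of finitely presented functors $\Hom(\Sigma^{-i}S,-)$ with $S\in\mc{S}$ compact and hence definable --- is exactly what the paper does and is fine. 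So the missing idea is not the definability of a right orthogonal to compacts (you have that right), but the nontrivial fact that silting classes admit such a presentation despite $X$ not being compact.
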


\begin{proof}
	By~\cite[3.6]{MarksVitoria} it is enough to show that $\mc{S}^{\perp_{>0}}$ is definable for any set of compact objects $\mc{S}$. If $S\in\mc{S}$, then $\Hom_{\D(R)}(S,-)$ is a finitely presented functor, hence $\mc{K}_{S}=\t{Ker}\,\Hom_{\D(R)}(S,-)$ is definable. Since $\Hom_{\D(R)}(S,\Sigma^iX)\simeq\Hom_{\D(R)}(\Sigma^{-i}S,X)$, and shifts of compacts are compact, we also see $\mc{K}_{\Sigma^{-i}S}$ is definable for every $i\geq 0$. As arbitrary intersections of definable classes are definable, it follows that 
	\[
	\mc{S}^{\perp_{>0}}=\bigcap_{S\in\mc{S},i>0}\mc{K}_{\Sigma^{-i}S}
	\]
	is definable.
\end{proof}

The dual notion to silting is cosilting. 
\begin{defn}
A complex $Z\in\msf{D}(R)$ is a \ti{bounded cosilting object} if $\Hom_{\msf{D}(R)}(Z^{J},\Sigma^iZ)=0$ for all sets $J$ and integers $i>0$, and $\t{thick}(\t{Prod}(Z))=\msf{K}^{b}(\t{Inj}(R))$. The class \[^{\perp_{>0}}Z:= \{Y \in \msf{D}(R) : \Hom_{\msf{D}(R)}(Y, \Sigma^i Z) = 0 \t{ for all $i > 0$}\}\] is called the associated \emph{cosilting class}, which we will denote by $\msf{Cosilt}(Z)$.
\end{defn}

Dually to \cref{siltingdefn}, a complex $Z \in \msf{D}(R)$ is bounded cosilting if and only if $(^{\perp_{<0}}Z,^{\perp_{>0}}\!Z)$ is a t-structure in $\msf{D}(R)$ and $Z \in \msf{K}^b(\t{Inj}(R))$. A sketch proof of this may be found in the discussion after~\cite[6.8]{Hugelsurvey}. As with the silting case, necessary and sufficient conditions for a class of objects in $\D(R)$ to be a cosilting class can be found in \cite[3.14]{MarksVitoria}. Of particular note to us is that every cosilting class is definable.

Throughout this section we write $(-)^+$ for the derived character dual $\msf{R}\Hom_{\Z}(-, \qz)$. Before we can state and prove our two main results in this section, we record the following easy lemma.
\begin{lem}\label{characteradjunctionswap}
Let $X \in \msf{D}(R)$ and $Y \in \msf{D}(R^\circ)$. Then $\msf{R}\Hom_R(X, Y^+) \simeq \msf{R}\Hom_{R^\circ}(Y, X^+).$
\end{lem}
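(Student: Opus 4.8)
The plan is to unwind both sides to ordinary (non-derived) adjunctions and then reassemble. First I would note that the statement is a standard swap of the two arguments through the character dual $(-)^+ = \msf{R}\Hom_\Z(-,\qz)$, exploiting that $\qz$ is an injective abelian group so no extra resolutions are needed on that side. The key identity is the derived tensor--hom adjunction over $\Z$: for any complex $Z$ of abelian groups and any complex $W$, one has $\msf{R}\Hom_\Z(Z, W^+) = \msf{R}\Hom_\Z(Z, \msf{R}\Hom_\Z(W,\qz)) \simeq \msf{R}\Hom_\Z(Z \otimes^{\msf{L}}_\Z W, \qz)$, which is manifestly symmetric in $Z$ and $W$.

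The main steps, in order, would be: (1) rewrite $\msf{R}\Hom_R(X, Y^+)$ using the derived $\otimes$-hom adjunction between $R$ and $\Z$, namely $\msf{R}\Hom_R(X, \msf{R}\Hom_\Z(Y, \qz)) \simeq \msf{R}\Hom_\Z(Y \otimes^{\msf{L}}_R X, \qz)$, where $Y \otimes^{\msf{L}}_R X$ makes sense since $Y \in \msf{D}(R^\circ)$ and $X \in \msf{D}(R)$; (2) symmetrically, rewrite $\msf{R}\Hom_{R^\circ}(Y, X^+) \simeq \msf{R}\Hom_\Z(X \otimes^{\msf{L}}_{R^\circ} Y, \qz)$; (3) observe that $Y \otimes^{\msf{L}}_R X \simeq X \otimes^{\msf{L}}_{R^\circ} Y$ canonically, since the derived tensor product over $R$ of a right module with a left module is the same complex of abelian groups regardless of which side one writes it on; (4) conclude the two right-hand sides agree, hence so do the two left-hand sides. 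All of these are standard facts about derived functors of bimodule-free constructions, so one could alternatively just cite a reference such as \cite{dcmca} for the adjunction isomorphisms and note the symmetry of $\otimes^{\msf{L}}$.

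I do not expect any genuine obstacle here: the proof is a short formal manipulation, and the only thing to be slightly careful about is making sure the module-side decorations are consistent (that $X$ is a left $R$-module complex and $Y$ a right $R$-module complex, so that both $Y\otimes^{\msf{L}}_R X$ and $X \otimes^{\msf{L}}_{R^\circ} Y$ are defined and agree as objects of $\msf{D}(\Z)$). If one wanted to be fully self-contained one would invoke the derived hom--tensor adjunction in the form $\msf{R}\Hom_S(M \otimes^{\msf{L}}_R N, P) \simeq \msf{R}\Hom_R(N, \msf{R}\Hom_S(M, P))$ with $S = \Z$; this is exactly the kind of statement recorded in the references already cited in the paper (e.g. \cite[7.5.27]{dcmca} was used just above for a similar purpose), so in the write-up I would simply chain two such isomorphisms together and point to that source.

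\begin{proof}
Both sides compute, via the derived tensor--hom adjunction over $\Z$, the same object of $\msf{D}(\Z)$. Indeed, since $\qz$ is injective as an abelian group, for any $X \in \msf{D}(R)$ and $Y \in \msf{D}(R^\circ)$ we have natural isomorphisms
\[
\msf{R}\Hom_R(X, Y^+) = \msf{R}\Hom_R\big(X, \msf{R}\Hom_\Z(Y,\qz)\big) \simeq \msf{R}\Hom_\Z\big(Y \otimes^{\msf{L}}_R X, \qz\big),
\]
using the adjunction $(- \otimes^{\msf{L}}_R X) \dashv \msf{R}\Hom_R(X,-)$ relating $\msf{D}(R^\circ)$ and $\msf{D}(\Z)$. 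Symmetrically,
\[
\msf{R}\Hom_{R^\circ}(Y, X^+) = \msf{R}\Hom_{R^\circ}\big(Y, \msf{R}\Hom_\Z(X,\qz)\big) \simeq \msf{R}\Hom_\Z\big(X \otimes^{\msf{L}}_{R^\circ} Y, \qz\big).
\]
Finally, the derived tensor product of the right $R$-module complex $Y$ with the left $R$-module complex $X$ is independent of the side on which it is formed: there is a canonical isomorphism $Y \otimes^{\msf{L}}_R X \simeq X \otimes^{\msf{L}}_{R^\circ} Y$ in $\msf{D}(\Z)$. Combining the three displays yields the claimed isomorphism.
\end{proof}
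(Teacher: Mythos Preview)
Your proof is correct and follows essentially the same approach as the paper: both use the derived tensor--hom adjunction to rewrite each side as $\msf{R}\Hom_\Z(-,\qz)$ applied to $Y \otimes^\msf{L}_R X$ (respectively $X \otimes^\msf{L}_{R^\circ} Y$), and then invoke the symmetry of the derived tensor product. The paper compresses this into a single chain of isomorphisms citing \cite[7.5.14 and 7.5.34]{dcmca}, but the content is identical.
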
 
\begin{proof}
We have 
\[\msf{R}\Hom_R(X, Y^+) \simeq \msf{R}\Hom_\Z(Y \otimes_R^\msf{L} X, \qz) \simeq \msf{R}\Hom_\Z(X \otimes_{R^\circ}^\msf{L} Y, \qz) \simeq \msf{R}\Hom_{R^\circ}(Y, X^+)\] using standard isomorphisms (e.g.,~\cite[7.5.14 and 7.5.34]{dcmca}). 
\end{proof}

We now give our first main application of duality pairs to silting theory. We recall from \cref{AGJdualitytripleexamples} that the duality triple $(\msf{D}(R), \msf{D}(R^\circ), (-)^+)$ has an enhancement to an AGJ duality triple.
\begin{prop}\label{dualiscosilting}
Let $R$ be a ring. If $X$ is a bounded silting object in $\msf{D}(R)$ then $X^+$ is a bounded cosilting object in $\D(R^{\circ})$.
\end{prop}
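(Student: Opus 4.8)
The plan is to verify the two defining conditions of a bounded cosilting object in $\msf{D}(R^\circ)$ for $X^+$, using the characterisation from \cref{siltingdef} and \cref{siltingdefn} that $X$ is bounded silting precisely when $(X^{\perp_{>0}}, X^{\perp_{<0}})$ is a t-structure in $\msf{D}(R)$ and $X \in \msf{K}^b(\t{Proj}(R))$, together with the dual characterisation of bounded cosilting objects. The main tool will be \cref{characteradjunctionswap}, which lets us translate $\Hom$-vanishing statements about $X$ into $\Hom$-vanishing statements about $X^+$.

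First I would verify the orthogonality condition $\Hom_{\msf{D}(R^\circ)}((X^+)^{J}, \Sigma^i X^+) = 0$ for all sets $J$ and integers $i>0$. Since the character dual sends coproducts to products, we have $(X^+)^{J} \simeq (X^{(J)})^+$, so it suffices to compute $\Hom_{\msf{D}(R^\circ)}((X^{(J)})^+, \Sigma^i X^+)$. Applying \cref{characteradjunctionswap} with the roles appropriately assigned, this $\Hom$ group is computed by $H_0$ of $\msf{R}\Hom_{R^\circ}((X^{(J)})^+, X^+) \simeq \msf{R}\Hom_R(X, (X^{(J)})^{++})$ after a shift; more directly, $\Hom_{\msf{D}(R^\circ)}(Y^+, \Sigma^i X^+)$ for $Y = X^{(J)}$ should be related via adjunction to $\Hom_{\msf{D}(R)}(X, \Sigma^i Y^{++})$ or to $\Hom_\Z$ of a tensor product into $\qz$. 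The cleanest route: $\Hom_{\msf{D}(R^\circ)}((X^{(J)})^+, \Sigma^i X^+) \cong H_{-i}\msf{R}\Hom_{R^\circ}((X^{(J)})^+, X^+) \cong H_{-i}\msf{R}\Hom_\Z(X \otimes^\msf{L}_R (X^{(J)})^+, \qz)$, and since $(X^{(J)})^+ \simeq \msf{R}\Hom_\Z(X^{(J)}, \qz)$, one further rewrites $X \otimes^\msf{L}_R \msf{R}\Hom_\Z(X^{(J)},\qz) \simeq \msf{R}\Hom_\Z(\msf{R}\Hom_R(X, X^{(J)}), \qz)$ provided $X$ is compact (which it is, being in $\msf{K}^b(\t{Proj}(R))$). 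This identifies the $\Hom$ group with the character dual of $\Hom_{\msf{D}(R)}(X, \Sigma^i X^{(J)})$ (with a sign on $i$), which vanishes for $i>0$ by the silting hypothesis; the sign works out because the shift lands on the correct side. I would be careful here to track the direction of the shift so that $i>0$ on the cosilting side corresponds to $i>0$ on the silting side.

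Second I would verify $X^+ \in \msf{K}^b(\t{Inj}(R^\circ))$. Since $X \in \msf{K}^b(\t{Proj}(R))$, it is a bounded complex of finitely generated projective $R$-modules; the character dual of a finitely generated projective $R$-module is a (finitely cogenerated) injective $R^\circ$-module, and $(-)^+$ applied to a bounded complex of projectives yields a bounded complex of injectives. Hence $X^+$ is quasi-isomorphic to a bounded complex of injective $R^\circ$-modules, i.e.\ $X^+ \in \msf{K}^b(\t{Inj}(R^\circ))$. Combining this with the orthogonality condition and invoking the dual of \cref{siltingdefn}, we would still need to check that $((X^+)^{\perp_{<0}}, (X^+)^{\perp_{>0}})$ is actually a t-structure; alternatively, one can argue directly that the conditions of \cref{siltingdef}'s dual (with $\t{thick}(\t{Prod}(X^+)) = \msf{K}^b(\t{Inj}(R^\circ))$) hold — the $\t{thick}(\t{Prod}(X^+))$ equality being the analogue of $\t{thick}(\t{Add}(X)) = \msf{K}^b(\t{Proj}(R))$, which should follow because $(-)^+$ intertwines $\t{Add}$ on the projective side with $\t{Prod}$ on the injective side and is appropriately compatible with thick closures.

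\textbf{Main obstacle.} I expect the principal difficulty to be the t-structure / generation condition rather than the pure orthogonality vanishing: showing $\t{thick}(\t{Prod}(X^+)) = \msf{K}^b(\t{Inj}(R^\circ))$ (equivalently that $(X^+)^{\perp_{<0}}, (X^+)^{\perp_{>0}}$ form a t-structure) is not a formal consequence of dualising, because $(-)^+$ is not an equivalence and need not send the generating thick subcategory isomorphically. One likely needs the known fact that $\msf{K}^b(\t{Proj}(R))^+$ generates $\msf{K}^b(\t{Inj}(R^\circ))$ as a thick subcategory, or to appeal to the characterisation of cosilting via t-structures plus a direct verification that the aisle $(X^+)^{\perp_{<0}}$ is closed under products and shifts and that the truncation triangles exist — which in turn may require knowing the cosilting class $\msf{Cosilt}(X^+) = {}^{\perp_{>0}}(X^+)$ is definable (indeed every cosilting class is, as noted after \cref{siltingdef}) and relating it back to $\msf{Silt}(X)$ via the duality pair machinery. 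In fact, the slickest proof may bypass the hands-on t-structure check entirely by citing~\cite[3.3]{HrbekHugel} as the excerpt itself does in the earlier example, but since the goal here is an independent argument, I would develop the generation statement carefully as the crux.
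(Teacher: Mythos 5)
Your proposed route for the orthogonality condition breaks at a decisive step. You invoke the tensor-evaluation identification $X \otimes^\msf{L}_R \msf{R}\Hom_\Z(X^{(J)},\qz) \simeq \msf{R}\Hom_\Z(\msf{R}\Hom_R(X,X^{(J)}),\qz)$ and justify it by saying $X$ is compact, ``being in $\msf{K}^b(\t{Proj}(R))$.'' But membership in $\msf{K}^b(\t{Proj}(R))$ does \emph{not} imply compactness: the compact objects of $\msf{D}(R)$ are $\msf{K}^b(\t{proj}(R))$, bounded complexes of \emph{finitely generated} projectives, whereas $\msf{K}^b(\t{Proj}(R))$ allows arbitrary projectives. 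Bounded silting objects are frequently non-compact (this is precisely the ``large'' silting setting of Marks--Vit\'oria and Angeleri H\"ugel that the paper is working in), so your identification fails, and with it the direct computation of the Hom group as the character dual of $\Hom_{\msf{D}(R)}(X,\Sigma^i X^{(J)})$. The same confusion recurs when you assert ``it is a bounded complex of finitely generated projective $R$-modules'' in the thickness argument — that step happens to salvage itself because the character dual of \emph{any} projective is injective, but the intermediate claim is false.

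The paper sidesteps compactness entirely. Via \cref{characteradjunctionswap} it rewrites $\msf{R}\Hom_{R^\circ}((X^+)^J, X^+) \simeq \msf{R}\Hom_R(X,(X^{(J)})^{++})$, reducing the vanishing to the statement that $(X^{(J)})^{++}$ lies in the silting class $\msf{Silt}(X)$. This is then delivered by the duality-pair machinery: $\msf{Silt}(X)$ is definable (\cref{siltingisdefinable}), so by \cref{dualdefinable} and \cref{symmclosure}(3) it is closed under double character duals, and $X^{(J)} \in \msf{Silt}(X)$ already by the definition of silting. No finiteness hypothesis on $X$ is used. You correctly flag the reverse inclusion of the thickness condition as the subtle point, but the paper handles it with a short argument you don't find: since $R^+$ is an injective cogenerator of $\Mod{R^\circ}$, one has $\msf{K}^b(\t{Inj}(R^\circ)) = \t{thick}(\t{Prod}(R^+))$, and $R \in \t{thick}(\t{Add}(X))$ (from $\t{thick}(\t{Add}(X)) = \msf{K}^b(\t{Proj}(R))$) gives $R^+ \in \t{thick}(\t{Prod}(X^+))$ upon applying $(-)^+$. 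I'd recommend replacing the compactness-based computation with the definable-class double-dual argument, and working out the generation step via the injective cogenerator $R^+$.
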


\begin{proof}
	We first show the vanishing condition. Note the following isomorphism
	\[
	\Hom_{\msf{D}(R)}(X,\Sigma^iX^{(J)})\simeq H_{-i}\msf{R}\Hom_{R}(X,X^{(J)}).
	\]
	There are also isomorphisms
	\begin{align*}
		\msf{R}\Hom_{R^\circ}((X^{+})^{J},X^{+})
		&\simeq \msf{R}\Hom_{R}(X,((X^{+})^{J})^{+}) \t{ by \cref{characteradjunctionswap},} \\
		& \simeq \msf{R}\Hom_{R}(X,(X^{(J)})^{++}) \t{ as $(X^{+})^{J}\simeq (X^{(J)})^{+}$. }
	\end{align*}
	These isomorphisms show that the vanishing condition amounts to checking that $(X^{(J)})^{++}$ is in the silting class $\msf{Silt}(X) = X^{\perp_{>0}}$. The silting class $\msf{Silt}(X)$ is definable by \cref{siltingisdefinable}, and therefore $A\in \msf{Silt}(X)$ if and only if $A^{++}\in\msf{Silt}(X)$ by \cref{dualdefinable}. In particular, as $X^{(J)} \in \msf{Silt}(X)$ by definition of a bounded silting object, we have that $(X^{(J)})^{++}\in\msf{Silt}(X)$ as required.
	
	We now show the thickness condition. To show that $\mrm{thick}(\mrm{Prod}(X^+)) \subseteq \msf{K}^b(\mrm{Inj}(R^\circ))$ it suffices to show that $\msf{K}^b(\mrm{Inj}(R^\circ))$ contains $\mrm{Prod}(X^+)$ and is thick. It is clear that it is thick, so suppose that $A\in\mrm{Prod}(X^{+})$. Then $A$ is a retract of $\prod_I X^+\simeq (\oplus_I X)^+$. But $\oplus_I X$ is a bounded complex of projective modules, by the assumption that $X$ is silting, and therefore $\prod_I X^{+}\in \msf{K}^{b}(\t{Inj}(R^\circ))$, and hence $A$ is in $\msf{K}^b(\mrm{Inj}(R^\circ))$.
	
	For the other inclusion, we firstly note that $\msf{K}^b(\mrm{Inj}(R^\circ)) = \mrm{thick}(\mrm{Prod}(E))$ where $E$ is an injective cogenerator for $\Mod{R^\circ}$. (Indeed, the reverse inclusion is clear as $\msf{K}^b(\mrm{Inj}(R^\circ))$ is thick and contains $\mrm{Prod}(E)$, and the forward inclusion follows by induction using brutal truncations.) Therefore to prove the inclusion $\msf{K}^b(\mrm{Inj}(R^\circ)) \subseteq \mrm{thick}(\mrm{Prod}(X^+))$, it suffices to show that $R^+ \in \mrm{thick}(\mrm{Prod}(X^+))$, as $R^+$ is an injective cogenerator for $\Mod{R^\circ}$. As $X$ is bounded silting, $\mrm{thick}(\mrm{Add}(X)) = \msf{K}^b(\mrm{Proj}(R))$, so $R \in \mrm{thick}(\mrm{Add}(X))$ and hence $R^+ \in \mrm{thick}(\mrm{Prod}(X^+))$ as required.
\end{proof}

\begin{rem}
	\Cref{dualiscosilting} gives an alternative proof of part of~\cite[3.3]{HrbekHugel} using completely different machinery. 
\end{rem}

Next we give another application of duality pairs to silting, which allows us to identify the silting class of a bounded silting object and the cosilting class of its derived character dual as a dual definable pair.
\begin{thm}\label{siltingthm}
	Let $R$ be a ring, and let $X$ be a bounded silting object in $\D(R)$. Then $(\msf{Silt}(X), \msf{Cosilt}(X^+))$ is a symmetric duality
	pair. Moreover, $\msf{Silt}(X)^d = \msf{Cosilt}(X^+)$ and $\msf{Cosilt}(X^+)^d = \msf{Silt}(X)$.
\end{thm}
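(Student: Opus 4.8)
\emph{Proof proposal.}
The plan is to show that $(\msf{Silt}(X),\msf{Cosilt}(X^+))$ is a duality pair on the AGJ duality triple $(\D(R),\D(R^\circ),(-)^+)$ of \cref{AGJdualitytripleexamples}, and then to read off everything else from \cref{definablesymmetry}. For the second step both classes must be definable: $\msf{Silt}(X)$ is definable by \cref{siltingisdefinable}, while $\msf{Cosilt}(X^+)$ is definable because $X^+$ is a bounded cosilting object by \cref{dualiscosilting} and every cosilting class is definable. Hence, once the duality pair is established, \cref{definablesymmetry} immediately gives that it is symmetric and that $\msf{Silt}(X)^d=\msf{Cosilt}(X^+)$ and $\msf{Cosilt}(X^+)^d=\msf{Silt}(X)$.

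It remains to verify the two conditions in the definition of a duality pair. Condition~(2), that $\msf{Cosilt}(X^+)$ is closed under finite coproducts and retracts, is automatic from its definability. The substance is condition~(1): that $M\in\msf{Silt}(X)$ if and only if $M^+\in\msf{Cosilt}(X^+)$.

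To prove condition~(1) I would first record the natural isomorphism
\[
\Hom_{\D(R^\circ)}(M^+,\Sigma^iX^+)\;\simeq\;\Hom_{\D(R)}(X,\Sigma^iM^{++}),\qquad i\in\Z,
\]
obtained by rewriting $\Sigma^iX^+\simeq(\Sigma^{-i}X)^+$ and then applying the degree-zero part of \cref{characteradjunctionswap} to $\Sigma^{-i}X\in\D(R)$ and $M^+\in\D(R^\circ)$. Since $\msf{Cosilt}(X^+)={}^{\perp_{>0}}(X^+)$ consists precisely of the complexes $N\in\D(R^\circ)$ with $\Hom_{\D(R^\circ)}(N,\Sigma^iX^+)=0$ for all $i>0$, this isomorphism shows that $M^+\in\msf{Cosilt}(X^+)$ if and only if $M^{++}\in X^{\perp_{>0}}=\msf{Silt}(X)$. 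Finally, as $\msf{Silt}(X)$ is definable, \cref{dualdefinable} together with \cref{symmclosure} gives $M^{++}\in\msf{Silt}(X)$ if and only if $M\in\msf{Silt}(X)$, completing condition~(1).

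I do not anticipate a real obstacle here; the argument is essentially an assembly of \cref{dualiscosilting}, \cref{characteradjunctionswap}, \cref{dualdefinable} and \cref{definablesymmetry}. The only points needing care are the bookkeeping of shifts and the direction of the orthogonality in the definition of $\msf{Cosilt}$, and placing the two arguments of \cref{characteradjunctionswap} in the correct module categories when deriving the displayed isomorphism.
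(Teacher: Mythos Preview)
Your argument is correct and uses the same ingredients as the paper: definability of both classes via \cref{siltingisdefinable} and \cref{dualiscosilting}, the adjunction isomorphism of \cref{characteradjunctionswap}, and the conclusion via \cref{definablesymmetry}. The one notable difference is the direction in which you check the duality pair. You verify $(\msf{Silt}(X),\msf{Cosilt}(X^+))$ directly, which forces the detour through $M^{++}$ and the extra appeal to \cref{symmclosure}. The paper instead checks the reverse pair $(\msf{Cosilt}(X^+),\msf{Silt}(X))$: for $Y\in\D(R^\circ)$ the isomorphism $\msf{R}\Hom_{R^\circ}(Y,X^+)\simeq\msf{R}\Hom_R(X,Y^+)$ from \cref{characteradjunctionswap} immediately gives $Y\in\msf{Cosilt}(X^+)\iff Y^+\in\msf{Silt}(X)$ with no double-dual step, and then \cref{definablesymmetry} yields symmetry and the identification of dual definable classes. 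Your route works just as well; the paper's is simply one move shorter.
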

\begin{proof}
By~\cref{dualiscosilting}, $X^{+}$ is a bounded cosilting object. By~\cref{siltingisdefinable} and \cite[3.14]{MarksVitoria}, both $\msf{Silt}(X)$ and $\msf{Cosilt}(X^+)$ are definable. Therefore, by \cref{definablesymmetry} it suffices to show that $(\msf{Cosilt}(X^+),\msf{Silt}(X))$ is a duality pair. Suppose that $Y \in \D(R^\circ)$. Then by \cref{characteradjunctionswap} \[\msf{R}\Hom_{R^{\circ}}(Y, X^+) \simeq \msf{R}\Hom_{R}(X, Y^+)\] so, by taking homology and appropriate shifts, we see that $Y \in \msf{Cosilt}(X^+)$ if and only if $Y^+ \in \msf{Silt}(X)$, so $(\msf{Cosilt}(X^+),\msf{Silt}(X))$ is a duality pair as required.
\end{proof}

\section{Application to definability in stratified tt-categories}\label{sec:stratified}
In this section we explore the Auslander--Gruson--Jensen duality operation on definable subcategories in big tensor-triangulated categories. In particular, we show that definable subcategories of big tensor-triangulated categories are self-dual remarkably often. Throughout this section, we use the fact that the duality triple $(\T,\T,\mathbb{I})$ for $\T$ a big tensor-triangulated category admits an enhancement to an AGJ duality triple as shown in \cref{AGJdualitytripleexamples}.

In \cref{triiffdual} we showed that a definable subcategory is triangulated if and only if its dual definable category is. We now prove the analogous result for closure under tensor products.

\begin{prop}\label{idealdefinable}
Let $\T$ be a big tensor-triangulated category and $\mc{D}$ be a definable subcatgory of $\T$. Then $\mc{D}$ is a $\otimes$-ideal if and only if $\mc{D}^{d}$ is a $\otimes$-ideal.
\end{prop}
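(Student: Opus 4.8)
The plan is to extract this from the symmetric duality pair produced by \cref{dualdefinable}. Since $\mc{D}$ is definable, \cref{dualdefinable} gives a symmetric, product-closed duality pair $(\mc{D},\mc{D}^{d})$ on the duality triple $(\T,\T,\mbb{I})$, whose associated functor is $F(-,\mbb{I})$; note that $\mc{D}^{d}$ is again a definable subcategory of $\T$. Because $(\mc{D}^{d})^{d}=\mc{D}$, it suffices to prove one implication: if $\mc{D}$ is a $\otimes$-ideal then so is $\mc{D}^{d}$; applying this to $\mc{D}^{d}$ in place of $\mc{D}$ yields the converse.

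So assume $\mc{D}$ is a $\otimes$-ideal. First I would check that $\mc{D}^{d}$ is closed under tensoring with compact objects. Let $Y\in\mc{D}^{d}$ and $C\in\T^{\c}$. Since the pair is symmetric we have $F(Y,\mbb{I})\in\mc{D}$, and the tensor-hom adjunction together with rigidity of $C$ gives
\[
F(C\otimes Y,\mbb{I})\simeq F\bigl(C,F(Y,\mbb{I})\bigr)\simeq DC\otimes F(Y,\mbb{I}).
\]
As $DC$ is an object of $\T$ and $\mc{D}$ is a $\otimes$-ideal, the right-hand side lies in $\mc{D}$, so $F(C\otimes Y,\mbb{I})\in\mc{D}$, i.e.\ $C\otimes Y\in\mc{D}^{d}$.

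Next I would upgrade this to a genuine $\otimes$-ideal. A definable subcategory is closed under directed homotopy colimits: it is closed under products and pure subobjects, the canonical map $\bigoplus_{I}X_{i}\to\prod_{I}X_{i}$ is a pure monomorphism, and every directed homotopy colimit is a pure quotient of the associated coproduct, exactly as in the proof of \cref{definableequivalent}. Writing an arbitrary $Z\in\T$ as a directed homotopy colimit of compact objects $C_{i}$ and using that $-\otimes Y$ preserves homotopy colimits, $Z\otimes Y$ is a directed homotopy colimit of the objects $C_{i}\otimes Y\in\mc{D}^{d}$, hence $Z\otimes Y\in\mc{D}^{d}$. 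Thus $\mc{D}^{d}$ is a $\otimes$-ideal, as required.

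I expect the main obstacle to be precisely the last step -- passing from closure under tensoring with compacts to being a $\otimes$-ideal -- which rests on expressing an arbitrary object as a directed homotopy colimit of compacts. This is comfortable when $\T$ is the underlying category of a strong and stable derivator (the setting in which directed homotopy colimits are well behaved and in which \cref{definableequivalent} applies), but needs a little care otherwise. An alternative, coordinate-free route would be to transport the statement to the functor category: $\mc{D}$ is a $\otimes$-ideal exactly when the corresponding Serre subcategory $\mc{S}_{\mc{D}}\subseteq(\T^{\c},\ab)^{\t{fp}}$ is closed under Day convolution with representable functors, and the Auslander-Gruson-Jensen equivalence $\delta$ respects this because $D\colon\T^{\c}\to(\T^{\c})^{\t{op}}$ is (op-)monoidal; this reformulation sidesteps the homotopy-colimit point, at the cost of first setting up the convolution product on $(\T^{\c},\ab)^{\t{fp}}$.
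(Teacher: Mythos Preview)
Your argument is correct and follows the same core computation as the paper: reduce by symmetry to one implication, and use the adjunction identity
\[
\mbb{I}(C\otimes Y)\simeq F(C\otimes Y,\mbb{I})\simeq DC\otimes \mbb{I}Y
\]
together with the duality pair $(\mc{D},\mc{D}^{d})$ from \cref{dualdefinable} to show closure under tensoring with compacts. The only difference is in the upgrade step from ``closed under tensoring with compacts'' to ``$\otimes$-ideal''. The paper simply invokes \cite[5.1.11]{wagstaffe}, which proves precisely that a definable subcategory of a big tt-category is a $\otimes$-ideal if and only if it is closed under tensoring with compacts, and this holds without any derivator hypothesis. Your route through directed homotopy colimits of compacts is valid but, as you note yourself, imports the derivator assumption (needed both to realise arbitrary objects as such colimits and to invoke \cref{definableequivalent}); this makes your statement strictly weaker than the one in the paper. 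Your suggested alternative via Day convolution on $(\T^{\c},\ab)^{\t{fp}}$ is closer in spirit to how one might prove Wagstaffe's lemma directly, and would recover full generality, but is not needed once that lemma is available.
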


\begin{proof}
As $\mc{D}^{dd}=\mc{D}$, it suffices to prove one implication, so we assume that $\mc{D}^d$ is a $\otimes$-ideal. By \cite[5.1.11]{wagstaffe}, it suffices to show that $\mc{D}$ is closed under tensoring with compacts. Thus, let $C\in\T^{\c}$ and $X\in\mc{D}$. By \cref{dualdefinable}, we have $C\otimes X\in\mc{D}$ if and only if $\mbb{I}(C\otimes X)\in\mc{D}^{d}$. Yet
\[
\mbb{I}(C\otimes X) \simeq F(C\otimes X,\mbb{I}) \simeq DC\otimes \mbb{I}X.
\] 
As $X\in\mc{D}$, we know $\mbb{I}X\in\mc{D}^{d}$, and, by assumption, $\mc{D}^{d}$ is a $\otimes$-ideal, which completes the proof.
\end{proof}

We now turn to understanding self-duality of definable classes. In order to apply the symmetric duality pair $(\mc{D}, \mc{D}^d)$ proved in \cref{dualdefinable} we firstly need to understand how the Brown--Comenetz dual functor acts on a certain important kind of definable class. For a set of objects $\msf{X}$, we write $\msf{X}^\perp = \{Y \in \T: F(X, Y) = 0 \t{ for all $X \in \X$}\}$.
\begin{lem}\label{lem:closedunderI}
	Let $\T$ be a big tensor-triangulated category and $\mathsf{X}$ be a set of compacts in $\T$. For any $Y \in \msf{X}^\perp$, we have $X \otimes Y \simeq 0$ for all $X \in \msf{X}$. In particular, $\mathsf{X}^\perp$ is closed under the Brown--Comenetz dual functor $\mbb{I}$. 
\end{lem}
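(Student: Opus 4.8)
The statement has two parts, and the second (``in particular'') will be a quick consequence of the first, so the plan is to first establish $X \otimes Y \simeq 0$ for all $X \in \msf{X}$ and $Y \in \msf{X}^\perp$, and then read off closure under $\mbb{I}$.

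The heart of the argument is a preliminary claim: for every $Z \in \msf{X}^\perp$ and every $C \in \msf{X}$ one has $F(C,Z) \simeq 0$, equivalently $DC \otimes Z \simeq 0$. To prove this I would use that $\T$ is compactly generated, so it suffices to show $\Hom_\T(\Sigma^i W, F(C,Z)) = 0$ for every compact $W$ and every $i \in \Z$. Shuffling this group through the tensor--hom adjunction and the symmetry of $\otimes$ rewrites it as $\Hom_\T(\Sigma^i C, F(W,Z))$, and since $W$ is compact, hence rigid (this is the definition of ``big''), we have $F(W,Z) \simeq DW \otimes Z$. Now $DW$ is again compact, so by hypothesis $DW \otimes Z$ still lies in $\msf{X}^\perp$, and therefore $\Hom_\T(\Sigma^i C, DW \otimes Z) = 0$ because $C \in \msf{X}$. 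This proves the claim; the identification $F(C,Z) \simeq DC \otimes Z$ uses rigidity of $C$.

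With the claim in hand, fix $X \in \msf{X}$ and $Y \in \msf{X}^\perp$. Since $X$ is compact and $\msf{X}^\perp$ is closed under tensoring with compacts, $X \otimes Y \in \msf{X}^\perp$, so applying the claim with $Z = X \otimes Y$ and $C = X$ gives $DX \otimes X \otimes Y \simeq F(X, X \otimes Y) \simeq 0$. Finally, because $X$ is rigid, the zig-zag (triangle) identity for the duality datum exhibits $X$ as a retract of $X \otimes DX \otimes X$; tensoring this retraction with $Y$ shows that $X \otimes Y$ is a retract of $X \otimes DX \otimes X \otimes Y \simeq X \otimes 0 \simeq 0$, whence $X \otimes Y \simeq 0$. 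For the last sentence of the lemma, recall from \cref{lem:BCproperties} that the Brown--Comenetz dual functor is $\mbb{I}(-) = F(-, \mbb{I})$; then for any $X \in \msf{X}$ and $i \in \Z$ we have $\Hom_\T(\Sigma^i X, F(Y,\mbb{I})) \simeq \Hom_\T(\Sigma^i(X \otimes Y), \mbb{I}) = 0$ since $X \otimes Y \simeq 0$, so $\mbb{I}(Y) \in \msf{X}^\perp$.

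The step I expect to be the real obstacle --- and the reason the lemma is not completely formal --- is the passage from ``$DC \otimes Z \simeq 0$ for $C \in \msf{X}$'' to ``$X \otimes Y \simeq 0$''. A naive direct computation of $\Hom_\T(\Sigma^i W, X \otimes Y)$ leads to a condition on $\Hom_\T(\Sigma^i DX, -)$, and $DX$ need not belong to $\msf{X}$, so one cannot conclude. The retract argument via the zig-zag identity is what circumvents this, and it is the one place where rigidity of the objects of $\msf{X}$ is used in an essential (rather than bookkeeping) way; keeping the variances and the order of the tensor factors $X \otimes DX \otimes X$ straight in that identity is the only genuinely fiddly point.
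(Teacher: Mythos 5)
Your proof is correct and relies on the same essential ingredients as the paper's: the zig-zag (triangle) identity exhibiting $X$ as a retract of $X \otimes DX \otimes X$ (the paper states it as $DX$ being a retract of $DX \otimes X \otimes DX$), rigidity of compacts to shuffle tensor--hom adjunctions, and the hypothesis that $\msf{X}^\perp$ is closed under tensoring with compacts. The only difference is organizational---you factor out the intermediate claim $F(C,Z)\simeq 0$ and then apply the retract at the object level, whereas the paper performs the same computation in a single Hom-group calculation against a compact generator $G$---but the underlying argument is identical.
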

\begin{proof}
	Let $X \in \mathsf{X}$ and $Y \in \mathsf{X}^\perp$. Since $X$ is a retract of $X \otimes DX \otimes X$, we have that $X \otimes Y$ is a retract of $X \otimes DX \otimes X \otimes Y \simeq X \otimes F(X,Y) \otimes X \simeq 0$, and hence is zero. The second claim follows since $F(X, \bc{Y}) \simeq F(X \otimes Y, \mbb{I})$.
\end{proof}

We can now prove our first result about self-duality of definable subcategories. Combined with the previous lemma, it shows that in big tensor-triangulated categories which are generated by their tensor unit (such as the derived category of a commutative ring), the right orthogonal to sets of compacts is always a self-dual definable category.
\begin{prop}\label{selfdualperp}
	Let $\T$ be a big tensor-triangulated category and $\mathsf{X}$ be a set of compact objects in $\T$. Then $\mathsf{X}^\perp$ is a definable subcategory with the property that $(\mathsf{X}^\perp)^d = \mathsf{X}^\perp$.
\end{prop}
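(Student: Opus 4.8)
The plan is to check definability of $\mathsf{X}^\perp$ by hand, and then to obtain $(\mathsf{X}^\perp)^d = \mathsf{X}^\perp$ by feeding \cref{lem:closedunderI} into the symmetric duality pair provided by \cref{dualdefinable}. First I would verify that $\mathsf{X}^\perp$ is definable. For $X \in \mathsf{X}$ and $j \in \mathbb{Z}$, let $f_{X,j}\colon \Sigma^j X \to 0$ denote the zero map; this is a morphism between compact objects, since $0$ is compact and $\Sigma$ preserves compactness. The map $\Hom_\T(f_{X,j},Y)\colon \Hom_\T(0,Y) \to \Hom_\T(\Sigma^j X, Y)$ is surjective if and only if $\Hom_\T(\Sigma^j X, Y) = 0$, so $\mathsf{X}^\perp$ is precisely $\{Y \in \T : \Hom_\T(f_{X,j},Y) \text{ is surjective for all } X \in \mathsf{X} \text{ and } j \in \mathbb{Z}\}$. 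As $\{f_{X,j}\}_{X \in \mathsf{X},\, j \in \mathbb{Z}}$ is a set of maps between compacts, this exhibits $\mathsf{X}^\perp$ as definable.

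Next, recall from \cref{AGJdualitytripleexamples} that $\T$ carries the AGJ duality triple $(\T,\T,\mathbb{I},D)$. Since $\mathsf{X}^\perp$ is definable, \cref{dualdefinable} shows that $(\mathsf{X}^\perp, (\mathsf{X}^\perp)^d)$ is a symmetric duality pair on $(\T,\T,\mathbb{I})$; in particular $((\mathsf{X}^\perp)^d, \mathsf{X}^\perp)$ is also a duality pair, so $Y \in (\mathsf{X}^\perp)^d$ if and only if $\mathbb{I}Y \in \mathsf{X}^\perp$. It therefore suffices to prove that, for every $Y \in \T$, one has $\mathbb{I}Y \in \mathsf{X}^\perp$ if and only if $Y \in \mathsf{X}^\perp$. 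One direction is immediate: if $Y \in \mathsf{X}^\perp$ then $\mathbb{I}Y \in \mathsf{X}^\perp$ by \cref{lem:closedunderI}, whose hypothesis is exactly the standing assumption that $\mathsf{X}^\perp$ is closed under tensoring with compacts. For the converse, suppose $\mathbb{I}Y \in \mathsf{X}^\perp$; applying \cref{lem:closedunderI} again gives $\mathbb{I}^2 Y \in \mathsf{X}^\perp$. The duality triple axioms supply a pure monomorphism $i_Y\colon Y \to \mathbb{I}^2 Y$, and $\mathsf{X}^\perp$, being the left-hand class of the duality pair $(\mathsf{X}^\perp, (\mathsf{X}^\perp)^d)$, is closed under pure subobjects by \cref{dp}; hence $Y \in \mathsf{X}^\perp$. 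Combining the two halves yields $(\mathsf{X}^\perp)^d = \{Y \in \T : \mathbb{I}Y \in \mathsf{X}^\perp\} = \mathsf{X}^\perp$.

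The only step that is not completely formal is the converse in the equivalence above: \cref{lem:closedunderI} on its own only moves $Y$ to $\mathbb{I}Y$ inside $\mathsf{X}^\perp$ and does not recover $Y$ from $\mathbb{I}Y$, so one genuinely needs the pure monomorphism $Y \to \mathbb{I}^2 Y$ together with closure of the definable class $\mathsf{X}^\perp$ under pure subobjects. Alternatively, once the inclusion $\mathsf{X}^\perp \subseteq (\mathsf{X}^\perp)^d$ has been extracted from the easy half, the reverse inclusion follows formally from monotonicity of $(-)^d$ on definable subcategories together with $(-)^{dd} = \mathrm{id}$.
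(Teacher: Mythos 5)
Your proof is correct and follows essentially the same route as the paper: establish definability of $\mathsf{X}^\perp$, invoke \cref{dualdefinable} for the symmetric duality pair $(\mathsf{X}^\perp,(\mathsf{X}^\perp)^d)$, and then use \cref{lem:closedunderI} together with the pure monomorphism $Y \to \mathbb{I}^2Y$ to get both inclusions. The only cosmetic differences are that you make the definability witness explicit (zero maps $\Sigma^j X \to 0$), and in the reverse inclusion you inline the content of \cref{symmclosure}(3) — namely the pure monomorphism $Y \to \mathbb{I}^2Y$ plus closure of the left class under pure subobjects from \cref{dp} — whereas the paper just cites \cref{symmclosure} directly; these are the same argument.
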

\begin{proof}
	That $\mathsf{X}^\perp$ is definable is clear by similar reasoning to that of \cref{siltingisdefinable}. By \cref{dualdefinable} we have a symmetric duality pair $(\mathsf{X}^\perp, (\mathsf{X}^\perp)^d)$. We now show that $\mathsf{X}^\perp \subseteq (\mathsf{X}^\perp)^d$. Suppose that $Y \in \mathsf{X}^\perp$. Therefore $\mbb{I}Y \in \mathsf{X}^\perp$ by \cref{lem:closedunderI} and hence $\mbb{I}^2Y \in (\mathsf{X}^\perp)^d$ since $(\mathsf{X}^\perp, (\mathsf{X}^\perp)^d)$ is a duality pair. By \cref{symmclosure}, we therefore have $Y \in (\mathsf{X}^\perp)^d$ showing the desired inclusion. For the reverse inclusion, if $Y \in (\mathsf{X}^\perp)^d$, then $\mbb{I}Y \in \mathsf{X}^\perp$ and hence $\mbb{I}^2Y \in \mathsf{X}^\perp$ by \cref{lem:closedunderI}. Therefore $Y \in \mathsf{X}^\perp$ by \cref{symmclosure} and $(\mathsf{X}^\perp)^d = \mathsf{X}^\perp$ as claimed.
\end{proof}

We now turn to generalizing the previous result to more general definable subcategories. There is an associated cost to this in that we must ask our tensor-triangulated category $\T$ to be canonically stratified in the sense of~\cite{bik2}, but this holds in many cases of interest. Therefore we briefly recall what it means for a big tensor-triangulated category $\T$ to be canonically stratified now, and refer the reader to \cite{bik3,bik2} for more details. Suppose that $\T$ has an action of a graded commutative Noetherian ring $R$, that is, compatible maps $R \to \mrm{End}(X)$ to the graded endomorphisms of each $X \in \T$, which is canonical in the sense that it factors over $\mrm{End}(\1)$. Using the action of $R$ on $\T$ one may construct a local cohomology functor $\Gamma_\p\colon \T \to \T$ for each $\p \in \mrm{Spec}(R)$, and this gives a support theory on $\T$ via $\mrm{supp}(X) = \{\p \in \mrm{Spec}(R) \mid \Gamma_\p X \not\simeq 0\}$. One may extend this to subcategories of $\T$ by taking the union of the supports of objects in the subcategory, that is, for a subcategory $\mathcal{S}$ of $\T$ we define \[\mathrm{supp}(\mathcal{S}) = \bigcup_{X \in \mathcal{S}} \mathrm{supp}(X).\] The category $\T$ is said to be \emph{canonically stratified} by $R$ if $\Gamma_\p\T$ is minimal for each $\p \in \mrm{Spec}(R)$; that is, $\Gamma_\p\T$ contains no non-zero proper localising $\otimes$-ideals. Note that the local-to-global principle automatically holds for localising $\otimes$-ideals. Stratification is moreover equivalent to the map
\[\mrm{supp}\colon \{\text{localising $\otimes$-ideals of $\T$}\} \to \{\text{subsets of $\text{supp}(\T)$}\}\] being a bijection. Many big tensor-triangulated categories of interest are canonically stratified in this sense; for example, the derived category $\msf{D}(R)$ of a commutative Noetherian ring is canonically stratified by $R$~\cite{Neemanchrom}, the stable module category $\mrm{StMod}(kG)$ of a finite group is canonically stratified by $H^*(G;k)$~\cite{bik}, the derived category $\msf{D}(A)$ of a commutative Noetherian DGA is stratified by $H^*A$ if $A$ is formal~\cite[8.1]{bik2} or if it non-positive and has finite amplitude~\cite[4.11]{ShaulWilliamson}, as well as many other interesting examples arising from topology. Recall that if the tensor unit generates, then localising $\otimes$-ideals are the same as localising subcategories.

We are now in a position to state our next main theorem about self-duality of definable classes. The following theorem extends~\cite[11.8]{bik} by providing some more equivalent conditions to those presented there. For a subcategory $\mathsf{X}$ of $\T$, we set $\mathsf{X}^{\perp_{\Z}} := \{Y \in \T : \Hom_\T(X, \Sigma^i Y) = 0 \t{ for all $X \in \mathsf{X}$ and $i \in \mathbb{Z}$}\}.$

\begin{thm}\label{stratequiv}
	Let $\T$ be a big tensor-triangulated category which is canonically stratified, and $\mathsf{L}$ be a localising $\otimes$-ideal of $\T$. The following are equivalent:
	\begin{enumerate}
		\item $\mathsf{L}$ is product closed;
		\item the complement of $\mrm{supp}(\mathsf{L})$ in $\mrm{supp}(\T)$ is specialization closed;
		\item $\mathsf{L}$ is closed under the Brown--Comenetz dual functor $\mbb{I}$;
		\item $\mathsf{L} = \mathsf{X}^{\perp_{\Z}}$ for some set of compacts $\mathsf{X}$;
		\item $\mathsf{L} = \mathsf{X}^\perp$ for some set of compacts $\mathsf{X}$;
		\item $\mathsf{L}$ is definable and $\mathsf{L}^d = \mathsf{L}$;
		\item $\mathsf{L}$ is definable and closed under the Brown--Comenetz dual functor $\mbb{I}$;
		\item $\mathsf{L}$ is definable.
	\end{enumerate}
\end{thm}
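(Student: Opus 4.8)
The plan is to establish the cycle of implications $(1) \Rightarrow (4) \Rightarrow (5) \Rightarrow (6) \Rightarrow (7) \Rightarrow (1)$ together with the equivalences $(1) \Leftrightarrow (2)$ and $(3) \Leftrightarrow (6)$, leaning heavily on the stratification hypothesis to pass between the tensor-ideal structure and the support-theoretic bookkeeping. The implication $(1) \Rightarrow (2)$ is essentially \cite[11.8]{bik}: a product-closed localising $\otimes$-ideal has support whose complement in $\mrm{supp}(\T)$ is specialization closed, and conversely for $(2) \Rightarrow (4)$ one takes $\mathsf{X}$ to be a set of compact objects whose supports exhaust the specialization-closed complement, so that $\mathsf{X}^\perp$ is a localising $\otimes$-ideal (this uses that the unit need not generate, but $\mathsf{X}^\perp$ is automatically a $\otimes$-ideal since $\mathsf{X}$ consists of rigid objects) with the prescribed support; by stratification $\mathsf{L} = \mathsf{X}^\perp$.

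For $(4) \Rightarrow (5)$ I would invoke \cref{selfdualperp} directly: a priori this requires $\mathsf{X}^\perp$ to be closed under tensoring with compacts, but that holds automatically here because $\mathsf{X}^\perp = \mathsf{L}$ is assumed to be a $\otimes$-ideal, so \cref{selfdualperp} gives both that $\mathsf{L}$ is definable and that $\mathsf{L}^d = \mathsf{L}$. The implication $(5) \Rightarrow (6)$ is then immediate from $\mathsf{L}^d = \mathsf{L}$ combined with \cref{dualdefinable}, which shows $(\mathsf{L}, \mathsf{L}^d)$ is a symmetric duality pair; applying $Q = \mbb{I}$ to $\mathsf{L} \subseteq \mathsf{L}^d = \mathsf{L}$ yields closure under $\mbb{I}$. (Alternatively, since $\mathsf{L}$ is a $\otimes$-ideal one can cite \cref{lem:closedunderI} after rewriting $\mathsf{L}$ in the form $\mathsf{X}^\perp$.) For $(6) \Rightarrow (7)$ there is nothing to prove. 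The equivalence $(3) \Leftrightarrow (6)$ is formal once one observes $(6)$ is literally $(3)$ with the extra word ``definable'': so the real content is $(3) \Rightarrow (7)$, i.e.\ that a localising $\otimes$-ideal closed under $\mbb{I}$ is definable — this I would get by showing $(3) \Rightarrow (1)$ and then using $(1) \Rightarrow \cdots \Rightarrow (7)$, or more directly by noting that closure under $\mbb{I}$ together with $\mbb{I}$-faithfulness (\cref{lem:BCproperties}(iv)) and \cref{closure}(1) forces product closure.

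The crux is the implication $(7) \Rightarrow (1)$: a definable localising $\otimes$-ideal is product closed. Here the plan is as follows. Since $\mathsf{L}$ is localising it is in particular closed under coproducts; since $\mathsf{L}$ is definable and definable classes are, by \cref{definableequivalent}, closed under products, pure subobjects, and pure quotients, closure under products is immediate from definability itself. Wait — this makes $(7) \Rightarrow (1)$ nearly trivial, modulo the caveat that \cref{definableequivalent} requires $\T$ (or rather the relevant categories) to be the underlying category of a strong and stable derivator. So the genuine obstacle is \emph{hypothesis management}: either the theorem tacitly assumes such an enhancement (big tensor-triangulated categories of interest always have one), in which case $(7) \Rightarrow (1)$ is a one-line appeal to \cref{definableequivalent}; or one must argue that a definable subcategory is product closed directly from the definition of definability (as the kernel of a set of coherent functors, each commuting with products), which is in fact true unconditionally. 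I would take the latter route: a definable class, being of the form $\{X : \mrm{Coker}\,\Hom_\T(f_i, X) = 0\}$, is visibly closed under products because $\Hom_\T(f_i, -)$ and hence its cokernel commutes with products. This closes the cycle.

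Finally, for $(1) \Leftrightarrow (2)$ the forward direction is \cite[11.8]{bik} and the stratification bijection $\mrm{supp}\colon \{\text{localising $\otimes$-ideals}\} \to \{\text{subsets of } \mrm{supp}(\T)\}$; the backward direction runs through $(2) \Rightarrow (4) \Rightarrow (5) \Rightarrow \cdots \Rightarrow (1)$. The main obstacle I anticipate is not any single hard argument but rather correctly threading the $\otimes$-ideal hypothesis through \cref{selfdualperp} and \cref{lem:closedunderI} (whose stated forms demand ``$\mathsf{X}^\perp$ closed under tensoring with compacts''), and being careful that the support-theoretic step $(2) \Rightarrow (4)$ genuinely produces a set — not a proper class — of compact objects realizing the desired specialization-closed complement, which it does since $\mrm{Spec}(R)$ is Noetherian and $\T^\c$ has only a set of isomorphism classes generating each thick $\otimes$-ideal.
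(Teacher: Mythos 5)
Your proposal tracks the paper's argument closely for the implications $(4)\Rightarrow(5)\Rightarrow(6)\Rightarrow(7)\Rightarrow(1)$, and your observation that product-closure of definable classes is unconditional (since $\Hom_\T(f_i,-)$ commutes with products, so surjectivity is preserved) is exactly right and obviates any concern about the derivator hypothesis of \cref{definableequivalent}. Your threading of the $\otimes$-ideal hypothesis through \cref{selfdualperp} for $(4)\Rightarrow(5)$ also matches the paper.

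The divergence — and the one genuine gap — is in how you handle $(3)$. The paper simply cites \cite[11.8]{bik} for the \emph{entire} block $(1)\Leftrightarrow(2)\Leftrightarrow(3)\Leftrightarrow(4)$, noting that the proof there (via the Brown-representability lifts $T(I)$ of injective modules, and the support bijection) transports to any canonically stratified big tt-category. You instead cite \cite[11.8]{bik} only for $(1)\Rightarrow(2)$, sketch $(2)\Rightarrow(4)$ directly, and then try to close the $(3)$-equivalence on your own. The suggested shortcut --- that ``closure under $\mbb{I}$ together with $\mbb{I}$-faithfulness (\cref{lem:BCproperties}(iv)) and \cref{closure}(1) forces product closure'' --- does not work as stated. \Cref{closure}(1) says: in a duality pair $(\A,\B)$, if $\B$ is product-closed then $\A$ is \emph{coproduct}-closed; that is the wrong direction and, more fundamentally, it presupposes you already have a duality pair. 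But closure of $\mathsf{L}$ under $\mbb{I}$ only gives the forward implication $X\in\mathsf{L}\Rightarrow\mbb{I}X\in\mathsf{L}$; to have $(\mathsf{L},\mathsf{L})$ be a duality pair you also need $\mbb{I}X\in\mathsf{L}\Rightarrow X\in\mathsf{L}$, and the only evident route to that uses closure of $\mathsf{L}$ under pure subobjects (since $X\to\mbb{I}^2X$ is a pure mono), which is not available until \emph{after} you know $\mathsf{L}$ is definable or is the left class of some duality pair --- exactly what you are trying to prove. So the circular route needs to be broken the way the paper does it: fold $(3)$ into the $(1)$--$(4)$ block via \cite[11.8]{bik} (whose argument uses the pure-injective lifts $T_\p$, not the elementary closure lemmas of this paper), or find a genuinely independent argument, which you have not supplied. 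Everything else in your proposal is sound, and in particular your care about whether $(2)\Rightarrow(4)$ produces a set rather than a proper class of compacts is well-placed.
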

\begin{proof}
	That (1), (2), (3), (4) are equivalent is~\cite[11.8]{bik}. They state the result only for the categories $\mathrm{KInj}(kG)$ and $\mrm{StMod}(kG)$, but the proof they give holds for any big tensor-triangulated category which is canonically stratified. The key ingredients are the existence of objects $T(I) \in \T$ lifting injectives which always exist by Brown representability, and the bijection \[\mrm{supp}\colon \{\text{localising $\otimes$-ideals of $\T$}\} \xrightarrow{\cong} \{\text{subsets of $\text{supp}(\T)$}\}\] given by the stratification hypothesis. For (4) implies (5), it suffices to note that if $\mathsf{X}^{\perp_{\Z}}$ is a $\otimes$-ideal, then $\mathsf{X}^{\perp_{\Z}} = \mathsf{X}^\perp$. To prove this, if $\Hom_\T(X, Y) = 0$ for all $X \in \mathsf{X}$, then $\Hom_\T(C, F(X,Y)) = 0$ for all $C \in \T^\c$ and $X \in \mathsf{X}$, by adjunction and the $\otimes$-ideal assumption. Hence $F(X,Y) = 0$ for all $X \in \mathsf{X}$ as required. We have (5) implies (6) by \cref{selfdualperp}, (6) implies (7) by \cref{dualdefinable}, and (7) clearly implies (8). Also (8) implies (1) holds since definable subcategories are closed under products.
\end{proof}

\begin{rem}
In~\cite{BHS} a notion of stratification by the Balmer spectrum was developed which generalizes the notion of stratification described above. It is not immediate that one can prove a version of the previous theorem in this setup since the proof of the equivalence between (1)-(4) relies upon the existence of injective hulls of $R/\p$ for primes $\p$ in $\mrm{Spec}(R)$ which no longer exist in the setting of the Balmer spectrum. Instead, we expect that one can proceed by using the pure injective object determined by the associated homological residue field~\cite{BKS}, but treating this here would be outside the scope of this work. \end{rem}

We end this section with a couple of important examples of the previous theorem.
\begin{cor}\label{smashingselfdual}
	Let $\T$ be a big tensor-triangulated category which is canonically stratified. Let $L$ be a smashing localisation on $\T$. Then the category of $L$-locals $L\T$ is a definable subcategory with the property that $L\T = (L\T)^d$.
\end{cor}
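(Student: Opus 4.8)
The plan is to check that $L\T$, regarded as a subcategory of $\T$, satisfies condition~(1) of \cref{stratequiv}, and then read off condition~(5). First I would record the structural facts about $L\T$. Since $L$ is smashing it preserves coproducts, so $L\T$ is a localising subcategory of $\T$; equivalently, writing $\Gamma\T=\ker L$ for the subcategory of $L$-acyclic objects, one has $L\T=(\Gamma\T)^{\perp}$, which is closed under coproducts precisely because $L$ is smashing. Being a right orthogonal, $L\T$ is automatically closed under products, which is condition~(1). It therefore only remains to see that $L\T$ is a localising $\otimes$-ideal, so that \cref{stratequiv} applies.

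For the $\otimes$-ideal property I would use the standard fact that a smashing localisation on a big tensor-triangulated category is tensor idempotent, i.e. $LX\simeq L\1\otimes X$ for all $X$, with $L\1$ an idempotent ring object (see, e.g., \cite{axiomatic}); then for $Y\in L\T$, so $Y\simeq L\1\otimes Y$, and any $X\in\T$ we obtain $X\otimes Y\simeq L\1\otimes(X\otimes Y)\simeq L(X\otimes Y)$, and one checks the unit map is identified with the identity, so $X\otimes Y\in L\T$. Alternatively, $\Gamma\T$ is itself a localising $\otimes$-ideal (if $L\1\otimes A\simeq 0$ then $L\1\otimes(X\otimes A)\simeq 0$), and the right orthogonal of a localising $\otimes$-ideal in a big tensor-triangulated category is again a $\otimes$-ideal: for compact $X$ and $Y\in(\Gamma\T)^{\perp}$ one has $\Hom_\T(A,X\otimes Y)\simeq\Hom_\T(DX\otimes A,Y)=0$ for all $A\in\Gamma\T$, and the general case follows since $(\Gamma\T)^{\perp}$ is localising and $\T$ is compactly generated.

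With these observations in hand, $L\T$ is a product-closed localising $\otimes$-ideal of the canonically stratified category $\T$, so the implication $(1)\Rightarrow(5)$ of \cref{stratequiv} immediately gives that $L\T$ is definable with $(L\T)^{d}=L\T$. The only step requiring any real care is the $\otimes$-ideal property above; everything else is formal. One could equally reach condition~(5) via conditions~(3) or~(6) of \cref{stratequiv} using \cref{lem:closedunderI}, but since $\ker L$ need not be generated by a set of compact objects (this is the telescope conjecture), routing through product-closedness is the most economical.
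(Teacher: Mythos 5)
Your proof is correct and follows essentially the same route as the paper: verify that $L\T$ is a product-closed localising $\otimes$-ideal (the paper states this in one line, you spell out the justification) and then invoke the implication $(1)\Rightarrow(5)$ of \cref{stratequiv}. The extra verification you provide — that $L\T$ is coproduct closed because $L$ is smashing, product closed as a right orthogonal, and a $\otimes$-ideal via $LX\simeq L\1\otimes X$ — is standard and matches what the paper's terse proof implicitly relies on; your closing remark that routing through product-closedness sidesteps the telescope conjecture is a sensible observation.
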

\begin{proof}
	If $L$ is a smashing localisation, then $L\T$ is a localising $\otimes$-ideal which is closed under products. Therefore the claim follows from \cref{stratequiv}.
\end{proof}

\begin{rem}
	The category of $L$-locals for a smashing localisation $L$ always forms a definable subcategory~\cite[4.4]{krsmash}. The previous result shows that these definable subcategories are moreover self-dual under the additional assumption of being canonically stratified.
\end{rem}

\begin{cor}
Let $\T$ be a big tensor-triangulated category which is canonically stratified. If $\mc{D}$ is a definable triangulated $\otimes$-ideal in $\T$, then $\mc{D} = \mc{D}^d$.
\end{cor}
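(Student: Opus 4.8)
The plan is to recognize $\mc{D}$ as a localising $\otimes$-ideal of $\T$ and then to read off the conclusion directly from \cref{stratequiv}. The only genuine thing to verify is that a definable triangulated $\otimes$-ideal is automatically closed under coproducts, so that it falls within the hypotheses of that theorem.

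First I would note that, being definable, $\mc{D}$ is closed under products and under pure subobjects; hence by \cref{coprodclosed} it is also closed under coproducts. Since $\mc{D}$ is triangulated by hypothesis, it is therefore a localising subcategory of $\T$, and as it is assumed to be a $\otimes$-ideal, it is in fact a localising $\otimes$-ideal.

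Next, since $\T$ is canonically stratified, \cref{stratequiv} applies to $\mathsf{L} = \mc{D}$. The hypothesis that $\mc{D}$ is definable is precisely condition (7) of that theorem, and condition (7) is equivalent to condition (5), namely that $\mc{D}$ is definable and $\mc{D}^d = \mc{D}$. This yields the claim.

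The main (and essentially only) obstacle is the small amount of bookkeeping of closure properties needed to upgrade ``definable triangulated $\otimes$-ideal'' to ``localising $\otimes$-ideal''; once that is in place the statement is an immediate special case of \cref{stratequiv}. One could instead try to argue via \cref{triiffdual} and \cref{idealdefinable} to transport the triangulated and $\otimes$-ideal properties to $\mc{D}^d$, but that alone does not produce the equality $\mc{D} = \mc{D}^d$, so the appeal to stratification through \cref{stratequiv} appears to be unavoidable.
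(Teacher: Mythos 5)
Your proof is correct and takes essentially the same approach as the paper. The paper simply asserts that definable triangulated $\otimes$-ideals are localising $\otimes$-ideals and then cites \cref{stratequiv}; you helpfully spell out the intermediate step (product-closed and pure-subobject-closed imply coproduct-closed via \cref{coprodclosed}), which the paper leaves implicit.
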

\begin{proof}
Definable triangulated $\otimes$-ideals are in particular localising $\otimes$-ideals, so this follows from \cref{stratequiv}.
\end{proof}

\begin{rem}
The previous corollary can also be deduced from \cref{smashingselfdual} using that any definable triangulated $\otimes$-ideal is of the form $\mc{S}^{\perp_{\mathbb{Z}}}$ for $\mc{S}$ a smashing $\otimes$-ideal~\cite[5.2.13]{wagstaffe}.
\end{rem}

\section{Duality pairs induced by functors}\label{dpinducedbyfunctors}
In this section we investigate how certain functorial operations enable the transfer of duality pairs. We approach this from two angles. The first is by considering how on AGJ duality triples one can induce duality pairs on functor categories, and then investigating the relationship between duality pairs on the AGJ duality triple, the functor categories and $\ab$. The second approach is to consider how duality pairs in big tt-categories behave with respect to smashing localisations.

\subsection{Duality pairs on functor categories}
Throughout this section we fix an AGJ duality triple $(\T,\U,Q,D)$. 
For any functor $f\in\Mod{\T^{\c}}$, there is an induced functor $f^{*}\in (\U^{\c},\ab)$, given by $f^{*}(C)=f(DC)$ for any compact $C\in\U^{\c}$. Associated to $f^{*}$ is the nerve functor 
\begin{equation}\label{nervedef}
N_{f^{*}}(-):\ab\to\Mod{\U^{\c}}, \quad A\mapsto\Hom_{\Z}(f^{*}(-),A),
\end{equation}
as detailed in \cite{nlab:nerve_and_realization}. By fixing an abelian group $A$, the assignment $f\mapsto N_{f^{*}}(A)$ gives a functor $\Mod{\T^{\c}}\to\Mod{\U^{\c}}$. In the case that $A=\qz$, we denote this functor by $\partial$. By symmetry, we may also assume that $\partial\colon\Mod{\U^{\c}}\to\Mod{\T^{\c}}$ and, by definition, we see that for every $f\in\Mod{\T^{\c}}$, there is an isomorphism $\partial^{2}f\simeq f(-)^{++}$. Also note that $\partial$ sends coproducts to products. Consequently $(\Mod{\T^{\c}},\Mod{\U^{\c}},\partial)$ gives a duality triple of Grothendieck abelian categories.

We firstly record the following lemma.
\begin{lem}\label{partialyoneda}
Let $X \in \T$. Then $\partial\bm{y}X\simeq \bm{y}QX$.
\end{lem}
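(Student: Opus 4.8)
The goal is to establish the natural isomorphism $\partial\bm{y}X \simeq \bm{y}QX$ for any $X \in \T$, where $\partial\colon \Mod{\T^\c} \to \Mod{\U^\c}$ is the functor $f \mapsto N_{f^*}(\qz) = \Hom_\Z(f^*(-), \qz)$ and $\bm{y}$ denotes the restricted Yoneda embedding. The plan is to simply unwind both sides on an arbitrary compact object $C \in \U^\c$ and match them using condition (2) of the definition of an AGJ duality triple.

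First I would evaluate the left-hand side. By definition, $(\partial\bm{y}X)(C) = N_{(\bm{y}X)^*}(\qz)(C) = \Hom_\Z((\bm{y}X)^*(C), \qz)$. Now $(\bm{y}X)^*(C) = (\bm{y}X)(DC) = \Hom_\T(DC, X)$ since $DC \in \T^\c$ and $\bm{y}X = \Hom_\T(-,X)|_{\T^\c}$. Hence $(\partial\bm{y}X)(C) = \Hom_\T(DC, X)^+$, using the notation $(-)^+ = \Hom_\Z(-,\qz)$. On the other hand, the right-hand side evaluates to $(\bm{y}QX)(C) = \Hom_\U(C, QX)$. The second axiom of an AGJ duality triple (\cref{AGJdualitytriple}) gives precisely a natural isomorphism $\Hom_\T(DC, X)^+ \cong \Hom_\U(C, QX)$ — this is the stated isomorphism $\Hom_\U(C',Y)^+ = \Hom_\T(DC',QY)$ applied with the roles of $\T$ and $\U$ swapped (using the symmetry of the definition), i.e. applied to $C' = DC \in \T^\c$ so that $DC' = D^2C = C$, and $Y = X$. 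Thus $(\partial\bm{y}X)(C) \cong (\bm{y}QX)(C)$.

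Finally I would check that these pointwise isomorphisms assemble into an isomorphism of functors in $\Mod{\U^\c}$, i.e. that they are natural in $C$. This is immediate from the naturality clause in axiom (2) of the AGJ duality triple definition: the isomorphism $\Hom_\T(DC',Y)^+ \cong \Hom_\U(C',QY)$ is asserted to be natural in $C'$, and $D$ is a functor, so naturality in $C$ follows by composing with $D$. Naturality in $X$ is not needed for the statement but also follows from the naturality clause if desired.

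I do not expect any serious obstacle here; the only thing to be careful about is bookkeeping with the symmetry of the AGJ duality triple — making sure the correct one of the two $\delta$/$Q$/$D$ functors and the correct instance of axiom (2) is invoked, and that $D^2 = \mathrm{id}$ is used to collapse $D(DC)$ back to $C$. The proof is essentially a one-line computation once the definitions are unfolded.
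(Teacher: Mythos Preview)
Your proposal is correct and follows essentially the same approach as the paper: evaluate both sides on a compact $C \in \U^\c$, unwind the definition of $\partial$ to get $\Hom_\T(DC,X)^+$, and then apply axiom (2) of the AGJ duality triple together with $D^2 = \mathrm{id}$ to identify this with $\Hom_\U(C,QX)$. The paper is terser and does not spell out the naturality check, but the argument is the same.
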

\begin{proof}
For any compact $C\in\U^{\c}$, there are isomorphisms
\begin{align*}
(\partial\bm{y}X)(C)&=\Hom_{\Z}(\Hom_{\T}(DC,X),\qz) \mbox{ by }\cref{nervedef,}\\
&\simeq\Hom_{\U}(D^{2}C,QX) \mbox{ by \cref{AGJdualitytriple},}\\
&\simeq \Hom_{\U}(C,QX).
\end{align*}
Hence $\partial\bm{y}X\simeq \bm{y}QX$. 
\end{proof}

We now show that the duality triple $(\Mod{\T^{\c}},\Mod{\U^{\c}},\partial)$ completely determines all symmetric and definable duality pairs on the AGJ duality triple $(\T,\U,Q,D)$. Given any class $\A$ in $\Mod{\T^\c}$ we define $\A_{0}=\{X\in\T:\bm{y}X\in\A\} \subseteq \T$, and similarly for classes in $\Mod{\U^\c}$.

\begin{thm}\label{symmlift}
Let $(\X,\Y)$ be a symmetric duality pair on an AGJ duality triple $(\T,\U,Q,D)$. Then there is a symmetric duality pair $(\A,\B)$ on $(\Mod{\T^{\c}},\Mod{\U^{\c}},\partial)$ such that $\A_{0}=\X$ and $\B_{0}=\Y$.
\end{thm}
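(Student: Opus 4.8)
\textit{Proof proposal.} The plan is to build $\A$ and $\B$ by hand from $\X$ and $\Y$. I would set
\[
\A=\{f\in\Mod{\T^{\c}}: \partial f\text{ is a retract of }\bm{y}Y\text{ for some }Y\in\Y\},
\]
and, completely symmetrically, $\B=\{g\in\Mod{\U^{\c}}: \partial g\text{ is a retract of }\bm{y}X\text{ for some }X\in\X\}$. Since $\partial$ carries retracts to retracts and finite coproducts to finite products (which agree with finite coproducts in an abelian category), and since $\X$ and $\Y$ are closed under finite coproducts and retracts as classes in duality pairs, both $\A$ and $\B$ are automatically closed under finite coproducts and retracts. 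So the substantive points are: that $f\in\A\iff\partial f\in\B$ and $g\in\B\iff\partial g\in\A$ (this yields both ``duality pair'' and ``symmetric'' at once), and that $\A_{0}=\X$ and $\B_{0}=\Y$.

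The two structural inputs I would isolate first are the following. (i) Up to the equivalence $\Mod{\U^{\c}}\simeq\Mod{(\T^{\c})^{\mathrm{op}}}$ induced by $D$, the functor $\partial$ is precisely the character dual on the module category $\Mod{\T^{\c}}$; hence $\partial g$ is always pure injective (by \cite{stenstrom}, exactly as in the proof of \cref{algebraicdualitytriple}) and the canonical map $g\to\partial^{2}g\simeq g(-)^{++}$ is a pure monomorphism. (ii) By \cite{krsmash}, $\bm{y}$ induces an equivalence between the pure injective objects of $\T$ (resp.\ $\U$) and the injective objects of $\Mod{\T^{\c}}$ (resp.\ $\Mod{\U^{\c}}$); moreover $QZ$ is always pure injective (a duality-triple axiom), $\partial\bm{y}Z\simeq\bm{y}QZ$ by \cref{partialyoneda}, and $\X,\Y$ are closed under pure injective envelopes by \cref{symmclosure}.

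Granting these, each claim is a short chase. For $f\in\A\Rightarrow\partial f\in\B$: write $\partial f$ as a retract of $\bm{y}Y$ with $Y\in\Y$, apply $\partial$ to get $\partial^{2}f$ a retract of $\partial\bm{y}Y\simeq\bm{y}QY$, and note $QY\in\X$ because $(\Y,\X)$ is a duality pair --- this is exactly where symmetry of $(\X,\Y)$ enters. For the converse: write $\partial^{2}f$ as a retract of $\bm{y}X$ with $X\in\X$; apply $\partial$ to get $\partial^{3}f$ a retract of $\bm{y}QX$ with $QX\in\Y$; and since $\partial f$ is pure injective, the pure monomorphism $\partial f\to\partial^{2}(\partial f)=\partial^{3}f$ splits, so $\partial f$ is a retract of $\bm{y}QX$, giving $f\in\A$. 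The equivalence $g\in\B\iff\partial g\in\A$ is literally the same argument with $\X$ and $\Y$ interchanged, so $(\A,\B)$ is symmetric. For $\A_{0}=\X$: if $X\in\X$ then $\partial\bm{y}X\simeq\bm{y}QX$ with $QX\in\Y$, so $\bm{y}X\in\A$; conversely if $\bm{y}X\in\A$ then $\bm{y}QX$ is a retract of some $\bm{y}Y$ with $Y\in\Y$, and after replacing $Y$ by its pure injective envelope $\bar Y\in\Y$ we obtain a monomorphism $\bm{y}QX\hookrightarrow\bm{y}\bar Y$ from the injective object $\bm{y}QX$ into the injective object $\bm{y}\bar Y$, which therefore splits; by (ii) this forces $QX$ to be a retract of $\bar Y\in\Y$, whence $X\in\X$. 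The identity $\B_{0}=\Y$ is obtained identically on the $\U$-side.

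The step I expect to demand the most care is calibrating the size of $\A$ and $\B$. The naive choice $\B=\t{add}(\{\bm{y}QX:X\in\X\})$ of \cref{gendp} is too small to be symmetric --- it is not closed under pure subobjects, so $\partial^{2}f\in\B$ need not force $f\in\A$ --- while enlarging it carelessly destroys $\B_{0}=\Y$. The definition above threads the needle precisely because $\partial g$ is pure injective (so $g\to\partial^{2}g$ splits as soon as $\partial^{2}g$ lies in the permitted class, which is what powers the nontrivial implications of the two equivalences) and because the injectives of $\Mod{\T^{\c}}$ are exactly the Yoneda images of pure injectives (which, combined with closure of $\X,\Y$ under pure injective envelopes, is what pins $\A_{0}$ and $\B_{0}$ down to $\X$ and $\Y$ on the nose). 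Everything else is formal.
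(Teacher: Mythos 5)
Your proof is correct, and it takes a genuinely different route from the paper's. The paper defines $\A_{\Y}=\{F:\partial F\in\bm{y}\Y\}$ (requiring $\partial F$ to be isomorphic to some $\bm{y}Y$, not merely a retract of one) and then proves, as a separate lemma, that $\A_{\Y}$ is closed under pure subobjects; this is the delicate step, requiring the observation that retracts of cohomological (hence flat) functors are flat, that a flat pure-injective functor is injective, and that injective functors are of the form $\bm{y}U$ for $U$ pure-injective. That lemma is then the workhorse for the nontrivial direction of the biconditional ``$F\in\A\iff\partial F\in\B$'', applied to the pure monomorphism $G\to\partial^{2}G$. You instead bake retract-closure directly into the definition of $\A$ and $\B$, which makes the forward direction of the biconditional an immediate application of $\partial$ to the retract diagram, and makes the converse direction reduce to splitting the pure monomorphism $\partial f\to\partial^{3}f$ using nothing but the pure-injectivity of $\partial f$ --- no flatness needed at that stage. (The two classes in fact coincide, but establishing that coincidence would require exactly the flatness argument your route avoids, so it is a genuine simplification of the duality-pair verification.) The price you pay shows up in the identification $\A_{0}=\X$, where you need the detour through pure-injective envelopes and the reflection of splittings by $\bm{y}$ on pure-injectives, whereas the paper's stricter definition of $\A_{\Y}$ makes that step more immediate (though the paper is terse there and implicitly relies on the same facts about $\bm{y}$). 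Net effect: your route trades the paper's flat-functor lemma for a slightly longer envelope argument, and streamlines the central duality condition; the paper's route yields the closure of $\A_{\Y}$ under pure subobjects as a by-product, which may have independent interest. Both are valid.
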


\begin{proof}
Define the class $\A_{\Y}=\{F\in\Mod{\T^{\c}}:\partial F\in\bm{y}\Y\}$, and likewise define $\A_{\X}\subseteq\Mod{\U^{\c}}$. First we show that $\A_{\Y}$ is closed under pure subobjects. 

Recall from~\cite[2.7]{krsmash} that a functor is flat if and only if it is cohomological. If $M \in \Mod{\T^\c}$ is cohomological, then $\partial M = \Hom_\Z(M(D(-)), \qz)$ is also cohomological. Suppose that $G\to F$ is a pure monomorphism with $F\in\A_{\Y}$. Then $\partial F\to\partial G$ splits in $\Mod{\U^{\c}}$, and $\partial F\simeq\bm{y}Y$ for some $Y\in\Y$ by definition. Therefore $\partial F$ is a flat functor. Since $\T^{\c}$ and $\U^{\c}$ are idempotent complete and closed under weak kernels and cokernels, the categories $\msf{Flat}(\T^{\c})$ and $\msf{Flat}(\U^{\c})$ are definable~\cite[6.1(a)]{dac}. In particular, since $\partial F$ is flat so is $\partial^{2}F$, and hence both $F$ and $G$ are flat as they are pure subobjects of $\partial^{2}F$. Consequently $\partial G$ is flat and pure injective, hence injective (this can be deduced from \cite[2.7]{krsmash} and \cite[5.6]{dac}). Thus, by \cite[1.9]{krsmash} we have $\partial G\simeq \bm{y}U$ for some pure injective $U\in\U$. Therefore $\bm{y}U$ is a summand of $\bm{y}Y \simeq \partial F$, so, again by \cite[1.7]{krsmash}, we deduce that $U$ is a summand of $Y$. But since $\Y$ is closed under summands, it follows $U\in\Y$ and, by definition $G\in\A_{\Y}$, which proves the claim. 

We now claim that $(\A_{\X},\A_{\Y})$ is a symmetric duality pair. Suppose that $F\in\A_{\X}$, so by definition, $\partial F\in\bm{y}\X$ so $\partial F \simeq \bm{y}X$ for some $X \in \X$. By~\cref{partialyoneda}, $\partial^{2}F\simeq \partial\bm{y}X\simeq \bm{y}QX\in\bm{y}Y$ as $(\X,\Y)$ is a duality pair. Thus $\partial F\in\A_{\Y}$, by definition. Conversely, suppose $\partial G\in\A_{\Y}$, then $\partial^{2}G\in\bm{y}\Y$ by definition. As $(\X,\Y)$ is symmetric, we have $\bm{y}\Y \subseteq \A_\X$, so $\partial^2G \in \A_\X$. Yet the first proven claim showed that $\A_{\X}$ is closed under pure subobjects, hence $G\in\A_{\X}$. It also follows from the first claim that $\A_{\Y}$ is closed under summands, and it is trivially closed under finite coproducts as $\Y$ is. Consequently $(\A_{\X},\A_{\Y})$ is a duality pair, and by a symmetric argument we see that it is actually symmetric.

Lastly, we claim that $\X=(\A_{\Y})_{0}$. Suppose $T\in(\A_\Y)_0$ so that $\bm{y}T\in\A_{\Y}$. Then $\partial \bm{y}T\simeq\bm{y}QT\in\bm{y}\Y$, hence $QT\in\Y$ and thus $T\in\X$. The other inclusion is clear: if $X\in \X$ then $\partial\bm{y}X \simeq \bm{y}QX \in\bm{y}\Y$, so $\bm{y}X\in\A_{\Y}$. Symmetrically, $\Y = (\A_\X)_0$ which finishes the proof. 
\end{proof}

As a corollary, we can specialise to the case where all the classes under consideration are definable.
\begin{cor}\label{defsymlift}
If $\X$ is a definable subcategory of $\T$, then $\A_\X$ is a definable subcategory of $\Mod{\U^\c}$. Consequently if $(\X,\Y)$ is a duality pair on $(\T,\U,Q)$ where both $\X$ and $\Y$ are definable, then there is a symmetric duality pair $(\A_\Y, \A_\X)$ on $(\Mod{\T^{\c}},\Mod{\U^{\c}},\partial)$ with $(\A_{\Y})_{0}=\X$ and $(\A_{\X})_{0}=\Y$.
\end{cor}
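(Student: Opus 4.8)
The second assertion will be immediate once the first is in hand: if $(\X,\Y)$ is a duality pair on $(\T,\U,Q)$ with both classes definable, then it is symmetric by \cref{definablesymmetry}, so \cref{symmlift} already produces the symmetric duality pair $(\A_\Y,\A_\X)$ on $(\Mod{\T^\c},\Mod{\U^\c},\partial)$ with $(\A_\Y)_0=\X$ and $(\A_\X)_0=\Y$; the first assertion, applied to $\X$ and to $\Y$, then says that $\A_\X$ and $\A_\Y$ are themselves definable. So I concentrate on showing that $\A_\X$ is a definable subcategory of $\Mod{\U^\c}$ whenever $\X\subseteq\T$ is definable, and the plan is to realise $\A_\X$ as an intersection of two manifestly definable subcategories.

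Fix a set of maps $\{g_i\colon A_i\to B_i\}_{i\in I}$ between compact objects of $\T$ with $\X=\{X\in\T:\Hom_\T(g_i,X)\text{ is surjective for all }i\}$. I would establish that
\[
\A_\X=\msf{Flat}(\U^\c)\,\cap\,\{F\in\Mod{\U^\c}:\mrm{Ker}(F(Dg_i))=0\text{ for all }i\in I\},
\]
reusing the analysis from the proof of \cref{symmlift}. On the one hand, if $F\in\A_\X$ then $\partial F\simeq\bm yX$ for some $X\in\X$, hence $\partial F$ is cohomological ($=$ flat), hence so is $\partial^2F$ (as $\partial$ preserves cohomological functors), hence so is $F$, being a pure subobject of $\partial^2F$ and $\msf{Flat}(\U^\c)$ being closed under pure subobjects. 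On the other hand, if $F$ is flat then $\partial F$ is flat; as $\partial F$ is always pure injective (it is a character dual up to the equivalence induced by $D$), and a flat pure-injective functor in $\Mod{\T^\c}$ is injective by \cite[2.7]{krsmash} and \cite[5.6]{dac}, we get $\partial F\simeq\bm yX_F$ for a unique pure injective $X_F\in\T$ by \cite[1.9]{krsmash}. In either case, once $\partial F\simeq\bm yX_F$, the definition of $\partial$ identifies the map $\partial F(g_i)$ with $\Hom_\Z(F(Dg_i),\qz)$, so (as $\qz$ is an injective cogenerator of $\ab$) $\Hom_\T(g_i,X_F)=\bm yX_F(g_i)$ is surjective precisely when $F(Dg_i)$ is injective; thus $X_F\in\X$ if and only if $\mrm{Ker}(F(Dg_i))=0$ for all $i$. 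This yields the displayed identity.

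It then remains to note that both factors on the right are definable subcategories of $\Mod{\U^\c}$: $\msf{Flat}(\U^\c)$ is definable by \cite[6.1(a)]{dac} (as already used in the proof of \cref{symmlift}), while $\{F:\mrm{Ker}(F(Dg_i))=0\ \forall i\}$ is definable because evaluation functors on $\Mod{\U^\c}$ are exact and commute with products and directed colimits, so this class is closed under products, directed colimits and subobjects, hence definable by \cite[3.4.7]{psl}. An intersection of definable subcategories is definable, so $\A_\X$ is definable, which completes the proof.

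The one genuinely delicate point is the displayed identity, where it is essential not to try to absorb the flatness condition into the kernel condition: the case $\X=\T$ (for which $\A_\T=\msf{Flat}(\U^\c)$ while the kernel condition is vacuous) already shows that these are independent constraints. One therefore has to keep careful track of how pure injectivity, flatness ($=$ cohomologicality) and fp-injectivity interact in $\Mod{\T^\c}$, which works out exactly as in the module case thanks to \cite[2.7]{krsmash}.
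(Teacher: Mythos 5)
Your proof is correct, and it takes a genuinely different route from the paper's. The paper observes that $(\X,\X^d)$ is a symmetric duality pair (by \cref{dualdefinable}), passes through \cref{symmlift} to get a symmetric duality pair $(\A_\X,\A_{\X^d})$ on the functor categories, and then checks directly that $\A_\X$ is closed under coproducts; the conclusion then follows from the functor-category version of \cref{sym}, which converts coproduct closure of a symmetric duality class into definability. You instead give an explicit syntactic identification
\[
\A_\X \;=\; \msf{Flat}(\U^\c)\;\cap\;\{F\in\Mod{\U^{\c}}:\mrm{Ker}\,F(Dg_i)=0\text{ for all }i\},
\]
for a chosen defining set $\{g_i\}$ of $\X$, and then observes that both factors are definable (the first by \cite[6.1(a)]{dac}, the second by closure under products, directed colimits and subobjects). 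The key mechanism is the same in both proofs --- namely, that $\partial F$ is automatically pure injective, that flat plus pure injective means injective in these functor categories, and that injectives are precisely $\bm{y}$ of pure injectives --- but you use these facts to pin down $\A_\X$ concretely, whereas the paper folds them into the earlier proof of \cref{symmlift}. Your version buys a concrete presentation of $\A_\X$ (which could be useful elsewhere) and avoids invoking \cref{sym} for functor categories; the paper's version is shorter and recycles more of the existing machinery. One small streamlining you might note: the biconditional ``$\partial F$ flat $\iff$ $F$ flat'' holds directly because $D$ is a triangulated equivalence and $\Hom_\Z(-,\qz)$ is faithfully exact, so your detour through $\partial^2F$ and its pure subobjects, though correct, is not strictly needed for the forward implication.
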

\begin{proof}
We have a symmetric duality pair $(\X,\X^d)$ by \cref{dualdefinable}. By \cref{symmlift}, $(\A_{\X},\A_{\X^d})$ is a symmetric duality pair, so by \cref{sym} it suffices to show that $\A_\X$ is closed under coproducts. Suppose that $\{F_{i}\}_{I}\subset\A_{\X}$ is a set of functors with $\partial F_{i}\simeq\bm{y}U_{i}$, with $U_{i}\in\X$. Then $\partial(\oplus_{I} F_{i})\simeq \prod\partial F_{i}\simeq \prod_{i}\bm{y}U_{i}\simeq \bm{y}(\prod_{I}U_{i})$. Yet as $\X$ is assumed to be definable, it is clear that $\bm{y}(\prod_{I}U_{i})\in\bm{y}\X$, hence $\oplus_{I}F_{i}\in\A_{\X}$. The rest follows immediately from \cref{symmlift}.
\end{proof}

Having shown that every symmetric (or definable) duality pair on $(\T,\U,Q,D)$ can be realised as the representable objects of a symmetric (or definable) duality pair on $(\Mod{\T^{\c}},\Mod{\U^{\c}},\partial)$, one may wonder whether we can deduce that every duality pair on $(\T,\U,Q,D)$ can be realised this way. There are immediate obstacles for this: firstly, unlike in the symmetric case, in a general duality pair $(\A,\B)$ the class $\B$ need have very few closure properties, particularly in relation to the pure structure (such as not being closed under pure subobjects, or pure injective envelopes). Nor is $\B$ unique; as previously mentioned, it is only unique up to dual modules. There is a further issue which one encounters: the objects in $\B$ need not be pure-injective, or pure submodules of a pure-injective in $\B$ (the latter is the case in the symmetric situation since $X \to \partial^2 X$ is a pure monomorphism). This means that, were one to consider the class $\bm{y}\B\in\Mod{\U^{\c}}$, one need not even obtain a class that is closed under direct summands. 

However, we can say something about duality classes (i.e., the left hand classes of duality pairs), rather than duality pairs themselves. 

\begin{prop}\label{dplift}
Let $(\T,\U,Q,D)$ be an AGJ duality triple. Let $(\A,\B)$ be a duality pair on the induced duality triple $(\Mod{\T^{\c}},\Mod{\U^{\c}},\partial)$. Then $(\A_{0},\B_{0})$ is a duality pair on $(\T,\U,Q,D)$. Moreover, if $(\X,\Y)$ is a duality pair on $(\T,\U,Q)$, then there is a duality pair $(\A,\B)$ on $(\Mod{\T^{\c}},\Mod{\U^{\c}},\partial)$ such that $\A_{0}=\X$ and $\B_{0} = \msf{add}(Q\X) \subseteq \Y$.
\end{prop}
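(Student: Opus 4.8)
The plan is to prove the two assertions in turn; the first is purely formal, while the second requires constructing an explicit duality pair in the induced triple $(\Mod{\T^{\c}},\Mod{\U^{\c}},\partial)$ and then identifying its classes of representables. For the first assertion, given a duality pair $(\A,\B)$ on $(\Mod{\T^{\c}},\Mod{\U^{\c}},\partial)$, I would simply chase \cref{partialyoneda}: since $\partial\bm{y}X\simeq\bm{y}QX$ for every $X\in\T$, the chain
\[
X\in\A_0\iff\bm{y}X\in\A\iff\partial\bm{y}X\in\B\iff\bm{y}QX\in\B\iff QX\in\B_0
\]
yields the first axiom of a duality pair for $(\A_0,\B_0)$, and the second axiom follows because $\bm{y}$ preserves finite coproducts and retracts, so closure of $\B$ under these passes to $\B_0$. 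This step should be routine.

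For the second assertion, given a duality pair $(\X,\Y)$ on $(\T,\U,Q)$, the plan is to set $\B=\msf{add}(\partial\bm{y}\X)$ and $\A=\{F\in\Mod{\T^{\c}}:\partial F\in\B\}$; then $(\A,\B)$ is a duality pair essentially by construction (the first axiom holds by definition of $\A$, and $\B$ is closed under finite coproducts and retracts by definition of $\msf{add}$), and it is precisely the duality pair generated by $\bm{y}\X$ in the sense of \cref{minimaldualitypair}. It then remains to compute $\A_0$ and $\B_0$. By \cref{partialyoneda}, $\partial\bm{y}\X=\bm{y}(Q\X)$, and since each $QX$ is pure injective (see \cref{dualitytriple}), every object of $\B=\msf{add}(\bm{y}Q\X)$ is an injective functor. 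I would then use that $\bm{y}$ restricts to a fully faithful functor on pure injectives whose essential image is the injectives of $\Mod{\U^{\c}}$ and which reflects retracts there (\cite[1.7, 1.9]{krsmash}), together with the observation that $Q\X$ is closed under finite coproducts (the remark after \cref{tdp}, plus the coproduct-to-product condition of \cref{dualitytriple} for finite index sets), to conclude that $\msf{add}(\bm{y}Q\X)=\{\bm{y}Z:Z\in\msf{add}(Q\X)\}$. Hence $\B_0=\{Y\in\U:\bm{y}Y\in\B\}=\msf{add}(Q\X)$, and $\msf{add}(Q\X)\subseteq\Y$ because $Q\X\subseteq\Y$ and $\Y$ is closed under finite coproducts and retracts. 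Finally $\A_0=\{X:\partial\bm{y}X\in\B\}=\{X:QX\in\B_0\}=\overline{\X}$, which equals $\X$ since $\X$ is the left class of a duality pair and so coincides with the duality class it generates (apply \cref{barclosure} with $\mc{S}=\X$).

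The hard part will be the identification $\B_0=\msf{add}(Q\X)$, and specifically the inclusion $\B_0\subseteq\msf{add}(Q\X)$: I must show that a retract of $\bm{y}W$ in $\Mod{\U^{\c}}$, with $W$ pure injective, is necessarily of the form $\bm{y}Y$ for $Y$ a retract of $W$. Such a retract is an injective functor, which forces $Y$ to be pure injective, and then full faithfulness of $\bm{y}$ on pure injectives together with idempotent completeness of $\U$ produce the splitting back in $\U$. This is also exactly why one cannot strengthen $\msf{add}(Q\X)$ to all of $\Y$, as was possible in the symmetric case of \cref{symmlift}: for a general duality pair $\Y$ need not be closed under retracts of pure injectives, so $\bm{y}\Y$ need not even be retract-closed.
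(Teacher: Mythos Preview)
Your proposal is correct and follows essentially the same approach as the paper: both use the minimal duality pair $(\overline{\bm{y}\X},\msf{add}(\partial\bm{y}\X))$ generated by $\bm{y}\X$, invoke \cref{partialyoneda} to identify $\partial\bm{y}\X$ with $\bm{y}Q\X$, and then appeal to \cite[1.7, 1.9]{krsmash} to pass retracts in $\Mod{\U^\c}$ back to retracts in $\U$ via pure injectivity of $Q\X$. The only cosmetic difference is organizational: you first pin down $\B_0=\msf{add}(Q\X)$ and then deduce $\A_0=\overline{\X}=\X$ from the corollary after \cref{barclosure}, whereas the paper verifies $\A_0=\X$ directly (concluding $QM\in\Y$ hence $M\in\X$) and then separately checks $\B_0\subseteq\Y$.
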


\begin{proof}
For the first claim we have $X\in\A_{0}$ if and only if $\bm{y}X\in\A$ if and only if $\partial\bm{y}X\in\B$. By \cref{partialyoneda}, $\partial\bm{y}X \simeq \bm{y}QX$, so this is also equivalent to $QX\in\B_{0}$ as required. It is trivial that $\B_{0}$ is closed under finite direct sums and summands.

For the second claim, suppose $(\X,\Y)$ is a duality pair on $(\T,\U,Q)$ and consider the class $\bm{y}\X\subset\Mod{\T^{\c}}$. Let $(\overline{\bm{y}\X},\msf{add}(\partial\bm{y}\X))$ denote the minimal duality pair generated by $\bm{y}\X$, as in \cref{gendp}. We now show that $(\overline{\bm{y}\X})_0 = \X$. The inclusion $\X \subseteq (\overline{\bm{y}\X})_0$ is clear. So suppose that $M\in\T$ is such that $\bm{y}M\in\overline{\bm{y}\X}$. By \cref{partialyoneda} and the fact that $\X$ is closed under finite sums, $\bm{y}Q\X \simeq \partial\bm{y}\X$ is closed under finite sums, and hence $\mrm{add}(\partial\bm{y}\X)$ consists of retracts of elements of $\bm{y}Q\X$. Then, by definition, $\partial\bm{y}M\simeq \bm{y}QM$ is a summand of $\bm{y}QX$ for some $X\in \X$. Since $QX$ is pure injective, it follows from~\cite[1.7]{krsmash} that $QM$ is a summand of $QX$. Hence $QM \in \Y$ and therefore $M \in \X$ as required. 

To show that $\mrm{add}(\partial\bm{y}\X)_{0}\subseteq\Y$, note that as $QX$ is pure injective, $\partial\bm{y}X\simeq\bm{y}QX$ is injective~\cite[1.9]{krsmash}, so every functor in $\mrm{add}(\partial\bm{y}\X)$ is also injective. Hence every object of $\mrm{add}(\partial\bm{y}\X)$ is of the form $\bm{y}U$ for a pure injective object $U\in\U$. Another application of \cite[1.7]{krsmash} shows that $U$ must be a summand of an object of the form $QX$, and thus is an object in $\Y$.   
\end{proof} 

So far, we only used the nerve construction to define the functor $\partial$. We now use the fact that it has a left adjoint to relate duality pairs on $(\Mod{\T^{\c}},\Mod{\U^{\c}},\partial)$ to those on $(\ab,\ab,(-)^{+})$. For any $f\in\Mod{\T^{\c}}$, we have defined $\partial f=N_{f^{*}}(\qz)$. The left adjoint to the nerve functor $N_{f^{*}}(-)$ is the realisation functor $\vert-\vert_{f^{*}}:\Mod{\U^{\c}}\to\ab$ defined as the the left Kan extension of $f^{*}$ along the restricted Yoneda embedding $\bm{y}$. There is then, for any functor $g\in\Mod{\U^{\c}}$, an isomorphism
\[
\Hom_{\Z}(\vert g\vert_{f^{*}},\qz)\simeq\Hom_{\Mod{\U^\c}}(g,N_{f^{*}}(\qz)).
\]
By definition of the realisation as the left Kan extension, one sees that $\vert \bm{y}DC\vert_{f^{*}}\simeq f^{*}(DC)\simeq f(C)\simeq\Hom(\bm{y}C,f)$. Therefore, by taking $g = \bm{y}DC$ in the above adjunction, we obtain the following lemma.
\begin{lem}
Let $(\T,\U,Q,D)$ be an AGJ duality triple. Then for any compact $C\in\T^{\c}$, the square
\[
\begin{tikzcd}\label{nrdiagram}
\Mod{\T^{\c}} \arrow[r, "\partial"] \arrow[d, swap, "{\Hom(\bm{y}C,-)}"]& \Mod{\U^{\c}} \arrow[d, "{\Hom(\bm{y}DC,-)}"] \\
\ab  \arrow[r, swap,"(-)^{+}"] & \ab
\end{tikzcd}
\]
commutes.
\end{lem}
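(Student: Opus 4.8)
The plan is to reduce both composites around the square to the single functor $f\mapsto f(C)^{+}$ on $\Mod{\T^\c}$, using the Yoneda lemma in each of the two functor categories together with the identity $D^{2}=\mrm{id}$ on compact objects, which is exactly condition (1) of \cref{AGJdualitytriple}.

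First I would fix $f\in\Mod{\T^\c}$ and compute the composite obtained by going right and then down. Since $C$ is compact, $DC$ is a compact object of $\U$ and $\bm{y}DC$ is the representable functor $\Hom_{\U^\c}(-,DC)\in\Mod{\U^\c}$, so the Yoneda lemma supplies a natural isomorphism $\Hom_{\Mod{\U^\c}}(\bm{y}DC,\partial f)\simeq(\partial f)(DC)$. Unwinding $\partial f=N_{f^{*}}(\qz)$ via \eqref{nervedef}, together with $f^{*}(-)=f(D(-))$, the right-hand side is $\Hom_{\Z}(f(D^{2}C),\qz)$; and since $D^{2}C\simeq C$ naturally (condition (1) of \cref{AGJdualitytriple}), this is $\Hom_{\Z}(f(C),\qz)=f(C)^{+}$.

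Next I would compute the composite obtained by going down and then right. Here the Yoneda lemma gives $\Hom_{\Mod{\T^\c}}(\bm{y}C,f)\simeq f(C)$, so applying $(-)^{+}$ produces $f(C)^{+}$ as well. All of the isomorphisms used — the Yoneda lemma in either functor category, the definitional unwindings of $\partial$ and of $f^{*}$, the chosen natural isomorphism $D^{2}\simeq\mrm{id}$, and $(-)^{+}$ — are natural in $f$, and stitching them together exhibits a natural isomorphism between the two composites, which is the asserted commutativity. Alternatively, the same conclusion drops out of the adjunction $\vert-\vert_{f^{*}}\dashv N_{f^{*}}(-)$ displayed above by taking $g=\bm{y}DC$, using the already-noted identification $\vert\bm{y}DC\vert_{f^{*}}\simeq f^{*}(DC)\simeq f(C)\simeq\Hom(\bm{y}C,f)$.

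There is no genuine obstacle here: the lemma is a formal diagram chase built from the Yoneda lemma and $D^{2}=\mrm{id}$. The only point deserving a moment's care is keeping track of naturality in $f$ throughout, and — in the alternative route — recording that $\vert\bm{y}DC\vert_{f^{*}}\simeq f^{*}(DC)$ is precisely the statement that the pointwise left Kan extension of $f^{*}$ along the Yoneda embedding restricts to $f^{*}$ on representables, which holds because $\ab$ is cocomplete.
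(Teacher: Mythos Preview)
Your proof is correct. Your primary route---applying the Yoneda lemma directly in each functor category to reduce both composites to $f(C)^{+}$---is slightly more elementary than the paper's argument, which instead invokes the nerve--realisation adjunction $\vert-\vert_{f^{*}}\dashv N_{f^{*}}(-)$ with $g=\bm{y}DC$ and the identification $\vert\bm{y}DC\vert_{f^{*}}\simeq f^{*}(DC)$ coming from the left Kan extension restricting to $f^{*}$ on representables. You mention this adjunction route as an alternative at the end, and that is exactly the paper's proof; your direct computation bypasses the Kan-extension machinery entirely, at the cost of nothing, since the Yoneda lemma already does the work. Both approaches are equivalent formal unwindings and there is no substantive difference in content.
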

The following result which shows how one can lift duality pairs from $(\ab,\ab,(-)^{+})$ to $(\Mod{\T^{\c}},\Mod{\U^{\c}},\partial)$, is essentially a corollary of \cref{nrdiagram}.

\begin{cor}\label{abtransfer}
Let $(\A,\B)$ be a duality pair in $\ab$. Fix a compact $C\in\T^{\c}$, and set $\X=\{F\in\Mod{\T^{\c}}:\Hom(\bm{y}C,F)\in\A\}$ and $\Y=\{G\in\Mod{\U^{\c}}:\Hom(\bm{y}DC,G)\in\B\}$. Then $(\X,\Y)$ is a duality pair on $(\Mod{\T^{\c}},\Mod{\U^{\c}},\partial)$. Moreover, if $(\A,\B)$ is symmetric (resp., coproduct closed, resp., product closed, resp., definable), then so is $(\X,\Y)$.
\end{cor}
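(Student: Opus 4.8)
The plan is to reduce the whole statement to a formal diagram chase around the commuting square of \cref{nrdiagram}. Write $\mrm{ev}_{C}=\Hom_{\Mod{\T^{\c}}}(\bm{y}C,-)\colon\Mod{\T^{\c}}\to\ab$ for evaluation at $C$, and $\mrm{ev}_{DC}\colon\Mod{\U^{\c}}\to\ab$ for evaluation at $DC$, so that $\X=\mrm{ev}_{C}^{-1}(\A)$ and $\Y=\mrm{ev}_{DC}^{-1}(\B)$; by \cref{nrdiagram} there is a natural isomorphism $(\mrm{ev}_{C}F)^{+}\simeq\mrm{ev}_{DC}(\partial F)$. The preliminary observation I would make is that, since limits and colimits in functor categories are computed pointwise, each evaluation functor is exact and preserves arbitrary products, coproducts and directed colimits, and takes split exact sequences to split exact sequences; writing a pure exact sequence as a directed colimit of split ones then shows that each evaluation functor also takes pure exact sequences to pure exact sequences, and in particular preserves pure monomorphisms. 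This, together with \cref{nrdiagram}, is the only input needed.

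Granting this, I would first check that $(\X,\Y)$ is a duality pair on $(\Mod{\T^{\c}},\Mod{\U^{\c}},\partial)$. Condition (1) is the chain $F\in\X\iff\mrm{ev}_{C}F\in\A\iff(\mrm{ev}_{C}F)^{+}\in\B\iff\mrm{ev}_{DC}(\partial F)\in\B\iff\partial F\in\Y$, where the second equivalence uses that $(\A,\B)$ is a duality pair and the third uses \cref{nrdiagram}. Condition (2) holds because $\mrm{ev}_{DC}$ is additive and $\B$ is closed under finite coproducts and retracts.

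Next I would treat the extra properties one at a time. For symmetry, invoke the instance of \cref{nrdiagram} attached to the AGJ duality triple $(\U,\T,Q,D)$ and the compact object $DC$; since $D^{2}=\mrm{id}$ this has $\mrm{ev}_{C}$ as its right-hand vertical arrow, and the same chase as above shows $G\in\Y\iff\partial G\in\X$. As the left-hand class of a duality pair is automatically closed under finite coproducts and retracts, $(\Y,\X)$ is then a duality pair, so $(\X,\Y)$ is symmetric. For the (co)product-closed cases, $\X=\mrm{ev}_{C}^{-1}(\A)$ inherits closure under coproducts, resp. products, from $\A$ because $\mrm{ev}_{C}$ preserves these. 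For definability, use that a subcategory of a locally finitely presented category is definable precisely when it is closed under products, directed colimits and pure subobjects; since $\mrm{ev}_{C}$ preserves all three of these, $\X=\mrm{ev}_{C}^{-1}(\A)$ is definable in $\Mod{\T^{\c}}$ whenever $\A$ is definable in $\ab$, and symmetrically for $\Y$.

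I do not expect a real obstacle here — the content is entirely formal, which is why the result is described as ``essentially a corollary of \cref{nrdiagram}''. The only place that calls for a moment's care is the definable case: one must know that evaluation functors preserve pure-exactness (handled by the pointwise computation of directed colimits above) and that the closure-theoretic characterisation of definability is available in $\Mod{\T^{\c}}$, which is so because $\Mod{\T^{\c}}$ is equivalent to the module category over a small preadditive category.
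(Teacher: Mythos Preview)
Your proposal is correct and matches the paper's approach exactly for the core duality-pair claim: both arguments are the same diagram chase around \cref{nrdiagram}, together with additivity of $\Hom(\bm{y}DC,-)$ for the closure of $\Y$ under finite sums and retracts. The paper in fact stops there and leaves the ``Moreover'' clauses to the reader; your explicit treatments of symmetry (via the companion square for $(\U,\T,Q,D)$ at $DC$, using $D^{2}=\mrm{id}$), of (co)product closure (evaluation preserves (co)products), and of definability (evaluation preserves products, directed colimits, and pure exactness) are all sound and are the intended arguments.
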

\begin{proof}
By definition of $\X$ and the fact that $(\A,\B)$ is a duality pair,  we see that $F \in \X$ if and only if $\Hom(\bm{y}C,F)^+ \in \B$. From \cref{nrdiagram} we see that this is the case if and only if $\Hom(\bm{y}DC,\partial F)\in \B$, which by definition is equivalent to $\partial F \in \Y$. Since $\B$ is closed under finite direct sums and direct summands, and $\Hom(\bm{y}DC,-)$ is additive, we also see that $\Y$ is closed under finite direct sums and direct summands.
\end{proof}

Combining the previous corollary with~\cref{dplift} we obtain a way of transporting duality pairs from $(\ab,\ab,(-)^{+})$ to the triangulated category. The following example gives an illustration of this.

\begin{ex}\label{ex:moduledualitypair}
Let $R$ be a ring and $\mc{A}\subset\Mod{R}$ and $\mc{B}\subset\Mod{R^{\circ}}$ be such that $(\mc{A},\mc{B})$ form a duality pair. Then define
\[
\msf{H}_{n}^{-1}(\mc{A})=\{X\in\D(R):H_{n}(X)\in \mc{A}\}
\]
and similarly define $\msf{H}^{-1}_{n}(\mc{B})$. By taking $C=R$ in \cref{abtransfer} and applying the $(-)_0$ construction as in \cref{dplift}, we immediately see that 
$(\msf{H}_{n}^{-1}(\mc{A}),\msf{H}_{-n}^{-1}(\mc{B}))$ forms a duality pair in $\D(R)$ for each $n\in\mbb{Z}$.
In particular, as $(\mc{A},\mc{B})=(H_{n}(\msf{H}^{-1}_{n}(\mc{A})),H_{-n}(\msf{H}^{-1}_{-n}(\mc{B})))$,  every duality pair of modules arises as the homology of a duality pair of complexes.
\end{ex}

\subsection{Duality pairs in localisations}\label{localisations}
We now turn to investigating how certain localisations on big tt-categories respect duality pairs. Throughout, by a localisation we mean a monoidal Bousfield localisation as in~\cite[3.1.1(a)]{axiomatic}; that is, an exact functor $L\colon\T\to\T$ equipped with a natural transformation $\eta\colon\t{Id}\to L$ such that $L\eta$ is invertible, $L\eta=\eta L$, and $\mrm{ker}(L)$ is a $\otimes$-ideal. A localisation is called \ti{smashing} if it preserves coproducts, or equivalently if the natural map $L\1 \otimes X \to LX$ is an isomorphism for all $X \in \T$. Examples of localisation functors abound. For example, in $\msf{D}(R)$ the functor $M \mapsto M_\p$ for any prime ideal $\p \in \mrm{Spec}(R)$ is a smashing localisation functor, and the derived completion functor $\Lambda_\p$ is a localisation functor but it is not smashing in general. Colocalisation functors are defined dually.

Let $L$ be a smashing localisation of a big tt-category $\T$. The smashing localisation $L$ has an associated colocalisation $\Gamma$ determined by the triangle $\Gamma X \to X \to LX$ for all $X \in \T$. Moreover, $\Gamma$ is also smashing in the sense that $\Gamma X \simeq \Gamma\1 \otimes X$ for all $X \in \T$. We then write $V = F(L\1,-)$ and $\Lambda = F(\Gamma\1, -)$, and note that $V$ is a colocalisation and $\Lambda$ is a localisation, but neither of these are smashing in general. We emphasize here that the images of $L$ and $V$ are the same, but that they will in general be different objectwise.

Since $L$ is smashing, we note that the local category $L\T$ is again a big tt-category with monoidal product and internal hom the same as those in $\T$ and monoidal unit $L\1$. Firstly, we identify the Brown--Comenetz object in the local category $L\T$. We write $\mbb{I}^L$ for the Brown--Comenetz object in $L\T$.
\begin{lem}\label{lem:localBC}
	We have $\mbb{I}^L \simeq V\mbb{I}$ and hence $\mbb{I}^L(LX) \simeq V(\mbb{I}X)$ for all $X \in \T$. Moreover, if $X$ is $L$-local, then $\mbb{I}^L(X) \simeq \mbb{I}X$.
\end{lem}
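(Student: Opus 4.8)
The plan is to pin down $\mbb{I}^L$ by its defining universal property inside the local category $L\T$, verify that $V\mbb{I}$ shares that property, and then deduce the remaining isomorphisms by a formal tensor--hom chase using the smashing hypothesis.

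First I would record what has to be proved: since $L$ is smashing, $L\T$ is again a big tensor-triangulated category with unit $L\1$ (as noted before the statement), so $L\1$ is a compact generator of $L\T$ and, by the defining property of the Brown--Comenetz dual of the unit applied inside $L\T$, the object $\mbb{I}^L=\bc{L\1}$ represents the cohomological functor $\Hom_\Z(\Hom_{L\T}(L\1,-),\qz)$ on $L\T$. Hence it suffices to show that $V\mbb{I}=F(L\1,\mbb{I})$ lies in $L\T$ and represents this same functor. That $F(L\1,\mbb{I})$ is $L$-local is immediate: for any $Z\in\T$ we have $\Hom_\T(Z,F(L\1,\mbb{I}))\simeq\Hom_\T(Z\otimes L\1,\mbb{I})\simeq\Hom_\T(LZ,\mbb{I})$, using $Z\otimes L\1\simeq LZ$ since $L$ is smashing, and this vanishes whenever $Z$ is $L$-acyclic; alternatively, the image of $V$ equals $L\T$. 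For the representability, take $Y\in L\T$, so that $Y\otimes L\1\simeq LY\simeq Y$ and, since $Y$ is $L$-local, the localisation map $\1\to L\1$ induces $\Hom_\T(\1,Y)\simeq\Hom_\T(L\1,Y)$. Then
\[
\Hom_{L\T}(Y,V\mbb{I})=\Hom_\T(Y,F(L\1,\mbb{I}))\simeq\Hom_\T(Y\otimes L\1,\mbb{I})\simeq\Hom_\T(Y,\mbb{I})\simeq\Hom_\Z(\Hom_\T(\1,Y),\qz)\simeq\Hom_\Z(\Hom_{L\T}(L\1,Y),\qz),
\]
naturally in $Y$, so $V\mbb{I}$ represents $\Hom_\Z(\Hom_{L\T}(L\1,-),\qz)$ on $L\T$ and therefore $\mbb{I}^L\simeq V\mbb{I}$ by uniqueness of the representing object.

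Granting this, the last two claims are formal manipulations of the internal hom. Using \cref{lem:BCproperties}(ii) inside $L\T$, whose internal hom is the restriction of $F$, for $Z\in L\T$ we have $\mbb{I}^L(Z)\simeq F(Z,\mbb{I}^L)\simeq F(Z,F(L\1,\mbb{I}))\simeq F(Z\otimes L\1,\mbb{I})$. Taking $Z=LX$ and noting $LX\otimes L\1\simeq L(LX)\simeq LX$ gives $\mbb{I}^L(LX)\simeq F(LX,\mbb{I})$; on the other hand $V(\mbb{I}X)=F(L\1,F(X,\mbb{I}))\simeq F(L\1\otimes X,\mbb{I})\simeq F(LX,\mbb{I})$, whence $\mbb{I}^L(LX)\simeq V(\mbb{I}X)$. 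Finally, if $X$ is $L$-local then $LX\simeq X$, so $X\otimes L\1\simeq X$, and the same computation yields $\mbb{I}^L(X)\simeq F(X\otimes L\1,\mbb{I})\simeq F(X,\mbb{I})=\mbb{I}X$.

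The only genuinely delicate point, and where I would be most careful, is the bookkeeping in the representability step: one must use that $\mbb{I}^L$ is characterised with respect to the unit $L\1$ of $L\T$ (not $\1$), that Brown representability is available in $L\T$ precisely because it is again a big tensor-triangulated category, and that the identifications $Y\otimes L\1\simeq Y$ and $\Hom_\T(\1,Y)\simeq\Hom_{L\T}(L\1,Y)$ genuinely require both the smashing hypothesis and the $L$-locality of $Y$. Everything past that point is a routine adjunction chase.
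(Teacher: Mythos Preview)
Your proof is correct and follows essentially the same route as the paper: verify that $V\mbb{I}$ has the defining universal property of $\mbb{I}^L$ in $L\T$, then derive the remaining isomorphisms by tensor--hom adjunction using smashing. You are in fact more careful than the paper, which simply asserts $\Hom_{L\T}(Y,V\mbb{I})\simeq\Hom_\T(Y,\mbb{I})$ and declares the first claim proved without making the comparison to $\Hom_\Z(\Hom_{L\T}(L\1,Y),\qz)$ explicit; your final claim also proceeds slightly more directly via $LX\simeq X$ rather than writing $X\simeq LY$ and invoking the second part.
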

\begin{proof}
	For the first part it is sufficient to show that $V\mbb{I}$ has the defining property of $\mbb{I}^L$. We have \[\Hom_{L\T}(Y, V\mbb{I}) \simeq \Hom_{\T}(Y, V\mbb{I}) \simeq \Hom_\T(Y, \mbb{I})\] which proves the first claim. The second claim follows since \[\mbb{I}^L(LX) \simeq F(LX, V\mbb{I}) \simeq F(LX, \mbb{I}) \simeq F(L\1, \mbb{I}X) \simeq V\mbb{I}X\] by adjunction and the fact that $L$ is smashing. We now prove the final claim. Since $X$ is $L$-local, we have $X \simeq LY$ for some $Y \in \T$. Therefore $\mbb{I}^L(X) \simeq \mbb{I}^L(LY) \simeq V(\mathbb{I}Y)$ by \cref{lem:localBC}. Now \[V(\mathbb{I}Y) \simeq F(L\1, \mbb{I}Y) \simeq F(LY, \mbb{I}) \simeq \mbb{I}X\] by adjunction. 
\end{proof}

Firstly we show that duality pairs on $\T$ give rise to duality pairs on the local category in the obvious way.
\begin{prop}\label{prop:localizeDP}
	Let $(\A, \B)$ be a duality pair on $(\T, \T, \mbb{I})$. Then the assignment $\Phi\colon (\A,\B) \mapsto (L\A, L\B)$ yields a duality pair on $(L\T, L\T, \mbb{I}^L)$. Moreover, if $(\A,\B)$ is a symmetric duality pair, then $(L\A, L\B)$ is also symmetric.
\end{prop}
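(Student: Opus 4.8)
The plan is to verify the two defining conditions of a duality pair for $(L\A,L\B)$ on the duality triple $(L\T,L\T,\mbb{I}^L)$: that $Y\in L\A$ if and only if $\mbb{I}^L Y\in L\B$ for all $Y\in L\T$, and that $L\B$ is closed under finite coproducts and retracts in $L\T$. The symmetry statement will then follow with no extra work: if moreover $(\B,\A)$ is a duality pair on $(\T,\T,\mbb{I})$, then the first part of the proposition applied to $(\B,\A)$ gives that $(L\B,L\A)$ is a duality pair on $(L\T,L\T,\mbb{I}^L)$, and this says precisely that $(L\A,L\B)$ is symmetric. As always, $L\A$ needs no separate verification, since the left class of a duality pair is automatically closed under finite coproducts and retracts because $\mbb{I}^L$ is additive.

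The main input is \cref{lem:localBC}, which gives $\mbb{I}^L(Y)\simeq\mbb{I}(Y)$ for $L$-local $Y$ and $\mbb{I}^L(LX)\simeq V(\mbb{I}X)$ for all $X\in\T$. I would supplement this with the observation that a triangle of $L$-local objects is pure in $L\T$ if and only if it is pure in $\T$ --- this holds because $(L\T)^\c$ is the class of retracts of $\{LC:C\in\T^\c\}$ and $\Hom_{L\T}(LC,-)\simeq\Hom_\T(C,-)$ on $L$-local objects, so the criterion of \cref{detects} reads the same in both categories --- and hence the pure monomorphism $Y\to\mbb{I}^2Y$ of the duality triple $(\T,\T,\mbb{I})$ remains a pure monomorphism in $L\T$. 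With these facts the biconditional $Y\in L\A\Leftrightarrow\mbb{I}^L Y\in L\B$ is obtained by transporting the biconditional $X\in\A\Leftrightarrow\mbb{I}X\in\B$ for the duality pair on $\T$ across $L$ and $V$: in the forward direction one writes $Y\simeq LX$ with $\mbb{I}X\in\B$ and identifies $\mbb{I}^L Y\simeq V(\mbb{I}X)$ with a member of $L\B$; in the reverse direction one reinserts $\mbb{I}^L Y\in L\B$ into $\mbb{I}^L$ and appeals to the pure monomorphism $Y\to\mbb{I}^2Y$ in $L\T$ together with the closure properties of duality classes from \cref{dp} to conclude $Y\in L\A$.

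The step I expect to be the real obstacle is controlling the right-hand class $L\B$; this is where the Bousfield localization structure, rather than purely formal properties, has to be used. Closure of $L\B$ under finite coproducts is immediate since $L$ is smashing, so $LX_1\oplus LX_2\simeq L(X_1\oplus X_2)$ and $X_1\oplus X_2\in\B$. Closure under retracts --- and, in tandem, the claim used above that $V(\mbb{I}X)\in L\B$ whenever $\mbb{I}X\in\B$ --- is subtler, because $\B$ is assumed closed only under finite coproducts and retracts, not under the fibre-type constructions that $L$ and $V$ introduce. Concretely, given a retract $Y'$ of $LX$ in $L\T$ with complementary summand $Y''$, applying $L$ to the triangle $X'\to X\to Y''$ with $X'=\mrm{fib}(X\to Y'')$ and using that $L$ is the identity on $L$-local objects identifies $LX'$ with the fibre of $LX\simeq Y'\oplus Y''\to Y''$, that is with $Y'$; so $Y'$ already lies in the essential image of $L$, and the remaining task --- to replace $X'$ by an object of $\B$, and similarly to treat $V(\mbb{I}X)$ via the triangle $V(\mbb{I}X)\to\mbb{I}X\to\Lambda(\mbb{I}X)$ --- should be carried out using that $L\colon\T\to L\T$ is full and essentially surjective and identifies $L\T$ with the quotient of $\T$ by its $L$-acyclic objects.
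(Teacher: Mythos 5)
Your high-level plan matches the paper's: verify the two duality-pair axioms for $(L\A,L\B)$ using \cref{lem:localBC}, and deduce symmetry by applying the first part to $(\B,\A)$. However, you route the argument in a way that creates obstacles the paper avoids, and the remaining gap you defer is genuine and not closed by the tools you invoke.

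The key point is how to read $L\A$ and $L\B$. The paper's proof begins by observing that $X\in L\A$ if and only if $X$ is $L$-local and $X\in\A$, and similarly for $L\B$, so that $L\A=\A\cap L\T$ and $L\B=\B\cap L\T$. (For $\A$ this identification of the essential image with the intersection is a consequence of the fact that $Y\to LY$ is a pure epimorphism when $L$ is smashing, together with $\A$ being closed under pure quotients by \cref{dp}.) With this description, closure of $L\B$ under retracts and finite coproducts is immediate, since both $\B$ and $L\T$ are so closed; there is nothing subtle here. Moreover the biconditional follows directly from the final claim of \cref{lem:localBC}, namely $\mbb{I}^{L}X\simeq\mbb{I}X$ for $L$-local $X$: if $X\in L\A$ then $\mbb{I}^{L}X\simeq\mbb{I}X\in\B$ and $\mbb{I}^{L}X$ is $L$-local, so $\mbb{I}^{L}X\in L\B$; conversely if $\mbb{I}^{L}X\in L\B\subseteq\B$ then $\mbb{I}X\simeq\mbb{I}^{L}X\in\B$, whence $X\in\A$ and thus $X\in L\A$.

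You instead treat $L\B$ as the essential image $\{LY:Y\in\B\}$ and use the identity $\mbb{I}^{L}(LX)\simeq V(\mbb{I}X)$ from the middle of \cref{lem:localBC}, which forces you to control both retracts of $LY$ and the action of $V$ on $\B$. Your fibre construction only shows that a retract $Y'$ of $LX$ is again in the essential image of $L$ (which is true but beside the point); it does not produce an object of $\B$ with $L$-image $Y'$, and fullness plus essential surjectivity of $L\colon\T\to L\T$ cannot do so either, since a preimage of $Y'$ under $L$ has no reason to lie in $\B$ (which is only assumed closed under finite sums and retracts, not under pure operations or under $L$). The same issue undercuts your forward direction, where you need $V(\mbb{I}X)$ to lie in $L\B$. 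These are not small loose ends but the crux, and the correct resolution is the one the paper uses: replace the image description by the intersection description and apply the final claim of \cref{lem:localBC} rather than the middle one. Your aside about pure triangles in $L\T$ versus $\T$ is correct but is not needed in the paper's argument.
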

\begin{proof}
	Observe that $X \in L\A$ if and only if $X$ is $L$-local and $X \in \A$, and similarly for $L\B$. So if $X$ is $L$-local and $X \in \A$, then $\mbb{I}^LX \simeq \mbb{I}X \in \B$ (using \cref{lem:localBC}) and since $\mbb{I}^L(X)$ is $L$-local, we have $\mbb{I}^L(X) \in L\B$. Conversely, let $X$ be $L$-local and suppose that $\mbb{I}^L(X) \in L\B$. By \cref{lem:localBC}, we have $\mbb{I}^L(X) \simeq \mbb{I}X \in \B$ and hence $X \in \A$, and thus in $L\A$ since it is $L$-local. The closure of $L\B$ under retracts and finite sums is clear. The claim about symmetry is clear.
\end{proof}

Now we turn to the converse problem; namely, we construct lifts of duality pairs from $L\T$ to $\T$. 
\begin{prop}\label{prop:liftDPfromlocals}
	Let $(\A,\B)$ be a duality pair on $(L\T, L\T, \mbb{I}^L)$. Define \[{}^\uparrow\msf{A} = \{X \in \T \mid LX \in \A\} \quad \mrm{and} \quad \msf{B}^\uparrow = \{X \in \T \mid VX \in \B\}.\] Then the assignment $\Psi \colon (\A,\B) \mapsto ({}^\uparrow\A, \B^\uparrow)$ gives rise to a duality pair on $(\T, \T, \mbb{I})$.
\end{prop}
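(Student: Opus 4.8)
The plan is to verify the two defining conditions of \cref{tdp} for the pair $({}^\uparrow\A, \B^\uparrow)$ on the duality triple $(\T,\T,\mbb{I})$; both turn out to be essentially immediate once \cref{lem:localBC} is available.

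First I would check the duality condition: that $X \in {}^\uparrow\A$ if and only if $\mbb{I}X \in \B^\uparrow$. By definition the left-hand side says $LX \in \A$, while the right-hand side says $V(\mbb{I}X) \in \B$ — a sensible membership question since $V(\mbb{I}X)$ lies in the common image of $L$ and $V$, namely $L\T$, and $\B \subseteq L\T$. Now \cref{lem:localBC} supplies a natural isomorphism $\mbb{I}^L(LX) \simeq V(\mbb{I}X)$, and since $(\A,\B)$ is a duality pair on $(L\T, L\T, \mbb{I}^L)$ we have $LX \in \A$ if and only if $\mbb{I}^L(LX) \in \B$. Concatenating these equivalences,
\[
X \in {}^\uparrow\A \iff LX \in \A \iff \mbb{I}^L(LX) \in \B \iff V(\mbb{I}X) \in \B \iff \mbb{I}X \in \B^\uparrow,
\]
which is exactly what is required.

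Second I would check that $\B^\uparrow$ is closed under finite coproducts and retracts. Both follow from formal properties of $V = F(L\1,-)$: it is exact and, being right adjoint to $L\1 \otimes -$, it preserves products, hence finite products, which coincide with finite coproducts in a triangulated category. Thus for $X, Y \in \B^\uparrow$ one has $V(X \oplus Y) \simeq VX \oplus VY \in \B$ since $\B$ is closed under finite coproducts, so $X \oplus Y \in \B^\uparrow$; and if $Y$ is a retract of some $X \in \B^\uparrow$, then applying $V$ exhibits $VY$ as a retract of $VX \in \B$, whence $VY \in \B$ and $Y \in \B^\uparrow$. (As noted after \cref{tdp}, closure of ${}^\uparrow\A$ under finite coproducts and retracts is then automatic, $\mbb{I}$ being additive.)

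The argument is entirely formal once \cref{lem:localBC} is in hand; the only point worth a moment's care is that $V$ carries finite coproducts to finite coproducts, which is why I pass through the identification of finite coproducts with finite products in the triangulated setting together with the fact that an internal-hom functor preserves products. I do not expect any genuine obstacle.
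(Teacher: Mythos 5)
Your proof is correct and follows essentially the same route as the paper's: both hinge on the isomorphism $\mbb{I}^L(LX) \simeq V(\mbb{I}X)$ from \cref{lem:localBC} to establish the duality condition, and both note that $L$ and $V$ preserve finite sums and retracts to get the closure properties. The only difference is that you spell out more carefully why $V$ preserves finite coproducts (via product preservation and the finite product/coproduct coincidence in triangulated categories), which the paper leaves implicit.
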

\begin{proof}
	Let $X \in {}^\uparrow\A$ so that $LX \in \A$. Then $V\mbb{I}X \simeq \mbb{I}^L(LX) \in \B$ using \cref{lem:localBC}. Conversely, if $\mbb{I}X \in \B^\uparrow$, then $\mbb{I}^L(LX) \simeq V\mbb{I}X \in \B$ so $LX \in \A$ as required. Now $L$ and $V$ preserve finite sums so the required closure properties are trivial.
\end{proof}

\begin{thm}\label{surjdp}
	The composition $\Phi \circ \Psi$ is the identity on duality pairs in $L\T$. In particular, every duality pair on $L\T$ arises from a duality pair on $\T$ via $\Phi$.
\end{thm}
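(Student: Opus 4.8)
The plan is to evaluate $\Phi\circ\Psi$ directly on an arbitrary duality pair $(\A,\B)$ on $(L\T,L\T,\mbb{I}^{L})$. By definition $\Psi(\A,\B)=({}^\uparrow\A,\B^\uparrow)$, and I would recall from the proof of \cref{prop:localizeDP} that for any duality pair $(\mathsf{C},\mathsf{D})$ on $(\T,\T,\mbb{I})$ the pair $\Phi(\mathsf{C},\mathsf{D})$ consists of the $L$-local objects lying in $\mathsf{C}$ together with the $L$-local objects lying in $\mathsf{D}$. So the task reduces to showing that the $L$-local objects of ${}^\uparrow\A$ are exactly $\A$ and that the $L$-local objects of $\B^\uparrow$ are exactly $\B$.

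The one genuine input is that both $L$ and the colocalisation $V=F(L\1,-)$ restrict to the identity on $L$-local objects. For $L$ this is standard, and for $V$ it follows from the fact that $V$ is a colocalisation whose essential image coincides with that of $L$ --- the $L$-local objects --- so that the counit $VY\to Y$ is an isomorphism for every $L$-local $Y$. (Explicitly, if $Y$ is $L$-local then the triangle $VY\to Y\to F(\Gamma\1,Y)$ has third term zero, since $\Hom_{\T}(W,F(\Gamma\1,Y))\simeq\Hom_{\T}(\Gamma W,Y)=0$ for all $W$, because $\Gamma W$ is $L$-acyclic and $Y$ is $L$-local.)

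Granting this, for an $L$-local object $Y$ we have $Y\in{}^\uparrow\A$ iff $LY=Y\in\A$, and $Y\in\B^\uparrow$ iff $VY=Y\in\B$. Since $\A$ and $\B$ are by definition classes of objects of $L\T$, every object of either class is $L$-local; hence the $L$-local objects of ${}^\uparrow\A$ are precisely $\A$, and those of $\B^\uparrow$ are precisely $\B$. Together with the description of $\Phi$ recalled above, this gives $\Phi(\Psi(\A,\B))=(\A,\B)$, which is the first claim. The ``in particular'' then follows at once: $\Psi(\A,\B)$ is a duality pair on $\T$ by \cref{prop:liftDPfromlocals}, and the equality $(\A,\B)=\Phi(\Psi(\A,\B))$ exhibits $(\A,\B)$ as $\Phi$ of a duality pair on $\T$, so $\Phi$ is surjective on duality pairs.

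I expect no serious obstacle here: the argument is essentially bookkeeping about which classes are intersected with $L\T$, together with the (co)localisation fact above. The only point requiring attention is to apply the convention for $\Phi$ from \cref{prop:localizeDP} consistently, reading the output of $\Phi$ as the $L$-local parts of its input classes rather than as their literal images under $L$; on the second coordinate these two readings can differ, since for a general object $X$ the objects $VX$ and $LX$ need not be related, so it is the $L$-local description that one must use.
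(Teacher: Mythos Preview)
Your proof is correct and follows essentially the same approach as the paper: both arguments reduce to showing that the $L$-local objects of ${}^\uparrow\A$ and $\B^\uparrow$ are exactly $\A$ and $\B$, using that $L$ and $V$ each act as the identity on $L$-local objects. You are in fact slightly more explicit than the paper in justifying $VY\simeq Y$ for $L$-local $Y$ via the triangle argument, whereas the paper appeals to $V^2\simeq V$ and the coincidence of the images of $L$ and $V$; your closing remark about reading $\Phi$ as ``intersect with $L\T$'' rather than ``apply $L$ objectwise'' is a genuine point of care that the paper leaves implicit.
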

\begin{proof}
	It is immediate from the definitions that $(\Phi \circ \Psi)(\A,\B) = (L(\,{}^\uparrow\A), L(\B^\uparrow)) = (\A,\B)$. Since $L$ is idempotent it is clear that $L(\,{}^\uparrow\A) = \A$. For the $\B$ class, we have $X \in L(\B^\uparrow)$ if and only if $X$ is $L$-local and in $\B^\uparrow$. Since $X$ is $L$-local, $X \simeq VZ$ for some $Z$, and hence $X \simeq VZ \simeq V^2Z \simeq VX \in \B$. 
\end{proof}

\begin{cor}\label{ttselfdual}
	Let $\mc{D}$ be a definable subcategory of a big tt-category $\T$. Then $L\mc{D}$ is a definable subcategory of $L\T$; moreover, we have that $(L\mc{D})^{d}=L(\mc{D}^{d})$.
\end{cor}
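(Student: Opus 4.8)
The plan is to pin down $L\mc{D}$ as an explicitly definable subcategory of $L\T$, and then to read off the identification of its dual from the localisation of the symmetric duality pair $(\mc{D},\mc{D}^{d})$ supplied by \cref{dualdefinable}.

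First I would check that $L$ carries compact objects of $\T$ to compact objects of $L\T$. Since $L$ is smashing, a coproduct of $L$-local objects is again $L$-local, so coproducts in $L\T$ coincide with those computed in $\T$; and for $C\in\T^{\c}$ and any $L$-local $X$, the defining triangle $\Gamma C\to C\to LC$ together with the $L$-acyclicity of $\Gamma C$ gives a natural isomorphism $\Hom_{L\T}(LC,X)=\Hom_{\T}(LC,X)\simeq\Hom_{\T}(C,X)$. These two facts combine to show that $LC$ is compact in $L\T$.

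Next I would prove that $L\mc{D}$ is definable in $L\T$. Picking a set of maps $\{f_i\colon A_i\to B_i\}_{I}$ between compacts of $\T$ which defines $\mc{D}$, the natural isomorphism above converts surjectivity of $\Hom_{\T}(f_i,X)$ into surjectivity of $\Hom_{L\T}(Lf_i,X)$ for every $L$-local $X$. Since $L\mc{D}=\mc{D}\cap L\T$ (the convention used in the proof of \cref{prop:localizeDP}), this yields $L\mc{D}=\{X\in L\T:\Hom_{L\T}(Lf_i,X)\text{ is surjective for all }i\in I\}$, and each $Lf_i\colon LA_i\to LB_i$ is a morphism of compacts of $L\T$ by the previous paragraph, so $L\mc{D}$ is definable. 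Running the same argument with $\mc{D}^{d}$ in place of $\mc{D}$, which is definable in $\T$, shows that $L(\mc{D}^{d})$ is definable in $L\T$.

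Finally I would assemble the pieces. As $L\T$ is again a big tt-category, \cref{AGJdualitytripleexamples} supplies the AGJ duality triple $(L\T,L\T,\mbb{I}^{L},F(-,L\1))$; by \cref{dualdefinable} the pair $(\mc{D},\mc{D}^{d})$ is a symmetric duality pair on $(\T,\T,\mbb{I})$, so by \cref{prop:localizeDP} the pair $(L\mc{D},L(\mc{D}^{d}))$ is a duality pair on $(L\T,L\T,\mbb{I}^{L})$ whose two classes are both definable, whence \cref{definablesymmetry} forces $(L\mc{D})^{d}=L(\mc{D}^{d})$. The only step needing genuine care is the interplay between compactness in $\T$ and in $L\T$ carried out in the first two paragraphs, which is exactly where smashing of $L$ enters; everything else is formal given the results already established.
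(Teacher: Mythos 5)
Your proof is correct, and it deviates from the paper's argument at the key step of establishing definability of $L\mc{D}$. The paper deduces definability indirectly: after obtaining the symmetric duality pair $(L\mc{D},L(\mc{D}^d))$ from \cref{prop:localizeDP}, it notes $L\mc{D}$ is coproduct closed (smashing preserves coproducts) and then invokes \cref{definablett} to conclude that $L\mc{D}$ is definable with $L(\mc{D}^d)$ as its dual. You instead prove definability \emph{directly}, by showing that a set $\{f_i\}$ of maps between compacts defining $\mc{D}$ localises to a set $\{Lf_i\}$ of maps between compacts of $L\T$ defining $L\mc{D}$ (using that $L$ carries compacts to compacts and that $\Hom_{L\T}(LC,X)\simeq\Hom_\T(C,X)$ for local $X$), and then finish via \cref{definablesymmetry} once you know both classes of the duality pair are definable. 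Your route is a little more hands-on but is also more robust: the implication $(1)\Rightarrow(3)$ in \cref{definablett} that the paper relies on requires $L\T$ to be the underlying category of a strong and stable derivator, a hypothesis neither stated in the corollary nor discharged in the paper's proof, whereas your argument sidesteps it entirely. The only additional ingredient you need is the standard behaviour of compacts under smashing localisation, which you correctly verify from the defining triangle $\Gamma C\to C\to LC$.
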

\begin{proof}
	By \cref{dualdefinable}, we have a symmetric duality pair $(\mc{D}, \mc{D}^d)$ and $\mc{D}$ is closed under coproducts. By \cref{prop:localizeDP}, we have a symmetric duality pair $(L\mc{D}, L(\mc{D}^d))$ on $L\T$ and since $L$ preserves arbitrary coproducts as it is smashing, we have that $L\mc{D}$ is coproduct closed. Applying \cref{definablett} shows that $L\mc{D}$ is a definable subcategory of $L\T$ and that $(L\mc{D})^d = L(\mc{D}^d)$.
\end{proof}

On the other hand, we can also consider the assignment $\Psi\circ \Phi$, which sends a duality pair on $\T$ to another duality pair on $\T$. Explicitly,
\[
(\Psi\circ\Phi)(\A,\B)=(\{X\in\T:LX\in L\A\},\{X\in\T:VX\in L\B\})
\]
it is clear that this is not the identity on duality pairs, as other things may localise into $L\A$. However, the operation $\Psi\circ\Phi$ is still well behaved, as the following shows.

\begin{prop}
The operation $\Psi\circ\Phi$ is idempotent; that is, if $(\A,\B)$ is a duality pair, we have
\[
(\Psi\circ\Phi)^{2}(\A,\B)=(\Psi\circ\Phi)(\A,\B).
\]
\end{prop}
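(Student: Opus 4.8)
The plan is to unwind the definitions of $\Phi$ and $\Psi$ to obtain an explicit description of $(\Psi\circ\Phi)(\A,\B)$, and then to apply $\Psi\circ\Phi$ a second time and check that nothing changes. So fix a duality pair $(\A,\B)$ on $(\T,\T,\mbb{I})$. Chaining \cref{prop:localizeDP} and \cref{prop:liftDPfromlocals} gives $(\Psi\circ\Phi)(\A,\B)=(\A_1,\B_1)$ with $\A_1=\{X\in\T:LX\in L\A\}$ and $\B_1=\{X\in\T:VX\in L\B\}$. Since $LX$ is always $L$-local and, by the observation in the proof of \cref{prop:localizeDP}, $L\A$ consists exactly of the $L$-local objects lying in $\A$, this simplifies to $\A_1=\{X:LX\in\A\}$; similarly, since the essential image of $V$ is the class of $L$-local objects, $\B_1=\{X:VX\in\B\}$.

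First I would compute the left-hand class of $\Phi(\A_1,\B_1)$, namely $L\A_1=\A_1\cap L\T=\{Y\in L\T:LY\in\A\}$. For $L$-local $Y$ one has $LY\simeq Y$, so $L\A_1=\{Y\in L\T:Y\in\A\}=L\A$. This uses only the idempotence of $L$ on its essential image, which is standard.

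Then I would carry out the analogous computation for the right-hand class, which is the only step needing a little care: $L\B_1=\B_1\cap L\T=\{Y\in L\T:VY\in\B\}$, and here I need $VY\simeq Y$ for $L$-local $Y$. This is precisely the identity already exploited in the proof of \cref{surjdp}: an $L$-local $Y$ has the form $VZ$, so $VY\simeq V^2Z\simeq VZ\simeq Y$ by idempotence of the colocalisation $V$. Hence $L\B_1=\{Y\in L\T:Y\in\B\}=L\B$.

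Finally I would reassemble. Applying $\Psi\circ\Phi$ to $(\A_1,\B_1)$ produces the pair with left class $\{X:LX\in L\A_1\}=\{X:LX\in L\A\}=\{X:LX\in\A\}=\A_1$ (again using that $LX$ is $L$-local) and right class $\{X:VX\in L\B_1\}=\{X:VX\in L\B\}=\{X:VX\in\B\}=\B_1$. Therefore $(\Psi\circ\Phi)^2(\A,\B)=(\A_1,\B_1)=(\Psi\circ\Phi)(\A,\B)$. I do not anticipate any genuine obstacle: the whole argument rests on the two idempotence identities $LY\simeq Y$ and $VY\simeq Y$ valid for $L$-local $Y$, together with the bookkeeping that applying $L(-)$ to a class means intersecting with $L\T$.
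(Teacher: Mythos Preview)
Your proof is correct and follows essentially the same approach as the paper's: both arguments reduce to the idempotence identities $L^2\simeq L$ and $V^2\simeq V$ (equivalently, $LY\simeq Y$ and $VY\simeq Y$ for $L$-local $Y$). The only cosmetic difference is that you factor the computation through showing $\Phi(\A_1,\B_1)=(L\A,L\B)=\Phi(\A,\B)$ before reapplying $\Psi$, whereas the paper unwinds $(\Psi\circ\Phi)^2$ directly; the underlying content is identical.
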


\begin{proof}
From the above description, we see that the left class in $(\Psi\circ\Phi)^{2}(\A,\B)$ is exactly
\[
\{X\in\T:LX\in\{Z\in\T:LZ\in L\A\}\};
\]
in particular, if $X$ is in this class, then $L(LX)\in L\A$, but as $L^{2}=L$ we have $LX\in L\A$, so $X$ is, in fact in the left class of $(\Psi\circ\Phi)(\A,\B)$. By inspection, it is clear that $X\in\T$ is in the right class of $(\Psi\circ\Phi)^{2}(\A,\B)$ if and only if $V^{2}X\in L\B$. However $V^2X \simeq VX$ and thus $VX\in L\B$, hence $X$ is in the right class of $(\Psi\circ\Phi)(\A,\B)$.
\end{proof}

\bibliographystyle{abbrv}

\bibliography{references}

\end{document}